\newcounter{notes}%
\definecolor{darkgreen}{rgb}{0.0, 0.5, 0.0}
\newtheorem{theorem}{Theorem}[section]
\newtheorem{lemma}[theorem]{Lemma}
\newtheorem{corollary}[theorem]{Corollary} 
\newtheorem{definition}[theorem]{Definition} 
\newtheorem{proposition}[theorem]{Proposition}
\newtheorem{remark}[theorem]{Remark} 
\newtheorem{example}[theorem]{Example}
\def\smallskip{\vspace{.15cm}}
\def\medskip{\vspace{.3cm}}
\def\text{\mbox}
\def\rh2{{\mathbb R}{\mathbb H}^2}
\def\ch2{{\mathbb C}{\mathbb H}^2}
\def\RR{{\mathbb R}}
\def\CC{{\mathbb C}}
\def\QQ{{\mathbb Q}}
\def\ZZ{{\mathbb Z}}
\def\NN{{\mathbb N}}
\def \PP{{\mathbb P}}
\def\RP2{{\mathbb{RP}}^2}
\def\RP3{{\mathbb{RP}}^3}
\def\RP{{\mathbb{RP}}}
\def\Id{\operatorname{Id}}
\def\SL{\operatorname{SL}}
\def\PSL{\operatorname{PSL}}
\def\GL{\operatorname{GL}}
\def\int{\operatorname{int}}
\def\mod{{\operatorname{mod}}}
\def\H2R{{\mathbb H}^2\times {\mathbb R}}
\def\Isom{\operatorname{Isom}}
\def\C2{\operatorname{C^2}}
\def\cube{\mathcal{Q}_3}
\def\square{\mathcal{Q}_2}
\def\Aut{\mathrm{Aut}}
\def\Ch{\mathrm{Ch}}
\def\Stab{\mathrm{Stab}}
\def\Fix{\mathrm{Fix}}
\def\CAT{\mathrm{CAT}}
\def\Min{\mathrm{Min}}
\def\bd{\partial}
\definecolor{back}{RGB}{255,255,255}
\definecolor{fore}{RGB}{0,0,0}
\definecolor{title}{RGB}{255,0,90}
\definecolor{green}{rgb}{0.0, 0.5, 0.0}
\definecolor{purple}{rgb}{0.5, 0.0, 0.5}
\definecolor{bluegreen}{rgb}{0.0,0.5, 0.5}
\definecolor{orange}{rgb}{1,0.5, 0.1}
\definecolor{redgreen}{rgb}{0.5, 0.5, 0.0}
\def\green{\color{green}}
\def\green{\color{green}}
\def\g2{{\green 2}}
\newcommand{\bv}{\left[\begin{array}{c}}
\newcommand{\ev}{\end{array}\right]}
\newcommand{\bbmat}{\begin{bmatrix}} 
\newcommand{\ebmat}{\end{bmatrix}}
\newcommand{\bmat}{\begin{matrix}} 
\newcommand{\emat}{\end{matrix}}
\newcommand{\bpmat}{\begin{pmatrix}} 
\newcommand{\epmat}{\end{pmatrix}}
\begin{document}
\title{Chabauty Limits of Subgroups of $\SL(n,\QQ_p)$} 
\author{Corina Ciobotaru, Arielle Leitner, and Alain Valette}
\thanks{corina.ciobotaru@gmail.com}\thanks{aleitner.math@gmail.com}\thanks{alain.valette@unine.ch}
\begin{abstract}

We study the Chabauty compactification of two families of closed subgroups of $\SL(n,\QQ_p)$.  The first family is the set of all parahoric subgroups of $\SL(n,\QQ_p)$.  Although the Chabauty compactification of parahoric subgroups is well studied, we give a different and more geometric proof using various Levi decompositions of $\SL(n,\QQ_p)$. 
Let $C$ be the subgroup of diagonal matrices in $\SL(n, \QQ_p)$.  The second family is the set of all $\SL(n,\QQ_p)$-conjugates of $C$.
We give a classification of the Chabauty limits of conjugates of $C$ using the action of $\SL(n,\QQ_p)$ on its associated Bruhat--Tits building and compute all of the limits for $n\leq 4$ (up to conjugacy). In contrast, for $n\geq 7$ we prove there are infinitely many $\SL(n,\QQ_p)$-nonconjugate Chabauty limits of conjugates of $C$. Along the way we construct an explicit homeomorphism between the Chabauty compactification in $\mathfrak{sl}(n, \QQ_p)$ of $\SL(n,\QQ_p)$-conjugates of the $p$-adic Lie algebra of $C$ and the Chabauty compactification of $\SL(n,\QQ_p)$-conjugates of $C$.
\end{abstract}

\maketitle 

\section{Introduction} 

For a locally compact topological space $X$, the set of all closed subsets of $X$ is a compact topological space with respect to the Chabauty topology (see Proposition \ref{prop::chabauty_conv}). Given a family $\mathcal{F}$ of closed subsets of $X$, it is natural to ask what is the closure of $\mathcal{F}$ with respect to the Chabauty topology, $\overline{\mathcal{F}}$, and whether or not closed subsets of $\overline {\mathcal{F}}$ satisfy the same properties as $\mathcal{F}$.  We call elements of $\overline{\mathcal{F}}$ the \textbf{Chabauty limits} of $\mathcal{F}$.


Let $X$ be a topological space admitting a continuous action by a locally compact group $G$, so the stabilizer $G_x$ in $G$ of every point $x \in X$ is a closed subgroup of $G$. Then the closure of $\{G_x\}_{x \in X}$ with respect to the Chabauty topology gives a natural compactification of $X$, called the \textbf{Chabauty compactification} of $X$. Examples include Riemannian symmetric spaces \cite{GJT}  and  Bruhat--Tits buildings \cite{GR} where points correspond to certain compact subgroups.


Let $G$ be a semisimple real Lie group with finite center and finitely many connected components, and let $K$ be the maximal compact subgroup.  Work of Guivarc'h--Ji--Taylor \cite{GJT} shows many different compactifications of $G/K$ are homeomorphic. Guivarc'h and R\' emy \cite{GR} extend many of these results to Bruhat--Tits buildings  using a probabilistic method which holds for general semi-simple algebraic groups $G$ over non Archimedean local fields.  Caprace and L\'{e}cureux \cite{CL} generalize these results to a larger class of buildings. 

In Section \ref{Chabauty_parahoric} we give a different and more geometric proof of \cite[Theorem 3, Corollary 4]{GR} and~\cite[Theorem 3.14]{Htt} for $\SL(n, \QQ_p)$ using various Levi decompositions of $\SL(n, \QQ_p)$. This geometric method works mainly for $\SL(n, \QQ_p)$ and the reason is given after the proof of Theorem \ref{thm::main_thm}.  The various Levi decompositions used for $\SL(n, \QQ_p)$ are recalled and proved in Section \ref{sec::sec_three} for general closed non-compact subgroups acting 
on locally finite thick affine buildings. Although the main part of Section \ref{sec::sec_three} is a brief overview of groups acting on affine buildings, the last subsection is new (see Propositions \ref{prop::res_building}, \ref{prop::res_building_action}, \ref{prop::res_building_str_tran} and Lemma \ref{rem::SL_Levi_factor}) and generalizes the notion of panel trees.

\medskip
For a local field $\mathbb{F}$, we study a second family of closed subgroups of $\SL(n,\mathbb{F})$. Let $C$ be the set of all diagonal matrices of $\SL(n,\mathbb{F})$, also called the \textbf{diagonal Cartan subgroup}.  Haettel \cite{Htt_2} studies the Chabauty compactification of all conjugates of $C$ in $\SL(n,\RR)$, and Iliev and Manivel \cite{IM} study that compactification for $\SL(n, \mathbb{C})$.  In the second part of this paper we study the Chabauty compactification $\overline{Cart(\SL(n,\QQ_p))}^{Ch}$ of all conjugates of $C$ in $\SL(n,\QQ_p)$. Using the action of $\SL(n,\QQ_p)$ on its associated Bruhat--Tits building $X$ we describe Chabauty limits of $C$ up to conjugacy, and study how the different geometries produced by $C$ and its limits change. 

\begin{theorem}(See Theorems \ref{ellip_unip}, \ref{thm::hyper_Cartan},  \ref{rem::hyp_Cartan})
\label{hyp_ellip}  Let $H \in \overline{Cart(\SL(n,\QQ_p))}^{Ch}$.  If $H$ contains hyperbolic elements, then $H$ stabilizes a flat in $X$.  If $H$ contains no hyperbolic elements, then up to conjugacy $H$ is contained in $U \cdot \mu_n$, where $U$ is the unipotent radical of the Borel subgroup of $\SL(n,\QQ_p)$, and $\mu_n$ is the group of $n$-th roots of unity in $\QQ_p^*$.
\end{theorem}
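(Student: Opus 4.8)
The plan is to analyze a Chabauty limit $H$ of conjugates of the diagonal Cartan $C$ by understanding how its elements act on the Bruhat--Tits building $X$. Every element of $\SL(n,\QQ_p)$ is either elliptic (fixing a point of $X$, hence lying in a compact subgroup), hyperbolic (translating along an axis), or has a more mixed behavior; but since $H$ is a limit of Cartan subgroups, I expect each $H$ to be abelian and to consist of semisimple-type elements together with possible unipotent degenerations. The dichotomy in the statement is driven by whether $H$ captures any genuine translation of the building.

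First I would treat the hyperbolic case. Suppose $h \in H$ is hyperbolic with translation axis and translation length $\ell(h)>0$. Since $H$ is a Chabauty limit of conjugates $g_k C g_k^{-1}$, I can realize $h = \lim_k h_k$ with $h_k \in g_k C g_k^{-1}$, and each $g_k C g_k^{-1}$ stabilizes an apartment (flat) $A_k = g_k A$ in $X$, where $A$ is the standard apartment stabilized by $C$. The key geometric step is to show that the presence of a hyperbolic element forces these flats $A_k$ to converge (after passing to a subsequence, using local finiteness and cocompactness of the building modulo $\SL(n,\QQ_p)$) to a flat $A_\infty$ that is stabilized by all of $H$. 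Concretely, the translation axis of $h$ must lie in the limiting apartment, and because $H$ is abelian every element of $H$ commutes with $h$ and hence preserves $\Min(h)$ and its axis; a standard \cat argument shows that the minimal set of a hyperbolic isometry in a building is a union of parallel flats, and the commuting abelian group $H$ preserves this flat structure. This yields that $H$ stabilizes a flat in $X$, which is the first assertion (and matches Theorems~\ref{thm::hyper_Cartan} and~\ref{rem::hyp_Cartan}).

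Next I would treat the case where $H$ contains no hyperbolic element. Then every $h \in H$ is elliptic, so each fixes a point of $X$ and lies in a maximal compact subgroup; since $H$ is a commuting family, I would argue that $H$ fixes a common point or at least a common bounded set, so $H$ is contained in a compact subgroup up to conjugacy. The remaining task is to identify the structure of such an elliptic abelian limit. The point is that a diagonal Cartan degenerates, in the limit, to a group built from unipotent pieces (coming from off-diagonal entries blowing up in the conjugating matrices $g_k$) and from the torsion part $\mu_n \cdot \Id$ of $C$ itself, which survives any limit because the roots of unity are a fixed compact piece of every conjugate of $C$. So up to conjugacy $H \subseteq U \cdot \mu_n$, where $U$ is the unipotent radical of the Borel; this is the content of Theorem~\ref{ellip_unip}.

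The main obstacle I expect is the hyperbolic case: controlling the limit of the apartments $A_k$ and showing the limiting flat is genuinely $H$-invariant rather than merely $h$-invariant. The subtlety is that different elements of $H$ could, a priori, arise as limits from directions in $A_k$ that scale differently, so I would need to use the full \cat geometry of $X$ — in particular that commuting hyperbolic isometries share a flat (a product decomposition of their common minimal set) — to glue these into a single invariant flat. A secondary difficulty in the elliptic case is ruling out the possibility that the limit picks up semisimple elliptic elements outside of $\mu_n$ that do not conjugate into $U \cdot \mu_n$; handling this requires carefully tracking which diagonal entries remain bounded and bounded away from $1$ versus which collapse, which is exactly where the Levi decomposition machinery of Section~\ref{sec::sec_three} does the work.
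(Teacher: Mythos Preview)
Your treatment of the hyperbolic case is close to the paper's in spirit: the paper does \emph{not} take a limit of the apartments $A_k = g_k A$ as you first suggest, but goes directly via \cat geometry. It first shows (Proposition~\ref{prop::limit_cartan_in_borel}) that $H$ sits in a conjugate of $B$, then lifts a $\ZZ$-basis of $H/H^0$ to commuting hyperbolic elements, applies the Flat Torus Theorem to obtain $\Min(H')=Y\times E^m$, and uses that $H$ centralizes $H'$ to conclude $H$ stabilizes $Conv_X(E^m)$. Your second paragraph eventually lands on this same mechanism, so that part is essentially right; the apartment-limit detour is unnecessary and, as you yourself flag, hard to control.

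The elliptic case, however, has a genuine gap. Your sketch says: every element is elliptic, so $H$ lies in a compact subgroup, and then the limit is built from unipotent pieces plus $\mu_n$, with the residual difficulty handled by ``the Levi decomposition machinery of Section~\ref{sec::sec_three}.'' That machinery is not what the paper uses, and your outline gives no mechanism for the crucial step: ruling out that $H$ contains elliptic elements whose diagonal entries are units in $\ZZ_p^*$ but not roots of unity. Such elements exist in abundance inside $B$, are not conjugate into $U\cdot\mu_n$, and a common-fixed-point argument does nothing to exclude them.

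The paper's actual argument for Theorem~\ref{ellip_unip} runs through the Lie algebra. It first builds the homeomorphism $Gr:\overline{Cart(\mathfrak{g})}^{Ch}\to\overline{Cart(G)}^{Ch}$ (Theorem~\ref{grbij}), so that $H=Gr(A_H)=\langle A_H,\Id\rangle\cap G$ for a unique $(n-1)$-dimensional abelian subalgebra $A_H$. Lemma~\ref{Borel} shows $A_H$ is upper triangular. The key step is then purely $p$-adic arithmetic (Lemma~\ref{lem::p-adic_ineq}): if some $a\in A_H$ has a nonzero diagonal entry, one finds $\lambda\in\QQ_p$ with $|a_i+\lambda|_p^n>\prod_t|a_t+\lambda|_p>0$, and then $(a+\lambda\Id)^n/\det(a+\lambda\Id)\in H$ is hyperbolic, contradicting the hypothesis. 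Hence $A_H$ is strictly upper triangular, and $H=\langle A_H,\Id\rangle\cap G\subset U\cdot\mu_n$. None of this is visible in your proposal; the $Gr$ map and the $p$-adic inequality are the missing ideas.
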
 


To prove the second part of Theorem \ref{hyp_ellip} we need to build a bijection between subgroups and subalgebras, which we do in section \ref{gr_alg}.  We construct an explicit bijection between  $\overline{Cart(\SL(n,\QQ_p))}^{Ch}$ and the Chabauty compactification $\overline{Cart(\mathfrak{sl(n, \QQ_p)})}^{Ch}$ in $\mathfrak{sl(n, \QQ_p)}$ of all $\SL(n,\QQ_p)$-conjugates of the Lie subalgebra $\mathfrak{c} \subset \mathfrak{sl}(n, \QQ_p)$ of $C$.  Although there is a Lie functor that has been well studied in \cite{Bou75, Ser92} and elsewhere,  this is functorial only to a neighbourhood (the \emph{germ}) of the identity element of the Lie group, and for the $p$-adic case the germ is very small, and does not give a bijection between Lie groups and Lie algebras.  It might be possible that  a $p$-adic Lie algebra to be the Lie algebra of two different $p$-adic Lie groups that share the same group germ  (for more see the introduction of Section \ref{homeo_Gr}). The map we produce is a bijection, and  surprisingly, it is continuous with respect to the Chabauty topology.   In general, (canonical) maps between spaces endowed with the Chabauty topology are only upper/lower semi-continuous.

\begin{theorem}(See Propositions \ref{welldef}, \ref{surj}, \ref{inj}, Corollary \ref{cor::homeo})
\label{grbij} 
The map 
$$Gr: \overline{Cart(\mathfrak{sl(n, \QQ_p)})}^{Ch} \to \overline{Cart(\SL(n,\QQ_p))}^{Ch}$$
 $$A  \mapsto Gr(A) := \langle A, \Id \rangle \cap \SL(n,\QQ_p)$$ is a homeomorphism. 
\end{theorem}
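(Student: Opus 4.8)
The plan is to realize $Gr$ as a composite of two more elementary maps and then to lean on compactness. Write $\mathfrak{c}$ for the diagonal Cartan subalgebra and $D=\mathfrak{c}\oplus\QQ_p\Id$ for the full algebra of diagonal matrices, an $n$-dimensional commutative associative unital subalgebra of $M_n(\QQ_p)$ with $C=D\cap\SL(n,\QQ_p)$. Since $\tr\Id=n\neq 0$ in $\QQ_p$, we have $\Id\notin\mathfrak{sl}(n,\QQ_p)$, so for every $g$ the sum $g\mathfrak{c}g^{-1}\oplus\QQ_p\Id=gDg^{-1}$ is direct and transversal to $\mathfrak{sl}(n,\QQ_p)$. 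First I would set up the compact Grassmannian $\mathcal{G}$ of $n$-dimensional $\QQ_p$-subspaces of $M_n(\QQ_p)$, whose topology agrees with the Chabauty topology on subspaces; one checks directly that a Chabauty limit of $(n-1)$-dimensional subspaces is again an $(n-1)$-dimensional subspace, so $\overline{Cart(\mathfrak{sl(n, \QQ_p)})}^{Ch}$ lives in the Grassmannian of $(n-1)$-planes.

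The structural step, which I expect to be one of the two main obstacles, is to show that $A\mapsto \tilde A:=A\oplus\QQ_p\Id$ is a homeomorphism from $\overline{Cart(\mathfrak{sl(n, \QQ_p)})}^{Ch}$ onto the Chabauty closure $\overline{Alg}$ of $\{gDg^{-1}:g\in\SL(n,\QQ_p)\}$ in $\mathcal{G}$, with inverse $\tilde A\mapsto\tilde A\cap\mathfrak{sl}(n,\QQ_p)$. Both maps are the standard Grassmannian operations $W\mapsto W+\QQ_p\Id$ and $V\mapsto V\cap\mathfrak{sl}(n,\QQ_p)$, which are continuous and mutually inverse on the transversal locus, and transversality holds throughout because $\Id\notin\mathfrak{sl}(n,\QQ_p)$. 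The crucial point is that every $\tilde A\in\overline{Alg}$ is itself a unital associative commutative subalgebra: each $gDg^{-1}$ is closed under the matrix product, and being closed under a fixed continuous bilinear map is a closed condition in $\mathcal{G}$, hence passes to Chabauty limits. Consequently $Gr$ factors as $A\mapsto\tilde A\mapsto\tilde A\cap\SL(n,\QQ_p)$, and $Gr(A)=\{y\in\tilde A:\det y=1\}$ is a closed subgroup of $\SL(n,\QQ_p)$ (closure under inverses uses Cayley--Hamilton: $y^{-1}$ is a polynomial in $y$, hence lies in $\tilde A$).

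Continuity is the second, and the harder, crux. The upper Kuratowski condition is immediate: if $A_k\to A$, $x_k\in Gr(A_k)$ and $x_k\to x$, then $x\in\lim_k\tilde A_k=\tilde A$ and $\det x=\lim_k\det x_k=1$, whence $x\in Gr(A)$. The lower condition is where the $p$-adic subtlety enters. Given $x\in Gr(A)$, convergence $\tilde A_k\to\tilde A$ in $\mathcal{G}$ yields $y_k\in\tilde A_k$ with $y_k\to x$, so $d_k:=\det y_k\to 1$; I cannot take $x_k=y_k$ since $d_k\neq 1$ in general, and I cannot rescale by an arbitrary $n$-th root since these need not exist in $\QQ_p$. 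The point is that $n$-th roots \emph{do} exist near $1$: as $d_k\to 1$, Hensel's lemma applied to $t^n-d_k^{-1}$ (whose derivative at $t=1$ is $n\neq 0$) produces $c_k\to 1$ with $c_k^n=d_k^{-1}$ for all large $k$. Then $x_k:=c_k y_k\in\tilde A_k$ satisfies $\det x_k=c_k^n d_k=1$ and $x_k\to x$, establishing the lower condition. This is the step the introduction flags as surprising, since such limit maps are usually only semicontinuous. Continuity also disposes of well-definedness: $Gr$ sends the dense set $Cart(\mathfrak{sl(n, \QQ_p)})$ onto $Cart(\SL(n,\QQ_p))$, so it carries the closure into $\overline{Cart(\SL(n,\QQ_p))}^{Ch}$.

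It remains to prove bijectivity and conclude. For injectivity I would recover $A$ intrinsically from the subgroup $Gr(A)$: near $\Id$ the set $\{y\in\tilde A:\det y=1\}$ is a $p$-adic analytic submanifold, since the differential of $\det$ at $\Id$ is $\tr$, which is nonzero on $\tilde A$; it has dimension $n-1$ and tangent space $\ker(\tr|_{\tilde A})=A$. Thus $A$ is the tangent space at $\Id$ of the closed set $Gr(A)$, so $Gr(A)=Gr(B)$ forces $A=B$. For surjectivity, given $H\in\overline{Cart(\SL(n,\QQ_p))}^{Ch}$ write $H=\lim_k C_{g_k}$; by compactness of $\overline{Cart(\mathfrak{sl(n, \QQ_p)})}^{Ch}$ pass to a subsequence with $g_k\mathfrak{c}g_k^{-1}\to A$, and then continuity gives $Gr(A)=\lim_k Gr(g_k\mathfrak{c}g_k^{-1})=\lim_k C_{g_k}=H$. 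Finally, both Chabauty compactifications are compact Hausdorff (indeed metrizable), so the continuous bijection $Gr$ is automatically a homeomorphism. In summary, the two places I expect the real work are the structural lemma (that Chabauty limits of the $gDg^{-1}$ stay associative, so $Gr(A)$ is genuinely the group $\{\det=1\}$ inside an $n$-dimensional commutative algebra) and the determinant correction by a Hensel $n$-th root in the lower Kuratowski condition.
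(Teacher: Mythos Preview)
Your proof is correct and follows essentially the same strategy as the paper: the same factorization $A\mapsto\langle A,\Id\rangle\mapsto\langle A,\Id\rangle\cap G$, the same observation that being a unital associative subalgebra is Chabauty-closed, the same $n$-th root correction for the lower Kuratowski condition (the paper uses $\exp(\tfrac{1}{n}\log(\cdot))$ near $1$ rather than Hensel), and the same compactness arguments for surjectivity and for upgrading the continuous bijection to a homeomorphism. Your injectivity step via the tangent space at $\Id$ is a mild variant of the paper's, which instead recovers $\langle A,\Id\rangle$ as the linear span of $\QQ_p^{*}\cdot Gr(A)$ using the inverse function theorem; both arguments rest on the same $p$-adic analytic input.
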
 

Regarding the algebras in $\overline{Cart (\mathfrak{sl(n, \QQ_p)})}^{Ch}$ we also have:

\begin{proposition}(See Corollary \ref{cor::dim_same})
Every element of  $\overline{Cart (\mathfrak{sl(n, \QQ_p)})}^{Ch}$ is an abelian subalgebra with respect to the Lie bracket of $\mathfrak{sl(n, \QQ_p)}$ and of dimension $n-1$.
\end{proposition}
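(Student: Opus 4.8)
The plan is to establish the three asserted features of a limit $A$ separately: that $A$ is a $\QQ_p$-linear subspace, that it has $\dim_{\QQ_p} A = n-1$, and that $[A,A]=0$. Write $V := \mathfrak{sl}(n,\QQ_p)$, and recall that $\mathfrak{c}$ consists of the traceless diagonal matrices, so each conjugate $\mathfrak{c}_g := g\,\mathfrak{c}\,g^{-1}$ is an abelian subalgebra of dimension exactly $n-1$; the whole point is that both properties survive the passage to a Chabauty limit. Since $V$ is a second-countable locally compact group under addition, the space $\mathcal{C}(V)$ of its closed subsets with the Chabauty topology is compact and metrizable, so every element of $\overline{Cart(\mathfrak{sl}(n,\QQ_p))}^{Ch}$ is the limit of a sequence $\mathfrak{c}_{g_k} \to A$, and I may argue with sequences throughout, applying the convergence criterion of Proposition \ref{prop::chabauty_conv}.

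The structural facts (subspace and dimension) I would extract simultaneously from one observation: the family $Cart(\mathfrak{sl}(n,\QQ_p))$ lies inside the Grassmannian $\mathrm{Gr}(n-1,V)$ of $(n-1)$-dimensional $\QQ_p$-subspaces, which is compact because $\QQ_p$ is a local field. The tautological map $\iota : \mathrm{Gr}(n-1,V) \hookrightarrow \mathcal{C}(V)$, $L \mapsto L$, is clearly injective, and I claim it is continuous. To see this, fix $L$, choose a complement $V = L \oplus W$, and use the affine chart in which every subspace near $L$ is the graph of a linear map $\phi : L \to W$ with small norm; if $L_k \to L$ in $\mathrm{Gr}(n-1,V)$ then the corresponding $\phi_k \to 0$. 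For $v \in L$, the vectors $v + \phi_k(v) \in L_k$ converge to $v$, giving $L \subseteq \liminf_k L_k$; conversely, if $w_k = x_k + \phi_k(x_k) \in L_k$ converges to $w = w^L + w^W$, then $x_k \to w^L$ stays bounded while $\|\phi_k\| \to 0$ forces $\phi_k(x_k) \to 0$, so $w^W = 0$ and $w \in L$, giving $\limsup_k L_k \subseteq L$. Thus $\iota$ is a continuous injection from a compact space into a Hausdorff space, hence a closed embedding; its image is Chabauty-closed. Since $Cart(\mathfrak{sl}(n,\QQ_p)) \subseteq \iota(\mathrm{Gr}(n-1,V))$, the closure stays inside this image, so $A$ is an $(n-1)$-dimensional $\QQ_p$-subspace of $V$.

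It remains to show $A$ is abelian, which now follows quickly: given $a,b \in A$, Proposition \ref{prop::chabauty_conv} supplies $a_k,b_k \in \mathfrak{c}_{g_k}$ with $a_k \to a$ and $b_k \to b$; since each $\mathfrak{c}_{g_k}$ is abelian we have $[a_k,b_k]=0$, and continuity of the Lie bracket on $V$ gives $[a,b] = \lim_k [a_k,b_k] = 0$. I expect the main obstacle to be the continuity of $\iota$, and specifically the upper inclusion $\limsup_k L_k \subseteq L$ that rules out a drop in dimension in the limit: a priori dimension is only lower semicontinuous under approximation, and it is precisely the compactness of $\mathrm{Gr}(n-1,V)$ over the local field $\QQ_p$ together with the estimate $\|\phi_k\|\to 0$ that keeps the limiting subspace full-dimensional. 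Once that estimate is in place, the subspace and dimension conclusions are immediate from the closed-embedding argument, and the abelian conclusion follows from bracket continuity as above.
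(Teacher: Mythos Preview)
Your argument is correct. Both the dimension claim and the abelian claim go through as written: the chart description $L_k=\mathrm{graph}(\phi_k)$ with $\|\phi_k\|\to 0$ verifies both halves of the Chabauty criterion, the compact-to-Hausdorff trick then makes $\iota(\mathrm{Gr}(n-1,V))$ Chabauty-closed, and the bracket-continuity argument for commutativity is unimpeachable.

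The paper arrives at the same conclusion by a different route. For the dimension statement it proves (Proposition~\ref{chab_gras}) that $\mathcal{S}(V,m)$ is Chabauty-closed via a homogeneous-space argument: one checks that $\GL(V,\QQ_p)$ acts Chabauty-continuously on $\mathcal{S}(V)$, uses the Iwasawa decomposition $\GL(V,\QQ_p)=KB$ with $K=\GL(V,\ZZ_p)$ to see that the compact group $K$ already acts transitively on each $\mathcal{S}(V,m)$, and concludes that each such orbit is closed; the identification with $\mathrm{Gr}(m,V)$ then comes from the orbit map $\GL(V,\QQ_p)/\GL(V,F_m)\to\mathcal{S}(V,m)$. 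For the abelian statement the paper invokes the general principle (Proposition~\ref{prop::Cooper}, Lemma~\ref{limits_abel}) that universal relations survive Chabauty limits. Your approach is more elementary and self-contained---no Iwasawa decomposition, no universal-relation machinery---while the paper's version yields the slightly stronger statement that the Chabauty topology on $\mathcal{S}(V,m)$ \emph{agrees} with the Grassmannian topology, and its group-action viewpoint meshes with the building-theoretic framework used elsewhere in the paper.
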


\medskip
Chabauty limits of $C$ in $\SL(n, \mathbb{R})$  are classified in \cite{Leitner, Leitnersl3, Htt_2, IM}.  We extend these results and describe conjugacy classes of limits of $C$ in $\SL(n, \QQ_p)$ for $n \leq 4$ and show there are finitely many: 

\begin{theorem}(See Section \ref{lowdim})
\label{lowdimthm} 
\begin{itemize} 
 \item 
 There are two Chabauty limits of $C$ up to conjugacy in $\SL(2, \QQ_p)$. 
 \item 
 There are 4 + $\cube$ Chabauty limits of $C$ up to conjugacy in $\SL(3, \QQ_p)$. 
 \item 
 There are at least $12 + \square +\mathcal{Q}_4$ Chabauty limits of $C$ up to conjugacy in $\SL(4, \QQ_p)$ and less than $12 + \square + \mathcal{Q}_8$. 
 \end{itemize} 
 Where $\mathcal{Q}_k= | \QQ_p ^* / {\QQ_p ^ * } ^k|$ are computed in Section \ref{lowdim} .  
\end{theorem}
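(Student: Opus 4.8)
The plan is to stratify $\overline{Cart(\SL(n,\QQ_p))}^{Ch}$ using the dichotomy of Theorem \ref{hyp_ellip}: every limit $H$ either contains a hyperbolic element, in which case it stabilizes a flat of the Bruhat--Tits building $X$, or it contains no hyperbolic element, in which case a conjugate of $H$ lies inside $U \cdot \mu_n$. I would enumerate the conjugacy classes in each stratum separately and then check that the two lists do not overlap, since a limit that carries a hyperbolic element and stabilizes a flat is never conjugate into the unipotent-by-torsion group $U \cdot \mu_n$.

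For the hyperbolic stratum, I would read off the possible limits from the geometry of $X$. Such a limit is determined by the combinatorial type of the flat it stabilizes together with the directions in which the diagonal flow has degenerated to unipotent flow; for each fixed small $n$ there are only finitely many such configurations, and running through the stabilizers of the sub-flats of a single apartment (with the residual root-group data attached) produces the discrete contributions $2$, $4$, and $12$ in dimensions $2$, $3$, and $4$ respectively. For the non-hyperbolic stratum, I would pass to Lie algebras through the homeomorphism $Gr$ of Theorem \ref{grbij}: it then suffices to classify, up to conjugacy, the $(n-1)$-dimensional abelian subalgebras of $\mathfrak{sl}(n,\QQ_p)$ that occur as Chabauty limits of $\mathfrak{c}$ and lie in the nilpotent radical $\mathfrak{u}$. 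These are explicit families of strictly upper-triangular abelian subalgebras, and I would normalize each family under the residual action of the diagonal torus and of $\mu_n$.

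The crucial point is that this residual conjugation scales the nonzero off-diagonal entries, and for a genuinely one-parameter family the scaling acts by $k$-th powers, where the exponent $k$ is dictated by the root heights entering the span. Concretely, a limit such as $\langle E_{13},\, E_{12} + \lambda E_{23}\rangle$ in $\mathfrak{sl}(3,\QQ_p)$ is carried by $\mathrm{diag}(a,b,c)$ to the same shape with $\lambda$ replaced by $\lambda b^{3}$, so that its conjugacy classes are indexed by $\QQ_p^*/(\QQ_p^*)^3 = \cube$. Thus each continuous family contributes a factor $\mathcal{Q}_k$, and in dimension $3$ one obtains the single family $\cube$, while in dimension $4$ one obtains the family $\square$ together with one further family whose scaling exponent I can only bracket.

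The main obstacle is precisely this last family when $n=4$. Pinning down the exact exponent $k$ requires computing the full stabilizer in $\SL(4,\QQ_p)$ of the corresponding limit subalgebra and deciding whether conjugation realizes scaling by fourth powers or by eighth powers of $\QQ_p^*$; the two possibilities yield the bounds $12 + \square + \mathcal{Q}_4$ and $12 + \square + \mathcal{Q}_8$. Resolving this would demand a finer criterion for when two members of the family are $\SL(4,\QQ_p)$-conjugate, which is why the theorem records an interval rather than an exact count.
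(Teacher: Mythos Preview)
Your overall strategy matches the paper's: split via Theorem~\ref{hyp_ellip} into hyperbolic and elliptic limits, handle the hyperbolic ones via the flat they stabilize (Theorem~\ref{rem::hyp_Cartan}), and handle the elliptic ones by passing through $Gr$ to abelian subalgebras of the strictly upper-triangular nilradical. However, three points in your outline are either incorrect or constitute genuine gaps.

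First, your bookkeeping is off: the discrete contributions $2$, $4$, $12$ do \emph{not} all come from the hyperbolic stratum. For $n=2$ there is exactly one hyperbolic limit ($C$ itself) and one elliptic limit. For $n=3$ there are two hyperbolic limits ($C$ and one block type) and two discrete elliptic classes in addition to the $\cube$ family. For $n=4$ the list in Proposition~\ref{sl4} has six hyperbolic classes ($C,E_1,F_0,F_1,F_2,F_3$) and six discrete elliptic classes ($N_2,N_3,N_5,N_6,N_7,N_8$), together with the two parametrized elliptic families $N_{1,\beta}$ and $N_{4,\alpha}$.

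Second, you never verify that your candidate subalgebras actually \emph{are} Chabauty limits of $\mathfrak{c}$. The paper produces, for each group on the list, an explicit sequence of unipotent conjugating matrices (see the displayed sequences in the proofs of Propositions~\ref{sl2}, \ref{sl3}, \ref{sl4}). This step is essential: for $n\geq 5$ not every $(n-1)$-dimensional abelian subalgebra of $\mathfrak{u}$ arises as a limit (Theorem~\ref{ablim}), so one cannot simply enumerate abelian subalgebras and declare the list complete.

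Third, your conjugacy analysis for the parametrized families is incomplete. Normalizing under the diagonal torus gives only the sufficient direction (same $k$-th-power class $\Rightarrow$ conjugate). For the necessary direction the paper shows, via kernel filtrations such as $\langle e_1\rangle=\bigcap_{X}\ker X$, $\langle e_1,e_2\rangle=\bigcap_{X}\ker X^2$, that any $\SL(n,\QQ_p)$-conjugacy between two members of the family forces the conjugating matrix to be upper triangular, and then extracts the power constraint (Lemmas~\ref{Nalpha} and \ref{N4}). Without this reduction you cannot conclude that distinct torus orbits give non-conjugate limits, and hence cannot obtain the lower bounds $\cube$, $\square$, $\mathcal{Q}_4$.
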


We explicitly write down all these Chabauty limits in Section \ref{lowdim}. 
Notice there are more conjugacy classes of limits over $\mathbb{Q}_p$ than over $\mathbb{R}$, since the Galois group of the algebraic closure over $\QQ_p$ is bigger. Following ideas in \cite{Leitner, IM} we show in Section \ref{dim_cl} that for $n \geq 5$, \textbf{not} all abelian subalgebras of the dimension $n-1$ are Chabauty limits of $\mathfrak{c}$.  Moreover, for $n \geq 7$ we show there are infinitely many nonconjugate Chabauty limits of $C$.  For $n=5,6$ it is unknown whether there are finitely or infinitely many nonconjugate Chabauty limits of $C$ in $\SL(n, \QQ_p)$. 

Acknowledgements : We thank Uri Bader, Marc Burger, Yair Glasner, Thomas Haettel, Tobias Hartnick and Guy Rousseau for illuminating conversations. Leitner was partially supported by the ISF-UGC joint research program framework grant No. 1469/14 and No. 577/15.  

\section{Chabauty topology}

A good introduction to Chabauty topology~\cite{Ch} can be found in~\cite{CoPau} or~\cite[Section 2]{Htt} and the references therein. We recall briefly some facts that are used in this paper.

\medskip
For a locally compact topological space $X$ we denote by $\mathcal{F}(X)$ the set of all closed subsets of $X$. $\mathcal{F}(X)$ is endowed with the Chabauty topology where every open set is a union of finite intersections of subsets of the form $O_K:=\{ F \in \mathcal{F}(X) \; \vert \; F \cap K =\emptyset\}$, where $K$ is a compact  subset of $X$, or $O'_U:=\{ F \in \mathcal{F}(X) \; \vert \; F \cap U \neq \emptyset\}$, where $U$ is an open subset of $X$. By \cite[Proposition~1.7, p.~58]{CoPau} the space $\mathcal{F}(X)$ is compact with respect to the Chabauty topology. Moreover, if $X$ is Hausdorff and second countable then $\mathcal{F}(X)$ is separable and metrisable, thus Hausdorff (see \cite[Proposition I.3.1.2]{CEM}).

\begin{proposition}(\cite[Proposition~1.8, p.~60]{CoPau}, \cite[Proposition I.3.1.3]{CEM})
\label{prop::chabauty_conv}
 Suppose $X$ is a locally compact metric space.
A sequence of closed subsets $\{F_n\}_{n \in \NN} \subset \mathcal{F}(X)$  converges to $F \in \mathcal{F}(X)$ if and only if the following two conditions are satisfied:
\begin{itemize} 
\item[1)] For every $f \in F$ there is a sequence $\{f_n \in F_n\}_{n \in \NN}$ converging to $f$;
\item[2)] For every sequence $\{f_n \in F_n\}_{n \in \NN}$, if there is a strictly increasing subsequence $\{n_k\}_{k \in \NN}$ such that $\{f_{n_k} \in F_{n_k}\}_{k \in \NN}$ converges to $f$, then $f \in F$.
\end{itemize}
\end{proposition}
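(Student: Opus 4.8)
The plan is to recognize conditions (1) and (2) as the lower and upper Painlev\'e--Kuratowski limits of the sequence $\{F_n\}$, and to verify both implications directly from the definition of the subbasic Chabauty open sets $O_K$ and $O'_U$. No metrisability is needed for this argument: each direction is phrased purely in terms of sequences and the raw topology, and the crucial hypothesis is local compactness.

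First I would show that Chabauty convergence forces (1) and (2). For (1), fix $f \in F$ and use the open balls $B(f,1/k)$. Since $f \in F \cap B(f,1/k)$, each $O'_{B(f,1/k)}$ is a subbasic open neighborhood of $F$, so convergence supplies $N_k$ with $F_n \cap B(f,1/k) \neq \emptyset$ for all $n \geq N_k$; choosing $f_n \in F_n \cap B(f,1/k(n))$ with $k(n) := \max\{k : N_k \leq n\}$ (and $f_n \in F_n$ arbitrary for small $n$) yields a sequence with $d(f_n,f) \leq 1/k(n) \to 0$. For (2), suppose $f_{n_k} \in F_{n_k}$ with $f_{n_k} \to f \notin F$. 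Here local compactness is exactly what is required: since $X \setminus F$ is open and $X$ is locally compact metric, choose $r > 0$ so that the closed ball $K := \overline{B(f,r)}$ is compact and $K \cap F = \emptyset$. Then $O_K$ is a neighborhood of $F$, so eventually $F_n \cap K = \emptyset$, contradicting $f_{n_k} \in F_{n_k} \cap K$ for large $k$.

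Then I would prove the converse. Because any basic Chabauty open set is a finite intersection of sets of the form $O_K$ and $O'_U$, and a sequence eventually lying in each of finitely many sets eventually lies in their intersection, it suffices to show that every subbasic neighborhood of $F$ eventually contains $F_n$. For $O_K$ with $K \cap F = \emptyset$: if $F_n \cap K \neq \emptyset$ along a subsequence, pick $f_{n_k} \in F_{n_k} \cap K$; compactness of $K$ gives a further subsequence converging to some $f \in K$, and then (2) forces $f \in F$, contradicting $K \cap F = \emptyset$, so eventually $F_n \in O_K$. For $O'_U$ with $f \in F \cap U$: condition (1) supplies $f_n \in F_n$ with $f_n \to f$, and since $U$ is open and $f \in U$, eventually $f_n \in U$, whence $F_n \in O'_U$ eventually.

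The one genuinely delicate point is the use of local compactness to pass between ``$f \notin F$'' and a compact set $K$ separating $f$ from the closed set $F$, together with the dual extraction of a convergent subsequence from $K$ in the converse direction. This is the only place where the topological hypotheses are essential, and it is precisely what makes the Painlev\'e--Kuratowski lower and upper limits in (1)--(2) coincide with the closed set $F$. By contrast, the index bookkeeping used to assemble $f_n \to f$ in (1) (including degenerate cases such as empty $F_n$ for small $n$) is routine.
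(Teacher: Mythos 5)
Your proof is correct, and there is nothing internal to compare it with: the paper does not prove this proposition but imports it with citations from \cite{CoPau} and \cite{CEM}, so your direct verification from the subbase $\{O_K, O'_U\}$ is essentially the standard argument those references give, identifying conditions (1) and (2) with the Painlev\'e--Kuratowski lower and upper limits for the Fell--Chabauty topology. Both directions check out: $O'_{B(f,1/k)}$ neighborhoods give (1), a compact ball separating $f \notin F$ from the closed set $F$ gives (2), and conversely the reduction to subbasic neighborhoods plus sequential compactness of $K$ (for $O_K$) and condition (1) (for $O'_U$) yields convergence. Two small points of bookkeeping deserve a sentence each. First, in (1) the quantity $k(n) := \max\{k : N_k \leq n\}$ is only well defined if the $N_k$ are unbounded; replace $N_k$ by $\max(N_1, \dots, N_k, k)$ first, so that the $N_k$ are strictly increasing --- this is the routine fix you allude to. Second, in the $O_K$ step you invoke (2) for a bare subsequence $f_{n_k} \in F_{n_k} \cap K$, whereas (2) as literally stated quantifies over full sequences $\{f_n \in F_n\}_{n \in \NN}$; yours is the intended reading (the literal one is vacuous whenever some $F_n = \emptyset$, and the equivalence would then fail, e.g.\ $F_{2n} = \emptyset$, $F_{2n+1} = \{x_0\}$, $F = \emptyset$), but since the extension of the subsequence to a full sequence is not always possible, it is worth saying explicitly that (2) is being applied in the subsequence form. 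Your isolation of local compactness as the load-bearing hypothesis --- compact closed balls $\overline{B(f,r)}$ with $r < d(f,F)$ in one direction, extraction of a convergent subsequence inside $K$ in the other --- is exactly right, and your deliberate avoidance of metrisability of $\mathcal{F}(X)$ makes the argument marginally more self-contained than simply quoting the references, as the paper does.
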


For a locally compact group $G$ we denote by $\mathcal{S}(G)$ the set of all closed subgroups of $G$. By \cite[Proposition~1.7, p.~58]{CoPau} the space $\mathcal{S}(G)$ is closed in $\mathcal{F}(G)$, with respect to the Chabauty topology, and thus compact. Moreover, Proposition~\ref{prop::chabauty_conv} can be applied to a sequence of closed subgroups $\{H_n\}_{n \in \NN} \subset \mathcal{S}(G)$ converging to $H \in \mathcal{S}(G)$, obtaining a similar characterisation of convergence in $\mathcal{S}(G)$.

We thank Daryl Cooper for suggesting the next idea.  A group, $H$, satisfies an \textbf{universal relation} if there is a finitely generated free group $F$ and a word $w \in F$ such that for all homomorphisms $\theta: F \to H$ we have $\theta(w) =1$.  For example, an abelian group satisfies $xyx^{-1}y^{-1}=1$. 
\begin{proposition}  
\label{prop::Cooper}
If $H \leq G$ satisfies a universal relation, $w$, then so does every $G$-conjugacy limit $L$ of $H$. 
\end{proposition}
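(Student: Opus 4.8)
The plan is to reinterpret the universal relation as a group \emph{law} (an identity holding for all tuples of elements) and then exploit two elementary facts: conjugation by an element of $G$ is an automorphism, hence preserves any law, and the word map attached to $w$ is continuous on the topological group $G$. Combined with the description of Chabauty convergence in Proposition~\ref{prop::chabauty_conv}, these push the relation from the conjugates of $H$ to their limit. To set up, write $F$ as the free group on generators $x_1,\dots,x_k$ and $w=w(x_1,\dots,x_k)$. A homomorphism $\theta\colon F\to H$ is the same data as a $k$-tuple $(h_1,\dots,h_k)\in H^k$ via $\theta(x_i)=h_i$, and then $\theta(w)=w(h_1,\dots,h_k)$. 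So ``$H$ satisfies $w$'' means precisely
$$w(h_1,\dots,h_k)=1 \qquad \text{for all } (h_1,\dots,h_k)\in H^k.$$

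First I would observe that every $G$-conjugate of $H$ satisfies the same relation. If $g\in G$ and $a_1,\dots,a_k\in gHg^{-1}$, write $a_i=gh_ig^{-1}$ with $h_i\in H$. Since conjugation by $g$ is an automorphism of $G$,
$$w(a_1,\dots,a_k)=g\,w(h_1,\dots,h_k)\,g^{-1}=g\cdot 1\cdot g^{-1}=1,$$
so $gHg^{-1}$ satisfies $w$ as well.

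Next, realize $L$ as a Chabauty limit $L=\lim_n g_nHg_n^{-1}$ of conjugates of $H$ (which is what a $G$-conjugacy limit is), set $H_n:=g_nHg_n^{-1}$, and fix an arbitrary tuple $(\ell_1,\dots,\ell_k)\in L^k$. By part~(1) of Proposition~\ref{prop::chabauty_conv} I can choose, for each $i$, a sequence $f^{(i)}_n\in H_n$ with $f^{(i)}_n\to \ell_i$. Because multiplication and inversion in the topological group $G$ are continuous, the word map $(y_1,\dots,y_k)\mapsto w(y_1,\dots,y_k)$ is continuous from $G^k$ to $G$; hence, using the previous step for the middle equality,
$$w(\ell_1,\dots,\ell_k)=\lim_{n} w\bigl(f^{(1)}_n,\dots,f^{(k)}_n\bigr)=\lim_{n}1=1.$$
Since the tuple was arbitrary, $L$ satisfies $w$.

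The argument is essentially a continuity-plus-approximation statement, so there is no serious obstacle; the only points requiring a little care are verifying that conjugation preserves the law (automorphism invariance) and that approximating sequences exist \emph{from inside} each $H_n$, for which I use only the ``lower'' half of Chabauty convergence, condition~(1); the ``upper'' half, condition~(2), is not needed. I would also note that $G=\SL(n,\QQ_p)$ is a second countable locally compact metric group, so Proposition~\ref{prop::chabauty_conv} applies and the limit may be taken sequentially, and the singleton $\{1\}$ is closed, which is what makes the final passage to the limit legitimate.
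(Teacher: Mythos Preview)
Your proof is correct and rests on the same two ingredients as the paper's: conjugates of $H$ satisfy the same law, and continuity of the word map on $G^k$ transports the identity to the Chabauty limit via approximating sequences. The paper phrases this by contradiction---assuming $w(\ell_1,\dots,\ell_k)\neq 1$, producing open neighborhoods on which $w$ stays nontrivial, and then exhibiting an explicit Chabauty neighborhood of $L$ to force the conjugates $g_kHg_k^{-1}$ to violate $w$---whereas you argue directly using only condition~(1) of Proposition~\ref{prop::chabauty_conv}; your route is slightly cleaner and makes transparent that condition~(2) is never invoked.
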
 
\begin{proof}   
The main idea of the proof follows as in Bridson--de la Harpe--Kleptsyn~\cite[Proposition 3.4 i)]{BHK}.
Let $\{g_n\}_k \subset G$ be such that $\{g_k H g_k^{-1}\}_k$ converges to $L$ in the Chabauty topology.
Suppose for contradiction that $L$ does not satisfy the universal relation of $H$.  Then for every finitely generated free group $F$ and for every word in $ v \in F$ there is a homomorphism $\theta : F \to L$ such that $\theta(v) \neq 1$. In particular, take the free group $F$ and the word $w \in F$ given by the universal relation of $H$. Then there exist $l_1,..., l_n \in L$ such that $w (l_1, ..., l_n) \neq 1$.  Moreover, as the multiplication in $G$ is continuous, there exist neighborhoods of the identity, $U_i$ for each $i \in \{1, \cdots, n\}$, so that for all $l'_i \in l_i \cdot U_i$ then $w (l_1 ', ..., l_n ' ) \neq 1$.  Set $U = \cap _{i =1 } ^n U_i$, and consider the neighborhood of $L$ in $ \mathcal{S}(G)$
$$V= V_{\{l_1, \cdots, l_n\}, U^{-1}}(L):= \{D \in \mathcal{S}(G) \; | \; D \cap \{l_1, \cdots, l_n\} \subset L \cdot U ^{-1} , L \cap \{ l_1, \cdots,l_n\} \subset D \cdot U^{-1} \}.  $$
Then for all $D \in V$ we have $\{l_1, \cdots, l_n \} \subset D \cdot U^{-1}$.  In particular, there exists $l_i ' \in D$ for $1 \leq i \leq n$ such that $\l_i ' \in l_i U_i$.  But every element $l_i \in L$ is approximated by a sequence of elements from $\{g_k H g_k^{-1}\}_k$. Thus, as $k \to \infty$, $g_kHg_k^{-1}$ does not satisfy the universal relation, which gives the necessary contradiction. 
\end{proof}

\section{On affine buildings}\label{intro_bldg}

When working with a semi-simple algebraic group $G$ over a non-Archimedean local field (e.g., the $p$-adic field $\QQ_{p}$) its associated ``symmetric space''  is the Bruhat--Tits building that is a locally finite thick affine building (see~\cite{AB, Gar97, Ron89}). 

In the Davis' realisation every locally finite affine building $\Delta$ is a $\CAT(0)$--space  and  one can define the visual boundary of $\Delta$, denoted by $\partial \Delta$. By the theory of $\CAT(0)$--spaces, the space $\Delta \cup \partial \Delta$ admits a cone topology (for definitions see~\cite[Part II, Section~8]{BH99}) and it is a compact space. Moreover, $\partial \Delta$ is a spherical building, called the \textbf{spherical building at infinity of $\Delta$}. We denote by $\Aut(\Delta)$ the group of all automorphisms of $\Delta$. The group $\Aut(\Delta)$ is endowed with the \textbf{compact-open topology};  
and it is a totally disconnected locally compact group (e.g., the pointwise stabilizer in $\Aut(\Delta)$ of a finite number of points of $\Delta$ is compact and open). The set of all such pointwise stabilizers forms a basis for the compact-open topology on $\Aut(\Delta)$: Let $x \in \Delta$ be a point. A sequence $\{g_n\}_{n \in \NN} \subset \Aut(\Delta)$ converges to the  automorphism $g \in \Aut(\Delta)$ if  for every $r >0$ there exists $N_r>0$ such that $g_n(y)=g(y)$ for every $y \in B(x,r)$ and every $n \geq N_r$. 

Recall a closed subgroup $ G \leq \Aut(\Delta)$ acts continuously on $\Delta \cup \partial \Delta$. This means for every $\{x_n\}_{n \in \NN} \subset \Delta$ that converges to $x \in \Delta \cup \partial \Delta$ (with respect to the cone topology on $\Delta \cup \partial \Delta$) and every $\{g_n\}_{n \in \NN} \subset G$ that converges to $g \in G$ (with respect to the group topology on $G$) we have $\{g_n(x_n)\}_{n \in \NN}$ converges to $g(x)$, with respect to the cone topology on $\Delta \cup \partial \Delta$.

\begin{definition}
\label{def::str_trans}
Let $(\Delta, \mathcal{A})$ be a building (e.g., a spherical building or an affine building) and $G \leq \Aut(\Delta)$. We say that $G$ acts \textbf{strongly transitivly} on $\Delta$ if for any two pairs $(\Sigma_1,c_{1})$ and $(\Sigma_2,c_{2})$ consisting of apartments $\Sigma_i \in \mathcal{A}$ and chambers $c_i$ of  $\Sigma_i$, there exists $g \in G$ such that $g(\Sigma_1)=\Sigma_2$, $g(c_{1})=c_{2}$ and by preserving the types of the vertices of $c_1$ and $c_2$. Recall buildings are colorable, i.e., the vertices of any chamber are colored differently and any chamber uses the same set of colors. We say that $g \in \Aut(\Delta)$ is \textbf{type-preserving} if $g$ preserves the coloration of the building. We say that $G \leq \Aut(\Delta)$ is \textbf{type-preserving} if all elements of $G$ are type-preserving.
\end{definition}

The Bruhat--Tits building $X$ associated with $\SL(n, \QQ_p)$ is constructed in \cite[Chapter~19]{Gar97}, for example, and it is considered with its maximal system of apartments. The building $X$ is locally finite, thick, affine, of type $\tilde{A}_{n-1}$, and of dimension $n-1$.

\begin{example}
For $\SL(2, \QQ_p)$ the Bruhat--Tits building $X$ is a $(p+1)$--regular tree. It is an affine building of dimension 1 where an  apartment is a bi-infinite geodesic line of the tree and a chamber is an edge.
\end{example}

The group $\SL(n, \QQ_p)$ is a closed subgroup of $\Aut(X)$ that acts strongly transitively and by type-preserving automorphisms on $X$. Also $\SL(n, \QQ_p) $ acts strongly transitively on its spherical building at infinity $\partial X$.

\begin{definition}
Let $(X,d)$ be a $\CAT(0)$-space, and let $\gamma$ be an isometry of $X$. Let 
$$\Min(\gamma):=\{x \in X \; | \; d(x, \gamma(x))=|\gamma| \},$$ 
where $|\gamma|:=\inf_{x \in X}\{d(x, \gamma(x))\}$ denotes the \textbf{translation length} of $\gamma$. If $\Min(\gamma)$ is the empty set then $\gamma$ is \textbf{ parabolic}. If $\Min(\gamma)$ is not the empty set then $\gamma$ is \textbf{ elliptic} if $\vert \gamma \vert=0$, otherwise $\gamma$ is \textbf{ hyperbolic}. A semi-simple isometry of $X$ is by definition either elliptic or hyperbolic.
\end{definition}

When $\Delta$ is an affine building, any element of $\Aut(\Delta)$ is either elliptic or hyperbolic.

\begin{lemma}
\label{lem::same_trans_length}
Let $\Delta$ be a locally finite affine building and $G$ be a closed non-compact subgroup of $\Aut(\Delta)$. Suppose the sequence $\{g_n\}_{n \in \NN} \subset G$ converges to an isometry $g \in G$.  If $g$ is elliptic (resp., hyperbolic) then there exists $N>0$ such that $g_n$ is elliptic (resp., hyperbolic) for every $n \geq N$. In particular, $\vert g \vert= \vert g_n\vert$ for every $n\geq N$.
\end{lemma}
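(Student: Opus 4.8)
The plan is to exploit the defining feature of the compact--open topology on $\Aut(\Delta)$ recalled above: since pointwise stabilizers of balls are open, $g_n \to g$ means precisely that for every bounded set $B \subset \Delta$ there is an $N$ with $g_n|_B = g|_B$ for all $n \geq N$. This is independent of the base point used (and uses that $\Delta$ is proper, so bounded subsets are relatively compact). With this in hand, the elliptic case is immediate: since $g$ is elliptic, $\Min(g) = \Fix(g) \neq \emptyset$, so fix $x_0 \in \Fix(g)$. Applying the agreement property to $B(x_0, 1)$ gives an $N$ with $g_n(x_0) = g(x_0) = x_0$ for all $n \geq N$. Hence each such $g_n$ fixes $x_0$, so $g_n$ is elliptic with $\vert g_n\vert = 0 = \vert g\vert$.

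For the hyperbolic case, let $A \colon \RR \to \Delta$ be an axis of $g$, parametrized by arclength with $g(A(t)) = A(t + \ell)$ where $\ell = \vert g\vert > 0$, and set $x_0 = A(0) \in \Min(g)$. Choose $r > \ell$ and let $N$ be such that $g_n = g$ on $B(x_0, r)$ for $n \geq N$. Then $g_n(x_0) = g(x_0) = A(\ell)$. The crucial observation is that the point $g^{-1}(x_0) = A(-\ell)$ also lies in $B(x_0, r)$, because $d(x_0, A(-\ell)) = \ell < r$; hence $g_n(A(-\ell)) = g(A(-\ell)) = x_0$, i.e. $g_n^{-1}(x_0) = A(-\ell)$. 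Consequently the three points $g_n^{-1}(x_0) = A(-\ell)$, $x_0 = A(0)$, and $g_n(x_0) = A(\ell)$ are consecutive equally spaced points on the geodesic line $A$, so that $d\big(g_n^{-1}(x_0), g_n(x_0)\big) = 2\ell = 2\, d\big(x_0, g_n(x_0)\big)$.

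To conclude I would invoke the standard $\CAT(0)$ characterization of axes: whenever $d(\gamma^{-1}x, \gamma x) = 2\, d(x, \gamma x)$, the concatenation $\bigcup_{k \in \ZZ} [\gamma^k x, \gamma^{k+1} x]$ makes angle $\pi$ at each vertex, since applying the isometry $\gamma^k$ carries the configuration at $x$ to the configuration at $\gamma^k x$; thus this path is a local geodesic and therefore a genuine geodesic line in the $\CAT(0)$ space $\Delta$, giving a $\gamma$-axis through $x$, so that $x \in \Min(\gamma)$ and $\vert\gamma\vert = d(x, \gamma x)$. Applying this with $\gamma = g_n$ and $x = x_0$ yields $x_0 \in \Min(g_n)$ and $\vert g_n\vert = \ell > 0$, so $g_n$ is hyperbolic with $\vert g_n\vert = \ell = \vert g\vert$.

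I expect the main obstacle to be exactly the hyperbolic case, and more precisely the step that agreement on a ball of radius $r > \ell$ pins down not only $g_n(x_0)$ but also $g_n^{-1}(x_0)$: this is what forces $x_0$ onto an axis of $g_n$ and converts the purely local agreement of $g_n$ with $g$ into the global, metric conclusion about the translation length. It is worth noting that this argument is self-contained and, in particular, does not require discreteness of the translation-length spectrum of $\Delta$; the semisimplicity statement recalled before the lemma is used only to know $g$ and $g_n$ are each elliptic or hyperbolic.
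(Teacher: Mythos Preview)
Your argument is correct and follows the same strategy as the paper: both reduce to agreement of $g_n$ with $g$ on a ball about a point of $\Min(g)$, which immediately settles the elliptic case and, in the hyperbolic case, forces a segment of the $g$-axis to be $g_n$-translated onto the adjacent collinear segment. The only difference is that where the paper cites \cite[Lemma~2.8]{CaCi} (and takes $r$ large enough to contain $g^{\pm 4}(x)$), you unpack that step directly by building the $g_n$-axis as the concatenation $\bigcup_k [g_n^k x_0,\,g_n^{k+1}x_0]$ and invoking the standard $\CAT(0)$ fact that local geodesics are geodesics; this makes your version self-contained and shows that $r>\ell$ already suffices.
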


\begin{proof}
As $g \in \Aut(\Delta)$, either $g$ is elliptic or $g$ is hyperbolic.

Suppose $g$ is elliptic. Then there exists $x \in \Min(g) \subset \Delta$, thus $g(x)=x$. By considering a ball $B(x,r)$ around $x$ with $r>0$ there exists $N_r$ with the required properties.

Suppose  $g$ is hyperbolic. Then $ \Min(g) $ is not empty and $g$-invariant ($\Min(g) =g \Min(g) $). In particular, for every $x \in \Min(g)$ we have  $g^{n}(x) \in \Min(g)$, for every $n \in \ZZ$. Moreover,  $\{g^{n}(x)\}_{n \in \ZZ} $ is contained in an apartment of $\Delta$ (that is not necessarily unique) and the points $\{g^{n}(x)\}_{n \in \ZZ} $ are on a bi-infinite (Euclidean) line, that forms a translation axis of $g$. Thus by taking a ball $B(x,r)$ with $r$ big enough such that $g^{\pm4}(x) \in B(x,r)$, then there exists $N_r>0$ such that $g_n(y)=g(y)$ for every $y \in B(x,r)$ and every $n >N_r$. In particular, the oriented segment $[x,g(x)]$ is sent by every $g_n$, with $n >N_r$, to the segment $[g(x),g^2(x)]$, preserving the orientation. Moreover, as $[x,g(x)]$ and $[g(x),g^2(x)]$ are collinear and because the orientation is not reversed, by~\cite[Lemma 2.8]{CaCi} we have $g_{n}$ is a hyperbolic element and $\vert g_n \vert =\vert g\vert $, for every $n >N_r$.
\end{proof}

\section{Decompositions of groups acting on affine buildings}
\label{sec::sec_three}

Let $\Delta$ be a locally finite thick affine building with the complete system of apartments and let $G$ be a closed subgroup of $\Aut(\Delta)$ that acts strongly transitively and by type-preserving automorphisms on $\Delta$. A \textbf{good maximal compact} subgroup of $G$ is the stabilizer $\Stab_{G}(x):=\{ g \in G \; \vert \; g(x)=x\}$ of a special vertex $x$ of $\Delta$. (For the definition of a special vertex see~\cite[Sec.~16.1]{Gar97}.)

 For the rest of the article we fix an apartment $\Sigma$ of $\Delta$ and a special vertex $x \in \Sigma$ and let $K:=\Stab_{G}(x)$. The \textbf{Cartan subgroup $C$} of $G$ is the centre of the subgroup $\Stab_{G}(\Sigma):=\{ g \in G \; \vert \; g(\Sigma)=\Sigma\}$. Since $G$ acts transitively on the set of all apartments of $\Delta$, the subgroup $C$ is unique up to conjugation in $G$.  Let $\Fix_{G}(\Sigma):=\{ g \in G \; \vert \; g(z)=z, \forall z \in \Sigma\}$.  Let $\Ch(\partial \Delta)$, resp., $\Ch(\partial \Sigma)$, be the set of all chambers of the spherical building $\partial \Delta$, resp., spherical apartment $\partial \Sigma$.

Let $c \in \Ch(\partial \Sigma) \subset \Ch(\partial \Delta)$ be an ideal chamber  of $\partial \Sigma$, that is fixed for what follows. The \textbf{Borel subgroup $B$} of $G$ is the closed subgroup $B:=\Stab_G(c):=\{ g \in G \; \vert \; g(c)=c\}$. Since $G$ acts on $\Delta$ strongly transitively, and induces a strongly transitively action on the visual boundary $\partial \Delta$, the Borel subgroup $B$ is unique up to conjugation in $G$. Let $B^0:=\{ g \in G \; \vert \; g(c)=c, \quad g \text{ elliptic}\}$. 

Recall $$W^{\text{\tiny{aff}}}=\Stab_{G}(\Sigma) / \Fix_{G}(\Sigma)$$ is the \textbf{affine Weyl group} associated with the affine building $\Delta$; $(W^{\text{\tiny{aff}}},S^{\text{\tiny{aff}}})$ is the affine Coxeter system of $\Delta$ and $S^{\text{\tiny{aff}}}$ is the set of the reflexions through the walls of a chamber in $\Ch(\Sigma)$. Another well-known fact is that $W^{\text{\tiny{aff}}}$ contains a maximal abelian normal subgroup isomorphic to $\mathbb{Z}^{m}$, whose elements are Euclidean translation automorphisms of the apartment $\Sigma$ and $m$ is the Euclidean dimension of $\Sigma$. Denote this maximal abelian normal subgroup by $A$ and note its elements are images of hyperbolic automorphisms of $\Delta$. In particular, every element of $A$ can be lifted (not in a unique way) to a hyperbolic element of $G$. 

Note for a general closed strongly transitive and type-preserving subgroup $G$ of $\Aut(\Delta)$ its corresponding abelian subgroup $A < W^{\text{\tiny{aff}}}$ does not necessarily lift to an abelian subgroup of $G$. Still, this is the case if we consider $G$ to be a semi-simple algebraic group over a non Archimedean local field (e.g., $G=\SL(n, \QQ_p)$). By the theory of Bruhat--Tits, the abelian group $A <W^{\text{\tiny{aff}}}$ is in fact the restriction to $\Sigma$ of a maximal split torus of $G$, which is abelian.

 Note the subgroups $K, C, A, B$ all depend on the choice of the apartment $\Sigma$, the special vertex $x \in \Sigma$ and the ideal chamber $c \in \Ch(\partial \Sigma)$. In particular, the decompositions below also depend on the choice of $\Sigma, x$ and $c$. 

\begin{lemma}(\textbf{Cartan decomposition}, for a proof see~\cite[Lemma~4.7]{Cio})
\label{lem::polar_decom}
Let $G$ be a strongly transitive and type-preserving subgroup of $\Aut(\Delta)$,  that is not necessarily closed. Then $G=K A K$.
\end{lemma}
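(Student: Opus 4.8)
The plan is to prove this geometrically, by moving the pair of special vertices $(x,g(x))$ into standard position inside the fixed apartment $\Sigma$. Fix $g\in G$ and set $y:=g(x)$. Since $g$ is a type-preserving automorphism and $x$ is a special vertex, $y$ is again a special vertex of the same type $\tau(x)$. By the building axiom for the complete apartment system, there is an apartment $\Sigma'$ containing both $x$ and $y$ (pick chambers $c_x\ni x$ and $c_y\ni y$ and take an apartment containing both). The two reductions below then carry $y$ back onto the $A$-orbit of $x$.

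The first key step is to show that $K=\Stab_G(x)$ acts transitively on the apartments containing $x$. Given such an apartment $\Sigma'$, choose a chamber $c'\subset\Sigma'$ having $x$ as one of its vertices and a chamber $c\subset\Sigma$ also having $x$ as a vertex. Strong transitivity (Definition \ref{def::str_trans}) provides a type-preserving $h\in G$ with $h(\Sigma')=\Sigma$ and $h(c')=c$. Because $h$ is type-preserving, $h(x)$ is the vertex of $c$ of type $\tau(x)$; as a chamber has exactly one vertex of each type and $x$ is itself that vertex of $c$, we get $h(x)=x$, so $h\in K$. Applying this to our $\Sigma'$ produces $k_1\in K$ with $k_1(\Sigma')=\Sigma$, and then $k_1(y)$ is a special vertex of $\Sigma$ of the same type as $x$.

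The second key step is the structural fact that the special vertices of $\Sigma$ of a fixed type form a single orbit under $A$. The special vertices of $\Sigma$ are indexed by the coweight lattice, their types by the cosets of the coroot lattice, and $A<W^{\text{aff}}$ is exactly the coroot-lattice translation subgroup, which therefore acts simply transitively on each type. (For $G=\SL(n,\QQ_p)$ this is the elementary observation that any integer vector with coordinate sum divisible by $n$ differs from a coordinate-sum-zero vector by a multiple of $(1,\dots,1)$.) Hence there is $a\in A$, realized by a genuine (hyperbolic) element of $\Stab_G(\Sigma)\leq G$ which we also denote $a$, with $a(x)=k_1(y)=k_1g(x)$.

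Finally I combine the two reductions. From $a(x)=k_1g(x)$ we get $a^{-1}k_1g(x)=x$, so $k_2:=a^{-1}k_1g$ fixes $x$ and lies in $K$; therefore $g=k_1^{-1}ak_2\in KAK$, and the reverse inclusion $KAK\subseteq G$ is trivial. I expect the main subtlety to be the transitivity of $A$ on same-type special vertices: this is where the distinction between the coweight and coroot lattices, and the role of the type-preserving hypothesis (which pins down the relevant coset of $W^{\text{aff}}$ and forbids $h$ from permuting types), genuinely enter. The transitivity of $K$ on apartments through $x$ is the other point that requires the full force of strong transitivity rather than mere chamber-transitivity.
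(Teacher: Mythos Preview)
Your proof is correct and follows the standard geometric argument. The paper does not give its own proof of this lemma but defers to \cite[Lemma~4.7]{Cio}; your argument is essentially the one found there: use strong transitivity to move an apartment containing $x$ and $g(x)$ onto $\Sigma$ via an element of $K$, then use that $A$ acts transitively on special vertices of a fixed type in $\Sigma$.

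Two small remarks. First, your justification that $A$ is transitive on same-type special vertices can be made cleaner and more general than the coweight/coroot discussion: since $x$ is special, $W^{\text{aff}}$ decomposes as $A \rtimes W_x$ with $W_x \cong W$ the finite Weyl group, so for any $w=aw_0\in W^{\text{aff}}$ one has $w(x)=a(x)$; combined with the fact that $W^{\text{aff}}$ is transitive on vertices of a fixed type (chamber-transitivity plus type-preservation), this gives the claim without invoking root data. Second, since $A$ lives in the quotient $W^{\text{aff}}=\Stab_G(\Sigma)/\Fix_G(\Sigma)$, what you actually use is a lift $\tilde a\in\Stab_G(\Sigma)$; any such lift still satisfies $\tilde a(x)=k_1(y)$ because $\Fix_G(\Sigma)$ fixes $x$. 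You handle this correctly, but it is worth making the abuse of notation explicit, as the paper itself does just before the lemma.
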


\begin{lemma}(\textbf{Iwasawa decomposition}, for a proof see~\cite[p.~14]{Cio})
\label{lem::Iwasawa_decom}
Let $G$ be a strongly transitive and type-preserving subgroup of $\Aut(\Delta)$,  that is not necessarily closed. Then $G=K A B^0$ and $B=AB^0$.
\end{lemma}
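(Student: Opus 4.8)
The plan is to prove both statements together by first establishing $G = KB$ and then refining $B$ as $AB^0$, so that $G = KB = K(AB^0) = KAB^0$. The easy inclusions come first: every element of $A$ is (the restriction to $\Sigma$ of) a Euclidean translation, which acts trivially on $\partial\Sigma$ and hence fixes the ideal chamber $c$; thus $A \subseteq \Stab_G(c) = B$. Since $B^0 \subseteq B$ by definition, we get $AB^0 \subseteq B$, and the whole content lies in the reverse inclusions $B \subseteq AB^0$ and $G \subseteq KB$.

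For $G = KB$ I would show that $K = \Stab_G(x)$ acts transitively on $\Ch(\partial\Delta)$, which is equivalent to $G = KB$ because $G/B \cong \Ch(\partial\Delta)$ via $gB \mapsto g(c)$ (strong transitivity on $\partial\Delta$). Given two chambers $d_1, d_2$ of $\Delta$ both containing the vertex $x$, choose apartments $\Sigma_i \supseteq d_i$; strong transitivity yields $g \in G$ with $g(\Sigma_1) = \Sigma_2$ and $g(d_1) = d_2$, and since $g$ is type-preserving it sends the vertex of $d_1$ of type $\tau(x)$, namely $x$, to the vertex of $d_2$ of the same type, again $x$, so $g(x) = x$ and $g \in K$. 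Hence $K$ is transitive on the chambers of $\Delta$ containing $x$. These are exactly the germs at $x$ of the sectors based at $x$, and germs-at-$x$ are in bijection with $\Ch(\partial\Delta)$ (each ideal chamber is the direction of a unique sector issuing from $x$); the transitivity therefore descends to $\Ch(\partial\Delta)$, giving $G = KB$.

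It remains to prove $B \subseteq AB^0$. Fix $b \in B$. By the dichotomy for affine buildings recalled before Lemma \ref{lem::same_trans_length}, $b$ is elliptic or hyperbolic; if $b$ is elliptic then $b \in B^0$ and $b = \Id \cdot b$. If $b$ is hyperbolic, let $S$ be the sector of $\Sigma$ based at $x$ in the direction $c$. Since $b(c) = c$, the sector $b(S)$ again points to $c$, and two sectors with the same direction share a common subsector; on this common subsector I can compare $b$ to the translations in $A$. Concretely I would use the retraction $\rho_{\Sigma,c}$ of $\Delta$ onto $\Sigma$ centered at the ideal chamber $c$: for $g \in B$ the map $\rho_{\Sigma,c}\circ g$ acts on $\Sigma$ as an affine isometry, and $g \mapsto (\text{its translation vector})$ defines a homomorphism $t \colon B \to \Sigma_{\mathrm{transl}}$ whose kernel consists of isometries with a fixed point, hence lies in $B^0$. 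Because $A$ is precisely the full translation lattice of $\Sigma$ (the maximal abelian normal subgroup $A < W^{\text{\tiny{aff}}}$, realized in $G$ as the restriction of the maximal split torus), $A$ surjects onto the image of $t$; choosing $a \in A$ with $t(a) = t(b)$ makes $a^{-1}b \in \ker t \subseteq B^0$, so $b = a(a^{-1}b) \in AB^0$.

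The main obstacle is this last step: making the ``translation-part'' homomorphism $t$ precise and verifying its two required properties. First, that $\ker t$ contains only elliptic elements --- no hyperbolic element can translate transversally to $c$ while fixing $c$, which is exactly where the hypothesis $b \in \Stab_G(c)$ is used, forcing the translation axis to point into $\overline{c}$ (the chamber $c$ is fixed pointwise by the type-preserving $b$, constraining its attracting endpoint at infinity). Second, that $A$ maps onto the image of $t$; for this one uses that $A$ is the entire coweight/translation lattice of $\Sigma$ and that $t(b)$, being the translation vector of a building automorphism pointing toward $c$, lies in this lattice. Everything else --- the elliptic case, the easy inclusions, and $G = KB$ --- is formal once strong transitivity and type-preservation are in hand.
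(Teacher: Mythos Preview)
Your overall architecture is right, but the argument for $G=KB$ contains a genuine error. You claim that the chambers of $\Delta$ containing $x$ are ``exactly the germs at $x$ of the sectors based at $x$, and germs-at-$x$ are in bijection with $\Ch(\partial\Delta)$''. This bijection is false. Already for a tree ($\tilde A_1$), the special vertex $x$ lies on only $p+1$ edges while $\partial\Delta$ has uncountably many ends; the map sending an ideal chamber to the tip chamber at $x$ of the corresponding sector is surjective but far from injective. So transitivity of $K$ on chambers of $\Delta$ containing $x$ does \emph{not} yield transitivity of $K$ on $\Ch(\partial\Delta)$, and your deduction of $G=KB$ breaks here.

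The repair is to carry the apartment along with the chamber. Given $c'\in\Ch(\partial\Delta)$, choose an apartment $\Sigma'$ with $x\in\Sigma'$ and $c'\in\partial\Sigma'$; since $x$ is special there is a unique sector $S'\subset\Sigma'$ based at $x$ with direction $c'$, and its tip chamber $d'$ contains $x$. Do the same for $(\Sigma,c)$ to get $S$ and $d$. Strong transitivity produces $g\in G$ with $g(\Sigma,d)=(\Sigma',d')$, and type-preservation forces $g(x)=x$ as you argued. The extra information $g(\Sigma)=\Sigma'$ is what you were missing: a type-preserving isometry $\Sigma\to\Sigma'$ fixing $x$ and sending $d$ to $d'$ must send the sector at $x$ containing $d$ to the sector at $x$ containing $d'$, hence $g(c)=c'$. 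The bijection ``sector at $x$ $\leftrightarrow$ ideal chamber'' that you invoked is valid \emph{within a fixed apartment through $x$}, not globally.

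Your sketch for $B=AB^0$ via the translation-part homomorphism $t$ is along the right lines, and once one checks that $\rho_{\Sigma,c}\circ b|_\Sigma$ is a type-preserving element of $W^{\text{\tiny aff}}$ fixing $c$ (hence a pure translation in the lattice $A$), the rest goes through essentially as you outline. One point to be careful about: you assert $\ker t\subseteq B^0$, but $t(b)=0$ only says $\rho_{\Sigma,c}(b(y))=y$ for $y\in\Sigma$, not $b(y)=y$; you still need to extract a genuine fixed point of $b$ from this, e.g.\ by using that $b(S)$ and $S$ share a subsector on which $\rho_{\Sigma,c}$ is the identity, forcing $b$ to fix that subsector pointwise.
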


\begin{remark} 
\label{rem::special_vertex}
For a general affine building $\Delta$  and $G$ a strongly transitive and type-preserving subgroup of $\Aut(\Delta)$ (not necessarily closed)  the Cartan and Iwasawa decompositions above do \emph{not} hold for $x$ a non-special vertex and $K=\Stab_{G}(x)$. If $x$ is not special, the translation subgroup of the affine Weyl group of $G$ might not act transitively on the set of (non-special) vertices of the same type, which is an ingredient in the proofs of the Cartan and Iwasawa decompositions. 
\end{remark}

The \textbf{Weyl group} associated with the affine building $\Delta$ is the group $$W=\Stab_{G}(\Sigma) / (\Stab_{G}(\Sigma) \cap B).$$

Here $(W,S)$ is the finite Coxeter system of $ \partial \Delta$ and $S$ is the set of all the reflexions through the walls in  $\Sigma$ that determine the chamber $c \in \Ch( \partial \Sigma)$  fixing the vertex $x$.  So $W$ is a finite group, in contrast to the infinite $W^{\text{\tiny{aff}}}$. 

 For a simplex $\sigma$ of $c$ there is a unique Coxeter subsystem $(W_I, I)$ of $(W,S)$ such that $\{s \in S \; \vert \; s(\sigma)=\sigma\}=I$; let $P^{I}:=G_{\sigma}:=\{ g \in G \; \vert \; g(\sigma)=\sigma\}$. For $\sigma=c$ the corresponding $(W_I, I)=(\Id, \emptyset)$ and $P^{\emptyset}=B$. Note  $B \subset P^{I}$, for every $(W_I,I)$ Coxeter subsystem of $(W,S)$ and $P^{I}$ is unique up to conjugation in $G$. The subgroups $P^{I}$ are closed subgroups of $G$ and are called the \textbf{parabolic subgroups} of $G$. The Borel subgroup $B$ is the minimal parabolic among them. 
 
For an ideal chamber $c \in \Ch( \partial \Sigma)$ there is a unique ideal chamber $c' \in \Ch( \partial \Sigma)$ opposite $c$. Every simplex $\sigma$ of $c$ has a unique opposite simplex $\sigma'$  in $\partial \Sigma$, with $\sigma' \subset c'$. The Coxeter subsystem $(W_I,I)$ associated with $\sigma'$ equals the one of $\sigma$. Still, the corresponding parabolic subgroups are not the same. To avoid the confusion, when $\sigma$ and $\sigma'$ are opposite ideal simplices in $\partial \Delta$ we denote $P^{I}_{+}:= G_{\sigma}=\{ g \in G \; \vert \; g(\sigma)=\sigma\}$ and $P^{I}_{-}:=G_{\sigma'}=\{ g \in G \; \vert \; g(\sigma')=\sigma'\}$. Correspondingly, for opposite ideal chambers we use the notation $B_{+}=G_{c}, B_{-}=G_{c'}$. We also use the notation $G_{\sigma}^{0}:=\{ g \in G \; \vert \; g(\sigma)=\sigma,  \quad g \text{ elliptic}\}$.

\begin{definition}[See Section 3 of \cite{BW04}]
\label{def::contr_groups}
Let $G $ be a totally disconnected locally compact group and take $a \in G$.  The \textbf{parabolic}  and \textbf{contraction} subgroups associated to $a$ are
\begin{equation} 
\label{equ::parab_subgroup}
P^{+}_{a}:=\{ g \in G \; \vert \; \{a^{-n}g a^{n}\}_{n \in \mathbb{N}} \text{ is bounded}\}, \qquad U^{+}_{a}:=\{ g \in G \; \vert \; \lim_{n \to \infty }a^{-n}g a^{n}=e\}.
\end{equation}
The group $P^+_a$ is a closed subgroup of $G$, but in general  $U^{+}_{a}$ is not closed.  Similarly, we define $P^{-}_{a}$ and $U^{-}_{a}$ using $a^{n}g a^{-n}$. \end{definition} 

\begin{example}
We apply Definition~\ref{def::contr_groups} to a hyperbolic element $a \in G$, where $G \leq \Aut(\Delta)$ and $\Delta$ is a locally finite affine building. Every hyperbolic element $a \in \Aut(\Delta)$ admits unique attracting and repelling endpoints $\xi_{+}$ and $\xi_{-}$ contained in $\partial \Delta$. 
Then $\xi_{+}$ and $\xi_{-}$ are the endpoints of the translation axis of $a$.

\end{example}

\begin{proposition}(\textbf{Levi decomposition}, for a proof see~\cite[Prop.~4.15 and Cor.~4.17]{Cio})
\label{coro::geom_levi_decom}
Let $G$ be a closed, strongly transitive and type-preserving subgroup of $\Aut(\Delta)$. Let $\Sigma$ be an apartment in $\Delta$ and assume there is a hyperbolic automorphism $a \in \Stab_{G}(\Sigma)$. Let $\xi_{+}, \xi_{-}$  be the attracting and repelling endpoints of $a$ and $\sigma_{+},\sigma_{-}$ the unique simplices in $\partial \Sigma$ whose interior contain $\xi_{+}$, respectively $\xi_{-}$. Note $\sigma_{+}$ is opposite $\sigma_{-}$ and let $(W_{I},I)$ be the corresponding Coxeter subsystem of $\sigma_{+}$.

Then $P^{\pm}_{a}=G_{\sigma_{\pm}}=P^{I}_{\pm}= U^{\pm}_{a} M_{I}$, where $M_{I}:= G_{\sigma_{+}} \cap  G_{\sigma_{-}}$, and $U^{\pm}_{a} $ is normal in  $G_{\sigma_{\pm}}^{0}$ and $G_{\sigma_{\pm}}$. In particular,  $G_{\sigma_{\pm}}^{0}= U^{\pm}_{a} (M_{I} \cap G_{\sigma_{\pm}}^{0})$.
\end{proposition}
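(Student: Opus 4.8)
The plan is to establish the Levi decomposition $P^{\pm}_{a}=G_{\sigma_{\pm}}=P^{I}_{\pm}=U^{\pm}_{a}M_{I}$ by working entirely with the geometry of the building and the dynamics of the hyperbolic element $a$ on $\Delta \cup \partial\Delta$. The first step is to identify the dynamical parabolic $P^{+}_{a}$ with the geometric stabilizer $G_{\sigma_{+}}$. Since $a$ is hyperbolic with attracting endpoint $\xi_{+}\in\mathrm{int}(\sigma_{+})$, the powers $a^{n}$ push chambers of $\partial\Delta$ toward $\xi_{+}$; I would argue that $g\in G$ has $\{a^{-n}ga^{n}\}_{n}$ bounded if and only if $g$ fixes the ideal simplex $\sigma_{+}$. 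One direction uses that if $g$ fixes $\sigma_{+}$ then conjugating by $a$ keeps $g$ inside the stabilizer of a bounded neighborhood of the axis (using that $a$ fixes $\sigma_{+}$ as well), so the conjugates stay in a compact set by local finiteness; the converse uses that if $g$ moves $\sigma_{+}$ off itself, then the attracting dynamics of $a^{-n}$ acting by conjugation forces $a^{-n}ga^{n}$ to move points at distance tending to infinity along the axis, contradicting boundedness. This simultaneously gives $P^{+}_{a}=P^{I}_{+}$ since $G_{\sigma_{+}}$ is by definition the parabolic $P^{I}_{+}$ attached to the Coxeter data of $\sigma_{+}$.

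The second step is the factorization $G_{\sigma_{+}}=U^{+}_{a}M_{I}$ with $M_{I}=G_{\sigma_{+}}\cap G_{\sigma_{-}}$. Here I would use the Iwasawa and Cartan decompositions (Lemmas \ref{lem::Iwasawa_decom} and \ref{lem::polar_decom}) relative to the parabolic $G_{\sigma_{+}}$, together with the strong transitivity of $G$ on the building. The Levi factor $M_{I}$ is the stabilizer of the pair $(\sigma_{+},\sigma_{-})$ of opposite ideal simplices, equivalently the stabilizer of the minimal ``combinatorial flat'' (the union of apartments containing both $\sigma_{\pm}$) that they span; strong transitivity lets me conjugate any element of $G_{\sigma_{+}}$ into one that additionally fixes $\sigma_{-}$ after multiplying by a contraction. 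Concretely, given $g\in G_{\sigma_{+}}$, the element $g$ sends $\sigma_{-}$ to some simplex $g\sigma_{-}$ still opposite $\sigma_{+}$; by transitivity of $G_{\sigma_{+}}$ on simplices opposite $\sigma_{+}$ there is an element $u$ which I must show lies in $U^{+}_{a}$ with $ug\sigma_{-}=\sigma_{-}$, giving $ug\in M_{I}$ and hence the decomposition. Identifying this correcting element as a genuine contraction (so $a^{-n}ua^{n}\to e$) is where the attracting/repelling dynamics of $a$ enter decisively.

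The third step is the normality of $U^{+}_{a}$ in $G_{\sigma_{+}}^{0}$ and in $G_{\sigma_{+}}$, and the induced splitting $G_{\sigma_{+}}^{0}=U^{+}_{a}(M_{I}\cap G_{\sigma_{+}}^{0})$. Normality follows abstractly from the contraction-group formalism of \cite{BW04}: $U^{+}_{a}$ is normalized by anything centralizing or commuting appropriately with the conjugation-by-$a$ dynamics, and since $M_{I}$ commutes with the flow direction determined by $a$ (it fixes both $\sigma_{\pm}$, hence stabilizes the axis direction), conjugation by $M_{I}$ preserves the contracting condition $\lim a^{-n}ga^{n}=e$. Combined with normality of $U^{+}_{a}$ under the $a$-conjugation itself, I get $U^{+}_{a}\trianglelefteq G_{\sigma_{+}}$; intersecting the factorization with the elliptic subgroup $G_{\sigma_{+}}^{0}$ and noting $U^{+}_{a}$ consists of elliptic elements yields the final identity.

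\textbf{The main obstacle} I expect is the second step: proving that the ``correcting'' element $u$ produced by strong transitivity actually lies in the contraction group $U^{+}_{a}$ rather than merely in the parabolic $P^{+}_{a}$. This requires carefully matching the \emph{combinatorial} transitivity statement (that $G_{\sigma_{+}}$ acts transitively on simplices opposite $\sigma_{+}$, via the retraction/residue structure of the spherical building at infinity) with the \emph{dynamical} characterization of contraction (genuine convergence $a^{-n}ua^{n}\to e$ in the compact-open topology). Establishing this cleanly is exactly the content cited from \cite[Prop.~4.15 and Cor.~4.17]{Cio}, and I would lean on the panel-tree / residue analysis of $\partial\Delta$ to pin down that the stabilizer of $\sigma_{+}$ acts on the star of $\sigma_{-}$ with the contraction group as the transitive unipotent part.
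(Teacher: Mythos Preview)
The paper does not actually prove this proposition: the statement header reads ``for a proof see~\cite[Prop.~4.15 and Cor.~4.17]{Cio}'' and no proof environment follows. So there is nothing in the paper to compare your argument against beyond the bare citation.

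That said, your three-step outline (identify $P^{+}_{a}=G_{\sigma_{+}}$ via the attracting dynamics of $a$; factor $G_{\sigma_{+}}=U^{+}_{a}M_{I}$ by correcting $g\sigma_{-}$ back to $\sigma_{-}$ and showing the correcting element contracts; deduce normality from the Baumgartner--Willis formalism) is exactly the shape of the argument in the cited source, and you correctly flag the genuine crux: proving that the element furnished by transitivity of $G_{\sigma_{+}}$ on simplices opposite $\sigma_{+}$ lies in $U^{+}_{a}$ rather than merely in $P^{+}_{a}$. One small tightening for step~3: your justification that $M_{I}$ ``commutes with the flow direction'' is vaguer than needed. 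Normality of $U^{+}_{a}$ in $P^{+}_{a}$ follows directly and abstractly: if $g\in P^{+}_{a}$ then $k_{n}:=a^{-n}ga^{n}$ lies in a fixed compact set $K$, and if $u\in U^{+}_{a}$ then $v_{n}:=a^{-n}ua^{n}\to e$; by compactness of $K$ and joint continuity of conjugation one has $k_{n}v_{n}k_{n}^{-1}\to e$, so $gug^{-1}\in U^{+}_{a}$. This is cleaner than invoking that $M_{I}$ stabilizes the axis direction.
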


 \begin{remark}
\label{rem::uni_rad}
Let $a,b \in \Stab_{G}(\Sigma)$ be two hyperbolic elements  with attracting endpoints in the interior of the same  unique simplex $\sigma_{+}$ in $\partial \Sigma$. By Proposition~\ref{coro::geom_levi_decom} one can see that $U^{+}_{a}=U^{+}_{b}$. Thus we define $U^{I}_{+}:=U^{+}_{a}$ and call it the \textbf{unipotent radical} of $P^{I}=G_{\sigma_{+}}$. Note $U^{I}$ depends only on the simplex $\sigma_{+}$ and not on the chosen hyperbolic element $a$. The subgroup $M_I$ is called the \textbf{Levi factor/subgroup} of $P^{I}$. 
\end{remark}

\begin{remark}
\label{rem::shape_hyp_elements}
Let $\mathbb{G}$ be a semi-simple algebraic group over a non Archimedean local field $k$ and let $G=\mathbb{G}(k)$ be the group of all $k$--rational points of $\mathbb{G}$ over the completion of $k$. For example, $G=\SL(n, \QQ_p)$.  Denote by $\Delta$ the corresponding locally finite thick affine building of dimension $m$ on which $G$ acts by type-preserving automorphisms and strongly transitively.  Let $\Sigma$ be the apartment that corresponds to the abelian subgroup $A\cong \mathbb{Z}^{m}$ of the affine Weyl group $W^{\text{\tiny{aff}}}$ of $G$. As $A$ can be lifted to an abelian subgroup of $G$, one can choose a basis of hyperbolic elements $\{\gamma_1,\cdots, \gamma_m\} \in \Stab_G(\Sigma)$ for the lift of $A $ such that the attracting endpoint of $\gamma_j$, for every $j \in \{1, \cdots, m\}$, is a vertex in $\bd \Sigma$  (i.e., it is a vertex of the chamber at infinity that corresponds to the Borel subgroup $B$). It is then easy to see that for every simplex $\sigma \subset \partial \Sigma$ there exists a hyperbolic element in $\Stab_{G}(\Sigma)$ with attracting endpoint in the interior of $\sigma$. So $P^{I}, U^{I}, M_{I}$ are  well defined for any simplex $\sigma \subset \partial \Sigma$.
\end{remark}

\begin{remark}
Let $\mathbb{G}$ be a semi-simple algebraic group over a non Archimedean local field $k$ and let $G=\mathbb{G}(k)$ be the group of all $k$--rational points of $\mathbb{G}$. By the Bruhat--Tits theory and~\cite[Cor.~4.29]{Cio} the groups $U^{\pm}_{a}$ are closed.
\end{remark}

\begin{example}
\label{ex:levi_decom}
For the totally disconnected locally compact group $G=\SL(n, \QQ_p)$ the Levi decompositions from Proposition~\ref{coro::geom_levi_decom} are as follows. 

 Consider the subgroups of $\SL(n, \QQ_p)$:
$$\begin{array}{ccc} 
B & M_{\emptyset}&  U^{\emptyset}\\ 
\begin{pmatrix} \alpha _1 & * & \cdots & * \\
0 & \alpha_2 & * & \cdots \\
0 & 0 & \ddots & *\\
0& \cdots & 0 & \alpha_n
\end{pmatrix} &
 \begin{pmatrix} \alpha _1 & 0 & \cdots & 0 \\
0 & \alpha_2 & 0 & \cdots \\
0 & 0 & \ddots & 0\\
0& \cdots & 0 & \alpha_n
\end{pmatrix}&
 \begin{pmatrix} 1 & * & \cdots & * \\
0 & 1 & * & \cdots \\
0 & 0 & \ddots & *\\
0& \cdots & 0 & 1
\end{pmatrix}
\end{array}.$$

The subgroup $B$ is the Borel subgroup corresponding to the model ideal chamber $c \in \Ch(\Sigma)$.
Then  $B = G_{c}=P^{\emptyset}= U^{\emptyset} M_{\emptyset} $. Note the subgroup $U^{\emptyset}$ is closed in $\SL(n, \QQ_p)$ and it is the unipotent radical of $B$ as defined in Remark \ref{rem::uni_rad}; it is a fact that $U^{\emptyset}$ contains only elliptic elements. Also $M_{\emptyset}=C$ is contained in $\Stab_G(\Sigma)$. So $M_{\emptyset}$ stabilizes the apartment $\Sigma$; it contains both elliptic and hyperbolic elements. The subgroup $M_{\emptyset}^{0} \leq  \SL(n, \ZZ_p) $ contains all the elliptic elements of $M_{\emptyset}$.


More generally, we associate the following subgroups of $\SL(n, \QQ_p)$  with a Coxeter subsystem $(W_{I},I)$ of $(W,S)$:
$$\begin{array}{ccc} 
P^{I} & M_{I}&  U^{I}\\ 
\begin{pmatrix} A_1 & * & \cdots & * \\
0 & A_2 & * & \cdots \\
0 & 0 & \ddots & *\\
0& \cdots & 0 & A_k\\
\end{pmatrix}&
 \begin{pmatrix} A_1 & 0 & \cdots & 0 \\
0 & A_2 & 0 & \cdots \\
0 & 0 & \ddots & 0\\
0& \cdots & 0 & A_k\\
\end{pmatrix}&
\begin{pmatrix} Id_1 & * & \cdots & * \\
0 & Id_2 & * & \cdots \\
0 & 0 & \ddots & *\\
0& \cdots & 0 & Id_k\\
\end{pmatrix}
\end{array}$$
where the blocks $A_1, \cdots, A_k$ are \textbf{indecomposable,} square matrices of possibly different dimensions. (For B the blocks are one dimensional and $k=n$).  A block $A_i$ is \textbf{indecomposable} if no conjugate of $P^I$ in $SL_n (\QQ_p)$ allows us to write $A_i$ as a direct sum of smaller blocks. The dimensions of the blocks $A_1, \cdots, A_k$ are determined by $(W_{I},I)$.  We abuse notation and denote by $Id_1, \cdots, Id_k$ identity matrices of the same size as $A_1, \cdots, A_k$.

Then $B \leq P^{I}=U^{I} M_{I}$. The subgroup $U^{I}$ is the unipotent radical of $P^{I}$ as defined in Remark \ref{rem::uni_rad}, it is a closed subgroup of $\SL(n, \QQ_p)$ and $U^{I} \leq U^{\emptyset}$. The Levi factor $M_{I}$ is a reductive group \cite[Definition 11.22]{Borel} and it is unique up to conjugacy. Its elliptic and hyperbolic part and the ``flat'' that $M_{I}$ stabilizes are a  special case of the general case that will be presented below.
\end{example}

\begin{remark}  Recall, $\PSL(n, \QQ_p)$ acts faithfully on the Bruhat--Tits building $\Delta$ associated with $\SL(n, \QQ_p)$ and the kernel of the isogeny $\SL(n, \QQ_p)\rightarrow \PSL(n, \QQ_p)$ is precisely $\mu_n$, the group of $n$-th roots of unity in $\QQ_p^*$.  As $\mu_n \cdot \Id$ is a normal subgroup of $\SL(n, \QQ_p)$ and acts trivially on the building $\Delta$,  $\PSL(n, \QQ_p)$ and $\SL(n, \QQ_p)$ give the same action on $\Delta$. 
\end{remark}

\textbf{Further decomposition of $M_{I}$}

First we fix the following notation. Let $\Delta$ be a locally finite thick affine building of dimension $n$ with its complete system of apartments. Let $\Sigma$ be an apartment of $\Delta$, $\sigma_{+}$ and $\sigma_{-}$ be two opposite simplices in $\partial \Sigma$,  and let $(W_{I},I)$ be the corresponding Coxeter subsystem of $\sigma_{+}$. The \textbf{residue} $res(\sigma_{+})$ is the set of all ideal chambers in $\Ch(\partial \Delta)$ that contain the simplex $\sigma_{+}$. Let $\Delta(\sigma_{+}, \sigma_{-})$ be the union of all the apartments of $\Delta$ whose ideal boundaries contain $\sigma_{+}, \sigma_{-}$.


\begin{proposition}
\label{prop::res_building}
The union of apartments $\Delta(\sigma_{+}, \sigma_{-})$ is a closed convex subset of $\Delta$,  it is an extended locally finite thick  affine building and $\Delta(\sigma_{+}, \sigma_{-}) \cong \RR^{\vert I \vert} \times \Delta_{I}$ with $\Delta_{I}$ a locally finite thick affine building of dimension $n-\vert I \vert$. Moreover, $res(\sigma_{+})$ is a compact subset of $\Ch(\partial \Delta)$, it is a spherical building and $res(\sigma_{+}) \cong \Ch(\partial \Delta_{I})$. 
\end{proposition}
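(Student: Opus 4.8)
The plan is to realise $\Delta(\sigma_+,\sigma_-)$ as the \emph{parallel set} of a flat and then to apply the product–decomposition theorem for $\CAT(0)$ spaces. First I would fix points $\xi_\pm$ in the interiors of $\sigma_\pm$; since $\sigma_+$ and $\sigma_-$ are opposite in $\partial\Sigma$, the sub-sphere of $\partial\Sigma$ spanned by them bounds a maximal flat $F\subset\Sigma$ with $\partial F$ equal to that sub-sphere, and $F\cong\RR^{|I|}$. The first key claim is that $\Delta(\sigma_+,\sigma_-)$ coincides with the union of all flats of $\Delta$ parallel to $F$, i.e. with the parallel set $P(F)$. One inclusion is immediate: any apartment $\Sigma'$ with $\sigma_\pm\subset\partial\Sigma'$ contains a flat parallel to $F$, namely the one with the same boundary sub-sphere. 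For the converse I would take a flat $F'$ parallel to $F$, join a point of $F'$ to a point of $F$ inside a common apartment (building axiom), and use that such an apartment, containing two asymptotic copies of $F$, must have $\sigma_+,\sigma_-$ in its boundary; a standard sector/convexity argument then shows the whole parallel class lies in $\Delta(\sigma_+,\sigma_-)$.

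Granting this identification, the first two assertions follow from general $\CAT(0)$ theory: the parallel set of a complete convex subset is closed and convex, and by the product–decomposition theorem for parallel sets of $k$–flats (see~\cite[Part~II]{BH99}) one obtains an isometric splitting $\Delta(\sigma_+,\sigma_-)=P(F)\cong\RR^{|I|}\times Y$, where the $\RR^{|I|}$ factor is the flat direction and $Y$ is the cross–section.

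Next I would upgrade the isometric factor $Y$ to an affine building $\Delta_I$. Each apartment $\Sigma'\subset P(F)$ respects the splitting, $\Sigma'\cong F'\times\Sigma'_I$ with $\Sigma'_I\cong\RR^{\,n-|I|}$, and I would declare the images of the various $\Sigma'_I$ to be the apartments of $Y$. Verifying the axioms of a locally finite thick affine building for $Y$ is the technical heart of the argument: existence of an apartment through any two points or chambers is inherited from $\Delta$; the retractions of $\Delta$ onto apartments restrict to retractions of $Y$, supplying the required isometric glueing; local finiteness is inherited; and thickness holds because the panels of $\Delta$ transverse to $F$ already carry at least three chambers, a property that survives passage to the cross–section. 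This identifies $Y=\Delta_I$ as a locally finite thick affine building of dimension $n-|I|$, so that $\Delta(\sigma_+,\sigma_-)=\RR^{|I|}\times\Delta_I$ is an extended affine building.

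Finally, for the residue, $res(\sigma_+)$ is the set of chambers of $\partial\Delta$ containing $\sigma_+$; it is closed in the compact chamber set $\Ch(\partial\Delta)$ (compact because $\Delta$ is locally finite, with the cone topology), hence compact, and it is a spherical building since residues of buildings are buildings~\cite{AB}. Passing to the boundary in the splitting gives the spherical join $\partial\bigl(\RR^{|I|}\times\Delta_I\bigr)=S^{|I|-1}*\partial\Delta_I$ with $S^{|I|-1}=\partial\RR^{|I|}$; here $\sigma_+$ is a chamber of the join factor $S^{|I|-1}$, and sending a chamber $\sigma_+*\tau$ to $\tau$ defines a type–preserving simplicial isomorphism $res(\sigma_+)\xrightarrow{\ \sim\ }\Ch(\partial\Delta_I)$, which is the asserted identification. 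I expect the main obstacle to be the building–axiom verification for the cross–section $Y$ in the third step, together with making the parallel–set identification of the first step fully rigorous; everything else is assembling standard $\CAT(0)$– and building–theoretic facts.
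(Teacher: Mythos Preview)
Your approach is correct and considerably more explicit than the paper's own treatment, which does not give a self-contained argument at all: it simply records that the splitting $\Delta(\sigma_+,\sigma_-)\cong\RR^{|I|}\times\Delta_I$ and the associated statements are proved by Rousseau~\cite[4.3]{Rou11} in the more general setting of masures (with a further pointer to~\cite[Section~2.6]{CMRH}), and that $res(\sigma_+)$ is a spherical building by~\cite[5.30]{AB}. Nothing further is argued.

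Your parallel-set route is essentially how one would unpack the content of the Rousseau reference within the $\CAT(0)$ framework, so the two are compatible rather than genuinely different. Two minor remarks on execution: for the inclusion $\Delta(\sigma_+,\sigma_-)\subset P(F)$, once an apartment $\Sigma'$ contains one flat parallel to $F$ it is foliated by parallel copies, so all of $\Sigma'$ lies in $P(F)$ (your sentence only exhibits a single flat); and for the reverse inclusion it is cleaner to invoke directly that in an affine building with the complete apartment system every flat is contained in some apartment, which then automatically has $\sigma_\pm\subset\partial F'=\partial F$ in its ideal boundary, rather than the two-point argument you sketch. You correctly identify the verification of the affine-building axioms for the cross-section $Y$ as the genuine work; this is routine but lengthy, which is exactly why the paper elects to cite Rousseau instead of carrying it out.
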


\begin{proof}
This is proved by Rousseau~\cite[4.3]{Rou11} in a more general setting (see also \cite[Section 2.6]{CMRH}). By \cite[5.30]{AB} we know  $res(\sigma_{+})$ is a spherical building. 
\end{proof}

When $\sigma_{+}$ and $\sigma_{-}$ in Proposition \ref{prop::res_building} are two opposite chambers in $\partial \Sigma$, then $\Delta(\sigma_{+}, \sigma_{-}) \cong \RR^{\vert n \vert} \cong \Sigma$ and $\Delta_{I}$ is just a point.

Note the Euclidean factor $ \RR^{\vert I \vert}$ from the splitting of $\Delta(\sigma_{+}, \sigma_{-})$ is in fact the affine space $L_{I}$ in Guivarc'h--R\'emy~\cite[1.2.1]{GR}. The simplices $\sigma_{+}, \sigma_{-}$ are in the boundary at infinity of $\RR^{\vert I \vert}$.

\begin{proposition}
\label{prop::res_building_action}
Let $G$ be a closed, type-preserving subgroup of $\Aut(\Delta)$ with a strongly transitive action on $\Delta$. Then $G_{\sigma_{+}, \sigma_{-}}:=G_{\sigma_+} \cap G_{\sigma_{-}} =M_I$  acts strongly transitively on $\partial \Delta_I$.
\end{proposition}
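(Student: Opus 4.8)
The plan is to transport strong transitivity from the spherical building $\partial\Delta$, on which $G$ already acts strongly transitively, down to $\partial\Delta_I$ through the identification $res(\sigma_+)\cong\Ch(\partial\Delta_I)$ furnished by Proposition~\ref{prop::res_building}. First I would record how $M_I$ acts on $\partial\Delta_I$ and set up a dictionary between the combinatorial data of $\partial\Delta_I$ and that of $\partial\Delta$. Since $M_I=G_{\sigma_+}\cap G_{\sigma_-}$ fixes both $\sigma_+$ and $\sigma_-$ as simplices, for any apartment $\Sigma'$ with $\sigma_+,\sigma_-\subset\partial\Sigma'$ the image $g(\Sigma')$ again has $\sigma_\pm$ in its boundary; hence $M_I$ preserves $\Delta(\sigma_+,\sigma_-)$ and its canonical Euclidean factor $\RR^{|I|}$, thereby inducing a type-preserving action on the transverse building $\Delta_I$ and thus on $\partial\Delta_I$.

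The dictionary I would use is the following. A chamber of $\partial\Delta_I$ corresponds, via $res(\sigma_+)\cong\Ch(\partial\Delta_I)$, to a chamber $D$ of $\partial\Delta$ containing $\sigma_+$. An apartment of $\partial\Delta_I$ corresponds to an apartment $A_I$ of $\Delta_I$, hence under the splitting $\Delta(\sigma_+,\sigma_-)\cong\RR^{|I|}\times\Delta_I$ to an apartment $\Sigma'$ of $\Delta$ with $\sigma_+,\sigma_-\subset\partial\Sigma'$; and the chambers of that apartment of $\partial\Delta_I$ correspond exactly to the chambers of $\partial\Sigma'$ containing $\sigma_+$ (whose opposite chamber in $\partial\Sigma'$ then contains $\sigma_-$). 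All of these correspondences are $M_I$-equivariant.

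Then, to verify Definition~\ref{def::str_trans}, I would take two pairs (apartment, chamber) of $\partial\Delta_I$ and translate them via the dictionary into pairs $(\partial\Sigma'_1,D_1)$ and $(\partial\Sigma'_2,D_2)$ in $\partial\Delta$, where $\sigma_+,\sigma_-\subset\partial\Sigma'_j$ and $\sigma_+\subset D_j$. Since $G$ acts strongly transitively on $\partial\Delta$, there is $g\in G$ with $g(\partial\Sigma'_1)=\partial\Sigma'_2$ and $g(D_1)=D_2$, type-preservingly. The crux is to check $g\in M_I$. As $g$ is type-preserving and $\sigma_+$ is the face of both $D_1$ and $D_2$ of its type, $g(\sigma_+)=\sigma_+$, so $g\in G_{\sigma_+}$. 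Moreover $\sigma_-$ is the unique simplex opposite $\sigma_+$ inside $\partial\Sigma'_1$, and likewise inside $\partial\Sigma'_2$; since the apartment isomorphism $g\colon\partial\Sigma'_1\to\partial\Sigma'_2$ preserves opposition and fixes $\sigma_+$, it sends $\sigma_-$ to $\sigma_-$, so $g\in G_{\sigma_-}$. Hence $g\in M_I$, and by equivariance of the dictionary $g$ realizes the required map between the two pairs of $\partial\Delta_I$; it is type-preserving there because a type-preserving automorphism of $\partial\Delta$ fixing $\sigma_+$ induces a type-preserving automorphism of $res(\sigma_+)$.

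The main obstacle is twofold. Conceptually, I must pin down the correct dictionary: it is tempting but wrong to index apartments of $res(\sigma_+)$ by simplices opposite $\sigma_+$, since two opposite non-chamber simplices lie in many apartments of $\partial\Delta$; the right statement is that apartments of $\partial\Delta_I$ correspond precisely to the apartments $\Sigma'$ of $\Delta$ whose boundary contains both $\sigma_+$ and $\sigma_-$. This is exactly what forces the transporting element to lie in $G_{\sigma_-}$ and not merely in $G_{\sigma_+}$, which is the whole difference between strong transitivity of the full stabilizer $G_{\sigma_+}$ and of the Levi factor $M_I$. Technically, the remaining work is the routine verification that the $M_I$-action on $\partial\Delta_I$ obtained from the splitting agrees with its action on $res(\sigma_+)$ under the isomorphism of Proposition~\ref{prop::res_building}, together with the two opposition- and type-bookkeeping facts used to place $g$ in $M_I$.
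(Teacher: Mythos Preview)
Your proposal is correct and follows essentially the same approach as the paper's proof: both use the identification $res(\sigma_+)\cong\Ch(\partial\Delta_I)$ and the fact that apartments of $\partial\Delta_I$ arise from apartments of $\Delta(\sigma_+,\sigma_-)$, invoke strong transitivity of $G$ on $\partial\Delta$ to transport one (apartment, chamber) pair to another, and then verify $g\in M_I$ via type-preservation (giving $g(\sigma_+)=\sigma_+$) and uniqueness of the opposite simplex in an apartment (giving $g(\sigma_-)=\sigma_-$). Your write-up is more explicit about the dictionary and the equivariance, but the argument is the same.
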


\begin{proof}
Since $G$ acts strongly transitively on $\Delta$ then $G$ acts strongly transitively on $\partial \Delta$. 
By Proposition \ref{prop::res_building} every apartment of $\partial \Delta_I$ is the boundary at infinity of some apartment (not necessarily unique) in $\Delta(\sigma_{+}, \sigma_{-})$.

Let $c_1,c_2 \in res(\sigma_{+})$ and let $A_1, A_2$ be two apartments of $\Delta(\sigma_{+},\sigma_{-})$ with $c_i \in \Ch(\partial A_i)$, for $i \in \{1,2\}$. Then by the strong transitivity of $G$ there exists $g \in G$ with $g(A_1)=A_2$ and $g(c_1)=c_2$. In particular, $g(\Ch(\partial A_1))=\Ch(\partial A_2)$. Since $c_1$ and $c_2$ share the same simplex $\sigma_+$ and $G$ is type-preserving, then $g(\sigma_{+})=\sigma_+$. So $g(\sigma_{-})=\sigma_{-}$ because $A_1, A_2$ are apartments of $\Delta(\sigma_{+},\sigma_{-})$. Thus $g \in M_I$. 
\end{proof}

\begin{proposition}
\label{prop::res_building_str_tran}
Let $G$ be a closed, type-preserving subgroup of $\Aut(\Delta)$ with a strongly transitive action on $\Delta$. Then $M_{I}$ acts on $\Delta(\sigma_{+}, \sigma_{-})$ and preserves the splitting $\Delta(\sigma_{+}, \sigma_{-}) \cong \RR^{\vert I \vert} \times \Delta_{I}$. 
Moreover, the induced map $\alpha : M_{I} \to \Isom(\Delta(\sigma_{+}, \sigma_{-}))=\Isom(\RR^{\vert I \vert}) \times \Isom(\Delta_{I}) $ is a group homomorphism and the normal subgroups $H_I := \alpha^{-1}( \alpha\vert_{\Isom(\Delta_{I})}(M_{I}))$, resp.,  $H^I := \alpha^{-1}( \alpha\vert_{\Isom(\RR^{\vert I \vert})} (M_{I}))$ of $M_{I}$  act by automorphisms and strongly transitively on $\Delta_{I}$, resp., by translations on $\RR^{\vert I \vert}$.
\end{proposition}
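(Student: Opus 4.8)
The plan is to establish three things in sequence: that $M_I$ acts on $\Delta(\sigma_+,\sigma_-)$ preserving the product splitting, that $\alpha$ is a well-defined homomorphism into the product of isometry groups, and that the two normal subgroups act as claimed. First I would observe that by Proposition~\ref{prop::res_building_action} every $g \in M_I$ fixes both $\sigma_+$ and $\sigma_-$, hence permutes the apartments of $\Delta$ whose boundary contains both simplices; since $\Delta(\sigma_+,\sigma_-)$ is by definition the union of exactly these apartments, $M_I$ preserves $\Delta(\sigma_+,\sigma_-)$ setwise and acts on it by isometries. Because $M_I$ is type-preserving, it preserves the simplices $\sigma_+,\sigma_-$ sitting at infinity of the Euclidean factor $\RR^{|I|}$, and hence respects the canonical product decomposition $\Delta(\sigma_+,\sigma_-) \cong \RR^{|I|} \times \Delta_I$ from Proposition~\ref{prop::res_building}. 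Invoking the de Rham-type uniqueness of the splitting of a \cat product (as in \cite[Part II, Section~8]{BH99}), any isometry preserving the splitting factors as a product of isometries of the two factors, which gives the isomorphism $\Isom(\Delta(\sigma_+,\sigma_-)) = \Isom(\RR^{|I|}) \times \Isom(\Delta_I)$ and shows $\alpha$ lands there.

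Next I would verify $\alpha$ is a group homomorphism: this is immediate once the action on the product is known to respect the splitting, since composition of product isometries is componentwise. Writing $\alpha = (\alpha_{\RR}, \alpha_{\Delta_I})$ for the two coordinate projections, the subgroups in the statement are the kernels of the coordinate projections composed with $\alpha$ up to the stated preimages, so they are normal in $M_I$ by construction. Concretely, $H_I$ is the preimage under $\alpha$ of the image of $M_I$ in $\Isom(\Delta_I)$ trivially extended on the $\RR^{|I|}$ factor, and symmetrically for $H^I$; both are normal as kernels/preimages of homomorphisms.

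The substantive claim is that $H_I$ acts strongly transitively on $\Delta_I$ and that $H^I$ acts by translations on $\RR^{|I|}$. For the translation statement, I would use Remark~\ref{rem::shape_hyp_elements}: the lift of the abelian group $A < W^{\mathrm{aff}}$ supplies hyperbolic elements in $\Stab_G(\Sigma) \cap M_I$ whose translation axes span the Euclidean factor, so the image $\alpha_{\RR}(M_I)$ contains the full translation lattice; combined with the fact that $M_I$ is type-preserving (hence cannot act on the Euclidean factor by reflections or rotations that permute the fixed simplices $\sigma_\pm$ at infinity), the image in $\Isom(\RR^{|I|})$ consists only of translations, so $H^I$ acts by translations. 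For strong transitivity of $H_I$ on $\Delta_I$, the key input is Proposition~\ref{prop::res_building_action}, which already gives that $M_I$ acts strongly transitively on $\partial \Delta_I \cong res(\sigma_+)$; I would promote this to strong transitivity on $\Delta_I$ itself by combining the strong transitivity of $G$ on $\Delta$ with the product structure, using that any pair (apartment, chamber) of $\Delta_I$ lifts to a pair in $\Delta(\sigma_+,\sigma_-)$ and is carried to any other by an element of $M_I$ that is then adjusted by the translation factor $H^I$ to act purely on the $\Delta_I$ coordinate.

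The main obstacle I expect is the last step: separating the $M_I$-action cleanly into its two factors and showing that the $\Delta_I$-component alone, realized by the normal subgroup $H_I$, retains full strong transitivity rather than only the strong transitivity of the ambient $M_I$. The difficulty is that an element of $M_I$ achieving a desired motion on $\Delta_I$ may simultaneously translate the $\RR^{|I|}$ factor, so I must argue that one can correct by a pure translation (available because $H^I$ acts transitively by translations on $\RR^{|I|}$) to land inside $H_I$ while keeping the prescribed effect on $\Delta_I$. Verifying that this correction preserves type and maps the chosen apartment/chamber pair correctly is where the care is needed; the earlier propositions furnish exactly the pieces, but stitching them into a single strongly transitive action on the factor is the crux.
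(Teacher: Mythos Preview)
Your treatment of the first two parts is essentially the paper's: $M_I$ preserves $\Delta(\sigma_+,\sigma_-)$ because it fixes $\sigma_\pm$, the product splitting is preserved, and $\alpha$ is a homomorphism with $H_I,H^I$ normal. Two small points: the paper cites the Foertsch--Lytchak splitting theorem \cite{FL} rather than \cite{BH99} for $\Isom(\Delta(\sigma_+,\sigma_-))=\Isom(\RR^{|I|})\times\Isom(\Delta_I)$; and for the translation claim the paper argues directly that a type-preserving isometry fixing $\sigma_+$ and $\sigma_-$ (which span $\partial\RR^{|I|}$) must act on $\RR^{|I|}$ as a translation or the identity --- your detour through Remark~\ref{rem::shape_hyp_elements} imports a hypothesis (semi-simple algebraic group) that the proposition does not assume.

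The substantive divergence, and the genuine gap, is in the strong transitivity of $H_I$ on $\Delta_I$. Your plan is to lift a pair (apartment, chamber) of $\Delta_I$ to a pair in $\Delta(\sigma_+,\sigma_-)$, use strong transitivity of $G$ on $\Delta$ to find $g\in G$ carrying one lifted pair to the other, and then correct by an element of $H^I$. Two problems: first, an affine chamber of $\Delta(\sigma_+,\sigma_-)$ does not contain $\sigma_+$ as a face, so the type-preserving argument that forced $g(\sigma_+)=\sigma_+$ in the proof of Proposition~\ref{prop::res_building_action} (where one works with \emph{ideal} chambers in $res(\sigma_+)$) no longer applies --- there is no reason $g$ lands in $M_I$. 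Second, your correction step assumes you can cancel the $\RR^{|I|}$-component of any $g\in M_I$ by some $h\in H^I$; this requires $\alpha_{\RR}(H^I)=\alpha_{\RR}(M_I)$, which is not established (the statement only asserts $H^I$ acts \emph{by} translations, not transitively), and indeed Remark~\ref{rem::alpha_map} warns that $\alpha(M_I)$ need not split as a product.

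The paper avoids both issues by a different route: it stays at the boundary, where Proposition~\ref{prop::res_building_action} already gives $M_I$ (and hence, since $H^I$ acts trivially on $\Delta_I$, the image in $\Isom(\Delta_I)$) strongly transitive on $\partial\Delta_I$, and then invokes the theorem of Kramer--Schillewaert \cite[Theorem~B]{KS} to promote strong transitivity on the spherical building at infinity to strong transitivity on the affine building $\Delta_I$ itself. This external result is precisely the missing ingredient that your direct lifting argument is trying to reproduce by hand.
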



\begin{proof}
As every $g \in M_I$ stabilizes $\sigma_+$ and $\sigma_-$ and is type-preserving, then $g$ preserves the splitting $\RR^{\vert I \vert} \times \Delta_{I}$ and acts by translations (or as a trivial element) on $\RR^{\vert I \vert}$ and by  automorphisms on $\Delta_{I}$. 

By the splitting results of \cite[Theorem 1.1, Corollary 1.3]{FL} we have $\Isom(\Delta(\sigma_{+}, \sigma_{-}))= \Isom(\RR^{\vert I \vert}) \times \Isom(\Delta_{I})$. As the action of $M_{I}$ on $\Delta(\sigma_{+}, \sigma_{-})$ is a group action, the corresponding map $\alpha : M_{I} \to \Isom(\RR^{\vert I \vert}) \times \Isom(\Delta_{I}) $  is a group homomorphism. Since $\Isom(\RR^{\vert I \vert}), \Isom(\Delta_{I})$ are normal subgroups in $\Isom(\Delta(\sigma_{+}, \sigma_{-}))$ so are the subgroups $H_I ,H^{I}$ in $M_{I}$.

By construction, $H^{I}$ is a subgroup of translations of $\RR^{\vert I \vert}$. It remains to prove $H_I$ acts strongly transitively on $\Delta_I$. Indeed, first note $H^I$ does not affect the action on $\Delta_I$ and $res(\sigma_{+})\cong \Ch(\partial \Delta_{I})$. So, as $M_I$ acts strongly transitively on $res(\sigma_{+})$, we obtain $H_I$ acts strongly transitively on $\partial \Delta_I$. By \cite[Theorem B]{KS} $H_I$ acts strongly transitively on $\Delta_I$ and the conclusion follows.
\end{proof}

\begin{remark}
\label{rem::alpha_map}
In general the map $\alpha : M_{I} \to \Isom(\RR^{\vert I \vert}) \times \Isom(\Delta_{I})$ from Proposition \ref{prop::res_building_str_tran} is not injective as the subgroup $Ker(\alpha) \leq M_{I}$ that acts trivially on $\Delta(\sigma_{+}, \sigma_{-}) \cong \RR^{\vert I \vert} \times \Delta_{I}$ can be non-trivial.
In addition, the image subgroup $\alpha(M_I)$ might not split as a direct product in  $\Isom(\RR^{\vert I \vert}) \times \Isom(\Delta_{I})$. 
Both issues occur in the special case $ M_{I} \leq  G= \SL(n, \QQ_p)$.
\end{remark}

We have seen in Example \ref{ex:levi_decom} that the Levi factor $M_I$ of a parabolic subgroup $P^{I}$ of $\SL(n, \QQ_p)$,  is a reductive group 
$$M_{I} =\{\begin{pmatrix} A_1 & 0 & \cdots & 0 \\
0 & A_2 & 0 & \cdots \\
0 & 0 & \ddots & 0\\
0& \cdots & 0 & A_k\\
\end{pmatrix} \; \vert \; \det(A_1) \cdot \det(A_2) \cdot ... \cdot \det(A_k) =1\},$$
where the blocks $A_1, \cdots, A_k$ are indecomposable, square matrices of possibly different dimensions. The next lemma computes the normal subgroups $Ker(\alpha)$ and $H^{I}$ for the map $\alpha : M_{I} \to \Isom(\Delta(\sigma_{+}, \sigma_{-}))=\Isom(\RR^{\vert I \vert}) \times \Isom(\Delta_{I}) $ from Proposition \ref{prop::res_building_str_tran}.

\begin{lemma}
\label{rem::SL_Levi_factor}
We have 
$$Ker(\alpha) = \{\begin{pmatrix} \lambda_1 \Id_1 & 0 & \cdots & 0 \\
0 & \lambda_2 \Id_2 & 0 & \cdots \\
0 & 0 & \ddots & 0\\
0& \cdots & 0 & \lambda_k \Id_k\\
\end{pmatrix} \in \SL(n, \QQ_p)  \; \vert \; \lambda_1, \lambda_2, ..., \lambda_k \in \ZZ_p\}$$

$$H^{I}=\{\begin{pmatrix} \lambda_1 \cdot \Id_1 & 0 & \cdots & 0 \\
0 & \lambda_2 \cdot \Id_2 & 0 & \cdots \\
0 & 0 & \ddots & 0\\
0& \cdots & 0 & \lambda_k \cdot \Id_k \\
\end{pmatrix} \in \SL(n, \QQ_p) \; \vert \;  \lambda_1, \lambda_2, ..., \lambda_k \in \QQ_p\}, $$
where $\Id_1, \Id_2, \cdots, \Id_k$ are square identity matrices of possibly different dimensions, but respectively, with the same size as $A_1, A_2, \cdots, A_k$  from $M_{I}$ above.
\end{lemma}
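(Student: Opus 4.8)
The plan is to read off both normal subgroups from the block form of $M_I$ together with the splitting of Proposition~\ref{prop::res_building_str_tran}. Write the indecomposable blocks of $M_I$ as having sizes $d_1,\dots,d_k$ with $\sum_{i=1}^k d_i = n$, so that $M_I = \{\operatorname{diag}(A_1,\dots,A_k) : A_i \in \GL(d_i,\QQ_p),\ \prod_{i=1}^k \det(A_i) = 1\}$. First I would make the splitting $\Delta(\sigma_{+},\sigma_{-}) \cong \RR^{|I|}\times\Delta_I$ explicit for $G = \SL(n,\QQ_p)$: by Propositions~\ref{prop::res_building} and~\ref{prop::res_building_action} the building $\Delta_I$ is the Bruhat--Tits building of the semisimple part of $M_I$, namely $\Delta_I \cong \prod_{i=1}^k \Delta_{d_i}$ where $\Delta_{d_i}$ is the building of $\SL(d_i,\QQ_p)$, while the Euclidean factor $\RR^{|I|}$ records the split center of $M_I$, a torus of rank $k-1$. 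Under this identification $g = \operatorname{diag}(A_1,\dots,A_k)$ acts on the $i$-th factor $\Delta_{d_i}$ only through the block $A_i$, via the natural action of $\GL(d_i,\QQ_p)$.

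To compute $H^{I}$, recall from Proposition~\ref{prop::res_building_str_tran} that $H^{I}$ is precisely the set of $g \in M_I$ whose image under $\alpha$ has trivial $\Isom(\Delta_I)$-component, i.e.\ $g$ acts trivially on $\Delta_I$. Since the action is blockwise, $g$ acts trivially on $\Delta_I$ if and only if each $A_i$ acts trivially on $\Delta_{d_i}$. The action of $\GL(d_i,\QQ_p)$ on $\Delta_{d_i}$ factors through $\PGL(d_i,\QQ_p)$, which acts faithfully, and the kernel of $\GL(d_i,\QQ_p) \to \Aut(\Delta_{d_i})$ is exactly the center $\QQ_p^* \Id_i$. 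Hence $g$ acts trivially on $\Delta_I$ if and only if every block is scalar, $A_i = \lambda_i \Id_i$; the determinant relation $\prod_i \lambda_i^{d_i} = 1$ then forces each $\lambda_i \in \QQ_p^*$, which is exactly the claimed description of $H^{I}$ (the displayed condition $\lambda_i \in \QQ_p$ being equivalent to $\lambda_i \in \QQ_p^*$ once invertibility is imposed).

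To obtain $Ker(\alpha)$, I would intersect $H^{I}$ with the elements acting trivially on the Euclidean factor. An element $g \in H^{I}$ already acts trivially on $\Delta_I$, so $g \in Ker(\alpha)$ if and only if $g$ also acts trivially on $\RR^{|I|}$; as $H^{I}$ acts on $\RR^{|I|}$ by translations, this says the translation of $g$ vanishes, i.e.\ $g$ is elliptic. A block-scalar matrix $\operatorname{diag}(\lambda_1\Id_1,\dots,\lambda_k\Id_k)$ translates the apartment $\Sigma \cong \{x \in \RR^n : \sum_i x_i = 0\}$ by the vector whose entry is $v_p(\lambda_i)$ in the positions of the $i$-th block; this vector is zero exactly when $v_p(\lambda_i) = 0$ for all $i$, i.e.\ when every $\lambda_i \in \ZZ_p^*$ (equivalently $g \in \SL(n,\ZZ_p)$ stabilizes the base vertex). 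This yields the claimed $Ker(\alpha)$.

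The main obstacle is the faithfulness input in the second step: one must rule out that a non-scalar block $A_i$ acts trivially on $\Delta_{d_i}$. This is the standard fact that, in the lattice model of $\Delta_{d_i}$ (homothety classes of $\ZZ_p$-lattices in $\QQ_p^{d_i}$), an element of $\GL(d_i,\QQ_p)$ preserves every homothety class if and only if it is a scalar, so the kernel of the action is the center $\QQ_p^*\Id_i$ and nothing more; the blockwise (product) nature of the action, guaranteed by the type-preserving hypothesis together with Proposition~\ref{prop::res_building_str_tran}, is what ensures there is no cross-block cancellation contributing extra kernel elements.
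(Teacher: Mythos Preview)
Your argument is correct, but it follows a genuinely different route from the paper's.

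The paper never identifies $\Delta_I$ explicitly as a product of smaller $\SL(d_i,\QQ_p)$-buildings. Instead it argues as follows: any element of $Ker(\alpha)$ fixes all of $\Delta(\sigma_+,\sigma_-)$ pointwise, in particular fixes the apartment $\Sigma$, and therefore lies in $\SL(n,\ZZ_p)\cap B\cap\Stab_G(\Sigma)=Diag(n,\ZZ_p)$; similarly each element of $H^I$ stabilizes $\Sigma$ and fixes $\partial\Sigma$ pointwise, hence lies in $\Stab_G(\Sigma)\cap B=Diag(n,\QQ_p)$. The block-scalar shape is then forced purely by \emph{normality} of $Ker(\alpha)$ and $H^I$ in $M_I$: conjugating a diagonal matrix by an elementary off-diagonal element inside one block and demanding the result remain diagonal forces the diagonal entries in that block to coincide.

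Your approach instead computes directly: you invoke the product decomposition $\Delta_I\cong\prod_i\Delta_{d_i}$ and the fact that the kernel of $\GL(d_i,\QQ_p)\to\Aut(\Delta_{d_i})$ is exactly the scalars, then handle $Ker(\alpha)$ by reading off the translation vector in the valuation coordinates. This is more hands-on and arguably more transparent geometrically, but note that the product identification of $\Delta_I$ is not actually contained in Propositions~\ref{prop::res_building} and~\ref{prop::res_building_action} as you cite them (those only give that $\Delta_I$ is an affine building on which $M_I$ acts strongly transitively); it is a standard Bruhat--Tits fact you are importing. The paper's normality trick sidesteps this input entirely, which is what makes its proof shorter.
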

\begin{proof}

Notice every element in $Ker(\alpha)$ fixes pointwise $\Delta(\sigma_{+}, \sigma_{-})$, and so $Ker(\alpha)$ fixes pointwise the apartment $\Sigma$ corresponding to the Cartan subgroup $C \leq \SL(n, \QQ_p)$. Thus $Ker(\alpha) \leq \SL(n, \ZZ_p) \cap B \cap \Stab_{G}(\Sigma)= Diag(n, \ZZ_p)$, the diagonal matrices in $\SL(n, \ZZ_p)$. Recall $\Stab_{G}(\Sigma)$ is the monomial subgroup of $\SL(n, \QQ_p)$ (see \cite[Section 6.9, page 351]{AB}). But $Ker(\alpha)$ is normal in $M_{I}$ and a subgroup in $Diag(n, \ZZ_p)$, thus it is easy to see $Ker(\alpha)$ has the desired form.

Now, let us compute $H^{I}$. Indeed, we first claim  $H^{I}$ is a subgroup of $Diag(n, \QQ_p)$. This is true because every element of $H^{I}$ acts trivially on $\Delta_{I}$ and translates (or acts trivially) on the flat $\RR^{\vert I \vert}$. Recall $\RR^{\vert I \vert}$ is a subflat of  $\Sigma$ and so every element of $ H^{I}$ stabilises the apartment $\Sigma$ and fixes pointwise the boundary of $\Sigma$. Thus $H^{I}$ is a subgroup of $\Stab_{G}(\Sigma) \cap B= Diag(n, \QQ_p)$ and our claim follows. As $H^{I}$ is also a normal subgroup of $M_I$ we obtain the desired conclusion.
\end{proof}

\begin{remark}
\label{rem::M_I_elements}
Notice, the intersection $H_I \cap H^I$ is $Ker(\alpha)$ consisting of diagonal matrices with entries in $\ZZ_p$, and those elements are not hyperbolic. Therefore hyperbolic elements of $H_I$ are not hyperbolic elements in $H^I$. 
\end{remark}
 
As $H_I$ acts strongly transitively on $\Delta_I$ we can apply all the decompositions presented above. Let $y \in \Delta_I$, then $\Fix_{M_I}(\RR^{\vert I \vert} \times y):=\{ g \in M_I \; \vert \; g(z)=z, \forall z \in \RR^{\vert I \vert} \times y \}= (H_I)_{y} $. 

Following the definition in Guivarc'h--R\'emy~\cite[1.2.3]{GR} we have:

\begin{definition}
\label{def::K_x_I}
For $y \in \Delta_I$ we denote $K_{I,y}:=\Fix_{M_I}(\RR^{\vert I \vert} \times y)= (H_I)_{y}$, $D_{I,y}:= U^{I} K_{I,y}$ and $R_{I,y}:= U^{I} H^{I} K_{I,y}.$
\end{definition}

\section{Chabauty limits of parahoric subgroups}
\label{Chabauty_parahoric}

This section gives a different geometric proof of~\cite[Theorem 3, Corollary 4]{GR} and~\cite[Theorem 3.14]{Htt} for $\SL(n, \QQ_p)$ by using Lemma~\ref{lem::same_trans_length} and the decomposition of $M_I$ given in Proposition~\ref{prop::res_building_str_tran}. The proof of \cite[Theorem 3]{GR} uses a probabilistic method which holds for general semi-simple algebraic groups $G$ over non Archimedean local fields. For completeness we also give more geometric proofs of two main Lemmas in~\cite{GR,Htt} about general semi-simple algebraic groups $G$ over non Archimedean local fields.

Let $\mathbb{G}$ be a semi-simple algebraic group over a non Archimedean local field $k$ and let $G=\mathbb{G}(k)$ be the group of all $k$--rational points of $\mathbb{G}$. In particular, we will consider $G=\SL(n, \QQ_p)$.  Denote by $\Delta$ the corresponding locally finite thick affine building of dimension $n-1$ on which $G$ acts by type-preserving automorphisms and strongly transitively.  Let $\Sigma$ be the apartment of $\Delta$ and fix a special vertex $x$ in $\Sigma$, the base point. For a point $y \in \Delta$ we denote $K_{y}:=\Stab_{G}(y)=:G_y$; note $K_y$ is a closed subgroup of $G$. 

Given a sequence $\{x_l\}_{l \in \NN} \subset \Delta$ of points we want to study the Chabauty limits in $\mathcal{S}(G)$ of the sequence of closed subgroups $\{K_{x_l}\}_{l \in \NN}$. By Guivarc'h--R\'emy~\cite[Lemma~1,~1.3.3]{GR} for every sequence $\{x_l\}_{l \in \NN} \subset \Delta$ there exist $I \subset S$, a converging sequence $\{g_{l_k}\}_{k \in \NN} \subset K_{x}$ and a simplex $\sigma \subset \Delta_I$ such that  for every $k \in \NN$ we have: $x_{l_k}=g_{l_k}(y_{l_k})$,  $y_{l_k} $ is in the Weyl chamber with base point $x$ of $\Delta$ and the projection of $y_{l_k}$ to $\Delta_I$ is in $\sigma$ (comprising the boundary of $\sigma$). The sequence $\{y_{l_k}\}_{k \in \NN}$ is called \textbf{$I$--canonical}.


\medskip
The following two lemmas hold true for any $I$-canonical sequence $\{y_l\}_{l \in \NN} \subset \Delta$ with $\sigma$ its associated simplex in $\Delta_I$ and $y$ a point in the interior of $\sigma$.

\begin{lemma}(See \cite[Lemma 7]{GR})
\label{lem::D_in_D_I}
Suppose the sequence $\{K_{y_l}\}_{l \in \NN}$ converges to $D \in \mathcal{S}(G)$ with respect to the Chabauty topology on $\mathcal{S}(G)$. Then $D_{I,y} \subset D$.
\end{lemma}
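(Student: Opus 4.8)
The plan is to prove the two inclusions $K_{I,y} \subset D$ and $U^I \subset D$ separately; since $D$ is a closed subgroup and $D_{I,y} = U^I K_{I,y}$ by Definition~\ref{def::K_x_I}, these together give $D_{I,y} \subset D$. The mechanism in both cases is the same: by the second condition of Proposition~\ref{prop::chabauty_conv} (applied to subgroups), if an element $g \in G$ lies in $K_{y_l}$ for all large $l$, then the eventually-constant sequence $g_l := g$ converges to $g$, and that condition forces $g \in D$. Thus it suffices to show that every element of $K_{I,y}$, and every element of $U^I$, eventually fixes the points $y_l$.

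First I would treat $K_{I,y}$, the easy inclusion. By Definition~\ref{def::K_x_I} we have $K_{I,y} = (H_I)_y$, and by Proposition~\ref{prop::res_building_str_tran} the subgroup $H_I$ acts trivially on the Euclidean factor $\RR^{\vert I \vert}$ and by type-preserving automorphisms on $\Delta_I$. An element $g \in K_{I,y}$ fixes $y \in \interior(\sigma)$; since $g$ is type-preserving and $y$ is interior to $\sigma$, $g$ must fix the simplex $\sigma \subset \Delta_I$ pointwise. Combined with the trivial action on $\RR^{\vert I \vert}$, this shows $g$ fixes $\RR^{\vert I \vert} \times \sigma$ pointwise inside $\Delta(\sigma_{+},\sigma_{-})$. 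As the $I$-canonical points $y_l$ have projection to $\Delta_I$ lying in $\sigma$, every $y_l$ belongs to $\RR^{\vert I \vert} \times \sigma$, hence $g(y_l) = y_l$ for all $l$. Therefore $K_{I,y} \subset \bigcap_l K_{y_l} \subset D$.

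The substantive part is the inclusion $U^I \subset D$. Fix a hyperbolic $a \in \Stab_G(\Sigma)$ with attracting endpoint $\xi_{+} \in \interior(\sigma_{+})$, so that $U^I = U^{+}_{a}$ by Remark~\ref{rem::uni_rad}. For $u \in U^{+}_{a}$ the defining condition $a^{-n} u a^{n} \to e$ (Definition~\ref{def::contr_groups}) means, in the compact-open topology, that for every $r > 0$ the element $u$ fixes the ball $B(a^{n}x, r)$ pointwise once $n$ is large. Unwinding this through the Levi decomposition of Proposition~\ref{coro::geom_levi_decom} and the product structure $\Delta(\sigma_{+},\sigma_{-}) \cong \RR^{\vert I \vert} \times \Delta_I$, one obtains the sharper geometric statement that $u$ fixes pointwise an entire sector-face directed toward $\sigma_{+}$, namely a set of the form $(p + \mathcal{C}) \times \Delta_I$, where $\mathcal{C}$ is the cone of Weyl-chamber directions in $\RR^{\vert I \vert}$ pointing into $\interior(\sigma_{+})$ and $p \in \RR^{\vert I \vert}$ depends on $u$. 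Writing $y_l = (v_l, w_l)$ in these coordinates, the $I$-canonical hypothesis says $w_l \in \sigma$ stays bounded while $y_l \to \infty$ inside the Weyl chamber based at $x$; since the escape cannot occur in the bounded $\Delta_I$-direction, the components $v_l$ march to infinity inside $\mathcal{C}$ toward $\xi_{+}$, so for $l$ large $v_l \in p + \mathcal{C}$ and $u(y_l) = y_l$. The constant-sequence argument then gives $u \in D$, completing $U^I \subset D$ and hence $D_{I,y} \subset D$.

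The main obstacle I anticipate is the middle step of the third paragraph: passing rigorously from the contraction condition $a^{-n} u a^{n} \to e$ to the pointwise fixing of a full sector-face $(p + \mathcal{C}) \times \Delta_I$ (in particular, that $u$ acts trivially on the residue factor $\Delta_I$), and verifying that the drift of $v_l$ within the Weyl chamber really is confined to the cone $\mathcal{C}$ toward $\interior(\sigma_{+})$ rather than veering toward another boundary face. Both points are exactly where the precise geometry of the residue splitting from Proposition~\ref{prop::res_building_str_tran} and Rousseau's description of $\Delta(\sigma_{+},\sigma_{-})$ must be invoked with care.
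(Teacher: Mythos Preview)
Your treatment of $K_{I,y}\subset D$ is essentially the paper's argument, only spelled out in more detail; that part is fine.

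For $U^I\subset D$, the paper proceeds differently and your route has a real gap. The paper does \emph{not} work with the contraction definition of $U^I$ at all: it invokes the root group decomposition of $U^I$ (from \cite[Cor.~4.29]{Cio}) and the fact that each element of a root group $U_h$ fixes pointwise a half-apartment of $\Delta$ containing all but a bounded part of the Weyl chamber (from \cite[Cor.~4.24]{Cio}). Since the $y_l$ live in that Weyl chamber, each $u\in U_h$ fixes $y_l$ for $l$ large, and one concludes. This is short precisely because the geometric content (half-apartment fixing) is imported from a cited result.

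Your proposed replacement---deducing directly from $a^{-n}ua^n\to e$ and the splitting $\Delta(\sigma_+,\sigma_-)\cong \RR^{|I|}\times\Delta_I$ that $u$ fixes a set $(p+\mathcal{C})\times\Delta_I$---does not follow from the ingredients you list. Two concrete problems: first, the contraction condition with a \emph{single} hyperbolic $a$ only tells you that $u$ fixes balls $B(a^nx,r)$, i.e.\ a tube along one ray toward $\xi_+$, and convexity of $\mathrm{Fix}(u)$ does not promote a tube to a full open cone in $\RR^{|I|}$; second, an element $u\in U^I$ need not fix $\sigma_-$, hence need not preserve $\Delta(\sigma_+,\sigma_-)$ at all, so you cannot read off its action on the factor $\Delta_I$ from Proposition~\ref{prop::res_building_str_tran}, which concerns $M_I$ rather than $U^I$. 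The statement that $u$ fixes a sector-face toward $\sigma_+$ is in fact true, but proving it amounts to re-deriving the root-group picture the paper simply cites. You correctly flagged this step as the obstacle; the resolution is to use the root group decomposition rather than the raw contraction condition.
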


\begin{proof}
We use the Levi decomposition. Since $D_{I,y}= U^{I} K_{I,y}$ we prove the lemma in two steps: the unipotent radical $U^{I}$ is a subset of $D$ and $K_{I,y} \subset D$. 

To prove the inclusion $U^{I} \subset D$ it is enough by~\cite[Cor.~4.29]{Cio} to prove every root group $U_{h}$ of $U^{I}$ is in $D$. 
Fix a root group $U_{h}$ as in~\cite[Cor.~4.29]{Cio}  and some $u \in U_{h}$. By~\cite[Cor.~4.24]{Cio} $u$ fixes pointwise a half-apartment of $\Delta$ that contains in its interior the Weyl chamber, but a finite volume part. Then there exists $N_u >0$ such that for every $l \geq N_u$ we have $u(y_l)=y_l$, thus $u \in K_{y_l}$ and so $u \in D$. 
Now we show $D$ contains $K_{I,y}$.  Indeed, as $y$ is in the interior of $\sigma$ and by the definition of $K_{I,y}$ we have  $K_{I,y}=K_{I,\sigma}$ and for $l$ big enough $K_{I,y}$ fixes pointwise $y_l$, so $K_{I,y} \subset K_{y_{l}}$. 
\end{proof}

\begin{lemma}(See~\cite[Lemma 3.12]{Htt})
\label{lem::D_in_P_I}
Suppose the sequence $\{K_{y_l}\}_{l \in \NN}$ converges to $D \in \mathcal{S}(G)$ with respect to the Chabauty topology on $\mathcal{S}(G)$. Then $D \subset P^{I}$.
\end{lemma}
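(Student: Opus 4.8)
The plan is to prove the set-theoretic inclusion by showing that every element of $D$ stabilises the ideal simplex $\sigma_{+}$, so that $D \subseteq G_{\sigma_{+}} = P^{I}$. The underlying geometric principle is that an $I$-canonical sequence escapes to infinity toward a point in the \emph{interior} of $\sigma_{+}$, and any Chabauty limit of the vertex stabilisers $K_{y_l}$ is forced to fix that limiting direction at infinity; an automorphism fixing an interior point of $\sigma_{+}$ must stabilise $\sigma_{+}$.

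First I would fix $g \in D$ and invoke the Chabauty characterisation of Proposition~\ref{prop::chabauty_conv}: since $\{K_{y_l}\} \to D$ and $g \in D$, part~1) gives a sequence $g_l \in K_{y_l}$ with $g_l \to g$ in the compact-open topology of $G$. As $g_l \in K_{y_l} = \Stab_G(y_l)$, we have $g_l(y_l) = y_l$ for every $l$. Next I would locate the limit of $\{y_l\}$ at infinity. Because $\{y_l\}$ is $I$-canonical, each $y_l$ lies in the Weyl sector based at $x$ in the direction of $c$, and in the splitting $\Delta(\sigma_{+},\sigma_{-}) \cong \RR^{\vert I \vert} \times \Delta_{I}$ of Proposition~\ref{prop::res_building} its projection to the building factor $\Delta_{I}$ remains inside the bounded simplex $\sigma$ while its projection to the flat $\RR^{\vert I \vert}$ escapes to infinity. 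By compactness of $\Delta \cup \partial\Delta$ in the cone topology I may pass to a subsequence along which $y_l \to \xi \in \partial\Delta$; boundedness of the $\Delta_{I}$-coordinate forces $\xi \in \partial \RR^{\vert I \vert}$, and the sector direction together with this bounded coordinate places $\xi$ in the interior of $\sigma_{+}$ (this is precisely how $\sigma_{+}$, equivalently $P^I$, is attached to the $I$-canonical sequence in Guivarc'h--R\'emy). Using the continuity of the $G$-action on $\Delta \cup \partial\Delta$ recalled above, $g_l(y_l) \to g(\xi)$; but $g_l(y_l) = y_l \to \xi$, and the cone topology is Hausdorff, so $g(\xi) = \xi$. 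Finally, $g$ is a type-preserving automorphism, hence carries the simplex carrying $\xi$ onto the one carrying $g(\xi) = \xi$; since $\xi$ lies in the interior of $\sigma_{+}$ its carrier is exactly $\sigma_{+}$, whence $g(\sigma_{+}) = \sigma_{+}$, i.e.\ $g \in G_{\sigma_{+}} = P^{I}$. As $g \in D$ was arbitrary, $D \subseteq P^{I}$.

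The hard part will be the claim that the subsequential limit $\xi$ lands in the interior of $\sigma_{+}$ rather than on a proper face. Making this rigorous requires a careful reading of the product structure $\RR^{\vert I \vert} \times \Delta_{I}$: one must verify that the escape direction of $y_l$ in the flat factor is pinned to the open face of $c$ corresponding to $\sigma_{+}$, the point being that keeping the $\Delta_{I}$-projection inside $\sigma$ is exactly what prevents the limiting direction from drifting onto a wall and into a smaller face. Once this geometric input is secured, the remaining ingredients — Chabauty convergence, continuity of the action on $\Delta \cup \partial\Delta$, and the carrier argument for type-preserving automorphisms — are routine, and together they yield $D \subseteq P^{I}$.
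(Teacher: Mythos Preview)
Your proposal is correct and follows essentially the same approach as the paper: take $g\in D$, approximate it by $g_l\in K_{y_l}$ via Chabauty convergence, pass to a subsequence so that $y_l\to\xi\in\partial\Delta$, and use continuity of the action to conclude $g(\xi)=\xi$, hence $g\in P^I$. The paper's proof is terser---it simply asserts that the subsequential limit $\xi$ is a point whose stabiliser is $P^I$---whereas you spell out why $\xi$ lies in the interior of $\sigma_+$ and then invoke the carrier argument for type-preserving automorphisms; this is exactly the content hidden behind the paper's one-line claim, and your flagging it as the delicate step is appropriate.
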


\begin{proof}
The proof goes as for~\cite[Lemma 3.12]{Htt}. 
By hypothesis we know the sequence $\{y_l\}_{l \in \NN}$ is contained in the closed strip $\RR^{\vert I\vert} \times \sigma$ intersected with the Weyl chamber in $\Delta$ with base point $x$. Therefore $\{y_l\}_{l \in \NN}$  admits a converging subsequence to a point $\xi \in \partial \Delta$ whose stabilizer is $P^{I}$. Let $d \in D$ and take $\{k_l \in K_{y_l}\}_{l \in \NN}$ that converges to $d$. Then we have, restricting eventually to a subsequence, $\{k_l(y_l)=y_l\}_{l \in \NN}$ converges to $d(\xi)$ and also to $\xi$; thus $d(\xi)=\xi$ and so $d \in P^{I}$. 
\end{proof}

\begin{theorem}(See~\cite[Theorem 3, Corollary 4]{GR} and~\cite[Theorem 3.14]{Htt})
\label{thm::main_thm}
Let $G=\SL(n, \QQ_p)$ with its associated Bruhat--Tits building $\Delta$, and $x$ be a fixed special vertex of $\Delta$.
Let $\{x_l\}_{l \in \NN} \subset \Delta$. Then the sequence $\{K_{x_l}\}_{i \in \NN}$ admits a convergent subsequence with respect to the Chabauty topology on $ \mathcal{S}(G)$ and the corresponding limit is $K_x$-conjugate to some $D_{I,y}$, for some $I \subset S$ and $y \in \Delta_{I}$.
\end{theorem}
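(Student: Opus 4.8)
The plan is to combine the compactness of the Chabauty topology with the two inclusions already established in Lemmas~\ref{lem::D_in_D_I} and~\ref{lem::D_in_P_I}, then upgrade the containment $D_{I,y} \subset D \subset P^{I}$ to an equality $D = D_{I,y}$ by analyzing the Levi decomposition $P^{I} = U^{I} M_{I}$ and the further splitting of $M_{I}$ from Proposition~\ref{prop::res_building_str_tran}. First I would reduce to a canonical sequence: by the Guivarc'h--R\'emy decomposition recalled just before Lemma~\ref{lem::D_in_D_I}, there exist $I \subset S$, a convergent sequence $\{g_{l_k}\}_{k \in \NN} \subset K_x$ converging to some $g \in K_x$, and an $I$-canonical sequence $\{y_{l_k}\}$ with associated simplex $\sigma \subset \Delta_I$, such that $x_{l_k} = g_{l_k}(y_{l_k})$. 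Since conjugation is continuous and $K_{x_{l_k}} = g_{l_k} K_{y_{l_k}} g_{l_k}^{-1}$, it suffices to prove that $\{K_{y_{l_k}}\}$ has a convergent subsequence with limit exactly $D_{I,y}$ for $y$ in the interior of $\sigma$; the limit of the original sequence is then $g$-conjugate to it. Compactness of $\mathcal{S}(G)$ (noted after Proposition~\ref{prop::chabauty_conv}) guarantees a convergent subsequence, so I may assume $\{K_{y_l}\}$ itself converges to some $D \in \mathcal{S}(G)$.

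With the limit $D$ in hand, Lemmas~\ref{lem::D_in_D_I} and~\ref{lem::D_in_P_I} immediately give $D_{I,y} = U^{I} K_{I,y} \subset D \subset P^{I} = U^{I} M_{I}$. The remaining task is the reverse inclusion $D \subset D_{I,y}$. The plan here is to pick $d \in D$ and, using $D \subset P^{I} = U^{I} M_{I}$, write $d = u \cdot m$ with $u \in U^{I}$ and $m \in M_{I}$; since $U^{I} \subset D_{I,y} \subset D$, it is enough to control the Levi component $m \in M_{I}$. Choose $k_l \in K_{y_l}$ converging to $d$. Each $k_l$ stabilizes $y_l$, hence stabilizes its image under the projection $\RR^{|I|} \times \Delta_I \to \Delta_I$, which lies in $\sigma$ (including its boundary). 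Passing through the homomorphism $\alpha : M_I \to \Isom(\RR^{|I|}) \times \Isom(\Delta_I)$ from Proposition~\ref{prop::res_building_str_tran}, the $\Delta_I$-component of the limiting Levi part $m$ must fix the point $y \in \interior(\sigma)$; combined with the fact that $K_l$ cannot translate $y_l$ along $\RR^{|I|}$ (the $y_l$ themselves march off to infinity in the Euclidean factor, but the group elements stabilizing $y_l$ act trivially on that axis in the limit, since any nontrivial translation component would contradict $k_l(y_l) = y_l$ in the limit), this forces $m \in K_{I,y} = (H_I)_y$. Thus $d = u m \in U^{I} K_{I,y} = D_{I,y}$, giving $D \subset D_{I,y}$ and hence equality.

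The main obstacle I anticipate is the careful bookkeeping in the last step: tracking how the condition $k_l(y_l) = y_l$ survives in the limit once the base points $y_l$ escape to infinity along the $\RR^{|I|}$-direction. Because $y_l \to \xi \in \partial \Delta$ rather than to an interior point, one cannot simply say the limit $d$ fixes a point of $\Delta$; instead I must separate the two factors of the splitting $\Delta(\sigma_+,\sigma_-) \cong \RR^{|I|} \times \Delta_I$ and argue factor-by-factor via $\alpha$, using continuity of the $G$-action on $\Delta \cup \partial\Delta$ (recalled before Definition~\ref{def::str_trans}) and Remark~\ref{rem::alpha_map}, which warns that $\alpha$ is neither injective nor a direct-product decomposition for $\SL(n,\QQ_p)$. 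I expect to need Lemma~\ref{lem::same_trans_length} to ensure that the elliptic/hyperbolic type of the limiting elements is controlled, so that no unexpected translation component in the Euclidean factor can appear. The algebraic identities $P^I = U^I M_I$ and $K_{I,y} = (H_I)_y$ from Definition~\ref{def::K_x_I} then close the argument cleanly, and the final sentence is to transport everything back by the conjugation $g$, yielding that the limit of $\{K_{x_l}\}$ is $K_x$-conjugate to $D_{I,y}$.
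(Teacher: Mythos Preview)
Your overall architecture matches the paper's: reduce to an $I$-canonical sequence via Guivarc'h--R\'emy, pass to a Chabauty-convergent subsequence, sandwich $D_{I,y}\subset D\subset P^I$ by the two lemmas, and then try to force the Levi part of any $d\in D$ into $K_{I,y}$. The treatment of the Euclidean factor is also right in spirit: since $d$ is a limit of elliptics, Lemma~\ref{lem::same_trans_length} forbids a nontrivial translation component in $\RR^{|I|}$, so $m$ lands in $H_I$.

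The genuine gap is your argument for the $\Delta_I$-component. You write ``each $k_l$ stabilizes $y_l$, hence stabilizes its image under the projection $\RR^{|I|}\times\Delta_I\to\Delta_I$'', and then pass to the limit through $\alpha$. But the approximants $k_l\in K_{y_l}$ need not lie in $P^I$ at all, let alone in $M_I$; they do not preserve $\Delta(\sigma_+,\sigma_-)$ or its splitting, so there is no action of $k_l$ on $\Delta_I$ to speak of, and $\alpha$ is only defined on $M_I$. Writing $k_l=u_l m_l$ is not available either, since $k_l\notin P^I$ in general. Consequently the sentence ``the $\Delta_I$-component of the limiting Levi part $m$ must fix the point $y\in\interior(\sigma)$'' is unsupported---and this is precisely the heart of the theorem.

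The paper fills this gap differently, and the method is what makes the proof specific to $\SL(n,\QQ_p)$. Having placed $d$ in $H_I$, it exploits that \emph{every} vertex of $\Delta$ (hence of $\Delta_I$) is special, so the Iwasawa decomposition $H_I=(H_I)_\sigma A_I B_I^0$ is available at the vertex $\sigma$ (first treating the case where $\sigma$ is a vertex). Since $(H_I)_\sigma=K_{I,y}\subset D$ one may reduce to $d\in A_I B_I^0$, and ellipticity (again Lemma~\ref{lem::same_trans_length}) forces $d\in B_I^0$. Then $d$ fixes an ideal chamber of $\partial\Sigma_I$, hence a Weyl subsector in $\Sigma$, and the structure $\Min(d)=\RR^{|I|}\times Y$ (via \cite[II, Prop.~6.9]{BH99}) shows $d$ actually fixes $y_l$ for large $l$, so $d\in K_{I,y}$. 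The non-vertex case is handled by $D_{I,\sigma}=\bigcap_{z\;\text{vertex of}\;\sigma} D_{I,z}$. None of this---the Iwasawa step at a special vertex of $\Delta_I$, the $\Min(d)$ analysis, or the vertex/non-vertex case split---appears in your proposal, and without it the inclusion $D\subset D_{I,y}$ is not established.
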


\begin{proof}
By Guivarc'h--R\'emy~\cite[Lemma~1,~1.3.3]{GR}, it is enough to consider an $I$-canonical sequence $\{y_l\}_{l \in \NN} \subset \Delta$ such that $\{K_{y_l}\}_{l \in \NN}$ converges to $D \in \mathcal{S}(G)$. Let $\sigma$ be the associated simplex in $\Delta_I$ of $\{y_l\}_{l \in \NN} $ and let $y$ be inside $\sigma$.  By the above lemmas we have $D_{I,y} \subset D \subset P^{I}$. Let $d \in D$.
By Lemma~\ref{lem::D_in_D_I} $U^{I} \subset D$ and we can assume that $d \in M_{I} $. By Proposition \ref{prop::res_building_str_tran} $d$ cannot project to a non-trivial translation in $H^{I}$, as this contradicts the fact that $d$ is a limit of elliptic elements (see Lemma \ref{lem::same_trans_length}). Therefore, we have $d \in H_I$. 

As every vertex in the affine building of $\Delta$ of $\SL(n, \QQ_p)$ is special, every vertex of the affine building $\Delta_I$ of $H_I$ is also special. So for every vertex $z \in \Sigma_I$ the maximal compact subgroup $(H_I)_z$ is good  and $H_I$ admits the Iwasawa decomposition $H_I= (H_I)_z A_I B_I^{0}$, where $A_I$ is the maximal split torus of $H_I$ with respect to the apartment $\Sigma_I$ and $B^{0}_{I}$ is the pointwise stabilizer in $H_I$ of a ideal chamber in $\partial \Sigma_I$ corresponding to the Weyl chamber in $\Sigma$. 

Suppose that $\sigma$ is a vertex, so $y=\sigma$. As in this case $(H_I)_{\sigma}= K_{I,y}$ and $ K_{I,y}$ is contained in $D$ by Lemma~\ref{lem::D_in_D_I}, we can assume  (up to multiplying by the inverse of an element in $K_{I,y}$) that $d \in A_I B_I^{0}$. Again by Lemma~\ref{lem::same_trans_length} $d \in B_I^{0}$, as otherwise $d$ would translate along some axis in the building $\Delta_I$ and thus in $\Delta$ contradicting the fact that $d$ is a limit of elliptic elements. So $d$ fixes pointwise an ideal chamber in $\partial \Sigma_I$ and so the ideal chamber in $\partial \Sigma$ corresponding to the Weyl  chamber of $\Delta$. This implies $d$ fixes pointwise an entire Weyl subsector contained in $\Sigma$.  Since $d \in M_I$ then $d$ preserves the splitting $\RR^{\vert I \vert} \times \Delta_{I}$. Since it is elliptic, $d$ fixes pointwise a flat in $\Delta$ of dimension $\vert I \vert$ that is parallel to the flat $\RR^{\vert I \vert}$ corresponding to $M_I$. In particular, by~\cite[Chapter II, Prop. 6.9]{BH99} $\Min(d)= \RR^{\vert I \vert} \times Y$, for some $Y \subset \Delta_{I}$. As $\Min(d)$ is closed, convex and contains a flat of dimension $\vert I \vert$ and a Weyl subsector,  $d$ must fix all points $y_l$, for $l$ big enough.  By projecting to $\Sigma_I$ we have $d \in K_{I,y}= (H_I)_{\sigma}$. We have obtained $D \subset D_{I,\sigma}$ if $\sigma$ is a vertex.

Suppose now $\sigma$ is not a vertex. As in Guivarc'h--R\'emy~\cite[proof of Lemma~8]{GR} $D_{I,y} = D_{I,\sigma} =\bigcap\limits_{z \text{ vertex of } \sigma} D_{I,z}$ and $D \subset D_{I,z}$ for every $z$ vertex of $\sigma$ and the conclusion follows.
\end{proof}

Recall from Remark~\ref{rem::special_vertex} the Iwasawa decomposition does not hold for a general non-special vertex, and so the proof of Theorem~\ref{thm::main_thm} cannot be generalised to semi-simple algebraic group over a non Archimedean local field.

\section{Chabauty limits of diagonal Cartan subgroups and Classification}

In this section we study Chabauty limits of a different family of closed subgroups: the set of all $\SL(n, \QQ_p)$-conjugate of the Cartan subgroup $C$ of $G:=\SL(n, \QQ_p)$ as defined in Section~\ref{sec::sec_three}.  The diagonal subgroup of $G$ is called the \textbf{diagonal Cartan subgroup}, $C$. It is a closed subgroup of $G$, and $C \leq \Stab_G(\Sigma)$, for a unique apartment $\Sigma$ in the Bruhat--Tits building $X$ associated with $G=\SL(n,\QQ_p)$. Set $$Cart(G) := \{ g C g^{-1} : g \in G\} \subset \mathcal{S}(G)$$ the set of all conjugates in $G$ of $C$.   
Let $\overline{Cart(G)}^{Ch}$ be the  closure of $Cart(G)$ in $\mathcal{S}(G)$ with respect to the Chabauty topology on $\mathcal{S}(G)$;  so $\overline{Cart(G)}^{Ch}$ is compact.  An element in $\overline{Cart(G)}^{Ch}$ is called a \textbf{Chabauty limit of the diagonal Cartan}.  For simplicity, we will use \textbf{limit of the Cartan} as terminology.


\begin{lemma} 
\label{lem::fix_cartan}
The diagonal Cartan subgroup $C \leq \Stab_G(\Sigma)$ fixes pointwise the boundary at infinity $\partial \Sigma$ of the apartment $\Sigma $ of $X$.

\end{lemma}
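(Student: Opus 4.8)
The plan is to exploit that the apartment $\Sigma$ is a Euclidean flat (isometric to $\RR^{n-1}$) and that the diagonal Cartan $C$ acts on $\Sigma$ \emph{purely by translations}; since a translation of a flat fixes its visual boundary pointwise, the lemma follows at once. So the argument splits into one geometric input (translations fix $\partial\Sigma$ pointwise, a soft \cat fact) and one structural input (every $g\in C$ restricts to a translation of $\Sigma$), the latter being the only non-formal point.

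First I would record the containments. By hypothesis $C\leq\Stab_G(\Sigma)$, so each $g\in C$ acts on $\Sigma$ as an affine isometry of the flat. Moreover $C=M_{\emptyset}\leq B$ since diagonal matrices are in particular upper triangular (Example~\ref{ex:levi_decom}), and $B=\Stab_G(c)$ fixes the ideal chamber $c\in\Ch(\partial\Sigma)$ defining the Borel subgroup. Hence $C\leq \Stab_G(\Sigma)\cap B$.

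Next I would show the induced action of $C$ on $\Sigma$ is by translations, i.e. the image of $C$ under $\Stab_G(\Sigma)\to W^{\mathrm{aff}}=\Stab_G(\Sigma)/\Fix_G(\Sigma)$ lands in the translation subgroup $A\cong\ZZ^{\,n-1}$. There are two routes. One is to invoke the Bruhat--Tits description recalled in Section~\ref{sec::sec_three}, where $A$ is identified with the restriction to $\Sigma$ of a maximal split torus of $G$, which for $G=\SL(n,\QQ_p)$ is exactly the diagonal torus $C$; thus $C$ restricts to $A$, and the elements of $C$ with unit entries lie in $\Fix_G(\Sigma)$ and act trivially. The more self-contained route is to note that the affine isometry $g|_{\Sigma}$ has a well-defined rotational (linear) part, whose class is the image of $g$ in the finite Weyl group $W=\Stab_G(\Sigma)/(\Stab_G(\Sigma)\cap B)$; since $g\in\Stab_G(\Sigma)\cap B$ this image is trivial, so $g$ has no rotational part and acts as a (possibly trivial) translation of $\Sigma$.

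Finally I would apply the \cat boundary argument: for a flat $\Sigma$ and a translation $t$, any geodesic ray $\rho$ and its image $t\circ\rho$ are parallel, hence remain at bounded (indeed constant) distance, hence are asymptotic; so $t$ fixes the endpoint $[\rho]=\xi\in\partial\Sigma$. Running this for every $g\in C$ gives that $C$ fixes $\partial\Sigma$ pointwise, as desired. I expect the main obstacle to be justifying that $C$ acts by translations (trivial rotational part); once that is in place, the boundary statement is automatic. If I wished to avoid citing Bruhat--Tits altogether, I would instead run the direct lattice computation showing that $\operatorname{diag}(\alpha_1,\dots,\alpha_n)$ translates the standard apartment by the valuation vector $(-v(\alpha_i))_i$ projected into $\Sigma$, which visibly preserves every asymptotic class of rays.
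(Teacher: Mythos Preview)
Your proof is correct and complete. It differs from the paper's in how it shows that $C$ acts on $\Sigma$ by translations.

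The paper argues as follows: since $C$ is abelian and acts cocompactly on $\Sigma$, the restriction of each $\eta\in C$ to $\Sigma$ is either hyperbolic (a nontrivial translation) or fixes $\Sigma$ pointwise; in either case $\Sigma\subset\Min(\eta)$, and a translation of a flat fixes the visual boundary pointwise. So the translation property is extracted from the combination \emph{abelian} + \emph{cocompact on a flat} (a full-rank translation lattice has trivial centralizer in the point group, so no nontrivial linear part can survive).

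You instead use the containment $C\leq\Stab_G(\Sigma)\cap B$: the linear part of $g|_\Sigma$ is its image in the finite Weyl group $W=\Stab_G(\Sigma)/(\Stab_G(\Sigma)\cap B)$, and for $g\in C$ this image is trivial, so $g$ is a translation. This is more direct---it uses only the identification of $W$ with the point group of the affine action and the simple transitivity of $W$ on $\Ch(\partial\Sigma)$---and does not invoke that $C$ is abelian or that the action is cocompact. It also proves the slightly stronger statement that every element of $\Stab_G(\Sigma)\cap B$ (not just $C$) fixes $\partial\Sigma$ pointwise. Conversely, the paper's argument has the virtue of applying to any abelian group acting cocompactly by isometries on a flat, without reference to a Borel subgroup or the Weyl group.
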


\begin{proof}
Since $C$ is abelian and acts cocompactly on $\Sigma$, the restriction of every element $\eta$ of $C$ to $\Sigma$ is either hyperbolic or pointwise fixes $\Sigma$. In particular, we have $\Sigma \subset \Min(\eta)$, for every $\eta \in C$. In both cases ($ \eta$ is hyperbolic or elliptic) the boundary $\partial \Sigma$ is pointwise fixed by every $\eta \in C$, and the conclusion follows.
\end{proof}

\begin{proposition}
\label{prop::limit_cartan_in_borel}
 Any limit of the diagonal Cartan  is contained in the product of the group $\mu_n$ of $n$-roots of 1  in $\QQ_p$, with a conjugate of the Borel subgroup $B$ and fixes pointwise the closure of an ideal chamber of $\Ch(\partial X)$ that is not necessarily unique. (Here $\mu_n$ is identified with the corresponding group of scalar matrices).
\end{proposition}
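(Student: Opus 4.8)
The plan is to follow, through the Chabauty limit, the ideal chamber at infinity that every conjugate of $C$ fixes. Write $H\in\overline{Cart(G)}^{Ch}$ as a limit $g_kCg_k^{-1}\to H$ for a sequence $\{g_k\}\subset G$ (if $H$ already lies in $Cart(G)$, take the sequence constant). Fix the model ideal chamber $c\in\Ch(\partial\Sigma)$ whose stabiliser is $B$. By Lemma~\ref{lem::fix_cartan} the subgroup $C$ fixes $\partial\Sigma$ pointwise, so $g_kCg_k^{-1}$ fixes $g_k\partial\Sigma$, and in particular the ideal chamber $g_kc$, pointwise. Since $X\cup\partial X$ is compact in the cone topology, its closed subset $\partial X$ is compact; hence the set of vertices of each fixed type is compact, and so is $\Ch(\partial X)$. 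Passing to a subsequence I may therefore assume $g_kc\to c_\infty$ for some $c_\infty\in\Ch(\partial X)$, with each vertex $g_kv$ of $g_kc$ converging to the corresponding vertex $w$ of $c_\infty$.

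Next I would show that $H$ fixes $\overline{c_\infty}$ pointwise. Let $h\in H$. By condition (1) of Proposition~\ref{prop::chabauty_conv} there is a sequence $h_k\in g_kCg_k^{-1}$ with $h_k\to h$ in $G$. For a vertex $v$ of $c$ we have $h_k(g_kv)=g_kv$ for every $k$; using that $G$ acts continuously on $X\cup\partial X$ together with $g_kv\to w$ and $h_k\to h$, I pass to the limit to obtain $h(w)=\lim_k h_k(g_kv)=\lim_k g_kv=w$. Thus $h$ fixes every vertex $w$ of $c_\infty$, and since $G$ is type-preserving and simplicial, $h$ fixes the closure $\overline{c_\infty}$ pointwise. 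As $h\in H$ was arbitrary, all of $H$ fixes $\overline{c_\infty}$ pointwise.

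Finally I would translate this into the group-theoretic statement. The pointwise fixing gives $H\subseteq\Stab_G(c_\infty)$. Because $\SL(n,\QQ_p)$ acts strongly transitively on $\partial X$, it is transitive on $\Ch(\partial X)$, so $c_\infty=g_\infty c$ for some $g_\infty\in G$ and $\Stab_G(c_\infty)=g_\infty Bg_\infty^{-1}=:B'$ is a conjugate of the Borel subgroup. The scalar matrices $\mu_n\cdot\Id$ act trivially on $X$, hence lie in $B'$, so $H\subseteq\mu_n\cdot B'=B'$, a conjugate of $B$ fixing $\overline{c_\infty}$ pointwise, as claimed.

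The main obstacle is the simultaneous passage to two limits — the Chabauty limit $h_k\to h$ of group elements and the cone-topology limit $g_kc\to c_\infty$ of the fixed ideal chambers — which is precisely where the continuity of the $G$-action on the bordification $X\cup\partial X$ is indispensable. One must also ensure that the combinatorial limit $c_\infty$ is again a genuine chamber rather than a degenerate configuration of vertices; this is exactly what compactness of $\Ch(\partial X)$ (equivalently, of the vertex sets of $\partial X$ together with closedness of the incidence relation) delivers.
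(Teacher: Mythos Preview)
Your argument is correct and follows essentially the same route as the paper: pick the fixed ideal chamber $c$, use compactness of the boundary to extract a limiting chamber $c_\infty$ from $g_kc$, and then pass simultaneously to the limit in $h_k\to h$ and $g_kc\to c_\infty$ via continuity of the $G$-action on $X\cup\partial X$ to conclude $h(c_\infty)=c_\infty$. The only cosmetic difference is that you track convergence vertex-by-vertex while the paper works directly with chambers; your final observation that $\mu_n\subset B'$ (so $\mu_n\cdot B'=B'$) is correct and simply makes explicit that the $\mu_n$ factor in the statement is redundant once $B$ is taken to be the full stabiliser in $\SL(n,\QQ_p)$.
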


\begin{proof}
Note that $\mu_n$ appears as the kernel of the quotient map $\SL_n(\QQ_p)\rightarrow \PSL_n(\QQ_p)$: indeed it is only $\PSL_n(\QQ_p)$ that acts faithfully on the corresponding Bruhat-Tits building $X$.

Let $\{g_m\}_{m \in \NN} \subset G$ and suppose the limit $\lim\limits_{m \to  \infty} g_mCg_m^{-1}$ exists in $\mathcal{S}(G)$ and equals $H$. 
Let $\Sigma$ be the apartment of $X$ such that $C \subset \Stab_{G}(\Sigma)$. In particular, every ideal chamber $c \in \Ch(\partial \Sigma)$ is fixed by $C$. Take one such ideal chamber $c \in \Ch(\partial \Sigma)$. Recall $X \cup \partial X$ is a Hausdorff compact space with respect to the cone topology on $X \cup \partial X$. Since the action of $G$ on $X \cup \partial X$ is continuous, the sequence $\{ g_m (c)\}_{m \in \NN}$ admits a convergent subsequence $g_{m_i}(c) \to c' \in \Ch(\partial X)$, when $m_i \to \infty$. For simplicity, we denote that subsequence also by $\{g_m\}_{m \in \NN}$. 

Let $h \in H$, there exists a sequence $\{h_m \in g_mCg_m^{-1} \}_{m \in \NN}$ such that $h_m \to h$, when $m \to \infty$, with respect to the  induced subgroup topology on $H$ from $G$. The action of $G$ on $X \cup \partial X$ is continuous thus so is the action of $H$, 
so we have $h_m g_m(c)\to h(c')$, when $m \to \infty$. But $h_m g_m(c)=g_m(c) \to c'$. Thus $h(c')=c'$, for every $h \in H$. Therefore $H$ is a subgroup of $\Stab_G(c')$ that is contained in a conjugate of $\mu_n\cdot B$. As $G$ acts by type-preserving automorphisms, the ideal chamber $c'$ and its closure are pointwise fixed by $H$.

Note for every ideal chamber $c \in \Ch(\partial \Sigma)$ there is an ideal chamber $c' \in \Ch(\partial X)$, such that $H \leq \Stab_G(c')$, but maybe for $c_1 \neq c_2 \in \Ch(\partial \Sigma)$ the corresponding $c'_1, c'_2$ are different. This means there could be more ideal chambers stabilized by $H$. 
\end{proof}

Examples of limits of the diagonal Cartan that fix pointwise more ideal chambers of $\Ch(\partial X)$ are all conjugates of $C$. Any conjugate of $C$ pointwise fixes the ideal boundary of an apartment of $X$.

\medskip
Next we show every limit of $C$ is conjugate to a limit of $C$ under conjugating by a sequence in the unipotent radical $U$ of $B$. 

\begin{lemma}
\label{lem::unip_conj}
Let $H$ be a  limit of the diagonal Cartan. Then there exists a sequence $\{u_m\}_{m \in \NN} \subset U$, where $U$ is the unipotent radical of the Borel subgroup $B$, and some $k \in K$ such that $\{u_m C u_m^{-1}\}_{m \in \NN}$ converges to a limit  $H'$ of the diagonal Cartan  and $kH'k^{-1}=H$.
\end{lemma}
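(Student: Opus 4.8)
The goal is to reduce an arbitrary conjugating sequence $\{g_m\}$ to one lying in the unipotent radical $U$, modulo a single element of $K$. The plan is to exploit the Cartan (Lemma~\ref{lem::polar_decom}) and Iwasawa (Lemma~\ref{lem::Iwasawa_decom}) decompositions of $G = KAB^0 = KAU\mu_n$, together with the fact that $C \leq \Stab_G(\Sigma)$ is essentially fixed by the ``$A$-part'' of each $g_m$. First I would write $H = \lim_m g_m C g_m^{-1}$ and factor each $g_m$ using the Iwasawa decomposition as $g_m = k_m a_m b_m$ with $k_m \in K$, $a_m \in A$, and $b_m \in B^0$ (so $b_m$ lies in a conjugate of $U\mu_n$). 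Since $a_m$ normalizes $\Stab_G(\Sigma)$ and $C$ is the center of $\Stab_G(\Sigma)$, I expect $a_m$ to normalize $C$, so that $a_m C a_m^{-1} = C$. The $\mu_n$ factor is central in $G$, hence also drops out of the conjugation. This collapses $g_m C g_m^{-1}$ to $k_m u_m C u_m^{-1} k_m^{-1}$ for some $u_m \in U$.

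Next I would extract a convergent subsequence of $\{k_m\}$ using compactness of $K$: say $k_m \to k \in K$ (passing to a subsequence, which is harmless since $H$ is already the limit). The continuity of the conjugation action of $G$ on $\mathcal{S}(G)$ with respect to the Chabauty topology then lets me pull the $k_m$ outside the limit. Concretely, I would argue that $\{u_m C u_m^{-1}\}$ itself subconverges in $\mathcal{S}(G)$ — again by compactness of the Chabauty space $\mathcal{S}(G)$ — to some closed subgroup $H'$, and that $H'$ is itself a limit of the diagonal Cartan (being a Chabauty limit of conjugates of $C$). Then by continuity of $D \mapsto k D k^{-1}$ on $\mathcal{S}(G)$, we get $\lim_m k_m u_m C u_m^{-1} k_m^{-1} = k H' k^{-1}$, and comparing with the original limit yields $kH'k^{-1} = H$.

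The main obstacle, and the step I would treat most carefully, is justifying that passing to subsequences of $\{k_m\}$ and of $\{u_m C u_m^{-1}\}$ does not alter the limit $H$ and that the conjugation map is genuinely continuous on $\mathcal{S}(G)$. One must verify that if a subsequence of $k_m u_m C u_m^{-1} k_m^{-1}$ converges to $H$ while $u_m C u_m^{-1} \to H'$ along the same subsequence and $k_m \to k$, then the limit of the conjugates equals the conjugate of the limit; this uses that left/right multiplication by a fixed element and conjugation are homeomorphisms of $G$, which induce homeomorphisms of $\mathcal{S}(G)$, and then a diagonal-type argument combining Proposition~\ref{prop::chabauty_conv} (the two convergence criteria) with the uniform closeness $k_m \to k$. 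A secondary point requiring care is the claim $a_m C a_m^{-1} = C$: since elements of $A$ lift hyperbolic translations stabilizing $\Sigma$ and $C$ is the center of $\Stab_G(\Sigma)$, conjugation by $a_m$ preserves $\Stab_G(\Sigma)$ and hence its center, giving the needed normalization.
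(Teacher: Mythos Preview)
Your approach is correct and matches the paper's strategy: factor the conjugating sequence via Iwasawa, absorb the torus part into $C$, and use compactness of $K$ to extract a single conjugator $k$. The paper's execution is slightly cleaner because it uses the form $G = KB = KUC$ and writes $g_m = k_m u_m a_m$ with $a_m \in C$ directly; then $a_m C a_m^{-1} = C$ is immediate and no further decomposition of $B^0$ is needed.

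One inaccuracy to fix: your parenthetical ``$b_m$ lies in a conjugate of $U\mu_n$'' and the decomposition $G = KAU\mu_n$ misidentify $B^0$. In fact $B^0 = U \cdot M_\emptyset^0$, where $M_\emptyset^0 = C \cap K$ is the full compact torus (diagonal matrices with entries in $\ZZ_p^*$), not merely $\mu_n$. This does not damage your argument, since $M_\emptyset^0 \subset C$ still drops out when conjugating $C$; but as written your justification ``the $\mu_n$ factor is central'' is the wrong reason. If you keep your ordering $g_m = k_m a_m b_m$, you should write $b_m = u_m' c_m$ with $u_m' \in U$, $c_m \in M_\emptyset^0 \subset C$, and then set $u_m := a_m u_m' a_m^{-1} \in U$ (using that $A$ normalizes $U$) to reach $g_m C g_m^{-1} = k_m u_m C u_m^{-1} k_m^{-1}$. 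Alternatively, just adopt the paper's ordering $g_m = k_m u_m a_m$ with $a_m \in C$ and avoid this extra step entirely. The final identification $kH'k^{-1} = H$ is handled in the paper by an explicit two-sided inclusion rather than an appeal to continuity of conjugation on $\mathcal{S}(G)$; either is fine.
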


\begin{proof}
As $H$ is a limit of the diagonal Cartan there exists $\{g_m\}_{m \in \NN} \subset \SL(n, \QQ_p)$ such that  $\{g_m C g_m^{-1}\}_{m \in \NN}$ converges to $H$ in the Chabauty topology. 

As $\SL(n, \QQ_p)= KB=KUC$, where $C$ is the diagonal Cartan subgroup of all diagonal matrices, we have $g_m=k_m u_m a_m$, with $k_m \in K, u_m \in U$ and $a_m \in C$. Because $K$ is compact, the sequence $\{k_m\}_{m \in \NN}$ admits a convergent subsequence to some $k \in K$. Abusing notation we write $k_m \xrightarrow{m \to \infty}  k$.

Then $\{u_m a_m C a_m^{-1} u_m^{-1}=u_m  C u_m^{-1} \}_{m \in \NN}$ admits a convergent subsequence in $\mathcal{S}(\SL(n, \QQ_p))$. Abusing notation again we write $u_m  C u_m^{-1} \xrightarrow{m \to \infty}  H'$. So $H'$ is a limit of the diagonal Cartan.

We claim $kH'k^{-1}=H$. Indeed, it is easy to see  $kH'k^{-1} \leq H$. Take now $h \in H$, then there exists $\{h_m \in C\}_{m \in \NN }$ such that $k_m u_m h_m u_m^{-1} k_m^{-1} \xrightarrow{m \to \infty}  h$. But $k_m^{-1}h k_m  \xrightarrow{m \to \infty}  k^{-1}hk$, so $u_m h_m u_m^{-1} = k_m^{-1} k_m u_m h_m u_m^{-1} k_m^{-1} k_m \xrightarrow{m \to \infty} k^{-1}hk$. Thus $k^{-1}hk \in H'$ and the conclusion follows.
\end{proof}

\begin{remark}
To classify  limits of $C$ up to conjugacy, it suffices by Proposition~\ref{prop::limit_cartan_in_borel} to find all limits of $C$ that are subgroups of the Borel, $B$. Note a limit $H$ of the diagonal Cartan either contains only elliptic elements and in which case we call $H$ an \textbf{elliptic limit of the Cartan} otherwise $H$ contains at least one hyperbolic element and we call $H$ a \textbf{hyperbolic limit of the Cartan}. 
\end{remark}


\subsection{A homeomorphism between $\overline{Cart (\mathfrak{sl}(n, \QQ_p))}^{Ch}$ and $\overline{Cart (\SL(n, \QQ_p))}^{Ch}$}\label{gr_alg}
\label{homeo_Gr} 

By Bourbaki \cite[Thm.~2,~pg.~340]{Bou75} we know that every closed subgroup of a real or $p$-adic Lie group is again a real or $p$-adic Lie group (for the definition of a real or $p$-adic Lie group see Bourbaki \cite{Bou75}). In particular, every closed subgroup of $\SL(n, \QQ_p)$ is a $p$-adic Lie group.

Again by  Bourbaki \cite[Def. 6, pg. 252]{Bou75} every real or p-adic Lie group $H$ has an associated real or $p$-adic Lie algebra $(\mathfrak{h}, [,])$. This gives a Lie functor $L$ from the set of real or p-adic Lie groups to the set of real or p-adic Lie algebras. In particular, one can consider the restriction of the Lie functor $L$ to the set of all closed subgroups of a Lie group $H$ to the set of all closed Lie subalgebras of $(\mathfrak{h}, [,])$. In the $p$-adic case the Lie functor $L$ is not injective (see Bourbaki \cite[Thm. 3, pg. 283]{Bou75}): $L(H_1)=L(H_2)$ implies $H_1 \cap H_2$ is open in $H_1$ and $H_2$. So the Lie algebra does not uniquely determine the Lie group for the $p$-adic case.  It would be interesting to see when  the Lie functor (real or $p$-adic case) $L: \mathcal{S}(H) \to \mathcal{S}(\mathfrak{h},[,])$ is continuous with respect to the Chabauty topology on $ \mathcal{S}(H)$ and the Chabauty topology on $\mathcal{S}(\mathfrak{h},[,])$, or on which compact subsets of $\mathcal{S}(H)$ the Lie functor $L$  is continuous. 

As in the case of real Lie groups, every $p$-adic Lie group $H$ has an associated exponential map $\exp: \mathfrak{h} \to H$ (see Bourbaki [Thm.~4, pg.~284, Def.~1, pg.~285]\cite{Bou75} or Hooke \cite{Hoo42}, Serre \cite{Ser92}, Schneider \cite{Sch11}). Still the  $\exp$ map for $p$-adic Lie algebras $\mathfrak h$ (see \cite[Examples (2), pg. 285]{Bou75}) is well defined only in a neighbourhood $V$ of zero of $(\mathfrak{h},[,])$ and maps $V$ diffeomorphically onto a neighbourhood of identity element in $H$ (for $\RR$ the exponential map is well defined on all of the Lie algebra).

In the case of $\SL(n, \QQ_p)$ and explicitly for the set $\overline{Cart (\SL(n, \QQ_p))}^{Ch}$ we propose a map that looks like an `inverse map' of the Lie functor $L$: $$Gr: \overline{Cart (\mathfrak{sl}(n, \QQ_p))}^{Ch} \to \overline{Cart (\SL(n, \QQ_p))}^{Ch}, \; \;  A \mapsto Gr(A):=\SL(n, \QQ_p) \cap \langle A, \Id \rangle. $$ 

 Here $Cart(\mathfrak{sl}(n, \QQ_p))$ is the set of all $\SL(n, \QQ_p)$-conjugates of the Lie subalgebra $\mathfrak{c} \subset \mathfrak{sl}(n, \QQ_p)$ of $C$. For $A \subset \mathfrak{sl}(n, \QQ_p)$ we denote $\langle A, \Id \rangle: = \{ \lambda \cdot a + \beta \Id : \lambda, \beta \in \QQ_p \textrm{ and } a \in A \}$ to be the $\QQ_p$-linear span of $A$ and $\Id$.

We show that $ A \in \overline{Cart (\mathfrak{sl}(n, \QQ_p))}^{Ch}$  is the Lie algebra of $Gr(A)$ and moreover the map $Gr$ is continuous with respect to the Chabauty topology. 


\subsubsection{\textbf{On closed subalgebras of $\mathcal{M}(n, \QQ_p)$ and $\mathfrak{sl}(n, \QQ_p)$}} In this subsection we build background on closed subgroups and subalgebras, before defining  the map $Gr$ in the next section.

All the topologies that we consider here, for example on $\mathcal{M}(n, \QQ_p)$, $\GL(n,\QQ_p)$, $\SL(n,\QQ_p)$, $\mathfrak{sl}(n, \QQ_p)$ etc.,  are the subspace topologies inherited from the product topology on $\QQ_p ^{n^2}$.

The set $\mathcal{M}(n, \QQ_p)$ of all $n \times n$ matrices over $\QQ_p$ is an algebra: it is a $\QQ_p$-vector space of finite dimension and the  multiplication of matrices gives the structure of an algebra.  

Set $G: = \SL(n,\QQ_p) \subset \GL(n , \QQ_p)$ and $G$ is a closed subgroup of $\GL(n , \QQ_p)$.  By Milne~\cite[Example~3.9, page~122]{Mil} the Lie algebra of $G$ is $\mathfrak{g}: = \mathfrak{sl}(n, \QQ_p)$, the set of all matrices in $\mathcal{M}(n, \QQ_p)$ of trace zero.  Note $\mathfrak{g}$ is a finite dimensional $\QQ_p$-vector subspace of $\mathcal{M}(n, \QQ_p)$ that is \underline{not} a subalgebra of $\mathcal{M}(n, \QQ_p)$ with respect to the usual multiplication of matrices. The multiplication on $\mathfrak{g}$ is given by the Lie bracket $[a, b]:= ab-ba$ for every $a,b \in \mathfrak{g}$. Note $[a,b]$ is of trace zero, for every $a,b \in \mathfrak{g}$, thus $[a,b] \in \mathfrak{g}$, and the identity matrix $\Id$ is \underline{not} in $\mathfrak{g}$.   Abelian subalgebra or just subalgebras of $\mathfrak{g}$ are considered with respect to the Lie bracket $[\cdot, \cdot]$.

The diagonal subgroup $Diag(n, \QQ_p) \subset \GL(n, \QQ_p)$ is a maximal abelian subgroup in $\GL(n, \QQ_p)$ and it is easy to see $Diag(n, \QQ_p)$ is a closed subgroup of $\GL(n, \QQ_p)$.  
We denote by $diag(n, \QQ_p)$ the set of all diagonal matrices of $\mathcal{M}(n, \QQ_p)$: it is a maximal abelian subalgebra of $\mathcal{M}(n, \QQ_p)$.

We denote by $\mathfrak{c}$ the \textbf{diagonal Cartan subalgebra} of the diagonal Cartan subgroup $C$.
We have $\mathfrak{c} : = diag(n, \QQ_p )\cap \mathfrak{g}$ and $\mathfrak{c}$ is an abelian subalgebra of $\mathfrak{g}$ with respect to the Lie bracket, as for every $a,b \in \mathfrak{c}$ we have $[a,b]=0$. So $\mathfrak{c}$ consists of all diagonal matrices of $\mathcal{M}(n, \QQ_p)$ with trace zero. We have $$Cart(\mathfrak{g}) = \{ g (diag(n, \QQ_p) \cap \mathfrak{g}) g^{-1} : g \in G\} = \{ g (diag(n, \QQ_p)) g^{-1} \cap \mathfrak{g} : g \in G\}.$$

\begin{definition}  Let $H \leq G$ be a subgroup. Denote by $\langle H \rangle$ the $\QQ_p$-linear span of elements of $H$. 
\end{definition} 

\begin{lemma}\label{cid} $\langle C\rangle = diag(n, \QQ_p )$.
\end{lemma}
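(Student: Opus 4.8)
The plan is to prove the nontrivial inclusion by a dimension count, since the easy direction is immediate. Indeed, every element of $C$ is a diagonal matrix, and the diagonal matrices form a $\QQ_p$-linear subspace of $\mathcal{M}(n, \QQ_p)$, so any $\QQ_p$-linear combination of elements of $C$ is again diagonal; hence $\langle C \rangle \subseteq diag(n, \QQ_p)$. As $\dim_{\QQ_p} diag(n, \QQ_p) = n$, it then suffices to exhibit $n$ linearly independent matrices lying in $\langle C \rangle$. I would first note that $\Id \in C \subseteq \langle C \rangle$, which will supply the ``trace'' direction, and then produce a spanning set for the trace-zero diagonal matrices.

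For the latter, write $E_{ii}$ for the diagonal matrix with a $1$ in the $(i,i)$ entry and zeros elsewhere. For each pair $i \ne j$ and each $t \in \QQ_p^*$, the diagonal matrix $g_{ij}(t) := \Id + (t-1)E_{ii} + (t^{-1}-1)E_{jj}$ (that is, $t$ in position $i$, $t^{-1}$ in position $j$, and $1$ elsewhere) has determinant $t \cdot t^{-1} = 1$, so it lies in $C$. Evaluating at $t = p$ and at $t = p^{-1}$ and subtracting isolates a single coordinate difference:
$$g_{ij}(p) - g_{ij}(p^{-1}) = (p - p^{-1})\,(E_{ii} - E_{jj}).$$
Since $p - p^{-1} \ne 0$ in $\QQ_p$, dividing by this scalar gives $E_{ii} - E_{jj} \in \langle C \rangle$ for all $i \ne j$.

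To finish I would assemble the count. The matrices $\{E_{ii} - E_{jj}\}_{i \ne j}$ span the $(n-1)$-dimensional subspace of trace-zero diagonal matrices (this is exactly $\mathfrak{c}$), and adjoining $\Id$ raises the dimension to $n$, because $\Id$ has trace $n \ne 0$ in $\QQ_p$ and is therefore linearly independent from the trace-zero diagonal matrices. Thus $\langle C \rangle$ contains an $n$-dimensional subspace of the $n$-dimensional space $diag(n, \QQ_p)$, forcing equality. The only point needing genuine care is that we work over $\QQ_p$ rather than $\RR$: one cannot argue linear independence by size or positivity, so the argument is organized around the trace functional, whose two nonvanishing inputs, $p - p^{-1} \ne 0$ and $n \ne 0$ in $\QQ_p$ (valid since $n \ge 1$), hold for every prime $p$ and every dimension.
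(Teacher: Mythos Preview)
Your proof is correct and takes essentially the same approach as the paper's: both use the diagonal matrices in $C$ with two non-unit reciprocal entries (your $g_{ij}(t)$, the paper's $D_a$) and form differences to isolate basis elements. The only cosmetic difference is that the paper picks two generic values $a\ne b$ to get $E_{11},E_{22}$ directly, while you exploit the symmetry $t\leftrightarrow t^{-1}$ to get $E_{ii}-E_{jj}$ in one stroke and then adjoin $\Id$; both arguments work over any field of characteristic zero with at least four elements.
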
 

\begin{proof} 
For $a\in\QQ_p^*$, let $D_a$ be the diagonal matrix with diagonal entries $a,a^{-1},1,...,1$. So $D_a - D_1$ has diagonal entries $a-1,a^{-1}-1,0,...,0$. Choose distinct $a,b\in\QQ_p^*$, with $a\neq 1\neq b$, and observe that the matrices $D_a-D_1, D_b-D_1$ are linearly independent. So the linear span of $D_a, D_b, D_1$ contains the diagonal matrices with diagonal entries $1,0,0,...,0$ and $0,1,0,...,0$. Moving $a$ and $a^{-1}$ down the diagonal we construct all other elements of the canonical basis of $diag(n, \QQ_p )$, hence the result. Note that this proof holds over any field of order at least 4.
\end{proof} 

Observe $G$ acts on $Cart(G)$ and $Cart(\mathfrak{g})$ by conjugation. 

\begin{lemma}\label{stab} $Stab_G(C) := \{ g C g^{-1} = C : g \in G\}$ equals $Stab_G(\mathfrak{c}) := \{ g \mathfrak{c} g^{-1} = \mathfrak{c} : g \in G\}$ . 
\end{lemma}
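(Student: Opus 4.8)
The plan is to reduce both stabilizers to statements about the single object $diag(n,\QQ_p)$, the algebra of all diagonal matrices, by exploiting that for $g \in G = \SL(n,\QQ_p)$ the conjugation map $\phi_g : M \mapsto gMg^{-1}$ is a $\QQ_p$-linear automorphism of $\mathcal{M}(n,\QQ_p)$ which fixes $\Id$ and preserves the trace. Two consequences of this do all the work: $\phi_g$ preserves $\mathfrak{g}=\mathfrak{sl}(n,\QQ_p)$ (the trace-zero matrices) and the line $\QQ_p\cdot\Id$; and, being an invertible linear map, it satisfies $\phi_g(\langle S\rangle)=\langle \phi_g(S)\rangle$ for the $\QQ_p$-span and $\phi_g(A\cap B)=\phi_g(A)\cap\phi_g(B)$ for any subsets. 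I would record these identities explicitly at the outset, since they are exactly what lets conjugation commute with the span and intersection operations used below. The key point is that both $C$ and $\mathfrak{c}$ can be reconstructed from $diag(n,\QQ_p)$ by $\phi_g$-invariant operations: $\mathfrak{c}=diag(n,\QQ_p)\cap\mathfrak{g}$ by definition, and $C=diag(n,\QQ_p)\cap G$.

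For the inclusion $Stab_G(C)\subseteq Stab_G(\mathfrak{c})$, I would begin from $gCg^{-1}=C$ and apply the span. Linearity of $\phi_g$ gives $g\langle C\rangle g^{-1}=\langle gCg^{-1}\rangle=\langle C\rangle$, so Lemma \ref{cid} yields $g\,diag(n,\QQ_p)\,g^{-1}=diag(n,\QQ_p)$; that is, $g$ normalizes the full diagonal algebra. Intersecting with $\mathfrak{g}$ and using that $\phi_g$ fixes $\mathfrak{g}$ setwise (because it preserves the trace), I obtain $g\mathfrak{c}g^{-1}=g(diag\cap\mathfrak{g})g^{-1}=(g\,diag\,g^{-1})\cap\mathfrak{g}=diag\cap\mathfrak{g}=\mathfrak{c}$, as required.

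For the reverse inclusion $Stab_G(\mathfrak{c})\subseteq Stab_G(C)$, the first step is the decomposition $diag(n,\QQ_p)=\mathfrak{c}\oplus\QQ_p\cdot\Id$, which holds because $n$ is invertible in the field $\QQ_p$, so every diagonal matrix $D$ splits as $(D-\tfrac{1}{n}\tr(D)\Id)+\tfrac{1}{n}\tr(D)\Id$ into a trace-zero part and a scalar. Given $g\mathfrak{c}g^{-1}=\mathfrak{c}$ and using $g\Id g^{-1}=\Id$, I get $g\,diag\,g^{-1}=g(\mathfrak{c}\oplus\QQ_p\Id)g^{-1}=\mathfrak{c}\oplus\QQ_p\Id=diag$, so $g$ again normalizes $diag(n,\QQ_p)$. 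Since $g\in G$ forces $gGg^{-1}=G$, intersecting with $G$ gives $gCg^{-1}=g(diag\cap G)g^{-1}=diag\cap G=C$, completing the equality.

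There is no deep obstacle here; the argument is essentially bookkeeping. The only points requiring genuine care are the two structural facts underpinning the invariance of $\phi_g$ — preservation of the trace (so $\mathfrak{g}$ is fixed setwise) and fixing of $\Id$ (so the complementary line is fixed) — together with the single field-theoretic observation that $n\neq 0$ in $\QQ_p$, which is precisely what makes the splitting $diag(n,\QQ_p)=\mathfrak{c}\oplus\QQ_p\Id$ valid and thus powers the reverse inclusion.
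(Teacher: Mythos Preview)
Your proof is correct and follows essentially the same route as the paper: both directions reduce to showing that $g$ normalizes $diag(n,\QQ_p)$, using $\langle C\rangle = diag(n,\QQ_p)$ (Lemma~\ref{cid}) for one inclusion and $\langle \mathfrak{c},\Id\rangle = diag(n,\QQ_p)$ for the other, then intersecting with $\mathfrak{g}$ or $G$ respectively. You spell out more of the underlying linear-algebra bookkeeping (trace preservation, the splitting requiring $n\neq 0$ in $\QQ_p$), but the argument is the same.
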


\begin{proof} 

If $g\in G$ stabilizes $C$, then it stabilizes $\langle C\rangle=diag(n,\QQ_p)$ (by Lemma \ref{cid}), so it stabilizes the set of trace 0 matrices in $diag(n, \QQ_p )$, which is $\mathfrak{c}$. Conversely, if $g\in G$ stabilizes $\mathfrak{c}$, then it stabilizes $\langle\mathfrak{c},Id\rangle=diag(n, \QQ_p )$, hence it stabilizes $diag(n, \QQ_p )\cap G=C$.
\end{proof} 

The next remark is very useful for what follows.
\begin{remark}
\label{rem::Bou_closed}
 By Bourbaki \cite[Section I.14, Cor. 1]{Bou81}, in a Banach space over $\QQ_p$ every finite dimensional linear subspace is closed, similarly to  the real case.
 \end{remark}
 
By Remark \ref{rem::Bou_closed}, $diag(n, \QQ_p)$ is a closed subset of $\mathcal{M}(n, \QQ_p)$, and thus a closed subalgebra of $\mathcal{M}(n, \QQ_p)$. 
Similarly,   $\mathfrak{g}$ is a closed subset of $\mathcal{M}(n, \QQ_p)$. 

\begin{definition}
\label{def::closed_sl}
Let $V$ be a vector space over $\QQ_p$ of finite dimension $N$. Endow $V$ with the product topology from $\QQ_p ^{N}$.  We denote by $\mathcal{S} (V)$ the set of all $\QQ_p$-linear subspace of $V$ (thus closed). In particular, one can consider $V= \mathfrak{g}$.
\end{definition} 
 
We have $\mathfrak{c}$ is a closed and abelian subalgebra of $\mathfrak{g}$, so $\mathfrak{c} \in \mathcal{S}(\mathfrak{g})$.  For every $g \in G$, we obtain $g \mathfrak{c}g^{-1}$ is also a closed abelian subalgebra of $(\mathfrak{g}, [\cdot,\cdot])$, thus $Cart(\mathfrak{g}) \subset \mathcal{S} (\mathfrak{g})$.

Note $a, b \in \mathfrak{g}$ then $ab$ is not necessarily in $\mathfrak{g}$, and $[a,b]=0$ if and only if $ab=ba$.

\begin{lemma} 
\label{chab_g}
The set $\mathcal{S}(\mathfrak{g})$ is compact with respect to the Chabauty topology from $\mathfrak{g}$.
\end{lemma}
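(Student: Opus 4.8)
The plan is to realize $\mathcal{S}(\mathfrak{g})$ as a closed subset of the compact space $\mathcal{F}(\mathfrak{g})$ and then invoke the fact that a closed subset of a compact space is compact. First I would note that $\mathfrak{g} = \mathfrak{sl}(n,\QQ_p)$, carrying the subspace topology from $\QQ_p^{n^2}$, is a finite dimensional $\QQ_p$-vector space and hence a locally compact, Hausdorff, second countable metric space. By the same result of \cite[Proposition~1.7, p.~58]{CoPau} quoted in Section~2, the space $\mathcal{F}(\mathfrak{g})$ of all closed subsets of $\mathfrak{g}$ is compact in the Chabauty topology, and since $\mathfrak{g}$ is second countable and Hausdorff, $\mathcal{F}(\mathfrak{g})$ is moreover metrizable. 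In particular, closedness in $\mathcal{F}(\mathfrak{g})$ can be tested with sequences, so convergence may be analyzed via Proposition~\ref{prop::chabauty_conv}.

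Next I would show $\mathcal{S}(\mathfrak{g})$ is closed in $\mathcal{F}(\mathfrak{g})$. Let $\{V_m\}_{m \in \NN} \subset \mathcal{S}(\mathfrak{g})$ be a sequence of $\QQ_p$-linear subspaces converging to some $F \in \mathcal{F}(\mathfrak{g})$; I claim $F$ is again a $\QQ_p$-linear subspace. The constant sequence $0 \in V_m$ converges to $0$, so by condition 2) of Proposition~\ref{prop::chabauty_conv} we get $0 \in F$. Given $f, f' \in F$, condition 1) furnishes sequences $f_m, f'_m \in V_m$ with $f_m \to f$ and $f'_m \to f'$; then $f_m + f'_m \in V_m$ and $f_m + f'_m \to f + f'$, so $f + f' \in F$ by condition 2). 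Similarly, for $f \in F$ and $\lambda \in \QQ_p$, choosing $f_m \in V_m$ with $f_m \to f$ gives $\lambda f_m \in V_m$ and $\lambda f_m \to \lambda f$, whence $\lambda f \in F$ by condition 2). Thus $F$ contains $0$ and is closed under addition and $\QQ_p$-scalar multiplication, i.e. $F$ is a $\QQ_p$-linear subspace of $\mathfrak{g}$. By Remark~\ref{rem::Bou_closed} it is automatically closed, so $F \in \mathcal{S}(\mathfrak{g})$.

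This shows $\mathcal{S}(\mathfrak{g})$ is closed in the compact space $\mathcal{F}(\mathfrak{g})$, hence compact, which completes the proof. This argument is the exact vector-space analogue of the statement, recorded in Section~2, that the set $\mathcal{S}(G)$ of closed subgroups of a locally compact group $G$ is closed in $\mathcal{F}(G)$: the only change is that the group operations are replaced by the vector-space operations of addition and scalar multiplication. I do not expect a genuine obstacle here; the one point requiring minor care is the scalar-multiplication step, where one must verify the argument for every $\lambda \in \QQ_p$ rather than merely for a generating set, but this is immediate since continuity of scalar multiplication yields $\lambda f_m \to \lambda f$ for each fixed $\lambda$.
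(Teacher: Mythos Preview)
Your proof is correct and follows essentially the same strategy as the paper: both show that $\mathcal{S}(\mathfrak{g})$ is closed in the compact space $\mathcal{F}(\mathfrak{g})$. The only difference is cosmetic: the paper cites \cite[Proposition~17(2)]{CoPau} for the closedness of $\mathcal{S}(\mathfrak{g})$ in $\mathcal{F}(\mathfrak{g})$, whereas you verify this directly via the sequential criterion of Proposition~\ref{prop::chabauty_conv}, checking that a Chabauty limit of $\QQ_p$-linear subspaces is again closed under addition and scalar multiplication.
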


\begin{proof} First, $\mathfrak{g}$ is a locally compact  topological group (with matrix addition) with the topology induced from the locally compact topological group $\mathcal{M}(n, \QQ_p)$.  By Paulin~\cite[Proposition~1.7, p.~58]{CoPau} the space $\mathcal{F}(\mathfrak{g})$ of all closed subsets of $\mathfrak{g}$ is compact with respect to the Chabauty topology on $\mathcal{F}(\mathfrak{g})$. By~\cite[Proposition 17(2)]{CoPau}  $\mathcal{S}(\mathfrak{g})$ is a closed subset of $\mathcal{F}(\mathfrak{g})$ and thus compact with respect to the Chabauty topology.
\end{proof} 

Abelian groups and algebras satisfy the universal relation that the commutator is trivial, so limits of abelian groups and algebras are abelian. As a consequence of Proposition \ref{prop::Cooper} we obtain:

\begin{lemma}
\label{limits_abel} \begin{enumerate}
\item $\overline{Cart(\mathfrak{g}) }^{Ch}$  contains only abelian subalgebras of $(\mathfrak{g}, [\cdot,\cdot])$.
\item $\overline{Cart(G)}^{Ch}$ contains only abelian subgroups.
\end{enumerate} 
\end{lemma}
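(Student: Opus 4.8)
The plan is to deduce both statements from the principle underlying Proposition~\ref{prop::Cooper}: a ``universal relation'' that holds on every term of a Chabauty-convergent sequence persists in the limit, precisely because the defining word is a continuous function of its arguments and convergence supplies approximating sequences from each term.

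For part (2) I would argue that this is literally an instance of Proposition~\ref{prop::Cooper}. The diagonal Cartan $C$ is abelian, hence satisfies the universal relation $w = xyx^{-1}y^{-1}=1$ coming from the free group on two generators. By definition, every $H \in \overline{Cart(G)}^{Ch}$ is a Chabauty limit of a sequence of conjugates $\{g_m C g_m^{-1}\}_{m}$, i.e.\ a $G$-conjugacy limit of $C$ in the sense of Proposition~\ref{prop::Cooper}. That proposition then forces $H$ to satisfy the same relation $w$, so $H$ is abelian, proving part (2).

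For part (1) I would run the analogous limiting argument directly, since Proposition~\ref{prop::Cooper} is phrased for groups whereas here the relevant ``universal relation'' is the vanishing of the Lie bracket $[x,y]=0$ on the linear subspaces comprising $\mathcal{S}(\mathfrak{g})$. Let $A \in \overline{Cart(\mathfrak{g})}^{Ch}$, so there is a sequence $\{g_m \mathfrak{c} g_m^{-1}\}_{m}$ in $Cart(\mathfrak{g})$ converging to $A$ in the Chabauty topology on $\mathcal{S}(\mathfrak{g})$ (which is compact by Lemma~\ref{chab_g}). Fix $a, b \in A$. By condition (1) of Proposition~\ref{prop::chabauty_conv} there exist $a_m, b_m \in g_m \mathfrak{c} g_m^{-1}$ with $a_m \to a$ and $b_m \to b$. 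Each conjugate $g_m \mathfrak{c} g_m^{-1}$ is abelian, so $[a_m, b_m]=0$ for every $m$. The bracket $[\cdot,\cdot] : \mathfrak{g} \times \mathfrak{g} \to \mathfrak{g}$ is bilinear, hence continuous for the subspace topology from $\QQ_p^{n^2}$, so passing to the limit gives $[a,b] = \lim_m [a_m, b_m] = 0$. As $a, b \in A$ were arbitrary, $A$ is abelian; in particular the bracket of any two of its elements is $0 \in A$, so $A$ is automatically closed under the bracket and is therefore an abelian subalgebra, proving part (1).

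The argument is short and I do not expect a serious obstacle. The only point needing care is that Proposition~\ref{prop::Cooper} as stated applies to a group and a group-theoretic word, whereas part (1) concerns linear subspaces and the bilinear bracket; one cannot cite it verbatim and must instead reproduce its elementary limiting step in the algebra setting, using only that the bracket is continuous and that Chabauty convergence furnishes approximating sequences from each $g_m \mathfrak{c} g_m^{-1}$.
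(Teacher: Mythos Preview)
Your proposal is correct and follows essentially the same approach as the paper, which simply invokes Proposition~\ref{prop::Cooper} with the commutator as the universal relation. You are in fact slightly more careful than the paper in noting that Proposition~\ref{prop::Cooper} is stated for groups and group-theoretic words, and in spelling out the straightforward adaptation to the Lie-algebra setting via continuity of the bracket; the paper leaves this implicit.
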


\subsubsection{\textbf{Defining the bijection $Gr: \overline{Cart(\mathfrak{g}) }^{Ch} \to \overline{Cart(G)}^{Ch}$}}
\begin{remark}
By Lemma \ref{stab} we may define the bijective map
 $$ Gr: Cart(\mathfrak{g}) \to  Cart(G) \qquad \qquad g \mathfrak{c} g ^{-1} \mapsto g C g^{-1} .$$
\end{remark}


Our goal is to establish a bijection between the closures $\overline{Cart(\mathfrak{g}) }^{Ch}$ and $\overline{Cart(G)}^{Ch}$.

\begin{definition}  Let $A \subset \mathcal{M}(n, \QQ_p)$ be a subalgebra of $\mathcal{M}(n, \QQ_p)$.  Denote by $A^*$ the group of all invertible elements of $A$ (i.e., $a \in A^*$ if and only if $a \in A \cap \GL(n, \QQ_p)$ with $a^{-1} \in A$). 
\end{definition}

\begin{lemma}
\label{lem::invers_elements}
Let $K$ be a field, in particular one can take $K = \QQ_p$. Let $A \subset \mathcal{M}(n, K)$ be a subalgebra of $\mathcal{M}(n, K)$. Then $A^* = A \cap \GL(n, K)$.
\end{lemma}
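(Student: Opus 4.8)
The plan is to prove the two inclusions separately. The inclusion $A^* \subseteq A \cap \GL(n,K)$ is immediate, since by the very definition of $A^*$ every invertible element of $A$ lies in $A \cap \GL(n,K)$. The content of the lemma is therefore the reverse inclusion: I must show that if $a \in A$ happens to be invertible as a matrix, then its matrix inverse $a^{-1}$ automatically lies in $A$ as well, so that $a$ qualifies as an element of $A^*$.

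To establish this I would fix $a \in A \cap \GL(n,K)$ and exploit that $A$, being a subalgebra of the finite-dimensional algebra $\mathcal{M}(n,K)$, is itself finite dimensional and closed under multiplication; in particular all positive powers $a, a^2, a^3, \dots$ lie in $A$. The key algebraic input is the Cayley--Hamilton theorem applied to $a$: writing the characteristic polynomial as $p(x) = x^n + c_{n-1}x^{n-1} + \cdots + c_1 x + c_0$, we have $p(a) = 0$, and since $a$ is invertible its constant term $c_0 = p(0) = (-1)^n \det(a)$ is nonzero.

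The slightly subtle point, and the step I expect to require the most care, is that a subalgebra of $\mathcal{M}(n,K)$ need not a priori contain the identity matrix $\Id$ (indeed the paper stresses that $\mathfrak{g}$ is not even closed under multiplication). I would dispose of this first by rearranging $p(a)=0$ as
$$c_0 \Id = -\left(a^n + c_{n-1}a^{n-1} + \cdots + c_1 a\right),$$
whose right-hand side is a $K$-linear combination of positive powers of $a$ and hence lies in $A$; dividing by the nonzero scalar $c_0$ shows $\Id \in A$. Multiplying the relation $p(a)=0$ by $a^{-1}$ then yields
$$c_0 \, a^{-1} = -\left(a^{n-1} + c_{n-1}a^{n-2} + \cdots + c_1 \Id\right),$$
and now that $\Id \in A$ every term on the right lies in $A$, so $a^{-1} \in A$ and therefore $a \in A^*$. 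This completes the reverse inclusion and hence the proof. I would note in passing that the argument uses nothing about $K$ beyond its being a field, matching the generality asserted in the statement.
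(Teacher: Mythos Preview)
Your proof is correct and follows the same Cayley--Hamilton approach as the paper. In fact you are slightly more careful than the paper: the published proof jumps directly to the formula $a^{-1} = -\frac{1}{k_0}(a^{n-1}+k_{n-1}a^{n-2}+\cdots+k_1\Id)$ without pausing to note that $\Id\in A$, whereas you first extract $\Id\in A$ from the Cayley--Hamilton relation itself (since $c_0\Id$ is a combination of positive powers of $a$) before invoking it. This extra step makes the argument work cleanly even for subalgebras not assumed unital, which matches the generality of the lemma's statement.
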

\begin{proof}
The inclusion $A^* \subset A \cap \GL(n, K)$ is clear. For the converse, let $a \in A \cap \GL(n, K)$. Let $P(t)= t^n+ k_{n-1} t^{n-1}+\cdots + k_1 t +k_0$ be the characteristic polynomial of $a$, where $k_{n-1}, \cdots, k_1, k_0 \in K$ with $k_0= \det(a) \neq 0$. By Cayley--Hamilton we have $0= P(a)=a^n+ k_{n-1} a^{n-1}+\cdots + k_1 a +k_0$ and so $a^{-1} = -\frac{1}{k_0} (a^{n-1}+ k_{n-1} a^{n-2}+\cdots + k_1 \Id )$ belongs to $A$.
\end{proof}

\begin{lemma}
\label{inv_gp}  
Let $A \in \mathcal{S}(\mathfrak{g})$ such that $\langle A, \Id \rangle$ is a subalgebra of $\mathcal{M}(n, \QQ_p)$.  Then $\langle A, \Id \rangle ^* = \langle A, \Id \rangle \cap \GL(n, \QQ_p)$ is a closed subgroup of $\GL(n, \QQ_p)$. 
\end{lemma}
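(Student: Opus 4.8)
The plan is to set $B := \langle A, \Id \rangle$ and to split the statement into an algebraic half (that $B^* = B \cap \GL(n, \QQ_p)$ is a subgroup) and a topological half (that it is closed). Since $A \subseteq \mathfrak{g} \subseteq \mathcal{M}(n, \QQ_p)$ and $\mathcal{M}(n, \QQ_p)$ has finite $\QQ_p$-dimension $n^2$, the span $B$ is automatically a finite-dimensional $\QQ_p$-linear subspace, and by hypothesis it is a subalgebra of $\mathcal{M}(n, \QQ_p)$ containing $\Id$. The set-theoretic identity $B^* = B \cap \GL(n, \QQ_p)$ is then exactly Lemma \ref{lem::invers_elements} applied to the subalgebra $B$ with $K = \QQ_p$.

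For the subgroup property I would verify the three axioms directly. The identity lies in $B^*$ because $\Id \in B$ and $\Id$ is invertible. Closure under multiplication holds because $B$ is a subalgebra (so $ab \in B$ whenever $a, b \in B$) and a product of invertible matrices is invertible, whence $ab \in B \cap \GL(n, \QQ_p)$. Closure under inverses is the only mildly substantive point: if $a \in B \cap \GL(n, \QQ_p)$, the Cayley--Hamilton computation from the proof of Lemma \ref{lem::invers_elements} expresses $a^{-1}$ as a $\QQ_p$-linear combination of $\Id, a, \dots, a^{n-1}$, all of which lie in $B$ (here it is essential that $\Id \in B$), so $a^{-1} \in B^*$.

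For closedness I would argue that $B$, being a finite-dimensional $\QQ_p$-linear subspace of the Banach space $\mathcal{M}(n, \QQ_p)$, is closed in $\mathcal{M}(n, \QQ_p)$ by Remark \ref{rem::Bou_closed}. Since $\GL(n, \QQ_p)$ carries the subspace topology inherited from $\mathcal{M}(n, \QQ_p)$, the intersection $B^* = B \cap \GL(n, \QQ_p)$ of the closed set $B$ with $\GL(n, \QQ_p)$ is closed in $\GL(n, \QQ_p)$ by the definition of the subspace topology, which completes the argument. There is no genuine obstacle here; the one thing to keep in mind is that closedness is asserted \emph{relative to} $\GL(n, \QQ_p)$, which is an open and hence non-closed subset of $\mathcal{M}(n, \QQ_p)$, so one should not expect $B^*$ to be closed in $\mathcal{M}(n, \QQ_p)$ itself — only its relative closedness in $\GL(n, \QQ_p)$ holds, and that is exactly what is needed.
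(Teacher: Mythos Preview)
Your proof is correct and follows the same approach as the paper: both invoke Lemma~\ref{lem::invers_elements} to get the set-theoretic identity $\langle A,\Id\rangle^* = \langle A,\Id\rangle \cap \GL(n,\QQ_p)$, from which the subgroup and closedness properties follow. The paper's proof is a single sentence that leaves these last two points implicit, whereas you have spelled them out explicitly (subgroup axioms via Cayley--Hamilton, closedness via Remark~\ref{rem::Bou_closed} and the subspace topology); your added detail is all correct and matches what the paper is tacitly using.
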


\begin{proof} 
As by Lemma \ref{lem::invers_elements} $\langle A, \Id \rangle ^* = \langle A, \Id \rangle \cap \GL(n, \QQ_p)$ the conclusion follows.
\end{proof}

\begin{remark} 
\label{rem::Gr_cartan}
Notice $\langle \mathfrak{c}, \Id \rangle  \cap G= \langle \mathfrak{c}, \Id \rangle^*  \cap G = C$.   
\end{remark}

\begin{lemma}
\label{lem::Chab_cont_span}
Let $V$ be a finite dimensional vector space over $\QQ_p$.  Let $W$ be a hyperplane in $V$ and let $v$ be a vector in $V \setminus W$. Then the map $e_v : \mathcal{S}(W) \to \mathcal{S}(V)$ given by $A \mapsto \langle A, v \rangle$ is continuous with respect to the Chabauty topology. In particular, $e_v$ is injective.
\end{lemma}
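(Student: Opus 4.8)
The plan is to reduce continuity to sequential continuity and then apply the characterization of Chabauty convergence in Proposition \ref{prop::chabauty_conv}. Since $V$ is a finite dimensional $\QQ_p$-vector space with the product topology inherited from $\QQ_p^N$, it is a locally compact, Hausdorff, second countable metric space, and the same holds for the hyperplane $W$; hence $\mathcal{S}(W)$ and $\mathcal{S}(V)$ are compact metrizable spaces in the Chabauty topology. In particular $e_v$ is continuous if and only if it is sequentially continuous. The structural input I would exploit is that, because $W$ is a hyperplane and $v \notin W$, one has a direct sum decomposition $V = W \oplus \QQ_p v$; let $\pi : V \to \QQ_p v$ denote the ($\QQ_p$-linear, hence continuous) projection onto the second factor along $W$. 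Note also that for every $A \in \mathcal{S}(W)$ the sum $\langle A, v\rangle = A \oplus \QQ_p v$ is direct, so each element of $\langle A, v\rangle$ has a unique expression $a + \beta v$ with $a \in A$ and $\beta \in \QQ_p$.

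So let $\{A_n\}_{n \in \NN}$ be a sequence in $\mathcal{S}(W)$ converging to $A \in \mathcal{S}(W)$, and I must check the two conditions of Proposition \ref{prop::chabauty_conv} for $\{\langle A_n, v\rangle\}_{n \in \NN}$ and $\langle A, v\rangle$. For condition (1), fix $f \in \langle A, v\rangle$ and write $f = a + \beta v$ with $a \in A$. By condition (1) applied to $A_n \to A$ there are $a_n \in A_n$ with $a_n \to a$; then $f_n := a_n + \beta v \in \langle A_n, v\rangle$ satisfies $f_n \to f$. This direction is immediate.

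The substance of the argument, and the step I expect to be the main obstacle, is condition (2): controlling the scalar coefficients of a convergent sequence so as to guarantee they do not escape. Suppose $f_{n_k} \in \langle A_{n_k}, v\rangle$ and $f_{n_k} \to f$, and write $f_{n_k} = a_{n_k} + \beta_{n_k} v$ uniquely with $a_{n_k} \in A_{n_k} \subset W$ and $\beta_{n_k} \in \QQ_p$. Applying the projection $\pi$, which is continuous and vanishes on $W$, gives $\pi(f_{n_k}) = \beta_{n_k} v \to \pi(f)$; writing $\pi(f) = \beta v$ this yields $\beta_{n_k} \to \beta$. Consequently $a_{n_k} = f_{n_k} - \beta_{n_k} v \to f - \beta v =: a$, and since $a_{n_k} \in A_{n_k}$ with $A_{n_k} \to A$, condition (2) for $A_n \to A$ forces $a \in A$. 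Hence $f = a + \beta v \in \langle A, v\rangle$, establishing condition (2) and therefore the Chabauty convergence $\langle A_n, v\rangle \to \langle A, v\rangle$.

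Finally, injectivity is a purely algebraic consequence of the same decomposition: for any $A \in \mathcal{S}(W)$ one recovers $A = \langle A, v\rangle \cap W$, since an element $a + \beta v$ of $\langle A, v\rangle$ lies in $W$ only if $\beta v \in W$, i.e. $\beta = 0$. Thus $\langle A, v\rangle = \langle A', v\rangle$ implies $A = A'$, and $e_v$ is injective.
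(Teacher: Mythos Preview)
Your proof is correct and follows essentially the same approach as the paper: both verify the two conditions of Proposition~\ref{prop::chabauty_conv}, using the decomposition $V = W \oplus \QQ_p v$ to isolate the scalar coefficient (the paper does this via a linear form $\alpha$ with $W = \ker\alpha$ and $\alpha(v)=1$, while you use the projection $\pi$ onto $\QQ_p v$ along $W$, but these are the same map up to identifying $\QQ_p v$ with $\QQ_p$). Your preliminary remark justifying that sequential continuity suffices is a small addition the paper leaves implicit.
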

\begin{proof}
We write $W = \ker \alpha$ where $\alpha$ is some linear form on $V$ such that $\alpha(v)=1$. 

Let $\{A_n\}_{n \in \NN}$ be a sequence of linear subspaces of $W$ converging in the Chabauty topology to a linear subspace $A$ of $W$. For $x= a+ \lambda v \in e_v(A)$ write $a= \lim_{n \to \infty} a_n $, with $a_n \in A_n$, for every $n \in \NN$. Then, for every $n \in \NN$ we have $a_n + \lambda v \in e_v(A_n)$ and $ \lim_{n \to \infty}(a_n + \lambda v )= x$. 

Conversely, if $\{x_n \in e_v(A_n)\}_{n \in \NN}$ converges to $x = a + \lambda v \in V$, then by writing $x_n= a_n + \lambda_n v$ and applying $\alpha$ we obtain $ \lim_{n \to \infty} \lambda_n =\lambda$. Then $\lim_{n \to \infty} a_n= \lim_{n \to \infty} (x_n - \lambda_n v)= x- \lambda v=a$. Thus $a \in A$ and $x \in e_v(A)$. We have shown the Chabauty limit of $\{e_v(A_n)\}_{n \in \NN}$ is $e_v(A)$. 

As $A = W \cap e_v(A)$ for every $A \in \mathcal{S}(W)$ the injectivity follows for $e_v$.
\end{proof}

\begin{remark}
\label{lem::6.15}
As a consequence of Lemma \ref{lem::Chab_cont_span} we obtain: if $\{ A_m \}_{m \in \NN} \subset \overline{Cart(\mathfrak{g})}^{Ch}$ with $A_m \xrightarrow{m \to \infty} A \in \overline{Cart(\mathfrak{g})}^{Ch}$,  then $\langle A_m , \Id \rangle \xrightarrow{m \to \infty}  \langle A, \Id \rangle$ with respect to the Chabauty topology induced from $\mathcal{M}(n, \QQ_p)$. 
\end{remark}

\begin{lemma}
\label{alg_limits_abel}  
Let $A \in Cart(\mathfrak{g})$, then $\langle A, \Id \rangle$ is an abelian subalgebra of $\mathcal{M}(n, \QQ_p)$.  If $\{A_m \}_{m \in \NN} \subset Cart (\mathfrak{g})$ with $A_m \xrightarrow{m \to \infty} A \in \overline{Cart (\mathfrak{g})}^{Ch}$ then $\langle A, \Id \rangle$ is an abelian subalgebra of $\mathcal{M} (n, \QQ_p)$.
\end{lemma}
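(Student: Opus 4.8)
The plan is to treat the two assertions in turn, using the first as the input to the second.

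For the first assertion I would start from the diagonal case. Since $\mathfrak{c}$ consists of the trace-zero diagonal matrices and $\Id$ is diagonal with trace $n\neq 0$ (so $\Id\notin\mathfrak{c}$), the span $\langle \mathfrak{c},\Id\rangle$ has dimension $(n-1)+1=n$ and is contained in $diag(n,\QQ_p)$; comparing dimensions gives $\langle \mathfrak{c},\Id\rangle = diag(n,\QQ_p)$, which is visibly an abelian subalgebra of $\mathcal{M}(n,\QQ_p)$ (diagonal matrices are closed under products and commute). For a general $A=g\mathfrak{c}g^{-1}\in Cart(\mathfrak{g})$, I would use that $\Id=g\Id g^{-1}$ and that conjugation by $g$ is a $\QQ_p$-linear algebra automorphism of $\mathcal{M}(n,\QQ_p)$, so that $\langle A,\Id\rangle = g\langle \mathfrak{c},\Id\rangle g^{-1}=g\, diag(n,\QQ_p)\, g^{-1}$. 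Being the image of an abelian subalgebra under an algebra automorphism, this is again an abelian subalgebra.

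For the second assertion, the first step is to upgrade the convergence $A_m\to A$ to convergence of the spans. By Remark \ref{lem::6.15} (a consequence of Lemma \ref{lem::Chab_cont_span} applied with $V=\mathcal{M}(n,\QQ_p)$, the hyperplane $W=\mathfrak{g}$, and $v=\Id\notin\mathfrak{g}$), and since $A_m\in Cart(\mathfrak{g})\subset \overline{Cart(\mathfrak{g})}^{Ch}$, the convergence $A_m\to A$ in $\mathcal{S}(\mathfrak{g})$ yields $\langle A_m,\Id\rangle \to \langle A,\Id\rangle$ in the Chabauty topology on $\mathcal{S}(\mathcal{M}(n,\QQ_p))$. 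By the first assertion, each $\langle A_m,\Id\rangle$ is an abelian subalgebra. Since $\langle A,\Id\rangle = A+\QQ_p\Id$ is by construction a linear subspace, it only remains to check that it is closed under matrix multiplication and is commutative.

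I would obtain both properties by passing to the limit through the two conditions of Proposition \ref{prop::chabauty_conv}. Given $x,y\in\langle A,\Id\rangle$, condition (1) supplies sequences $x_m,y_m\in\langle A_m,\Id\rangle$ with $x_m\to x$ and $y_m\to y$. Since each $\langle A_m,\Id\rangle$ is a subalgebra, $x_my_m\in\langle A_m,\Id\rangle$, and by continuity of matrix multiplication $x_my_m\to xy$; condition (2) then forces $xy\in\langle A,\Id\rangle$, giving closure under products. Commutativity is easier: $A$ is an abelian Lie subalgebra by Lemma \ref{limits_abel}, so its matrices commute, and because $\Id$ is central this makes all of $\langle A,\Id\rangle=A+\QQ_p\Id$ commute (alternatively, pass $x_my_m=y_mx_m$ to the limit). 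I expect the only real subtlety to be the closure-under-multiplication step, where both Chabauty conditions must be combined with the continuity of multiplication on $\mathcal{M}(n,\QQ_p)$; everything else is bookkeeping once the convergence $\langle A_m,\Id\rangle\to\langle A,\Id\rangle$ is in hand.
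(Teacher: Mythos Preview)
Your proposal is correct and follows essentially the same approach as the paper: both establish the first assertion by identifying $\langle A,\Id\rangle$ with a conjugate of $diag(n,\QQ_p)$ (the paper via Lemma~\ref{cid}, you via a dimension count), and both handle the second assertion by invoking Remark~\ref{lem::6.15} to approximate elements of $\langle A,\Id\rangle$ from $\langle A_m,\Id\rangle$, using that each $\langle A_m,\Id\rangle$ is an abelian subalgebra, and passing products to the limit. The only cosmetic difference is that the paper, instead of directly appealing to condition~(2) of Proposition~\ref{prop::chabauty_conv} for the convergence $\langle A_m,\Id\rangle\to\langle A,\Id\rangle$, decomposes $a_m b_m = c_m + \lambda_m\Id$ via the trace and applies condition~(2) to $A_m\to A$ for the trace-zero part $c_m$; your packaging is slightly cleaner but the content is identical.
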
 

\begin{proof}  

Let $A \in Cart(\mathfrak{g})$. Then there exists $g \in G$ such that $A=g\mathfrak{c} g^{-1}$ and by Lemma~\ref{cid} $\langle A, \Id \rangle= g \cdot  diag(n, \QQ_p) \cdot g^{-1}$. 
So $\langle A, \Id \rangle$ is a subalgebra of $\mathcal{M}(n, \QQ_p)$ with respect to the multiplication of matrices. As $diag(n, \QQ_p)$ is an abelian subalgebra  this implies $\langle A, \Id \rangle$ is abelian too. 

Let $\{A_m \}_{m \in \NN} \subset Cart (\mathfrak{g})$ with $A_m \xrightarrow{m \to \infty} A \in \overline{Cart (\mathfrak{g})}^{Ch}$. Let $a, b \in \langle A, \Id \rangle$. By Remark \ref{lem::6.15} there exist $\{a_m \in \langle A_m, \Id \rangle\}_{m \in \NN}$ and $\{b_m \in \langle A_m, \Id \rangle\}_{m \in \NN}$ with $a_m  \xrightarrow{m \to \infty} a$ and $b_m  \xrightarrow{m \to \infty} b$. 

By the first part of the lemma we know that $a_m b_m \in \langle A_m, \Id \rangle$. So $a_m b_m= c_m + \lambda_m \Id$, where $c_m \in A_m$, $trace(c_m)=0$ and $trace(a_m b_m)=n \lambda_m$.

Moreover, we have $a_m b_m  \xrightarrow{m \to \infty} ab$, and thus $trace(a_m b_m)=n \lambda_m \xrightarrow{m \to \infty} trace(ab)=:n \lambda$. We write $ab= c+\lambda \Id $, with $c \in \mathfrak{g}$. Then $c_m \xrightarrow{m \to \infty}  c$ and as $A_m \xrightarrow{m \to \infty} A$ we obtain $c \in A$. It follows $ab \in \langle A, \Id \rangle$. Because $a_m b_m= b_m a_m$ we have $ab=ba$ and the conclusion of the lemma follows.
\end{proof} 

Note in the proof of Lemma \ref{alg_limits_abel}  it was important that $A \in Cart(\mathfrak{g})$ is a conjugate of the set of all diagonal matrices with trace zero. 

\medskip
 Let $\mathcal{S}_{1}(\mathcal{M}(n, \QQ_p))$ be the set of unital subalgebras of $\mathcal{M}(n, \QQ_p)$. 

\begin{proposition}
\label{prop::Gr_tilde}
The map $\tilde{Gr}: \mathcal{S}_{1}(\mathcal{M}(n, \QQ_p)) \to \mathcal{S}(G)$ given by $B \mapsto \tilde{Gr}(B):= B \cap G = B^* \cap G$ is Chabauty continuous.
\end{proposition}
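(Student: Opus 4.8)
The plan is to verify the sequential characterisation of Chabauty convergence from Proposition~\ref{prop::chabauty_conv}. Both $\mathcal{M}(n,\QQ_p)$ and $G=\SL(n,\QQ_p)$ are second countable, locally compact and metrisable, so the associated Chabauty spaces are metrisable, and it suffices to prove $\tilde{Gr}$ is sequentially continuous. First I would note that $\tilde{Gr}$ is well defined: $B\cap G = B^*\cap G$ is a subgroup (the group $B^*$ of units of the unital algebra $B$ intersected with $G$), and it is closed in $G$ since $B$ and $\SL(n,\QQ_p)$ are both closed in $\mathcal{M}(n,\QQ_p)$. Now fix a sequence $\{B_m\}_{m\in\NN}\subset \mathcal{S}_1(\mathcal{M}(n,\QQ_p))$ converging to $B$, and set out to show $\{B_m\cap G\}_{m\in\NN}$ converges to $B\cap G$ by checking conditions (1) and (2) of Proposition~\ref{prop::chabauty_conv}.

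The upper direction, condition (2), is routine. Suppose $\{h_m\in B_m\cap G\}_{m\in\NN}$ has a subsequence $h_{m_k}$ converging to some $h\in G$. Since $h_{m_k}\in B_{m_k}$ and $B_m\to B$ in $\mathcal{S}_1(\mathcal{M}(n,\QQ_p))$, condition (2) for that convergence forces $h\in B$; as $G$ is closed and $h$ is the limit inside $G$, we also have $h\in G$, hence $h\in B\cap G$.

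The lower direction, condition (1), is the heart of the matter and where the main obstacle lies. Take $h\in B\cap G$. Using condition (1) for $B_m\to B$, choose $b_m\in B_m$ with $b_m\to h$. Then $\det(b_m)\to\det(h)=1$, so for $m$ large $\det(b_m)$ is a unit close to $1$ in $\QQ_p^*$. The difficulty is that $b_m\notin G$ in general, since typically $\det(b_m)\neq 1$, and I want to rescale $b_m$ by a scalar $\lambda_m\Id$ (which lies in $B_m$ because $B_m$ is a unital $\QQ_p$-subalgebra) so that $\det(\lambda_m b_m)=\lambda_m^{\,n}\det(b_m)=1$. This requires $(\det b_m)^{-1}$ to be an $n$-th power, which is \emph{not} automatic in $\QQ_p^*$; the key observation resolving this is that only $n$-th roots \emph{near $1$} are needed. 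Apply Hensel's lemma to $f(\lambda)=\lambda^n-(\det b_m)^{-1}$ with approximate root $\lambda=1$: here $f(1)=1-(\det b_m)^{-1}$ and $f'(1)=n$, and since $\det(b_m)\to 1$ we eventually have $|f(1)|_p<|n|_p^2$, so Hensel yields a root $\lambda_m$ with $|\lambda_m-1|_p\le |1-(\det b_m)^{-1}|_p/|n|_p\to 0$, i.e. $\lambda_m\to 1$. Then $h_m:=\lambda_m b_m\in B_m$ is invertible with determinant $1$, so $h_m\in B_m\cap G$, and $h_m=\lambda_m b_m\to 1\cdot h=h$. This establishes condition (1) and completes the proof.

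The principal obstacle is precisely the rescaling step in condition (1): naively one cannot solve $\lambda^n=(\det b_m)^{-1}$ in $\QQ_p$, and the argument hinges on restricting attention to a neighbourhood of $1$ where Hensel's lemma guarantees an $n$-th root converging to $1$, which simultaneously fixes the determinant and preserves the limit $h_m\to h$.
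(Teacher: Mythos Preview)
Your proof is correct and follows essentially the same approach as the paper: verify sequential Chabauty continuity, with condition~(2) immediate and condition~(1) handled by rescaling $b_m$ by a scalar $n$-th root of $\det(b_m)^{-1}$ near~$1$. The only difference is cosmetic: the paper packages the existence of a continuous $n$-th root near~$1$ into a separate lemma (Lemma~\ref{lem::Y_C}, using the $p$-adic $\exp$ and $\log$), whereas you invoke Hensel's lemma directly---both are standard and yield the same conclusion.
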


In order to prove this we need the following lemma, which allows us to construct a continuous branching of the $m$-th root on a neighborhood of 1 in $\QQ_p^*$.

\begin{lemma}[pointed out by Y. de Cornulier] 
\label{lem::Y_C}
Fix $m \geq 1$. Then there exists an open neighbourhood $U$ of $1$ in $\QQ_{p}^{*}$ and a continuous function $g : U \to \QQ_{p}^{*} $ such that $g(x)^m = x$ for every $x \in U$.
\end{lemma}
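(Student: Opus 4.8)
The plan is to build the $m$-th root directly from the $p$-adic exponential and logarithm, by setting $g(x):=\exp\!\left(\tfrac{1}{m}\log x\right)$ on a sufficiently small neighbourhood of $1$. Recall that the $p$-adic logarithm $\log(1+z)=\sum_{n\geq 1}\frac{(-1)^{n-1}}{n}z^n$ converges for $z\in p\ZZ_p$, while the $p$-adic exponential $\exp(y)=\sum_{n\geq 0}\frac{y^n}{n!}$ converges precisely when $v_p(y)>\frac{1}{p-1}$. For $k$ large enough both series converge on the relevant domains and furnish mutually inverse, continuous group isomorphisms
$$\log: 1+p^k\ZZ_p \;\xrightarrow{\ \sim\ }\; p^k\ZZ_p, \qquad \exp: p^k\ZZ_p \;\xrightarrow{\ \sim\ }\; 1+p^k\ZZ_p,$$
between the multiplicative group $1+p^k\ZZ_p$ and the additive group $p^k\ZZ_p$; in particular $\exp(y)^m=\exp(my)$ and $\exp\circ\log=\mathrm{id}$ on $1+p^k\ZZ_p$.

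First I would fix the valuation $a:=v_p(m)$ and write $m=p^a m'$ with $\gcd(m',p)=1$, so that multiplication by $\tfrac1m$ sends $p^k\ZZ_p$ onto $p^{k-a}\ZZ_p$. Then I would choose the integer $k$ large enough that two conditions hold simultaneously: (i) $\log$ and $\exp$ restrict to the mutually inverse isomorphisms displayed above (for $p$ odd one may take $k\geq 1$, for $p=2$ one takes $k\geq 2$); and (ii) $k-a$ is large enough that $p^{k-a}\ZZ_p$ still lies in the convergence domain of $\exp$, i.e. $k-a\geq 1$ for $p$ odd and $k-a\geq 2$ for $p=2$. With such a $k$, set $U:=1+p^k\ZZ_p$, an open neighbourhood of $1$ in $\QQ_p^*$, and define $g(x):=\exp\!\left(\tfrac1m\log x\right)$.

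For continuity, $\log$ is continuous on $U$, multiplication by $\tfrac1m$ is continuous, and $\exp$ is continuous on $p^{k-a}\ZZ_p$ by condition (ii), so $g$ is continuous; moreover $g(U)\subset 1+p^{k-a}\ZZ_p\subset\QQ_p^*$. Finally, using that $\exp$ is a homomorphism from the additive to the multiplicative group and that $\exp\circ\log=\mathrm{id}$ on $U$, we compute $g(x)^m=\exp\!\left(\tfrac1m\log x\right)^m=\exp\!\left(m\cdot\tfrac1m\log x\right)=\exp(\log x)=x$ for every $x\in U$, as required. The one point demanding care—and the only real obstacle—is bookkeeping the convergence domains when $p\mid m$: dividing by $m$ lowers the valuation by $a=v_p(m)$, so $k$ must be chosen large enough that $\tfrac1m\log x$ remains inside the domain where $\exp$ converges, with the case $p=2$ requiring the slightly stronger threshold $v_2(\cdot)\geq 2$. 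An alternative route via Hensel's lemma applied to $T^m-x$ (valid once $|x-1|_p<|m|_p^2$) would also produce the root, but the $\exp/\log$ construction yields continuity essentially for free.
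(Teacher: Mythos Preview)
Your proof is correct and takes essentially the same approach as the paper: both define $g(x)=\exp\bigl(\tfrac{1}{m}\log x\bigr)$ on a small neighbourhood of $1$, using the $p$-adic exponential and logarithm. The paper simply cites Robert's book for the convergence issues, whereas you work them out explicitly (including the shift by $v_p(m)$ and the $p=2$ threshold); this extra care is fine but not required for the level of the paper's argument.
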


\begin{proof}
By Robert \cite[Chapter 5, Section 4.1]{Rob00}, the function mapping $x \mapsto \exp(\frac{\log(1+x)}{m})$ is defined on a small neighbourhood $V$ around $0$ in $\QQ_p$; denote this function by $f$.  Moreover by \cite[Chapter 5, Section 4.2, Prop. 3]{Rob00} we have $f(x)^m = \exp \log(1+x) = 1+x$. Thus the function $g(x) = f(x-1) $ defined on $U=1+V$ does the job.
\end{proof}

\begin{proof}[Proof of Proposition \ref{prop::Gr_tilde}]
Let $\{B_k\}_{k \in \NN}$ be a sequence in $\mathcal{S}_{1}(\mathcal{M}(n, \QQ_p))$, converging to $B \in \mathcal{S}_{1}(\mathcal{M}(n, \QQ_p))$. We need to show the sequence $\{B_k \cap G\}_{k \in \NN}$ converges to $B \cap G$. Indeed, if a sequence $\{b_k \in B_k \cap G\}_{k \in \NN}$ converges to $b \in \mathcal{M}(n, \QQ_p)$, then clearly $b \in B \cap G$ as $G$ is closed. Conversely, if $b \in B\cap G$, then we may write $b=\lim_{k \to \infty} b_k$  with $b_k \in B_k$. So $\lim_{k \to \infty} \det(b_k) = 1$, and $\det(b_k)$  is in a neighbourhood $U \subset \QQ_p$ of $1 \in \QQ_p$, for $ k>0$ large enough. By Lemma \ref{lem::Y_C} (for $m=n$), define $ \lambda_k: = g(\det(b_k))$ (so that $\lim_{k \to \infty} \lambda_k =1$) and $b'_k = \lambda^{-1}_{k} b_k$. Then $b'_k \in B_k \cap G$ and $\lim_{k \to \infty}b'_k=b$.
\end{proof}

\begin{proposition}
\label{welldef} 
Let $A \in \overline{Cart (\mathfrak{g})}^{Ch}$, then $Gr(A) := \langle A, \Id \rangle \cap G$ is a closed subgroup of  $\overline{Cart (G)}^{Ch}$ and if $X\in A$ belongs to the domain of the exponential map $Exp$, then $Exp(X)\in Gr(A)$. The map $Gr:  \overline{Cart (\mathfrak{g})}^{Ch} \to \overline{Cart (G)}^{Ch}$ is well defined and  continuous with respect to the Chabauty topology on $\overline{Cart (\mathfrak{g})}^{Ch}$ and $\overline{Cart (G)}^{Ch}$.
\end{proposition} 

\begin{proof} 
By Lemma~\ref{alg_limits_abel}, $\langle A, \Id \rangle$ is a closed abelian subalgebra of $\mathcal{M}(n, \QQ_p)$. Thus by Lemma~\ref{inv_gp}, $Gr(A) = \langle A, \Id \rangle ^* \cap G= \langle A, \Id \rangle \cap G$ is a closed abelian subgroup of $\GL(n , \QQ_p)$.  If $X\in A$, then $Exp(X)\in G$ as $X$ is of trace 0, and on the other hand $Exp(X)\in \langle A, \Id \rangle$ as the latter is a closed sub algebra of $\mathcal{M}(n, \QQ_p)$. So $Exp(X)\in Gr(A)$.

Now consider the map $e_{\Id} : \mathcal{S}(\mathfrak{g}) \to \mathfrak{\mathcal{M}(n, \QQ_p)}$ given by $A \mapsto e_{\Id}(A)=\langle A, \Id \rangle$. By Lemma \ref{lem::Chab_cont_span} it is Chabauty continuous. As well is the canonical inclusion $\iota : \overline{Cart (\mathfrak{g})}^{Ch} \to \mathcal{S}(\mathfrak{g})$. It is then easy to see the image of $e_{\Id} \circ \iota$ is contained in $\mathcal{S}_{1}(\mathcal{M}(n, \QQ_p))$ (as $\langle A, \Id \rangle$ is a unital subalgebra). So we may compose with $\tilde{Gr}$ and obtain $Gr:= \tilde{Gr} \circ e_{\Id} \circ \iota$ that is well-defined and Chabauty continuous. Since $Gr$ maps $Cart (\mathfrak{g})$ to $Cart (G)$, just by continuity it maps $\overline{Cart (\mathfrak{g})}^{Ch}$ to $\overline{Cart (G)}^{Ch}$.
\end{proof}




Notice $Gr(\mathfrak{c})=C$. Our goal is to prove the map $Gr:  \overline{Cart (\mathfrak{g})}^{Ch} \to \overline{Cart (G)}^{Ch}$ is a bijection.

\begin{proposition}
\label{surj} 
The map $Gr:  \overline{Cart (\mathfrak{g})}^{Ch} \to \overline{Cart (G)}^{Ch}$ is surjective. 
\end{proposition}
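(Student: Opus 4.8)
The plan is to exploit compactness of $\mathcal{S}(\mathfrak{g})$ together with the continuity of $Gr$ established in Proposition \ref{welldef}, reducing surjectivity to a routine ``diagonal limit'' argument. First I would fix an arbitrary $H \in \overline{Cart(G)}^{Ch}$. By definition of the closure there is a sequence $\{g_m\}_{m \in \NN} \subset G$ such that $\{g_m C g_m^{-1}\}_{m \in \NN}$ converges to $H$ in the Chabauty topology on $\mathcal{S}(G)$. I would then lift this to the Lie algebra side by considering the associated sequence $\{g_m \mathfrak{c} g_m^{-1}\}_{m \in \NN} \subset Cart(\mathfrak{g}) \subset \mathcal{S}(\mathfrak{g})$.

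The next step uses Lemma \ref{chab_g}: since $\mathcal{S}(\mathfrak{g})$ is compact with respect to the Chabauty topology, the sequence $\{g_m \mathfrak{c} g_m^{-1}\}_{m \in \NN}$ admits a subsequence $\{g_{m_k} \mathfrak{c} g_{m_k}^{-1}\}_{k \in \NN}$ converging to some $A \in \mathcal{S}(\mathfrak{g})$. Because each term lies in $Cart(\mathfrak{g})$, the limit $A$ lies in $\overline{Cart(\mathfrak{g})}^{Ch}$ by construction. Thus $A$ is a legitimate element of the domain of $Gr$, and the remaining task is to identify $Gr(A)$ with $H$.

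To finish I would invoke continuity of $Gr$ (Proposition \ref{welldef}): applying $Gr$ to the convergent subsequence gives $Gr(g_{m_k} \mathfrak{c} g_{m_k}^{-1}) \xrightarrow{k \to \infty} Gr(A)$. On the other hand, by the defining property of $Gr$ on $Cart(\mathfrak{g})$ we have $Gr(g_{m_k} \mathfrak{c} g_{m_k}^{-1}) = g_{m_k} C g_{m_k}^{-1}$, and this subsequence converges to $H$ since it is a subsequence of a sequence already converging to $H$. Because $G$ is second countable, the Chabauty space $\mathcal{S}(G)$ is Hausdorff, so Chabauty limits are unique; comparing the two computations yields $Gr(A) = H$. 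Since $H$ was arbitrary, surjectivity follows.

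I do not anticipate a genuine obstacle here: the argument is a standard compactness-plus-continuity sandwich, and all the hard analytic content (compactness of $\mathcal{S}(\mathfrak{g})$, continuity of $Gr$, and the algebraic identity $Gr(g\mathfrak{c}g^{-1}) = gCg^{-1}$) is already in hand. The only point requiring mild care is the bookkeeping of subsequences, namely ensuring that passing to the subsequence on the algebra side does not disturb the convergence of the corresponding group-side sequence to $H$; this is immediate since every subsequence of a convergent sequence in a Hausdorff space converges to the same limit.
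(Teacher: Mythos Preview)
Your proposal is correct and follows essentially the same argument as the paper: pick a sequence of conjugates $g_m C g_m^{-1}\to H$, pass to the corresponding algebras $g_m\mathfrak{c}g_m^{-1}$, extract a Chabauty-convergent subsequence by compactness, and use the continuity of $Gr$ (Proposition~\ref{welldef}) together with $Gr(g\mathfrak{c}g^{-1})=gCg^{-1}$ to identify the limit with $H$. The only cosmetic difference is that you invoke compactness of $\mathcal{S}(\mathfrak{g})$ via Lemma~\ref{chab_g}, while the paper appeals directly to compactness of $\overline{Cart(\mathfrak{g})}^{Ch}$; these are equivalent here.
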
 

\begin{proof} Take $H \in \overline{Cart (G)}^{Ch}$.  Then there is a sequence $\{H_m \} \in Cart(G)$ with $H_m \xrightarrow{m \to \infty} H$.  Each $H_m = g_m C g_m ^{-1}$ for $g_m \in G$.  Set $\{A_m := g_m \mathfrak{c} g_m ^{-1} \in Cart(\mathfrak{g})\}_{m \in \NN}$ a sequence. Then $Gr(A_m)= H_m$.  Since $\overline{Cart (\mathfrak{g})}^{Ch}$ is compact, $A_m$ admits a convergent subsequence $A_{m_k} \xrightarrow{m \to \infty} A \in \overline{Cart (\mathfrak{g})}^{Ch}$.  As $H_{m_k}=Gr(A_{m_k}) \xrightarrow{m \to \infty} Gr(A)$, by Proposition~\ref{welldef}, the conclusion follows. 
\end{proof}

\begin{proposition} 
\label{inj} 
Let $B \in \mathcal{S}_{1}(\mathcal{M}(n, \QQ_p))$. Then we have $B =\langle \QQ_{p}^{*} (B \cap G) \rangle $. In particular, the maps $\tilde{Gr}: \mathcal{S}_{1}(\mathcal{M}(n, \QQ_p)) \to \mathcal{S}(G)$ and $Gr:  \overline{Cart (\mathfrak{g})}^{Ch} \to \overline{Cart (G)}^{Ch}$ are injective. 
\end{proposition}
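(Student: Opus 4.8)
The plan is to prove the single algebraic identity $B=\langle\QQ_p^*(B\cap G)\rangle$ and then read off both injectivity statements as formal consequences. One inclusion is immediate: since $B$ is a linear subspace containing $B\cap G$, every $\QQ_p$-multiple $\lambda g$ with $g\in B\cap G$ already lies in $B$, so $\langle\QQ_p^*(B\cap G)\rangle\subseteq B$. The work is in the reverse inclusion, and the whole point is to exploit that $B$ is \emph{unital}: because $\Id\in B$, for any $b\in B$ and any scalar $t\in\QQ_p$ the perturbation $b+t\Id$ again lies in $B$, and $\Id$ itself lies in $B\cap G$ (as $\det\Id=1$). The idea is then to write $b=(b+t_0\Id)-t_0\Id$ for a cleverly chosen scalar $t_0$ so that both summands are scalar multiples of elements of $B\cap G$.

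The crux, and the only genuinely arithmetic step, is choosing $t_0$ so that $\det(b+t_0\Id)$ becomes an $n$-th power in $\QQ_p^*$; this is exactly what lets us rescale $b+t_0\Id$ into $\SL(n,\QQ_p)=G$. I would set $q(t):=\det(b+t\Id)$, a monic polynomial of degree $n$ with coefficients $c_{n-1},\dots,c_0\in\QQ_p$, and observe that $q(t)/t^n=1+c_{n-1}t^{-1}+\cdots+c_0t^{-n}\to 1$ as $|t|_p\to\infty$ (e.g.\ along $t=p^{-M}$ with $M\to\infty$, since each term $c_i t^{i-n}$ has valuation tending to $+\infty$). Since the subgroup $(\QQ_p^*)^n$ of $n$-th powers is open and contains a neighbourhood of $1$, which is precisely the content behind Lemma~\ref{lem::Y_C}, for $|t_0|_p$ large enough we get $q(t_0)/t_0^n\in(\QQ_p^*)^n$, while $t_0^n$ is visibly an $n$-th power; hence $q(t_0)$ is a nonzero $n$-th power. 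Picking $\mu\in\QQ_p^*$ with $\mu^n=q(t_0)^{-1}$ gives $\det(\mu(b+t_0\Id))=1$, so $\mu(b+t_0\Id)\in B\cap G$ and therefore $b+t_0\Id\in\QQ_p^*(B\cap G)$. As $t_0\Id=t_0\cdot\Id\in\QQ_p^*(B\cap G)$ as well, the decomposition $b=(b+t_0\Id)-t_0\Id$ exhibits $b\in\langle\QQ_p^*(B\cap G)\rangle$. This proves $B\subseteq\langle\QQ_p^*(B\cap G)\rangle$, hence equality.

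With the identity in hand the injectivity of $\tilde{Gr}$ is immediate: if $\tilde{Gr}(B_1)=\tilde{Gr}(B_2)$, i.e.\ $B_1\cap G=B_2\cap G$, then $B_1=\langle\QQ_p^*(B_1\cap G)\rangle=\langle\QQ_p^*(B_2\cap G)\rangle=B_2$. For $Gr$ I would use its factorisation $Gr=\tilde{Gr}\circ e_{\Id}\circ\iota$ from the proof of Proposition~\ref{welldef}: the inclusion $\iota$ is injective, $e_{\Id}$ is injective by Lemma~\ref{lem::Chab_cont_span} (with $V=\mathcal{M}(n,\QQ_p)$, hyperplane $W=\mathfrak{g}$, and $\Id\in V\setminus W$ since $\tr\Id=n\neq 0$ in the characteristic-zero field $\QQ_p$, so that $A=\mathfrak{g}\cap\langle A,\Id\rangle$ recovers $A$), and $\tilde{Gr}$ is injective by the previous step; here one invokes Lemma~\ref{alg_limits_abel} to know $\langle A,\Id\rangle$ is a genuine unital subalgebra, hence lands in $\mathcal{S}_1(\mathcal{M}(n,\QQ_p))$. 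A composite of injective maps being injective, $Gr$ is injective.

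I expect the determinant-as-$n$-th-power step to be the main obstacle, since it is where the field arithmetic of $\QQ_p$ (that $n$-th powers form an open subgroup with $1$ in its interior) genuinely enters, whereas the unital perturbation trick and the bookkeeping for the two injectivity claims are purely formal.
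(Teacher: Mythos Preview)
Your proof is correct and in fact more elementary than the paper's. The paper establishes $B=\langle\QQ_p^*(B\cap G)\rangle$ by $p$-adic Lie theory: it argues that $B^*=B\cap\GL(n,\QQ_p)$ is open in $B$, hence a $p$-adic analytic group of dimension $d=\dim_{\QQ_p}B$; then it considers the analytic homomorphism $\phi:(B\cap G)\times\QQ_p^*\to B^*$, $(g,\lambda)\mapsto\lambda g$, computes its differential at the identity to have rank $d$, and invokes the inverse function theorem to conclude that $\QQ_p^*(B\cap G)$ contains an open set of $B$, whose linear span is therefore all of $B$. Your approach bypasses all of this with a direct construction: for any $b\in B$ you perturb by a large scalar $t_0$ so that $\det(b+t_0\Id)$ lands in $(\QQ_p^*)^n$, rescale into $B\cap G$, and decompose $b=(b+t_0\Id)-t_0\Id$ as a difference of two elements of $\QQ_p^*(B\cap G)$. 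This is both shorter and avoids the machinery of $p$-adic analytic groups, using only the openness of $(\QQ_p^*)^n$ near $1$ (Lemma~\ref{lem::Y_C}) and the elementary observation that $\det(b+t\Id)/t^n\to 1$. The injectivity deductions for $\tilde{Gr}$ and $Gr$ are then identical in both arguments.
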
 

\begin{proof} 
 Because $Gr= \tilde{Gr} \circ e_{\Id} \circ \iota$ and  $e_{\Id}, \iota$ are injective, it is enough to prove $\tilde{Gr}$ is injective. This follows from the first part of the proposition that we now prove in four steps:
 
1. Recall by Lemma \ref{lem::invers_elements} we have $B^* = B \cap \GL(n , \QQ_p)$. 

2. We claim  $B^*$ is open in $B$. Indeed, it is enough to show that $\Id$ has an open neighbourhood in $B$ such that any of its elements is also contained in $B^*$. It is a general fact for Banach unital algebras that if $\vert \vert a - \Id \vert \vert < 1$ and by setting  $b := \Id - a$, then $a^{-1} = (\Id - b)^{-1} = \sum_{k=0}^{\infty} b^k$ is a norm-convergent series. The claim follows. 

Let $d$ be the dimension of the $\QQ_p$-linear subspace $B$. Then $B^*$ is a $d$-dimensional $p$-adic analytic group, hence $B\cap G = B^{*} \cap G = \ker(\det \vert_{B^*})$ is a $(d-1)$-dimensional $p$-adic analytic subgroup (see Serre \cite{Ser92}, Part II, Chapter 4, Section 5, Corollary, LG4.12).

3. We claim $\QQ_{p}^{*}(B\cap G)$ is open in $B^*$. Indeed, consider the analytic group homomorphism $\phi : (B^* \cap G) \times \QQ_{p}^{*} \to B^{*}$ given by 
$(g,\lambda) \mapsto \phi(g,\lambda) := \lambda g$. Its differential at the identity $\Id$ is
$$D\phi_{\Id} :(B\cap \mathfrak{g}) \times \QQ_{p} \to B, \  \ ï¿œ\ (X,\lambda ) \mapsto X + \lambda \Id.$$
Notice $D\phi_{\Id}$ has rank $d$ so, by the inverse function theorem (Serre \cite{Ser92}, Part II, Chapter 2, LG2.10), $\phi$ is a local analytic isomorphism, hence its image contains an open set.

4. Finally, in a finite dimensional vector space, the linear span of an open set is the whole space. Hence $B =\langle \QQ_{p}^{*} (B \cap G) \rangle $.
\end{proof} 

We have obtained a continuous bijective map from the compact set $\overline{Cart (\mathfrak{g})}^{Ch}$ to the compact set $\overline{Cart (G)}^{Ch}$, which concludes the proof of Theorem \ref{grbij}.  As the Chabauty topology on  $\overline{Cart (G)}^{Ch}$ is Hausdorff and $\overline{Cart (\mathfrak{g})}^{Ch}$ is compact we directly conclude the map $Gr$ is a homeomorphism.

\begin{corollary}
\label{cor::homeo}
The map $Gr:  \overline{Cart (\mathfrak{g})}^{Ch} \to \overline{Cart (G)}^{Ch}$ is a homeomorphism. 
\end{corollary}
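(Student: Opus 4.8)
The plan is to reduce the corollary to the standard topological fact that a continuous bijection from a compact space to a Hausdorff space is automatically a homeomorphism. All of the genuine content has already been established in the preceding propositions, so what remains is only to assemble those facts and to confirm that the hypotheses of this standard fact are in place.

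First I would record that $Gr$ is a continuous bijection: well-definedness and Chabauty-continuity are Proposition~\ref{welldef}, surjectivity is Proposition~\ref{surj}, and injectivity is Proposition~\ref{inj}. Hence $Gr \colon \overline{Cart(\mathfrak{g})}^{Ch} \to \overline{Cart(G)}^{Ch}$ is a continuous bijection. Next I would check the topological hypotheses on the two sides. For the domain, Lemma~\ref{chab_g} shows that $\mathcal{S}(\mathfrak{g})$ is compact in the Chabauty topology; since $\overline{Cart(\mathfrak{g})}^{Ch}$ is by definition the closure of $Cart(\mathfrak{g})$ in $\mathcal{S}(\mathfrak{g})$, it is a closed subset of a compact space and therefore compact. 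For the codomain, $\overline{Cart(G)}^{Ch} \subset \mathcal{S}(G)$, and $\mathcal{S}(G)$ is Hausdorff: indeed $G = \SL(n, \QQ_p)$ is locally compact, Hausdorff and second countable, so by the discussion in Section~2 the space $\mathcal{F}(G)$ is separable and metrisable, whence $\mathcal{S}(G)$, and in particular $\overline{Cart(G)}^{Ch}$, is Hausdorff.

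Finally I would close the argument in the usual way: a continuous bijection $f$ from a compact space $K$ to a Hausdorff space $Y$ is a closed map, because any closed subset $C \subseteq K$ is compact, its continuous image $f(C)$ is compact, and a compact subset of a Hausdorff space is closed; a closed continuous bijection has continuous inverse, hence is a homeomorphism. Applying this to $Gr$, with $K = \overline{Cart(\mathfrak{g})}^{Ch}$ and $Y = \overline{Cart(G)}^{Ch}$, yields the corollary. I do not expect a real obstacle at this stage: the substantive analytic and algebraic work, namely the continuity of the span map (Lemma~\ref{lem::Chab_cont_span}), the continuity of $\tilde{Gr}$ (Proposition~\ref{prop::Gr_tilde}), and the inverse-function-theorem argument underlying injectivity (Proposition~\ref{inj}), has already been carried out. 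The only point deserving care is the verification that the codomain is Hausdorff, which rests entirely on the second countability of $\SL(n, \QQ_p)$ recalled in Section~2; this is what upgrades the continuous bijection to a homeomorphism.
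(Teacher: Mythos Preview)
Your proof is correct and follows exactly the paper's approach: the paper also simply notes that $Gr$ is a continuous bijection from a compact space to a Hausdorff space and concludes immediately. You spell out the compactness of the domain, the Hausdorffness of the codomain, and the standard closed-map argument in more detail than the paper does, but the underlying reasoning is identical.
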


\begin{remark}
\label{lem::roots_unity}
Let $\mu_n$ be the group of $n$-th roots of unity in $\QQ_p^*$. Let $H \in \overline{Cart (G)}^{Ch}$ and let $\lambda \in \mu_n$. If $g \in H$ then we claim $\lambda g \in H$. Indeed, by Corollary \ref{cor::homeo}, take $A_H \in \overline{Cart (\mathfrak{g})}^{Ch}$ such that $G \cap \langle A_H, \Id \rangle= Gr(A_H)=H$. Take $g \in H$ and let the unique $a \in A_H$ such that $a+\frac{trace(g)}{n}\Id= g$. Then $\lambda g = \lambda (a+\frac{trace(g)}{n}\Id)=  \lambda a+\frac{trace(\lambda g)}{n}\Id \in  \langle A_H, \Id \rangle.$ But $\det(\lambda g)=1$ so the conclusion follows.
\end{remark}

\medskip
\subsubsection{\textbf{Remarks on the dimension of a limit of the diagonal Cartan}}
So far we have shown that for every $H \in \overline{Cart (G)}^{Ch}$ there exists a unique $A_H \in \overline{Cart (\mathfrak{g})}^{Ch}$ such that $Gr(A_H) = H$. We end this subsection by showing that the dimension of every $A_H \in \overline{Cart (\mathfrak{g})}^{Ch}$ is $n-1$.

\medskip
We thank Thomas Haettel for the idea of the proof of the following Proposition.  Let $V$ be a $\QQ_p$-vector space of dimension $N$. Denote by $\mathcal{S}(V, m)$ the set of all $\QQ_p$-linear subspaces of $V$ of dimension $m <N$. Then $\mathcal{S}(V, m) \subset \mathcal{S}(V)$. In particular, one can consider $V = \mathfrak{g}$ and $m =n-1$.

\begin{proposition} \label{chab_gras}
We have $\mathcal{S}(V,m)$ is closed in $\mathcal{S}(V)$ and  $\mathcal{S}(V, m)$ equals the Grassmannians $Grass(m,V)$ as a set. Moreover, $\mathcal{S}(V, m)$ with the Chabauty topology is homeomorphic to the  Grassmannians $Grass(m, V)$  endowed with a quotient topology induced from the topological group $\GL(V, \QQ_p)$.
\end{proposition}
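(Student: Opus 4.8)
The plan is to produce a single continuous bijection from the Grassmannian, carrying its quotient topology, onto $\mathcal{S}(V,m)$ with the Chabauty topology, and then promote it to a homeomorphism by a compactness argument. Fix a basis identifying $V \cong \QQ_p^{N}$ (with $N=\dim V$), let $W_{0} := \langle e_1, \dots, e_m \rangle$ be the standard coordinate $m$-plane, and consider the orbit map $q : \GL(V, \QQ_p) \to Grass(m,V)$, $q(g) = gW_{0}$. Elementary linear algebra shows $\GL(V,\QQ_p)$ acts transitively on $m$-dimensional subspaces, so $q$ is surjective and realises $Grass(m,V)$ as $\GL(V,\QQ_p)/\Stab(W_{0})$ with the quotient topology named in the statement. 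Since the underlying set of $Grass(m,V)$ is, by definition, the set of $m$-dimensional subspaces, it coincides with $\mathcal{S}(V,m)$ as a set (this is the set-level assertion of the proposition); write $\bar{\Phi}$ for this identity bijection regarded as a map from the quotient-topology Grassmannian to the Chabauty space $\mathcal{S}(V,m)$.

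First I would check that the composite $\Phi := \bar{\Phi}\circ q : \GL(V,\QQ_p)\to \mathcal{S}(V)$, $g\mapsto gW_{0}$, is Chabauty continuous, using the convergence criterion of Proposition~\ref{prop::chabauty_conv}. If $g_k \to g$ in $\GL(V,\QQ_p)$, then for any $w = gw_{0}\in gW_{0}$ the vectors $g_k w_{0}\in g_kW_{0}$ converge to $w$, verifying condition (1); and if $x_{k_j}=g_{k_j}w_j\in g_{k_j}W_{0}$ converges to $x$, then $w_j = g_{k_j}^{-1}x_{k_j}\to g^{-1}x$, so $g^{-1}x\in W_{0}$ (as $W_{0}$ is closed) and $x\in gW_{0}$, verifying condition (2). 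Hence $\Phi$ is continuous with image in $\mathcal{S}(V,m)$. Because $\Phi$ is constant on the cosets of $\Stab(W_{0})$, the universal property of the quotient topology makes $\bar{\Phi}$ continuous, and it is a bijection by transitivity of the action.

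It then remains to upgrade the continuous bijection $\bar{\Phi}$ to a homeomorphism, for which I would show its source is compact and its target Hausdorff. The target is Hausdorff because $V\cong\QQ_p^{N}$ is Hausdorff and second countable, so $\mathcal{F}(V)$, and hence its subspace $\mathcal{S}(V,m)$, is metrisable, as recalled just before Proposition~\ref{prop::chabauty_conv}. For compactness of the source, I would exhibit $Grass(m,V)$ as the continuous image of a compact group by proving that the maximal compact subgroup $\GL(N,\ZZ_p)$ already acts transitively on $m$-planes: the restriction of $q$ to $\GL(N,\ZZ_p)$ is continuous and, granted transitivity, surjective. A continuous bijection from a compact space to a Hausdorff space is a homeomorphism, so $\bar{\Phi}$ identifies the quotient topology with the Chabauty topology. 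Finally, $\mathcal{S}(V,m)$ is then compact, and a compact subset of the Hausdorff space $\mathcal{S}(V)$ is closed, which yields the first assertion.

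The only genuinely non-formal input, and what I expect to be the main obstacle, is the transitivity of $\GL(N,\ZZ_p)$ on $m$-dimensional subspaces. I would establish it by observing that for any $m$-dimensional subspace $W$ the intersection $W\cap\ZZ_p^{N}$ is a saturated $\ZZ_p$-submodule of $\ZZ_p^{N}$, since $p^{k}x\in W$ with $x\in\ZZ_p^{N}$ forces $x\in W$; over the discrete valuation ring $\ZZ_p$ the quotient $\ZZ_p^{N}/(W\cap\ZZ_p^{N})$ is then torsion-free, hence free, so $W\cap\ZZ_p^{N}$ is a direct summand. Choosing a $\ZZ_p$-basis $v_1,\dots,v_m$ of $W\cap\ZZ_p^{N}$ and extending it to a $\ZZ_p$-basis $v_1,\dots,v_N$ of $\ZZ_p^{N}$ gives an element of $\GL(N,\ZZ_p)$ sending $W_{0}$ to $W$, because $v_1,\dots,v_m$ span $W$ over $\QQ_p$ (every $w\in W$ has $p^{k}w\in W\cap\ZZ_p^{N}$ for some $k$). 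This is standard module theory over a DVR, but it is the one step where the arithmetic of $\QQ_p$, rather than pure topology, enters.
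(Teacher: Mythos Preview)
Your proof is correct and follows essentially the same architecture as the paper's: construct the continuous bijection $\bar\Phi$ from the quotient-topology Grassmannian to the Chabauty space, then upgrade it to a homeomorphism by showing the source is compact (via transitivity of $\GL(N,\ZZ_p)$ on $m$-planes) and the target Hausdorff. The one substantive difference is in how you justify the $\GL(N,\ZZ_p)$-transitivity: the paper invokes the Iwasawa decomposition $\GL(V,\QQ_p)=KB$ (Lemma~\ref{lem::Iwasawa_decom}), noting that the Borel $B$ stabilises the standard flag and hence $K=\GL(V,\ZZ_p)$ already acts transitively on each $\mathcal{S}(V,d)$; you instead argue directly from the structure theory of modules over the DVR $\ZZ_p$ (saturated submodules are direct summands). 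Your argument is more self-contained and avoids importing the building-theoretic Iwasawa decomposition, while the paper's is shorter given the machinery already in place.
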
 

\begin{proof} 
Recall $\mathcal{S}(V)$ is the set of all $\QQ_p$-linear subspaces of $V$ and by Lemma~\ref{chab_g} $\mathcal{S} (V)$ is compact.  Let $\GL(V, \QQ_p)$ be the set of invertible linear maps on $V$ considered as a $\QQ_p$-linear vector space.   Note $\GL(V,\QQ_p)$ acts by left multiplication and continuously on $\mathcal{S}(V)$ with respect to the Chabauty topology. Indeed,  suppose $\{D_k\}_{k \in \NN} \subset \GL(V, \QQ_p)$ converges to some $D \in \GL(V,\QQ_p)$ and $\{F_k\}_{k \in \NN} \subset \mathcal{S}(V)$ converges to $F \in \mathcal{S}(V)$. As $\GL(V,\QQ_p)$ acts continuously on  $V$ we have $\{D_k f_k\}_{k \in \NN}$ converges to $D f$, for every sequence $\{f_k \in F_k\}_{k \in \NN}$ that converges to $f \in F$. It remains to prove if $\{D_k f_k\}_{k \in \NN}$ admits a strictly increasing subsequence $\{k_l\}_{l \in \NN}$ with $\{D_{k_l} f_{k_l}\}_{l \in \NN}$ converging to some $g \in V$, then $D^{-1}g \in F$. As $D^{-1} D_{k_l}$ converges to $\Id \in \GL(V,\QQ_p)$, for $l$ large enough $f_{k_l}$ is as close as we want to $D^{-1} D_{k_l} f_{k_l}$, and thus to $D^{-1}g$. We obtain $\{f_{k_l}\}_{l \in \NN}$ converges to $D^{-1}g$ and so $D^{-1}g$ is in $F$. Therefore $\{D_k F_k\}_{k \in \NN}$ converges to $D F$ in the Chabauty topology on $\mathcal{S}(V)$.

Since we can apply a linear transformation in $\GL(V, \QQ_p)$ to change between bases of any two $\QQ_p$-linear subspaces of the same dimension, there are a finite number of  $\QQ_p$-linear subspaces of $V$, up to $\GL(V,\QQ_p)$ action. Thus the $\GL(V,\QQ_p)$-action on  $\mathcal{S} (V)$ is cocompact.  

Recall $\GL(V,\QQ_p)$ admits the Iwasawa decomposition  $KAN=KB$  (see Lemma \ref{lem::Iwasawa_decom}) where $K=\GL(V, \ZZ_p)$, $A$ is the set of diagonal matrices in $\GL(V,\QQ_p)$, $B$ is the subgroup of all upper-triagular matrices of $\GL(V, \QQ_p)$, and $N$ the set of all upper-triangular matrices with $1$ on the diagonal.  As in \cite[Section 1.1.3]{Lit} we fix the standard basis $\left\{e_1, \cdots, e_{N}\right\}$ of $V$, then the $\QQ_p$-linear subspace $F_{d}:=span\left\{e_1, \cdots, e_{d}\right\}$ of dimension $d$ in $V$ is invariant under the group that contains $B$:

$$\GL(V, F_{d}):=\begin{pmatrix} * & * \\
0_{(N-d)\times (N-d)} & * 
\end{pmatrix}.$$ 

As $F_{d}$ is in $\mathcal{S}(V, d)$, we obtain $\GL(V,\ZZ_p)$ acts transitively on $\mathcal{S}(V, d)$ which implies $\GL(V, \QQ_p)$ and $\GL(V,\ZZ_p)$ have the same orbits on $\mathcal{S}(V)$.  Since $\GL(V,\ZZ_p)$ is compact, every $\GL(V, \QQ_p)$-orbit in $\mathcal{S} (V)$ is closed.  Now, as $F_{m} \in \mathcal{S}(V, m) \subset  \mathcal{S} (V)$ and $F_{m} \in Grass(m, V)$, the orbit $\GL(V, \QQ_p) \cdot F_{m}$ is in bijection with $Grass(m, V)$ as a set.  Because elements of $\GL(V, \QQ_p)$ do not change the dimension of a $\QQ_p$-linear subspace, we obtain $\mathcal{S}(V, m) $ is closed in $\mathcal{S}(V)$ and in bijection with $Grass(m, V)$.

Denote the stabilizer of $F_{m}$ in $\GL(V, \QQ_p)$  by $\GL(V, F_{m})$. Then the map from $\GL(V, \QQ_p)/ \GL(V, F_{m})$ $\cong Grass(m, V)$ to $\mathcal{S}(V, m)$ given by $g \mapsto g F_{m}$ is bijective and continuous with respect to the quotient topology induced from the topological group $\GL(V, \QQ_p)$ and the Chabauty topology on $\mathcal{S}(V, m)$, that is also Hausdorff. We conclude $\mathcal{S}(V, m)$ with the Chabauty topology is homeomorphic to $\GL(V, \QQ_p)/ \GL(V, F_{m}) \cong Grass(m, V)$ with the quotient topology induced from the topological group $\GL(V, \QQ_p)$. 
\end{proof} 

\begin{corollary}
\label{cor::dim_same}
Every element of  $\overline{Cart (\mathfrak{g})}^{Ch}$ is an abelian subalgebra with respect to the Lie bracket of $\mathfrak{g}$ and of dimension $n-1$.
\end{corollary}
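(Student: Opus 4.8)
The plan is to combine the two preceding results: Lemma~\ref{limits_abel} handles abelianness, and Proposition~\ref{chab_gras} handles the dimension count. For the abelian part I would simply invoke Lemma~\ref{limits_abel}(1), which already asserts that every element of $\overline{Cart(\mathfrak{g})}^{Ch}$ is an abelian subalgebra of $(\mathfrak{g},[\cdot,\cdot])$; this was obtained from Proposition~\ref{prop::Cooper} applied to the universal relation $[a,b]=0$. So no additional work is needed there, and only the dimension statement remains.

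For the dimension, first I would observe that every element of the dense subset $Cart(\mathfrak{g})$ has dimension exactly $n-1$. Indeed, each $A \in Cart(\mathfrak{g})$ is of the form $g\mathfrak{c}g^{-1}$ for some $g \in G$, where $\mathfrak{c} = diag(n,\QQ_p)\cap\mathfrak{g}$ is the space of diagonal trace-zero matrices, a $\QQ_p$-linear subspace of dimension $n-1$. Since conjugation by an invertible matrix is a $\QQ_p$-linear automorphism of $\mathfrak{g}$, it preserves dimension, so $\dim_{\QQ_p}A = n-1$ for every $A \in Cart(\mathfrak{g})$. Hence $Cart(\mathfrak{g}) \subset \mathcal{S}(\mathfrak{g}, n-1)$.

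Next I would apply Proposition~\ref{chab_gras} with $V=\mathfrak{g}$ (so that $N=\dim_{\QQ_p}\mathfrak{g}=n^2-1$) and $m=n-1$, which tells us that $\mathcal{S}(\mathfrak{g}, n-1)$ is closed in $\mathcal{S}(\mathfrak{g})$ with respect to the Chabauty topology. Taking the Chabauty closure of the inclusion $Cart(\mathfrak{g}) \subset \mathcal{S}(\mathfrak{g}, n-1)$ then yields $\overline{Cart(\mathfrak{g})}^{Ch} \subset \mathcal{S}(\mathfrak{g}, n-1)$; that is, the dimension cannot drop in a Chabauty limit, so every limit of the diagonal Cartan subalgebra remains of dimension $n-1$. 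Combining this with the abelian statement from Lemma~\ref{limits_abel} gives the corollary.

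Since all the genuinely topological content---that an $m$-dimensional subspace cannot degenerate to one of smaller dimension under Chabauty convergence---is already packaged in Proposition~\ref{chab_gras}, I expect this corollary to be essentially a matter of assembling the two ingredients rather than proving anything new. The only point requiring a moment's care is checking the hypothesis $m<N$ of Proposition~\ref{chab_gras}, which holds because $n-1 < n^2-1$ for every $n\geq 2$, so the Grassmannian description genuinely applies.
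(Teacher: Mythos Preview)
Your proposal is correct and follows exactly the route the paper intends: the corollary is stated immediately after Proposition~\ref{chab_gras} with no proof given, so it is meant to be read as a direct consequence of Lemma~\ref{limits_abel}(1) for abelianness and of the closedness of $\mathcal{S}(\mathfrak{g},n-1)$ in $\mathcal{S}(\mathfrak{g})$ from Proposition~\ref{chab_gras} for the dimension, precisely as you have written.
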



\subsection{Elliptic Cartan limits}

\begin{lemma}
\label{Borel}  
Take $H \in \overline{Cart (G)}^{Ch}$ and the corresponding $A_H \in \overline{Cart (\mathfrak{g})}^{Ch}$. If $H$ is contained in the upper triangular Borel subgroup $B \subset G$, then every matrix in $A_H$ is upper triangular. 
\end{lemma}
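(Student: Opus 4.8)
The plan is to reduce the statement to the unital algebra $\langle A_H, \Id \rangle$ and then apply the reconstruction formula from Proposition~\ref{inj}. First I would record what the hypotheses give us about this algebra. Since $A_H \in \overline{Cart(\mathfrak{g})}^{Ch}$, Lemma~\ref{alg_limits_abel} tells us that $\langle A_H, \Id \rangle$ is an (abelian) subalgebra of $\mathcal{M}(n, \QQ_p)$; as it obviously contains $\Id$, it is a \emph{unital} subalgebra, i.e. $\langle A_H, \Id \rangle \in \mathcal{S}_{1}(\mathcal{M}(n, \QQ_p))$. Moreover, by the very definition of the map $Gr$ we have $\langle A_H, \Id \rangle \cap G = Gr(A_H) = H$.

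Next I would invoke Proposition~\ref{inj}, which asserts that every unital subalgebra $\mathcal{B}$ satisfies $\mathcal{B} = \langle \QQ_{p}^{*}(\mathcal{B} \cap G)\rangle$. Applying this with $\mathcal{B} = \langle A_H, \Id \rangle$ and using the identification $\mathcal{B} \cap G = H$ from the previous step, we obtain
$$\langle A_H, \Id \rangle = \langle \QQ_{p}^{*} H \rangle,$$
the $\QQ_p$-linear span of all scalar multiples of elements of $H$. This is the key identity: it recovers the whole algebra from the group it cuts out.

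Finally I would exploit that upper triangularity is a linear condition. The hypothesis $H \subseteq B$ means every element of $H$ is upper triangular, and the set $T$ of upper triangular matrices in $\mathcal{M}(n, \QQ_p)$ is a $\QQ_p$-linear subspace, hence closed under scalar multiplication and under taking linear combinations. Therefore $\QQ_{p}^{*} H \subseteq T$ forces $\langle \QQ_{p}^{*} H \rangle \subseteq T$, and combining with the identity above gives $\langle A_H, \Id \rangle \subseteq T$. Since $A_H \subseteq \langle A_H, \Id \rangle$, every matrix in $A_H$ is upper triangular, as desired.

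I do not expect a genuine obstacle here: the argument is a bookkeeping combination of the bijection's reconstruction formula with the elementary observation that upper triangularity is preserved under linear span. The only point that requires any care is confirming that $\langle A_H, \Id \rangle$ actually lies in the domain $\mathcal{S}_{1}(\mathcal{M}(n, \QQ_p))$ of Proposition~\ref{inj} — that is, that it really is a unital subalgebra and not merely a linear span — which is exactly what Lemma~\ref{alg_limits_abel} (together with the presence of $\Id$) provides.
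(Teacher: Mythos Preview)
Your proof is correct, but it takes a different route from the paper's own argument. The paper proves the contrapositive directly: assuming some $a \in A_H$ is not upper triangular, it constructs by hand an element of $H$ that is not upper triangular, namely $\frac{1}{g(\det(p^k a + \Id))}(p^k a + \Id)$ for $k$ large, where $g$ is the local $n$-th root from Lemma~\ref{lem::Y_C}. Your argument instead invokes the reconstruction identity $\langle A_H,\Id\rangle = \langle \QQ_p^* H\rangle$ from Proposition~\ref{inj} as a black box, and then observes that upper triangularity, being a linear condition, passes to the span. Your route is shorter and more conceptual, since it leverages the general machinery already in place; the paper's route is more explicit and self-contained, avoiding the appeal to the inverse function theorem buried inside Proposition~\ref{inj}. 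Both are valid, and indeed your observation shows that Lemma~\ref{Borel} is really a formal consequence of Proposition~\ref{inj} once one notes that the Borel is the intersection of a linear subspace with $G$.
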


\begin{proof}  
We prove the contrapositive. Assume that $A_H$ contains some non-upper triangular matrix $a$. Consider the sequence $\{p^k a+\Id\}_{k>0}$ in $\langle A_H,\Id\rangle$.  As $k\rightarrow\infty$ the sequence $\{p^k a+\Id\}_{k>0}$ converges to $\Id$, hence $\det(p^ka+\Id)$ converges to 1. Fix $k$ large enough, so that $\det(p^k a+\Id)$ belongs to the neighborhood $U$ of 1 appearing in Lemma \ref{lem::Y_C}. Then the matrix $\frac{1}{g(\det(p^k a +  \Id))}( p^k a +  \Id )$ is in $G \cap \langle A_H, \Id \rangle = H$, and it is clearly not upper triangular.
%
%
\end{proof} 


\begin{remark}[See Platonov--Rapinchuk~\cite{PR94} page~151]
\label{rem::hyp_element}
For $\SL(n, \QQ_p)$ the ``good'' maximal compact subgroup is $\SL(n, \ZZ_{p})$, and so elements of $\SL(n, \ZZ_{p})$ are elliptic. The diagonal matrices in $\SL(n, \QQ_p)$ of the form $diag(p^{a_1}, \cdots, p^{a_n})$, with the condition that $(a_1,\cdots, a_n) \in \ZZ^{n}$ and $\sum\limits_{i=1}^{n} a_i=0$, are hyperbolic. The  Iwasawa and Levi decompositions associated with $\SL(n, \QQ_p)$ implies an upper triangular matrix of $\SL(n, \QQ_p)$ is hyperbolic if and only if at least one entry of its diagonal has $p$-adic absolute value $>1$.

\end{remark}

\begin{lemma}
\label{lem::p-adic_ineq} 
Let $\alpha_1, ..., \alpha_n \in \QQ_p$. Suppose for some fixed $i\neq j \in \{1,...,n\}$ we have $\alpha_i \neq \alpha_j$. Then there  exists a non-empty open subset  $U$ in $(\QQ_p, +)$ such that $\vert \alpha_i +\lambda\vert_p^n > \vert \alpha_j+\lambda\vert_p^n$ and $|\alpha_i+\lambda|_p^n > \prod_{t=1}^n |\alpha_t+\lambda|_p > 0$ for every $\lambda\in U$. 
\end{lemma}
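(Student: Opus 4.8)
The plan is to localize $\lambda$ near $-\alpha_j$, so that among the quantities $\alpha_t + \lambda$ the factor $\alpha_j + \lambda$ becomes the unique small one; this single choice will force both desired inequalities simultaneously. Concretely, I would set $\delta := \min\{|\alpha_t - \alpha_j|_p : \alpha_t \neq \alpha_j\}$, a strictly positive real number since the index $i$ already witnesses $\alpha_i \neq \alpha_j$, so the set is non-empty. I would then take the clopen sphere
$$U := \{\lambda \in \QQ_p : |\lambda + \alpha_j|_p = p^{-k}\}$$
for a large integer $k$ to be fixed at the end. This is a non-empty open subset of $(\QQ_p,+)$ (it is the preimage of $\{p^{-k}\}$ under the locally constant map $\lambda \mapsto |\lambda + \alpha_j|_p$, and contains $-\alpha_j + p^k$) that avoids the point $-\alpha_j$.

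The first step is to compute each $|\alpha_t + \lambda|_p$ \emph{exactly} using the non-archimedean isosceles property. Writing $\alpha_t + \lambda = (\alpha_t - \alpha_j) + (\alpha_j + \lambda)$ and using $|\alpha_j + \lambda|_p = p^{-k} < \delta \leq |\alpha_t - \alpha_j|_p$ whenever $\alpha_t \neq \alpha_j$, I obtain $|\alpha_t + \lambda|_p = |\alpha_t - \alpha_j|_p$ for every such $t$; in particular $|\alpha_i + \lambda|_p = |\alpha_i - \alpha_j|_p \geq \delta$. For the indices with $\alpha_t = \alpha_j$ (there are $m \geq 1$ of them, since $j$ itself qualifies) one simply has $|\alpha_t + \lambda|_p = |\alpha_j + \lambda|_p = p^{-k}$. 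Since $\lambda \neq -\alpha_j$, every factor is strictly positive, which already gives $\prod_{t=1}^n |\alpha_t + \lambda|_p > 0$.

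The first inequality is then immediate: $|\alpha_i + \lambda|_p = |\alpha_i - \alpha_j|_p \geq \delta > p^{-k} = |\alpha_j + \lambda|_p$, and raising to the $n$-th power preserves strictness. For the remaining inequality I would collect the exact values into
$$\prod_{t=1}^n |\alpha_t + \lambda|_p = p^{-km}\!\!\prod_{t:\,\alpha_t \neq \alpha_j}\!\! |\alpha_t - \alpha_j|_p, \qquad |\alpha_i + \lambda|_p^n = |\alpha_i - \alpha_j|_p^n,$$
and observe that the product on the right is a fixed positive constant while the left-hand side carries the factor $p^{-km}$ with $m \geq 1$. Hence for $k$ large enough the bound $|\alpha_i - \alpha_j|_p^n > p^{-km}\prod_{t:\,\alpha_t \neq \alpha_j}|\alpha_t - \alpha_j|_p$ holds, which is exactly the second inequality. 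Choosing $k$ large enough to satisfy both $p^{-k} < \delta$ and this last estimate completes the argument and produces the required $U$.

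There is essentially no serious obstacle here: the non-archimedean absolute value lets me evaluate each $|\alpha_t + \lambda|_p$ exactly rather than merely estimate it, so the only point requiring a little care is the bookkeeping when several $\alpha_t$ coincide with $\alpha_j$. In that case the small factor $|\alpha_j + \lambda|_p$ appears with multiplicity $m$, but since $m \geq 1$ the product still tends to $0$ as $k \to \infty$, so the conclusion is unaffected. Working with the sphere $U$ rather than a ball is a minor convenience that makes $U$ automatically open, sidestepping any need to verify that a punctured ball is open.
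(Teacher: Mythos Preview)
Your proof is correct and follows essentially the same approach as the paper: both localize $\lambda$ near $-\alpha_j$ so that $|\alpha_j+\lambda|_p$ becomes the unique small factor, forcing both inequalities. The paper organizes this by first reducing to the case $\alpha_j=0$ via translation and then taking $\lambda=p^m$, whereas you handle the general case directly by working on the sphere $|\lambda+\alpha_j|_p=p^{-k}$; this is only a cosmetic difference, and your version is arguably tidier in that it avoids the case split and makes the ultrametric computation of each $|\alpha_t+\lambda|_p$ explicit.
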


\begin{proof} We prove the set $U$ is non-empty. This is done by considering two cases.

\textbf{Case $1$.} Suppose that $\alpha_j=0$. 

First notice that for every $\alpha_k$ that is not zero and for $m>0$ large enough  we have $|\alpha_k+p^m|_p= |\alpha_k|_p$. Moreover, for $m>0$ large enough we also have $\prod_{t:\alpha_t\neq 0}|\alpha_t+ p^m|_p= \prod_{t:\alpha_t\neq 0}|\alpha_t|_p$.

Thus, set $ \lambda = p^m \in \QQ_p$ for some $m>0$ large enough so that $p^{-m} < \vert \alpha_i\vert^n_p = |\alpha_i+\lambda|_p^n $ and $ \vert \alpha_i\vert^n_p > \prod_{t=1}^n |\alpha_t+\lambda|_p > 0$. Then $\vert \alpha_j+ \lambda \vert_p= p^{-m} $ and so $\vert \alpha_j + \lambda \vert_p^n <  \vert \alpha_i +\lambda\vert_p^n$. 

\textbf{Case $2$.} We now treat the general case when $\alpha_j \neq 0$.
Then we may apply the first case to the sequence $\alpha_1-\alpha_j,\alpha_2-\alpha_j,...,\alpha_n-\alpha_j$. The result follows. 

Finally, we observe the set, $U$, of $\lambda$'s in $(\QQ_p,+)$ satisfying all the inequalities in the statement of the lemma, is clearly open.
\end{proof}

As a consequence of  Lemma \ref{lem::p-adic_ineq} we have:

\begin{theorem} \label{ellip_unip} [Elliptic $\Rightarrow$ Unipotent] Given $H \in \overline{Cart (G)}^{Ch}$ take $A_H \in \overline{Cart (\mathfrak{g})}^{Ch}$ so that $Gr(A_H) = H$. If $H$ is contained in the Borel subgroup $B$, and $H$ does not contain hyperbolic elements, then every matrix in $A_H$ has only zero on the diagonal.  Moreover, $H$ is contained in $U^\emptyset \cdot \mu_n$, where $U^\emptyset$ is the unipotent radical of $B$ (see Example \ref{ex:levi_decom}) and $\mu_n$ is the group of $n$-th roots of unity in $\QQ_p^*$.
\end{theorem}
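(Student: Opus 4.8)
The plan is to establish the contrapositive of the first assertion and then read off the \textbf{Moreover} clause as an immediate consequence. Throughout I use two reductions. Since $H \subseteq B$, Lemma~\ref{Borel} guarantees that every matrix in $A_H$ is upper triangular of trace zero. Moreover, by Remark~\ref{rem::hyp_element} an element of $H \subseteq B \cap G$ is hyperbolic exactly when one of its diagonal entries has $p$-adic absolute value $>1$; since the diagonal entries of an element of $H$ multiply to its determinant $1$, the hypothesis that $H$ contains no hyperbolic element is equivalent to: every element of $H$ has all diagonal entries in $\ZZ_p^*$.

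I would first reduce the second statement to the first. Assuming every matrix in $A_H$ has zero diagonal, $A_H$ is contained in the strictly upper triangular matrices, so any $h \in H = \langle A_H, \Id\rangle \cap G$ has the form $h = a + \beta\Id$ with $a$ strictly upper triangular and $\beta \in \QQ_p$. As $h$ is then upper triangular with constant diagonal $\beta$, we get $\det h = \beta^n = 1$, hence $\beta \in \mu_n$ and $h = \beta(\Id + \beta^{-1}a) \in \mu_n \cdot U^\emptyset$, with $\Id + \beta^{-1}a \in U^\emptyset$ (see Example~\ref{ex:levi_decom}). This yields $H \subseteq U^\emptyset \cdot \mu_n$.

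For the first statement I argue by contradiction: suppose some $a \in A_H$ has nonzero diagonal $(\alpha_1,\dots,\alpha_n)$. Because $\sum_t \alpha_t = 0$ and the $\alpha_t$ do not all vanish, there are indices $i \neq j$ with $\alpha_i \neq \alpha_j$. For $\lambda \in \QQ_p$ and $\mu \in \QQ_p^*$ the matrix $\mu(a + \lambda\Id)$ lies in $\langle A_H, \Id\rangle$, is upper triangular with diagonal $(\mu(\alpha_t + \lambda))_t$, and lies in $G$ precisely when $\mu^n \prod_t(\alpha_t + \lambda) = 1$. For such a pair, the $i$-th diagonal entry satisfies $|\mu(\alpha_i + \lambda)|_p^n = |\alpha_i + \lambda|_p^n / \prod_t |\alpha_t + \lambda|_p$. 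By Lemma~\ref{lem::p-adic_ineq} there is a nonempty open $U \subseteq \QQ_p$ on which $|\alpha_i + \lambda|_p^n > \prod_t |\alpha_t + \lambda|_p > 0$; so for $\lambda \in U$, provided such a $\mu$ exists, the element $\mu(a+\lambda\Id) \in H$ has a diagonal entry of absolute value $>1$ and is hyperbolic, contradicting the hypothesis. This would force every matrix in $A_H$ to have zero diagonal, completing the argument.

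The crux, and the step I expect to be the main obstacle, is arranging that $\mu$ genuinely exists in $\QQ_p^*$, i.e. that $\prod_t(\alpha_t + \lambda)$ can be made an $n$-th power in $\QQ_p^*$ while keeping $\lambda \in U$; this is a real constraint since $\QQ_p^*/(\QQ_p^*)^n$ is a nontrivial finite group. To handle it I would let $\lambda$ approach a root $-\alpha_j$ of $\lambda \mapsto \prod_t(\alpha_t + \lambda)$, write $\eta := \lambda + \alpha_j$, and factor $\prod_t(\alpha_t + \lambda) = \eta^{m}\prod_{\alpha_t \neq \alpha_j}(\alpha_t + \lambda)$, where $m$ is the multiplicity of the value $\alpha_j$. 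As $\eta \to 0$ the second factor tends to the nonzero constant $\prod_{\alpha_t \neq \alpha_j}(\alpha_t - \alpha_j)$ and is eventually trapped in a single coset of $(\QQ_p^*)^n$, while $\eta$ may be chosen small in a prescribed coset; keeping $\lambda$ this close to $-\alpha_j$ makes the $j$-th factor small and so preserves the strict inequality of Lemma~\ref{lem::p-adic_ineq}. The remaining bookkeeping is to check that $\eta^{m}\prod_{\alpha_t \neq \alpha_j}(\alpha_t - \alpha_j) \in (\QQ_p^*)^n$ can be achieved, for which I would exploit the freedom to vary the approached root and the index $i$ together with the continuous $n$-th root near $1$ furnished by Lemma~\ref{lem::Y_C}.
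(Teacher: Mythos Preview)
Your overall strategy mirrors the paper's: reduce to upper triangular $A_H$ via Lemma~\ref{Borel}, use Lemma~\ref{lem::p-adic_ineq} to locate a $\lambda$ making one diagonal entry dominate the product, and then exhibit a hyperbolic element of $H$. The derivation of the \textbf{Moreover} clause from the first assertion is fine and matches the paper.

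However, there is a genuine gap at precisely the point you flag as ``the crux''. By restricting to elements of the form $\mu(a+\lambda\Id)$ you force $\prod_t(\alpha_t+\lambda)$ to be an $n$-th power in $\QQ_p^*$, and your coset argument does not close this. For instance, if the multiplicity $m$ of the value $\alpha_j$ shares a common factor with $n$, then the image of $\eta\mapsto\eta^m$ in the finite group $\QQ_p^*/(\QQ_p^*)^n$ is a proper subgroup and need not contain the coset determined by $\prod_{\alpha_t\neq\alpha_j}(\alpha_t-\alpha_j)$; varying the root $\alpha_j$ or the index $i$ does not obviously repair this, and Lemma~\ref{lem::Y_C} only supplies $n$-th roots near $1$, which does not help when the target coset is nontrivial.

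The paper sidesteps this obstruction entirely by using a structural fact you never invoke: by Lemma~\ref{alg_limits_abel}, $\langle A_H,\Id\rangle$ is a commutative \emph{subalgebra} of $\mathcal{M}(n,\QQ_p)$ under matrix multiplication, not merely a linear subspace. Hence $(a+\lambda\Id)^n\in\langle A_H,\Id\rangle$, and
\[
\frac{(a+\lambda\Id)^n}{\det(a+\lambda\Id)}\ \in\ \langle A_H,\Id\rangle\cap G \ =\ H
\]
automatically, with $i$-th diagonal entry $(\alpha_i+\lambda)^n/\prod_t(\alpha_t+\lambda)$ of $p$-adic absolute value $>1$. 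Raising to the $n$-th power makes the determinant an $n$-th power for free, so no extraction of roots in $\QQ_p^*$ is needed. Replacing your $\mu(a+\lambda\Id)$ by this element removes the obstacle and completes your argument.
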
 

\begin{proof} 
By Lemma \ref{Borel}, every matrix in $A_H$ is upper triangular. Fix $a\in A_H$, with diagonal values $a_1,...,a_n$. Suppose for contradiction that not all $a_i$'s are 0. By lemma \ref{lem::p-adic_ineq}, there exist $i\in\{1,...,n\}$ and $\lambda\in\QQ_p$ such that
$$|a_i+\lambda|_p^n > \prod_{j=1}^n |a_j+\lambda|_p > 0.$$
Then $\det(a+\lambda \Id)\neq 0$, hence $\frac{(a+\lambda \Id)^n}{\det(a+\lambda \Id)}$ belongs to $G$ and is a hyperbolic matrix, since the $i$-th diagonal coefficient has the $p$-adic norm $>1$. This contradicts the assumption on $H$, so all diagonal coefficients of $a$ are 0.

To prove the last statement of the theorem, write $H=Gr(A_H)$, with $A_H$ strictly upper triangular by the first statement. Then $\langle A,\Id\rangle$ consists of strictly upper triangular plus scalar matrices, so intersecting with $G$ one obtains a subgroup in $U^\emptyset \cdot \mu_n$. This proves the Theorem. 
\end{proof}


\subsection{Hyperbolic Cartan limits}
\label{subsec::hyper_Cartan}

In this section we prove that hyperbolic Cartan limits are, up to conjugacy, subgroups of $B \cap G_{\sigma_+} \cap G_{\sigma_-}$, for some subsimplex $\sigma_+$ of the ideal chamber $c$ and $\sigma_-$ is opposite $\sigma_+$ and contained in $\partial \Sigma$. 

\begin{lemma}
\label{lem::normal}
Let $H$ be a subgroup of $B$ and let $H^{0}:=\{h \in H \; \vert \; h \text{ is elliptic}\}$. Then $H^{0}$ is a normal subgroup of $H$. If $H$ is closed then $H^{0}$ is closed too. 
\end{lemma}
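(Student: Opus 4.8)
The plan is to realize $H^{0}$ as the kernel of a group homomorphism defined on all of $B$; this delivers both the normality and, when $H$ is closed, the closedness at once.

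First I would build the relevant homomorphism. Since $B$ is the upper triangular Borel subgroup of $\SL(n,\QQ_p)$ (Example \ref{ex:levi_decom}), taking the diagonal part of an upper triangular matrix defines a group homomorphism $\pi : B \to C$ with kernel the unipotent radical $U^{\emptyset}$, because the diagonal of a product of upper triangular matrices is the product of their diagonals. Composing $\pi$ with the valuation map $w : C \to \ZZ^{n}$, $w(\mathrm{diag}(\alpha_1,\dots,\alpha_n)) = (v_p(\alpha_1),\dots,v_p(\alpha_n))$, which is a homomorphism into the abelian group $(\ZZ^{n},+)$, I obtain a homomorphism $\omega := w \circ \pi : B \to \ZZ^{n}$ sending $g \in B$ to the tuple of $p$-adic valuations of its diagonal entries.

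Next I would identify $H^{0}$ as a kernel. By Remark \ref{rem::hyp_element} an element $g \in B$ is hyperbolic exactly when some diagonal entry has $p$-adic absolute value $>1$, i.e. when some $v_p(\alpha_i) < 0$; thus $g$ is elliptic iff all $v_p(\alpha_i) \geq 0$. Since the diagonal entries of an element of $\SL(n,\QQ_p)$ have product $1$, we have $\sum_i v_p(\alpha_i)=0$, so all $v_p(\alpha_i)\geq 0$ forces all $v_p(\alpha_i)=0$; hence $g$ is elliptic iff $\omega(g)=0$. Therefore $\{g \in B : g \text{ elliptic}\} = \ker\omega$, and consequently $H^{0} = H \cap \ker\omega = \ker(\omega|_{H})$. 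As the kernel of the homomorphism $\omega|_{H} : H \to \ZZ^{n}$, the set $H^{0}$ is automatically a normal subgroup of $H$, proving the first assertion.

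Finally, for closedness I would note that $\omega$ is continuous: the $p$-adic valuation $v_p : \QQ_p^{*} \to \ZZ$ is locally constant and $\ZZ^{n}$ is discrete, so $\ker\omega = \omega^{-1}(0)$ is closed in $B$, hence closed in $G$ because $B$ is closed in $G$; thus if $H$ is closed then $H^{0} = H \cap \ker\omega$ is an intersection of closed sets and is closed. Alternatively, closedness is immediate from Lemma \ref{lem::same_trans_length}, since a limit of elliptic elements cannot be hyperbolic, so the elliptic elements of the closed group $H$ form a closed set. The only point needing care is the equivalence ``elliptic $\iff$ unit diagonal,'' which I would justify via Remark \ref{rem::hyp_element}; because the matrix structure of $B$ makes $\pi$, $w$ and $\omega$ completely explicit, I do not anticipate a genuine obstacle.
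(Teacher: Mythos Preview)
Your proof is correct. The paper's own proof is a one-line citation of Lemma~3.1 in \cite{CaCi}, so your argument is genuinely different in that it is self-contained and specific to the matrix group $\SL(n,\QQ_p)$: you exhibit $H^{0}$ as the kernel of the explicit homomorphism $\omega:B\to\ZZ^{n}$ recording the valuations of the diagonal entries, and then read off normality and closedness directly. The cited result in \cite{CaCi} presumably works at the level of an abstract group acting on an affine building (hence applies beyond $\SL(n,\QQ_p)$), while your route trades that generality for elementarity---you never leave linear algebra and the basic properties of $v_p$. Both approaches hinge on the same dichotomy, namely that in $B$ an element is elliptic precisely when its diagonal has trivial valuation vector; you make this explicit via Remark~\ref{rem::hyp_element} together with the constraint $\sum_i v_p(\alpha_i)=0$ coming from $\det=1$, which is exactly the right justification.
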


\begin{proof}
This follows from Lemma~3.1 from~\cite{CaCi}.
\end{proof}

\begin{lemma}
\label{lem::hype_borel}
Let $h \in B$ be elliptic and let $g \in B$ be hyperbolic. Then $gh$ is hyperbolic.
\end{lemma}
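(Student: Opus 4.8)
The plan is to reduce everything to the diagonal entries, using the characterization of hyperbolicity for upper triangular elements recorded in Remark~\ref{rem::hyp_element}. Since $g,h \in B$ are both upper triangular, their product $gh$ is again upper triangular, and the key elementary fact is that for upper triangular matrices the diagonal of a product is the entrywise product of the diagonals: $(gh)_{ii} = g_{ii}h_{ii}$ for each $i$. So the whole argument will hinge on controlling the $p$-adic absolute values $|(gh)_{ii}|_p = |g_{ii}|_p \cdot |h_{ii}|_p$.

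First I would record the dichotomy coming from Remark~\ref{rem::hyp_element}: an upper triangular element of $\SL(n,\QQ_p)$ is hyperbolic precisely when some diagonal entry has absolute value $>1$, and otherwise it is elliptic (recall every automorphism of an affine building is elliptic or hyperbolic). Applying this to $g$, which is hyperbolic, yields an index $j$ with $|g_{jj}|_p > 1$. Applying it to $h$, which is elliptic, gives $|h_{ii}|_p \le 1$ for all $i$; combined with the fact that $h$ is upper triangular with $\det h = \prod_i h_{ii} = 1$, the ultrametric product formula forces $|h_{ii}|_p = 1$ for every $i$. This normalization of the elliptic element's diagonal is the small point to get right, but it is immediate.

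Then I would simply compute $|(gh)_{jj}|_p = |g_{jj}|_p \,|h_{jj}|_p = |g_{jj}|_p \cdot 1 = |g_{jj}|_p > 1$, so $gh$ has a diagonal entry of absolute value $>1$ and is therefore hyperbolic by Remark~\ref{rem::hyp_element}. I do not expect any genuine obstacle here; the only thing to be careful about is to justify (rather than assume) that the elliptic $h$ has all diagonal entries of absolute value exactly $1$, and to invoke the remark's characterization in both directions. Note also that the statement needs $h$ on the right of $g$; the same argument works for $hg$ since the diagonal of the product is symmetric in this computation, so no orientation issue arises.
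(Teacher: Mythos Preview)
Your argument is correct and complete: for upper triangular matrices in $\SL(n,\QQ_p)$ the diagonal of a product is the entrywise product of the diagonals, Remark~\ref{rem::hyp_element} gives the hyperbolicity criterion in terms of $|\cdot|_p$ of diagonal entries, and the determinant constraint forces all diagonal entries of the elliptic $h$ to have absolute value exactly $1$. Nothing is missing.

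The paper, however, argues geometrically rather than computationally. It proceeds by contradiction: if $gh$ were elliptic, then (being in $B$) it would fix pointwise a Weyl sector $Q$ in $\Sigma$ with ideal boundary $c$; the elliptic $h\in B$ likewise fixes pointwise a sector $Q'\subset\Sigma$ toward $c$; hence $g=(gh)h^{-1}$ fixes the nonempty subsector $Q\cap Q'$ pointwise, contradicting hyperbolicity of $g$. Your approach is shorter and entirely elementary, but it is tied to the explicit matrix description of $B$ in $\SL(n,\QQ_p)$ via Remark~\ref{rem::hyp_element}. The paper's proof uses only the building geometry (fixed points of elliptics, sectors toward a common ideal chamber), so it would go through verbatim for any closed strongly transitive type-preserving group acting on a locally finite affine building, not just $\SL(n,\QQ_p)$.
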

\begin{proof}
Suppose $gh$ is elliptic then $gh$ fixes a point in the affine building $X$. Thus $gh$ fixes pointwise an entire sector $Q$ with ideal boundary the ideal chamber $c$ (that defines the Borel subgroup $B$) and that is contained in the apartment $\Sigma$. As $h$ is also elliptic it fixes a sector $Q'$ in $\Sigma$ with ideal boundary the ideal chamber $c$.  Then $h$ and $gh$ fix pointwise $Q \cap Q'$.  So $g$ must fix pointwise $Q \cap Q'$.  This is a contradiction as $g$ is hyperbolic. It follows that $gh$ is hyperbolic.
\end{proof}

Recall by Lemma~\ref{lem::Iwasawa_decom} $B=B^0 A$ where $A \cong \ZZ^{n-1}$ is the set of all diagonal matrices in $G$ that are hyperbolic elements, but the $\Id$ (see Remark~\ref{rem::hyp_element}) and $B^0$ is normal in $B$. Then by Lemma~\ref{lem::hype_borel} for every $H \leq B$ the quotient $H/H^0$ contains only hyperbolic representatives. 

\begin{lemma}
\label{lem::H_hyp_abelian}
Let $H \leq B$ be a hyperbolic Cartan limit. Then $H/H^{0}$ is a free abelian subgroup of rank $m$, with $0 < m \leq n-1$, and maps to $A$ via a canonical injective homomorphism.
\end{lemma}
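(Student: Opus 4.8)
The plan is to realize $H/H^0$ as a subgroup of $A \cong \ZZ^{n-1}$ by means of the Iwasawa decomposition $B = AB^0$, and then to read off all three assertions (free abelian, rank bounded by $n-1$, injectivity into $A$) from the structure of subgroups of $\ZZ^{n-1}$. First I would record that $H$ is abelian: since $H \in \overline{Cart(G)}^{Ch}$, Lemma~\ref{limits_abel}(2) gives that $H$ is abelian, so $H/H^0$ is abelian, and $H^0$ is already normal in $H$ by Lemma~\ref{lem::normal}, so the quotient makes sense.

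Next I would construct the canonical homomorphism. Because $B^0$ is normal in $B$ there is a quotient homomorphism $B \to B/B^0$. I claim the torus factor $A$ maps isomorphically onto $B/B^0$: the composite $A \hookrightarrow B \to B/B^0$ is surjective since $B = AB^0$, and its kernel is $A \cap B^0$. By Remark~\ref{rem::hyp_element} every nonidentity element of $A$ is hyperbolic, whereas every element of $B^0$ is elliptic, so $A \cap B^0 = \{\Id\}$ and therefore $B/B^0 \cong A \cong \ZZ^{n-1}$. Composing the quotient map with this identification yields a homomorphism $\pi \colon B \to A$, which I then restrict to obtain $\pi|_H \colon H \to A$.

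The key computation is the kernel of $\pi|_H$. An element $h \in H$ lies in $\ker(\pi|_H)$ exactly when $h \in B^0$, i.e.\ exactly when $h$ is elliptic; since $H \subseteq B$ this means $\ker(\pi|_H) = H \cap B^0 = H^0$. Hence $\pi|_H$ descends to an injective homomorphism $\bar\pi \colon H/H^0 \hookrightarrow A \cong \ZZ^{n-1}$, which is precisely the asserted canonical map. Now any subgroup of $\ZZ^{n-1}$ is free abelian of rank at most $n-1$, so $H/H^0$ is free abelian of some rank $m \leq n-1$. To see $m > 0$, I would use that $H$ is a \emph{hyperbolic} Cartan limit, so it contains a hyperbolic element $g$; as $g$ is not elliptic, $g \notin H^0$, whence its image in $H/H^0$ is nontrivial and $m \geq 1$.

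The only step requiring genuine care is the identification $B/B^0 \cong A$, which rests entirely on $A \cap B^0 = \{\Id\}$ — equivalently, on the fact from Remark~\ref{rem::hyp_element} that the Iwasawa torus factor $A$ contains no nontrivial elliptic element, so that it is a true complement to the elliptic part $B^0$. Once this is secured, identifying $\ker(\pi|_H)$ with $H^0$ and invoking the classification of subgroups of $\ZZ^{n-1}$ are routine, and no topological input (such as closedness of $H^0$) is needed, since the conclusion is purely algebraic.
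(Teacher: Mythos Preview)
Your proof is correct and follows essentially the same approach as the paper: both use the Iwasawa decomposition $B = AB^0$ together with $A \cap B^0 = \{\Id\}$ to build an injection $H/H^0 \hookrightarrow A \cong \ZZ^{n-1}$, then read off the rank bound and positivity. Your formulation is slightly cleaner in that you first construct the quotient homomorphism $B \to B/B^0 \cong A$ and then restrict, so the homomorphism property is automatic; the paper instead defines the map $\psi$ element-by-element via $h \mapsto a_h$ and separately argues it is a homomorphism using normality of $B^0$ in $B$.
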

\begin{proof}
Since $H$ is a limit of the diagonal Cartan, it is abelian, and so is $H/ H^0$. As $H \leq B$ every $h \in H$ has a decomposition as $h=a_h h_1$ for a unique $a_h \in A$ and some $h_1 \in B^0$.  The element $a_h$ is unique because $A \setminus \left\{ \Id \right\}$ contains only hyperbolic elements. More precisely, $h=a_h h_1=b_h h_2$ implies $b_h^{-1}a_h \in B^0 \cap A$ so it is elliptic and must be the identity. Then the map $\psi :H/H^0 \to A $ given by $h \mapsto a_h$ is well defined and injective. Because $H$ is abelian and $H^0, B^0$ are normal subgroups in $H$, respectively, $B$, the map $\psi$ is also a homomorphism. We obtain $H/H^{0}$ is a free abelian subgroup of rank $0 < m \leq n-1$.
\end{proof}

The \textbf{convex hull} in $X$ of a flat $E \subset X$ is the intersection of all half-apartments and apartments in $X$ that contain $E$.  Denote the convex hull of $E$ in $X$ by $Conv_X(E)$ and notice it is unique. The convex hull is again a flat in $X$ and in general might be of bigger dimension that the dimension of $E$.

\begin{theorem}
\label{thm::hyper_Cartan}
Let $H$ be a hyperbolic Cartan limit with $\textrm{rank}( H / H^0) = m$. Then there exist $0 <k \leq n-1$ maximal (which depends on $m$) and 
$\sigma_+, \sigma_- \subset \partial \Sigma$ opposite simplices with $\sigma_+ \subset c$ such that up to conjugacy in $G$, $H$ is a subgroup of $B \cap G_{\sigma_+} \cap G_{\sigma_-}$ and stabilizes a flat of dimension  $k$ in $X$ whose ideal boundary is the support of $\sigma_+, \sigma_-$.
\end{theorem}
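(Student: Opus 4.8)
The plan is to reduce everything to a single \emph{generic} hyperbolic element of $H$, straighten that element into a Levi factor by a unipotent conjugation, and then let the abelianness of $H$ transport the resulting pair of fixed opposite simplices to all of $H$.

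First I would set up the reduction and the combinatorial data. By Lemma~\ref{limits_abel}, $H$ is abelian, and by Proposition~\ref{prop::limit_cartan_in_borel} together with Lemma~\ref{lem::unip_conj} I may assume, up to conjugacy, that $H\le B$. By Lemmas~\ref{lem::normal} and~\ref{lem::H_hyp_abelian}, $H^{0}\trianglelefteq H$ and $\psi\colon H/H^{0}\to A$ is an injective homomorphism onto a rank-$m$ subgroup of $A\cong\ZZ^{n-1}$; write $V\subseteq A\otimes\RR$ for its $\RR$-span, so $\dim V=m$. Using that a hyperbolic element of $B$ is upper triangular with valuation vector $v(h)=(\mathrm{val}(d_1),\dots,\mathrm{val}(d_n))$ (Remark~\ref{rem::hyp_element}), I define $\sigma_{+}\subset c$ to be the face of $c$ whose associated interval partition of $\{1,\dots,n\}$ consists of the maximal runs on which \emph{every} $v\in V$ is constant. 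Then $V\subseteq\mathrm{supp}(\sigma_{+})$, the opposite simplex $\sigma_{-}\subset\partial\Sigma$ is determined, and Propositions~\ref{coro::geom_levi_decom} and~\ref{prop::res_building} give the Levi factor $M_I=G_{\sigma_+}\cap G_{\sigma_-}$ and the splitting $\Delta(\sigma_+,\sigma_-)\cong\RR^{|I|}\times\Delta_I$. I let $k$ be the dimension of the Euclidean factor $\RR^{|I|}$; by construction $m\le k\le n-1$ and $k$ is determined by the position of $V$. Since $\sigma_{+}\subset\bar c$ and $H\le B=G_c$ is type-preserving, $H$ fixes every face of $c$, so \emph{automatically} $H\le G_{\sigma_+}=P^{I}_{+}=U^{I}_{+}M_I$.

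Next comes the straightening, which I expect to be the main obstacle. Choose a hyperbolic $h_0\in H$ with $\psi(h_0)$ in the relative interior of $V$ (its valuation vector jumps exactly at the block boundaries); such $h_0$ exists because the finitely many sub-walls inside $V$ form a proper subset of the rank-$m$ lattice $\psi(H/H^0)$. Write $h_0=um$ with $u\in U^{I}_{+}$ and $m\in M_I$. The claim is that \emph{$h_0$ is conjugate to $m$ by some $v\in U^{I}_{+}$.} Solving $v h_0 v^{-1}=m$ amounts to solving $u=v^{-1}\,m v m^{-1}$ in $U^{I}_{+}$, i.e.\ to inverting the map $v\mapsto v^{-1}\,{}^{m}(v)$; this is possible precisely because $m$ acts on $\mathrm{Lie}(U^{I}_{+})$ with no eigenvalue of $p$-adic modulus $1$, which holds since $\psi(h_0)$ is regular across the blocks (each relevant ratio $|d_i/d_j|$ with $i,j$ in different blocks is $\ne 1$). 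This is the step where the off-Levi unipotent part of a hyperbolic element is removed using the contracting dynamics of Definition~\ref{def::contr_groups}; it is exactly the phenomenon that fails at a non-special vertex (cf.\ Remark~\ref{rem::special_vertex}) and here is supplied by the genericity of $h_0$.

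Finally I would propagate and read off the flat. Replace $H$ by $H':=vHv^{-1}$; since $v\in U^{I}_{+}\le U^{\emptyset}\le B$ (Example~\ref{ex:levi_decom}) and $v$ fixes $\sigma_+$, we still have $H'\le B\cap G_{\sigma_+}$, and now $H'$ contains the element $m=vh_0v^{-1}\in M_I$, whose axis lies in the Euclidean factor $\RR^{|I|}\subset\Sigma$ so that both its endpoints have carriers $\sigma_+,\sigma_-$ lying in $\partial\Sigma$. Because $H'$ is abelian, every element of $H'$ commutes with $m$, hence preserves the axis of $m$ and fixes each of its endpoints; as $H'$ is type-preserving and these endpoints lie in the relative interiors of $\sigma_+$ and $\sigma_-$ (colorability, Definition~\ref{def::str_trans}), $H'$ fixes both simplices. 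Therefore
$$H'\le B\cap G_{\sigma_+}\cap G_{\sigma_-}=B\cap M_I,$$
which is the asserted containment. By Proposition~\ref{prop::res_building_str_tran}, $M_I$ preserves the splitting $\Delta(\sigma_+,\sigma_-)\cong\RR^{|I|}\times\Delta_I$ and acts on the Euclidean factor by translations, and the hyperbolic elements of $H'$ translate along it; thus $H'$ stabilizes the $k$-dimensional flat $\RR^{|I|}$ (equivalently $\mathrm{Conv}_X$ of the axis of $m$), whose ideal boundary is the support of $\sigma_+,\sigma_-$. This completes the plan.
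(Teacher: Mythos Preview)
Your approach is genuinely different from the paper's: instead of invoking the Flat Torus Theorem on a full rank-$m$ abelian subgroup of hyperbolic elements and then reading off $\sigma'_{\pm}$ from the boundary of the resulting convex-hull flat, you try to pin down $\sigma_+\subset c$ combinatorially from the valuation data $V=\psi(H/H^0)\otimes\RR$, straighten a single generic $h_0$ into $M_I$, and propagate via commutativity. The idea of reducing to one element and using abelianness is attractive, but there is a real gap in the straightening step.

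The problem is that your interval partition only records \emph{consecutive} equalities of coordinates in $V$, whereas $V$ can sit inside a \emph{non-consecutive} root hyperplane $\{a_i=a_j\}$ with $|i-j|>1$. Concretely, in $\SL(5,\QQ_p)$ take the Cartan limit obtained by conjugating $\mathfrak{c}$ by $u_m=\Id+p^{-m}E_{13}$; the limit algebra $A_H$ consists of matrices with diagonal $(\alpha_1,\alpha_2,\alpha_1,\alpha_4,\alpha_5)$ and an arbitrary $(1,3)$-entry, so the associated $H\le B$ is a hyperbolic Cartan limit whose valuation space $V$ is the $3$-plane $\{v_1=v_3\}$. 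No two \emph{consecutive} coordinates of a generic $v\in V$ agree, so your blocks are all singletons and $\sigma_+=c$; but then for \emph{every} $h_0\in H$ one has $|d_1/d_3|_p=1$, so $\mathrm{Ad}(m)$ has an eigenvalue of $p$-adic modulus $1$ on $\mathrm{Lie}(U^\emptyset)$ and the equation $u=v^{-1}\,{}^m(v)$ cannot be solved in general. Even when straightening does succeed, the attracting endpoint of $m$ lies in $\partial(\mathrm{supp}(\sigma_+))$ but not in the interior of your $\sigma_+$ unless the block values happen to be strictly decreasing, so the claim that the carriers of the endpoints are $\sigma_+,\sigma_-$ is unjustified.

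The paper sidesteps both issues: the Flat Torus Theorem produces $E^m$ directly, $\mathrm{Conv}_X(E^m)$ gives the correct $k$-flat (in the example above it is the wall $\{a_1=a_3\}$, a $3$-flat), and only \emph{afterwards} does one pick $\sigma'_{\pm}$ in its ideal boundary and perform a $G$-conjugation (via the convex hull $\mathrm{Conv}_{\partial X}(\sigma'_+,c)$ and strong transitivity) to arrange $\sigma_+\subset c$. Your scheme could be repaired by letting $\tau_{\pm}$ be the carriers of $\xi_{\pm}(h_0)$ for any hyperbolic $h_0\in H$ (no straightening needed: commutativity alone gives $H\le G_{\tau_+}\cap G_{\tau_-}$), and then importing the paper's final convex-hull-and-conjugate step to move $\tau_+$ into $c$; but as written the combinatorial shortcut does not work.
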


\begin{proof}
By Proposition~\ref{prop::limit_cartan_in_borel},  we may take $H$ to be a hyperbolic Cartan limit contained in $B$. By Lemma~\ref{lem::H_hyp_abelian} choose $m$ hyperbolic representatives in $H$ that generate $H/H^0$. Then those representatives generate a free abelian subgroup $H'$ of $H$ of rank $m \leq n-1$ and $H' \cong H/H^0$.  Apply the Flat Torus Theorem~\cite[Chapter II, Thm. 7.1]{BH99} and obtain $\Min(H'):= \bigcap\limits_{h \in H'} \Min(h)=Y \times E^{m}$, where $E^m$ is a flat in $X$ of dimension $m$, and every element $h \in H'$ stabilizes $\Min(H')$, the splitting $Y \times E^{m}$ and acts as a translation on $E^m$. In particular, $H'$ stabilizes $E^m$. 

By~\cite[Chapter II, Thm. 6.8 5)]{BH99} every element of $H$ stabilizes $\Min(H')$, preserves the splitting $Y \times E^{m}$  and acts as a translation on $E^m$.  In particular, every element in $H^0$ is elliptic and acts as the identity on $E^m$. We obtain $H$ stabilizes $Conv_X(E^m)$: more precisely, every hyperbolic element of $H$ acts as a translation on $Conv_X(E^m)$ and every elliptic element fixes pointwise $Conv_X(E^m)$. Then we take $k$ to be the dimension of $Conv_X(E^m)$ and notice $k$ is maximal. In particular, $H$ fixes pointwise the boundary $\partial Conv_X(E^m)$.

Then there exist two opposite ideal simplices (not necessarily unique) $\sigma'_+, \sigma'_- \subset \partial (Conv_X(E^m)) \cap \partial X$ such that the support of $\sigma'_+$ is $\partial (Conv_X(E^m) )$. Fix such  $\sigma'_+, \sigma'_-$. Since $H$ fixes pointwise the ideal chamber $c$ corresponding to the Borel subgroup $B$ and also the ideal simplex $\sigma'_+$,  and since the convex hull is unique, then $H$ fixes pointwise the convex hull $Conv_{\partial X}(\sigma'_+,c)$ in $\partial X$ of $c$ and $\sigma'_+$ (see \cite[Example~3.133.c)]{AB}). Let $c'$ be the unique ideal chamber of $Conv_{\partial X}(\sigma'_+,c)$ that contains the simplex $\sigma'_+$ (see \cite[Def.~3.104 and Exe. 3.149)]{AB}). Then by the strong transitivity of $G$ on $\Ch(\partial X)$ there exists $g \in G$ with $g(c')=c$ and $g(\sigma'_-) \in \partial \Sigma$. Thus there exist $\sigma_+ =g(\sigma'_+)  , \sigma_- = g(\sigma'_-)  \subset \partial \Sigma$ opposite simplices and $\sigma_+ \subset c$ such that, up to conjugacy, $H$ is a subgroup of $B \cap G_{\sigma_+} \cap G_{\sigma_-}$.  
\end{proof}

Theorems \ref{thm::hyper_Cartan} and \ref{ellip_unip} imply Theorem \ref{hyp_ellip}. 

\medskip
In order to give explicit examples of limits of the diagonal Cartan in low dimensions we need to understand further the diagonal entries of  matrix elements of hyperbolic Cartan limits. This is a generalization of Theorem \ref{ellip_unip}. 



\begin{theorem} 
\label{rem::hyp_Cartan}
Up to conjugacy, every hyperbolic Cartan limit $H$ is a subgroup of  $$B \cap  G_{\sigma_+} \cap G_{\sigma_-}= \left\{\begin{pmatrix} A_1 & 0 & \cdots & 0 \\
0 & A_2 & 0 & \cdots \\
0 & 0 & \ddots & 0\\
0& \cdots & 0 & A_k\\
\end{pmatrix} \in \SL(n, \QQ_p) \right\},$$ for some opposite simplices $\sigma_+, \sigma_- \subset \partial \Sigma$ with $\sigma_+ \subset c$ and where the blocks $A_1, \cdots, A_k$ are indecomposable upper triangular square matrices of possibly different dimensions, and in each block $A_i$ every element in $H$ has its diagonal entries all the same. 

In particular, if the dimension of $Conv_X(E^m)$  corresponding to $H$ is $n-1$, then $H$ is conjugate to the diagonal Cartan subgroup. 
\end{theorem}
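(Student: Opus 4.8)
The plan is to build directly on Theorem~\ref{thm::hyper_Cartan}, which already places $H$, up to conjugacy, inside $B \cap G_{\sigma_+} \cap G_{\sigma_-} = B \cap M_I$; by Example~\ref{ex:levi_decom} this is exactly the group of block-diagonal upper-triangular matrices with indecomposable blocks $A_1, \dots, A_k$. So only two things remain: that the diagonal entries within each block $A_i$ are all equal, and the final dimension statement. Throughout I would pass to the associated algebra $A_H \in \overline{Cart(\mathfrak{g})}^{Ch}$ with $Gr(A_H) = H$ (Corollary~\ref{cor::homeo}). By Lemma~\ref{Borel} every matrix in $A_H$ is upper triangular, and by Proposition~\ref{inj} we have $\langle A_H, \Id\rangle = \langle \QQ_p^{*} H\rangle$, which lies in the block-diagonal matrices since $H \subseteq M_I$; hence $A_H$ is block-diagonal, upper-triangular and trace zero. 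It then suffices to show each block of each $a \in A_H$ has constant diagonal, since $\langle A_H, \Id\rangle$ is abelian and closed under products, so constant diagonals in $A_H$ force every element of $H = \langle A_H, \Id\rangle \cap G$ to be block-scalar-plus-nilpotent.

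The geometric heart of the argument is to show that $H$ acts on $\Delta_I$, equivalently on each factor $\Delta_{I,i}$, only by elliptic isometries. Recall from the proof of Theorem~\ref{thm::hyper_Cartan} the flat $F = Conv_X(E^m)$ of dimension $k$ whose ideal boundary is the support of $\sigma_+$. Since $F$ is a flat contained in $\Delta(\sigma_+, \sigma_-)$ with exactly that prescribed boundary, the splitting of Proposition~\ref{prop::res_building_str_tran} forces $F = \RR^{\vert I \vert} \times \{z_0\}$ for a single point $z_0 \in \Delta_I$. Each hyperbolic generator $h$ (from Lemma~\ref{lem::H_hyp_abelian}) preserves $E^m \subseteq F$ and acts on it by translation; writing its action on $\RR^{\vert I \vert} \times \Delta_I$ as $(t_h, \phi_h)$ and comparing the $\Delta_I$-coordinates of $E^m$ and $h(E^m)=E^m$ forces $\phi_h(z_0) = z_0$. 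The elliptic elements of $H$ fix $F$ pointwise, hence also fix $z_0$ in the $\Delta_I$-factor; as $H = H' H^0$ with $H'$ generated by the hyperbolic representatives, the whole image of $H$ in $\Isom(\Delta_I)$ fixes $z_0$, so $H$ is elliptic on each $\Delta_{I,i}$.

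With ellipticity in hand, equality of the diagonal entries follows by a block-wise repetition of the argument of Theorem~\ref{ellip_unip}. Suppose some block $a_i$ of some $a \in A_H$ has two distinct diagonal entries. Applying Lemma~\ref{lem::p-adic_ineq} to the $d_i$ diagonal entries of $a_i$ produces $\lambda \in \QQ_p$ (chosen inside the open set so that the global determinant remains nonzero) with one entry of $a_i + \lambda \Id$ strictly dominating, in norm, the $d_i$-th root of the block determinant. Then $h := (a + \lambda \Id)^n / \det(a + \lambda \Id) \in \langle A_H, \Id\rangle \cap G = H$, and up to scalars its $i$-th block acts on $\Delta_{I,i}$ as $a_i + \lambda \Id$; by the $\SL_{d_i}$ version of Remark~\ref{rem::hyp_element} this is a hyperbolic isometry, contradicting the previous paragraph. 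Hence every block of every $a \in A_H$ has constant diagonal, which is exactly the asserted scalar-plus-nilpotent shape of each $A_i(h)$. Note it is here that Lemma~\ref{lem::p-adic_ineq} does the real work: it produces a dominating entry even among diagonal entries of equal norm, upgrading "elliptic on $\Delta_{I,i}$'' (equal norms) to genuine equality.

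For the last statement, if $\dim Conv_X(E^m) = n-1 = \dim \Sigma$ then $F = \Sigma$, so $\sigma_+ = c$ is a full ideal chamber and $M_I = M_{\emptyset} = C$; thus $H \subseteq B \cap C = C$. Since $A_H \subseteq \mathfrak{c}$ and $\dim A_H = n-1 = \dim \mathfrak{c}$ by Corollary~\ref{cor::dim_same}, we conclude $A_H = \mathfrak{c}$ and hence $H = Gr(\mathfrak{c}) = C$, so $H$ is conjugate to the diagonal Cartan. I expect the main obstacle to be the geometric ellipticity step of the second paragraph, namely correctly extracting $F = \RR^{\vert I \vert} \times \{z_0\}$ from the maximality of $k$ together with the splitting, and distinguishing hyperbolicity along the Euclidean factor from hyperbolicity on the $\Delta_{I,i}$; once that is in place the algebraic step is a faithful block-wise copy of Theorem~\ref{ellip_unip}.
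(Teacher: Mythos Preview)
Your argument is correct and follows a route that differs from the paper's in an instructive way. The paper separates the analysis into hyperbolic and elliptic elements of $H$: for the hyperbolic ones it invokes Lemma~\ref{rem::SL_Levi_factor} and Remark~\ref{rem::M_I_elements} to see that the translational part on the flat $F$ is realised by block-scalar diagonal matrices; for the elliptic ones it builds, via Lemma~\ref{lem::p-adic_ineq}, a hyperbolic element $h_\lambda$, picks $g_\lambda \in \alpha^{-1}(\alpha|_{\Isom(F)}(h_\lambda)) \subset H^I$, and derives a norm contradiction from the diagonal of the elliptic element $g_\lambda^{-1}h_\lambda$. Your approach instead unifies both cases by the geometric observation that $F$ is a horizontal slice $\RR^{|I|}\times\{z_0\}$, so all of $H$ fixes $z_0\in\Delta_I$; the contradiction is then obtained by a single block-wise replay of the Theorem~\ref{ellip_unip} argument.

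What your route buys is conceptual cleanliness: once you know $H$ is elliptic on $\Delta_I$, no case distinction is needed and Lemma~\ref{rem::SL_Levi_factor} is bypassed. What it costs is that you implicitly use the product decomposition $\Delta_I \cong \prod_i \Delta_{I,i}$ (with $\Delta_{I,i}$ the building of $\PGL(d_i,\QQ_p)$) and the fact that $M_I$ acts on the $i$-th factor through its $i$-th block; these are standard for $\SL(n,\QQ_p)$ but are not set up in the paper, so you should state them explicitly. The paper's approach stays entirely inside the machinery already developed (Proposition~\ref{prop::res_building_str_tran} and Lemma~\ref{rem::SL_Levi_factor}) at the price of a slightly more involved matrix comparison. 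Your identification of $F$ with a horizontal slice is correct: every geodesic line in $F$ has both endpoints in $\partial\RR^{|I|}$, hence constant $\Delta_I$-projection, and since $F$ is spanned by such lines it projects to a single point. One small point to make precise: when you apply Lemma~\ref{lem::p-adic_ineq} inside a single block, you are using it with $d_i$ in place of $n$; this is fine since the lemma and its proof are insensitive to the ambient dimension, but it is worth saying.
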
 

\begin{proof}
Recall a block $A_i$ is \textbf{indecomposable} if  no conjugate of $P^I$ in $SL (n,\QQ_p)$ allows us to write $A_i$ as a direct sum of smaller blocks

The first part of the proposition is just Theorem \ref{thm::hyper_Cartan},  Proposition \ref{coro::geom_levi_decom} and Example \ref{ex:levi_decom}. 

As $H \leq B$ and by Lemma \ref{Borel}, every matrix in $A_H$ is upper triangular, and so is every matrix in $H$.
Let us prove  in each block $A_i$ every matrix element in $H$ has its diagonal entries all the same. 

We treat hyperbolic and elliptic elements of $H$ separately. Let $F$ be the flat in $X$ of maximal dimension $k$ that is stabilized by $H$ and given by Theorem \ref{thm::hyper_Cartan}. Recall, from the proof of Theorem \ref{thm::hyper_Cartan}, every hyperbolic element $h$ of $H$ acts as a translation on $F$ and every elliptic element in $H$ fixes pointwise $F$. Then by Lemma \ref{rem::SL_Levi_factor} and Remark \ref{rem::M_I_elements} for every hyperbolic element $h \in H$ the preimage $\alpha^{-1}(\alpha\vert_{\Isom(F)}(h))$ consists of diagonal matrices with entries in each block $A_i$ all the same. Moreover, every elliptic element of $H$ has its diagonal entries in $\ZZ_p^*$, otherwise it will be hyperbolic (see Remark \ref{rem::shape_hyp_elements}). 

Suppose for contradiction that there is an elliptic element $h$ of $H$ such that in one block $A_i$ it has two different diagonal entries, say $\alpha_j \neq \alpha_l$; we fix for what follows $A_i, \alpha_j \neq \alpha_l$. Then $a:= h- \frac{\tau}{n} \Id \in A_H$, where $\tau = trace(h) \in \ZZ_p$, still has nonequal diagonal entries $\alpha_j- \frac{\tau}{n} \neq \alpha_l-\frac{\tau}{n}$. 

Denote the diagonal entries of $a$ by $a_1, ..., a_n$ where $a_j \neq a_l$ by the previous paragraph.  Then by Lemma \ref{lem::p-adic_ineq} there exists $\lambda \in \QQ_p$ such that $\vert a_j +\lambda\vert_p^n < \vert a_l+\lambda\vert_p^n$ and $|a_l+\lambda |_p^n > \prod_{t=1}^n | a_t+\lambda|_p > 0$, thus
 $$h_\lambda:= \frac{ (a + \lambda \Id)^n} { det ( a + \lambda \Id)} \in H$$ 
 is a hyperbolic element. By the first part of the proof, the preimage $\alpha^{-1}(\alpha\vert_{\Isom(F)}(h_{\lambda}))$ consists of diagonal matrices with entries in each block $A_t$ all the same. Take some $g_\lambda \in \alpha^{-1}(\alpha\vert_{\Isom(F)}(h_{\lambda}))$. As $|a_l+\lambda |_p^n > \prod_{t=1}^n | a_t+\lambda|_p > 0$, the corresponding entry $\lambda_i$ of $g_\lambda$ in the block $A_i$ is such that $\vert \lambda_i \vert_p>1$. Moreover,  we claim $g_\lambda^{-1} h_\lambda$ is an elliptic element of $B$. Indeed, applying the map $\alpha$ from Proposition \ref{prop::res_building_str_tran} to $g_\lambda^{-1} h_\lambda$ and by the choice of $g_\lambda$ we obtain $\alpha(g_\lambda^{-1} h_\lambda)=\alpha\vert_{\Isom(\Delta_I)}(h_\lambda)$. So by definition $g_\lambda^{-1} h_\lambda$ acts trivially on the flat $F$. As  $h_\lambda$ is in $B$ and $g_\lambda^{-1}$ is diagonal the claim follows. Thus the diagonal entries of $g_\lambda^{-1} h_\lambda$ must be in $\ZZ_p^{*}$. But the $j$-diagonal entry of $g_\lambda^{-1} h_\lambda$ in the block $A_i$, is such that  $\vert \frac{(a_j+\lambda)^n}{det ( a + \lambda \Id)} \cdot \frac{1}{\lambda_i} \vert_p <1$. This contradicts the fact that the diagonal entries of $g_\lambda^{-1} h_\lambda$ are in $\ZZ_p^*$.
 

If the dimension of $Conv_X(E^m)$ of the flat $E^m$ corresponding to $H$ is $n-1$, then $\sigma_+$ is the ideal chamber $c$. So $H$ is contained (up to conjugacy) in $C$. As the corresponding Lie algebra $A_H$ of $H$ is of dimension $n-1$ and thus contained in the set of diagonal matrices with trace zero, we conclude by Propositions~\ref{inj},~\ref{surj} that $H$ is a diagonal Cartan subgroup.
\end{proof}

\section{Explicit examples of limits of the diagonal Cartan in low dimensions}\label{lowdim}
Recall $\mu_k$ is the group of $k$th roots of unity in $\QQ_p$. 

\subsection{$\SL (2,\QQ_p)$}

We compute a limit of the diagonal Cartan.  By Lemma \ref{lem::unip_conj} it is sufficient to conjugate $C$ only by sequences in the unipotent radical $U$ of $B$. In view of Corollary \ref{cor::homeo}, we may replace $\overline{Cart (G)}^{Ch}$ by $\overline{Cart (\mathfrak{g})}^{Ch}$ and $C$ by $\mathfrak{c}$. Observe that

$$\bpmat 1 & p^{-n} \\0 & 1 \epmat 
\bpmat a_n & 0 \\ 0 & -a_n \epmat 
\bpmat 1 &p^ {-n} \\0 & 1 \epmat  ^ {-1} 
= 
\bpmat a_n & -2p^{-n}a_n\\ 0 & -a_n \epmat.$$
For this to converge to 
$
\bpmat 0 & x \\ 0 & 0 \epmat$, it is enough to take $a_n=\frac{-p^nx}{2}$. This computation shows that $\mathfrak{n}=\{\bpmat 0 & x \\ 0 & 0 \epmat: x\in\QQ_p\}$ is a Chabauty limit of conjugates of $\mathfrak{c}$. 
\newline
 Applying $Gr$, we obtain that  $\{\pm \Id\}.U$ is an elliptic Cartan limit.  We now show this is the only possible limit, up to conjugacy.

\begin{proposition}\label{sl2}  Up to conjugacy, there is only one limit of $C$ in $ \overline{Cart(\SL(2,\QQ_p))}^{Ch} \setminus Cart(\SL (2,\QQ_p))$: the product of the unipotent radical of the Borel subgroup $B$, with $\{\pm \Id\}$.
\end{proposition}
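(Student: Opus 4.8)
The plan is to split the analysis according to the elliptic/hyperbolic dichotomy recorded after Lemma~\ref{lem::unip_conj}. Let $H \in \overline{Cart(\SL(2,\QQ_p))}^{Ch} \setminus Cart(\SL(2,\QQ_p))$. By Proposition~\ref{prop::limit_cartan_in_borel} together with Lemma~\ref{lem::unip_conj} we may, after conjugating, assume $H \subseteq B$, so that all the structural results of this subsection apply; write $A_H \in \overline{Cart(\mathfrak{g})}^{Ch}$ for the subalgebra with $Gr(A_H)=H$. The crucial numerical input is Corollary~\ref{cor::dim_same}, which gives $\dim A_H = n-1 = 1$; for $n=2$ this one-dimensionality will pin down $A_H$ in each case. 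Recall also that for $n=2$ the group $\mu_n=\mu_2$ of square roots of unity is exactly $\{\pm 1\}$, identified with $\{\pm\Id\}$.

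\emph{Elliptic case.} Suppose $H$ contains no hyperbolic element. Then Theorem~\ref{ellip_unip} forces every matrix of $A_H$ to be strictly upper triangular. For $n=2$ the strictly upper triangular trace-zero matrices form the one-dimensional space $\mathfrak{n}=\{\bpmat 0 & x \\ 0 & 0 \epmat : x \in \QQ_p\}$, so $\dim A_H = 1$ gives $A_H = \mathfrak{n}$. Applying the homeomorphism $Gr$ of Corollary~\ref{cor::homeo}, I would compute $H = \langle \mathfrak{n}, \Id\rangle \cap \SL(2,\QQ_p) = \{\bpmat \beta & y \\ 0 & \beta \epmat : \beta^2 = 1\} = \{\pm\Id\}\cdot U$, which is the claimed group. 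The computation preceding the proposition already exhibits $\mathfrak{n}$ as a Chabauty limit of conjugates of $\mathfrak{c}$, so this $H$ genuinely lies in the closure; and it is not a conjugate of $C$, because it contains nontrivial unipotent elements, which are not semisimple and hence not conjugate into the diagonal torus. Thus the elliptic case yields exactly $\{\pm\Id\}\cdot U$ up to conjugacy.

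\emph{Hyperbolic case.} Suppose $H$ contains a hyperbolic element. By Lemma~\ref{lem::H_hyp_abelian}, $\textrm{rank}(H/H^0) = m$ with $1 \le m \le n-1 = 1$, so $m = 1$. Theorem~\ref{thm::hyper_Cartan} then produces a maximal $k$ with $m \le k \le n-1$ equal to the dimension of $Conv_X(E^m)$, and these bounds force $k = 1 = n-1$. The final clause of Theorem~\ref{rem::hyp_Cartan} then says $H$ is conjugate to the diagonal Cartan $C$, i.e.\ $H \in Cart(\SL(2,\QQ_p))$, which is excluded from the set under consideration. Hence no hyperbolic limit lies in the difference set, and combining the two cases, the unique element of $\overline{Cart(\SL(2,\QQ_p))}^{Ch}\setminus Cart(\SL(2,\QQ_p))$ up to conjugacy is $\{\pm\Id\}\cdot U$.

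I expect the obstacles here to be matters of care rather than of depth, since the heavy lifting is done by the general theorems specialized to $n=2$. The two points to handle cleanly are: that the dimension equality $\dim A_H = 1$ genuinely forces $A_H = \mathfrak{n}$ (so that $H$ \emph{equals} $\{\pm\Id\}\cdot U$, rather than merely being contained in it), and that the rank/convex-hull inequality $m \le k \le n-1$ collapses in rank one, leaving no room for an intermediate hyperbolic limit outside the conjugacy class of $C$. Confirming that $\{\pm\Id\}\cdot U \notin Cart(\SL(2,\QQ_p))$ via the presence of genuine unipotents then closes the argument.
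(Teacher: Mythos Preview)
Your proof is correct and follows essentially the same approach as the paper: reduce to $H\subseteq B$, use the elliptic/hyperbolic dichotomy, invoke Theorem~\ref{ellip_unip} with the dimension count from Corollary~\ref{cor::dim_same} to force $A_H=\mathfrak{n}$ in the elliptic case, and use Theorem~\ref{rem::hyp_Cartan} to rule out the hyperbolic case. Your version is slightly more explicit in spelling out the rank collapse $m=k=n-1=1$ and in verifying that $\{\pm\Id\}\cdot U\notin Cart(\SL(2,\QQ_p))$, but the argument is the same.
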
 
\begin{proof} 
Recall  the Bruhat--Tits building of $\SL (2,\QQ_p)$ is a $(p+1)$-regular tree $T_{p+1}$ and up to conjugacy there is only one parabolic subgroup, the Borel subgroup $B$, equal to the set of the upper triangular matrices of $\SL(2,\QQ_p)$ (see Example~\ref{ex:levi_decom}). 

Let $H$ be a limit of the diagonal Cartan. By Proposition~\ref{prop::limit_cartan_in_borel} it is enough to consider $H  \leq \{\pm Id\}\cdot B$. 

By Theorem~\ref{ellip_unip}, if $H$ is an elliptic Cartan limit, $H$ is contained in the unipotent radical of $B$ times the square-roots of unity $\{\pm \Id\}$. Moreover, by Propositions~\ref{surj},~\ref{inj} and Corollary~\ref{cor::dim_same}  the  subalgebra $A_{H} \in \overline{Cart(\mathfrak{sl}(2,\QQ_p))}^{Ch} $ corresponding to $H$ is of dimension one and is a subset of all upper triangular matrices in $\mathfrak{sl}(2,\QQ_p)$ with only zero on the diagonal. In other words $A_H$ is the set of strictly upper diagonal nilpotent matrices. Then $H=\langle A_H, \Id \rangle^ * = \{\pm \Id\}.U$, with $U$ the unipotent radical of $B$.
 
 If $H$ is a hyperbolic Cartan limit by Theorem \ref{rem::hyp_Cartan}  $H$ is in $Cart(\SL(2,\QQ_p))$.
\end{proof} 


\medskip

We obtain geometric intuition using the lattice construction of the Bruhat--Tits building $X= T_{p+1}$ of $\SL(2, \QQ_p)$ from \cite[Chapter~19]{Gar97}.  The diagonal Cartan subgroup stabilizes an apartment, which in the tree $T_{p+1}$ is a bi-infinite line. A vertex in $T_{p+1}$ is an equivalence class of lattices,  and by choosing the standard basis for $\QQ_p^2$, the vertices of the a bi-infinite line stabilized by $C$ are determined by the chosen base in $\QQ_p^2$.    Going to infinity can be seen by moving the second basis vector onto the first basis vector, so that the angle between them goes to zero.  They collapse to a single generalized eigenvector which is preserved by the limit group under this same sequence.

\subsection{$\SL(3,\QQ_p)$}


By \cite[Chapter~19]{Gar97} the Bruhat--Tits building $X$ for $\SL(3,\QQ_p)$ is constructed by gluing triangles with $p+1$ triangles along each edge.  Vertices correspond to lattices in $\QQ _p^3$.  To go to infinity we move the generators of the lattice close together in different ways (which gives us non-conjugate limits of $C$). To classify limits in $\SL (3,\QQ_p)$ we use: 

 \begin{remark}\label{cubeclass} By \cite[Th\'eor\`eme 2, page 33]{serre} the structure of $\QQ_p^*$ is given by:
$$\QQ_p ^* \cong \begin{cases} \ZZ \times \ZZ_p  \times \ZZ/ (p-1) \ZZ & p \neq 2\\
\ZZ \times \ZZ_2  \times \ZZ / 2 \ZZ  & p =2.
\end{cases} $$
If $p \neq 3$ then $\ZZ_p$ is a $3$-divisible group, in which every element is a cube. From this it is easy to compute 
$$\cube = |\QQ_p ^*/ {\QQ_p ^*}^3|= \begin{cases}
 9 & p =3 \\
 9 & p \equiv 1 \pmod 3 \\
 3 & p \equiv 2 \pmod 3. 
\end{cases} $$
\end{remark}

\begin{proposition}\label{sl3} Up to conjugacy, there are $3 + \cube$  limits of $C$ in $ \overline{Cart(\SL  (3,\QQ_p))}^{Ch} \setminus Cart(\SL(3,\QQ_p))$: the product of $\mu_3$ with 
$$\begin{pmatrix} a & x & 0 \\
0 & a & 0 \\
0 & 0 & \frac{1}{a^2} 
\end{pmatrix}  \qquad
\begin{pmatrix} 1 & x & y \\ 
0 & 1 & \alpha x\\
 0 & 0 & 1
\end{pmatrix} \qquad
\begin{pmatrix} 1 & x & y \\ 
0 & 1 & 0\\
 0 & 0 & 1
\end{pmatrix} \qquad
\begin{pmatrix} 1 & 0 & y \\ 
0 & 1 & x\\
 0 & 0 & 1
\end{pmatrix}. $$
where $\alpha \in \QQ_p$ is fixed. 
\end{proposition}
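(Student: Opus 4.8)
The plan is to transfer the whole problem to Lie algebras via Corollary \ref{cor::homeo} and then run the elliptic/hyperbolic dichotomy. First, by Proposition \ref{prop::limit_cartan_in_borel} every limit $H \notin Cart(\SL(3,\QQ_p))$ is, up to conjugacy, contained in $\mu_3 \cdot B$, and by Remark \ref{lem::roots_unity} it is stable under multiplication by $\mu_3$; so it suffices to enumerate, up to conjugacy, the limits inside $\mu_3 \cdot B$. By Corollaries \ref{cor::homeo} and \ref{cor::dim_same} such an $H$ is determined by its algebra $A_H \in \overline{Cart(\mathfrak g)}^{Ch}$, a $2$-dimensional abelian subalgebra of $\mathfrak g = \mathfrak{sl}(3,\QQ_p)$, and I would split according to whether $H$ contains a hyperbolic element or not.

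For the elliptic limits, Theorem \ref{ellip_unip} places $A_H$ inside the strictly upper triangular nilradical $\mathfrak n = \langle e_{12}, e_{23}, e_{13}\rangle$, which is Heisenberg with centre $\langle e_{13}\rangle$ and $[e_{12}, e_{23}] = e_{13}$. A direct check shows any $2$-dimensional abelian subalgebra of $\mathfrak n$ must contain the centre and project to a line in $\mathfrak n / \langle e_{13}\rangle \cong \QQ_p^2$, so the elliptic limits are indexed by a slope in $\PP^1(\QQ_p)$ and are exactly the second, third and fourth families of the statement (slope $\alpha$, slope $0$, slope $\infty$). To separate conjugacy classes I would use the two $\SL(3,\QQ_p)$-invariants given by the maximal rank of an element of $A_H$ and by $\dim \sum_{X \in A_H} \Im X$: slope $0$ gives all elements of rank $\le 1$ with $\dim \sum \Im X = 1$, slope $\infty$ gives rank $\le 1$ with $\dim \sum \Im X = 2$, and finite nonzero slope produces a rank-$2$ element; hence these three families are pairwise non-conjugate.

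The core point is to count the finite-slope family. Here $A_\alpha = \langle e_{12} + \alpha e_{23}, e_{13}\rangle$ is precisely the centraliser $Z_{\mathfrak g}(N_\alpha)$ of the regular nilpotent $N_\alpha = e_{12} + \alpha e_{23}$, since $N_\alpha^2 = \alpha e_{13}$. Because a regular nilpotent lies in a unique Borel subalgebra, any $g$ conjugating $A_\alpha$ to $A_{\alpha'}$ must normalise the upper triangular Borel, so $g \in B$; and on $B$ the unipotent part fixes the slope while the diagonal torus rescales $\alpha$ by $t_2^3$, so $A_\alpha$ and $A_{\alpha'}$ are conjugate if and only if $\alpha \equiv \alpha' \bmod (\QQ_p^*)^3$, giving $\cube$ classes by Remark \ref{cubeclass}. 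For the hyperbolic limits, Theorem \ref{rem::hyp_Cartan} makes $H$ block upper triangular with $r$ indecomposable blocks of constant diagonal, and the stabilised flat has dimension $r-1$; the case $r = 3$ returns $C$ (excluded), so the only new case is $r = 2$, with block sizes $(2,1)$ or $(1,2)$, and the $3$-cycle permutation matrix (which lies in $\SL_3$) conjugates one to the other, leaving the single hyperbolic family (the first family of the statement).

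Finally I would verify that every listed class is attained. Conjugating $\mathfrak c$ by $I + p^{-n} e_{12}$ realises the hyperbolic family, and the same mechanism with $I + p^{-n}(e_{12}+e_{13})$, with $I + p^{-n}(e_{23}+e_{13})$, and with a sequence mixing $e_{12}, e_{23}, e_{13}$ realises slope $0$, slope $\infty$, and one finite-slope algebra. To obtain all $\cube$ finite-slope classes simultaneously I would exploit that $Cart(\SL(3,\QQ_p))$, and hence $\overline{Cart(\SL(3,\QQ_p))}^{Ch}$, is invariant under all of $\GL(3,\QQ_p)$ (because $\GL_3 = \SL_3 \cdot \{\operatorname{diag}(t,1,1)\}$ and the diagonal factor normalises $C$): conjugating one realised centraliser $Z(N)$ by $\GL_3$ moves $N$ through its entire $\GL_3$-orbit of regular nilpotents, which meets all $\cube$ of the $\SL_3$-orbits. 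Summing, the limits outside $Cart(\SL(3,\QQ_p))$ number $1 + \cube + 1 + 1 = 3 + \cube$. I expect the finite-slope count to be the main obstacle: pinning the $\SL_3$-conjugacy of the $A_\alpha$ exactly to $\QQ_p^*/(\QQ_p^*)^3$ through the unique-Borel property, together with realising each of these classes (where the $\GL_3$-invariance is the decisive tool), is the delicate part, whereas the hyperbolic and rank arguments are comparatively routine.
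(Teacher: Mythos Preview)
Your proof is correct and follows the same overall architecture as the paper: reduce to $\mu_3\cdot B$, split into elliptic and hyperbolic limits via Theorems \ref{ellip_unip} and \ref{rem::hyp_Cartan}, and work on the Lie algebra side via the homeomorphism $Gr$. The differences are in the details of three subarguments, and in each case your route is at least as clean as the paper's.

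For the $N_\alpha$ count, the paper proves a standalone computation (Lemma \ref{Nalpha}): assuming $T N_\alpha T^{-1}=N_\beta$, it shows directly that $T$ preserves the flag $\langle e_1\rangle\subset\langle e_1,e_2\rangle$ by writing these as $\bigcap\ker X$ and $\bigcap\ker X^2$, and then computes the torus action on the slope. Your argument via ``regular nilpotent lies in a unique Borel'' is the conceptual explanation behind that computation, and lands on the same torus calculation $\alpha\mapsto t_2^3\alpha$. For separating the three elliptic families, the paper simply asserts ``it is then easy to see''; your rank/image-dimension invariants make this explicit and are a genuine addition. For realising all $\cube$ classes, the paper writes down an $\alpha$-dependent conjugating sequence $\bigl(\begin{smallmatrix}\alpha&p^{-n}&p^{-2n}/2\\0&1&p^{-n}\\0&0&\alpha^{-1}\end{smallmatrix}\bigr)$, whereas you realise one class and then transport by $\GL_3$-invariance of $\overline{Cart}^{Ch}$; both work, and your observation that $\GL_3=\SL_3\cdot\{\operatorname{diag}(t,1,1)\}$ normalises $C$ is the efficient way to avoid producing a separate sequence for each cube class.
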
 

We will need the following:

\begin{lemma}\label{Nalpha} Set $N_\alpha=\{\begin{pmatrix} 1 & x & y \\ 
0 & 1 & \alpha x\\
 0 & 0 & 1
\end{pmatrix}:x,y\in\QQ_p\}$. In $\SL(3,\QQ_p)$, the subgroups $N_\alpha$ and $N_\beta$ are conjugate if and only $\alpha$ and $\beta$ are in the same class of $\QQ_p ^*/ {\QQ_p ^*}^3$.
\end{lemma}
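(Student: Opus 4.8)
The plan is to trade the groups for their nilpotent subspaces. Writing $E_{ij}$ for the elementary matrix with a single $1$ in position $(i,j)$, set $e_\alpha := E_{12}+\alpha E_{23}$ and $f := E_{13}$. A direct check shows $N_\alpha = \Id + A_\alpha$, where $A_\alpha := \langle e_\alpha, f\rangle$ is the $2$-dimensional subspace of $\mathfrak{sl}(3,\QQ_p)$ attached to $N_\alpha$ by the map $Gr$. Consequently, for $g\in\SL(3,\QQ_p)$ one has $gN_\alpha g^{-1}=N_\beta$ if and only if $gA_\alpha g^{-1}=A_\beta$, so the whole lemma becomes a statement about $\SL(3,\QQ_p)$-conjugacy of the subspaces $A_\alpha$. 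I will assume throughout that $\alpha,\beta\in\QQ_p^*$; the degenerate case $\alpha=0$ gives the abelian group whose generators square to zero (the separate third family in Proposition \ref{sl3}) and is not conjugate to any $N_\beta$ with $\beta\neq 0$.

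For the \textbf{if} direction I would simply exhibit the conjugator. Taking $g=\operatorname{diag}(1,t,t^{-1})\in\SL(3,\QQ_p)$ and conjugating term by term gives $gE_{12}g^{-1}=t^{-1}E_{12}$, $gE_{23}g^{-1}=t^{2}E_{23}$, and $gE_{13}g^{-1}=tE_{13}$, so $ge_\alpha g^{-1}=t^{-1}(E_{12}+t^3\alpha E_{23})=t^{-1}e_{t^3\alpha}$ and $gfg^{-1}=tf$. Hence $gA_\alpha g^{-1}=A_{t^3\alpha}$, i.e.\ $gN_\alpha g^{-1}=N_{t^3\alpha}$. Thus if $\beta=t^3\alpha$ for some $t\in\QQ_p^*$, the two subgroups are conjugate.

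For the \textbf{only if} direction the crucial reduction is that any conjugating $g$ must be upper triangular, and I would obtain this from an intrinsic flag. With $A_\alpha^2:=\operatorname{span}\{XY : X,Y\in A_\alpha\}$, the only nonzero product is $e_\alpha^2=\alpha f$, so $A_\alpha^2=\QQ_p E_{13}$. As an endomorphism of $\QQ_p^3$, the generator $E_{13}$ has image $\langle e_1\rangle$ and kernel $\langle e_1,e_2\rangle$, and these two subspaces are manifestly determined by $A_\alpha$. Since $gA_\alpha g^{-1}=A_\beta$ forces $g$ to carry $\operatorname{im}(A_\alpha^2)$ and $\ker(A_\alpha^2)$ onto $\operatorname{im}(A_\beta^2)$ and $\ker(A_\beta^2)$, and these are the \emph{same} standard subspaces for $\alpha$ and $\beta$, the element $g$ preserves the flag $\langle e_1\rangle\subset\langle e_1,e_2\rangle$ and is therefore upper triangular. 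Writing $a=g_{11}$, $d=g_{22}$, $h=g_{33}$, a routine computation then shows that $ge_\alpha g^{-1}$ has $(1,2)$-entry $a/d$ and $(2,3)$-entry $d\alpha/h$ (the $(1,3)$-entry is irrelevant); for this to lie in $A_\beta$ its $(2,3)$-entry must be $\beta$ times its $(1,2)$-entry, giving $\beta=d^2\alpha/(ah)$. Feeding in $\det g=adh=1$, i.e.\ $ah=1/d$, collapses this to $\beta=d^3\alpha$, so $\beta/\alpha=g_{22}^3\in(\QQ_p^*)^3$ and $\alpha,\beta$ lie in the same class.

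I expect the main obstacle to be precisely this reduction to the Borel. Over $\GL(3,\QQ_p)$ all the $N_\alpha$ with $\alpha\neq 0$ are conjugate, because $e_\alpha$ is regular nilpotent, so the entire content of the lemma is the determinant-one constraint; one must argue carefully that no element outside the upper-triangular subgroup can implement the conjugacy. The flag argument is exactly what rules this out, and once $g$ is forced into the Borel, the condition $\det g=1$ is precisely what converts the otherwise free scaling of the $(2,3)/(1,2)$ ratio into a perfect cube.
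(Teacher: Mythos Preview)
Your proof is correct and follows essentially the same route as the paper's: both pass to the nilpotent subspace/Lie algebra, establish that any conjugator must preserve the standard flag $\langle e_1\rangle\subset\langle e_1,e_2\rangle$ (you via $A_\alpha^2=\QQ_p E_{13}$ and its image/kernel, the paper via $\cap_X\ker X$ and $\cap_X\ker X^2$), and then read off the cube condition from the diagonal entries of the upper-triangular $g$ together with $\det g=1$. The explicit conjugator you use for the ``if'' direction, $\operatorname{diag}(1,t,t^{-1})$, differs from the paper's $\operatorname{diag}((\beta/\alpha)^{1/3},(\beta/\alpha)^{1/3},(\beta/\alpha)^{-2/3})$, but both work and the argument is otherwise the same.
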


\begin{proof} If $\alpha$ and $\beta$ are in the same class of $\QQ_p ^*/ {\QQ_p ^*}^3$, then 
$$\scriptsize{ \begin{pmatrix} (\frac{\beta}{\alpha})^{\frac{1}{3}} & 0 & 0 \\
0 & (\frac{\beta}{\alpha})^{\frac{1}{3}}& 0 \\
0 & 0 & (\frac{\beta}{\alpha})^{-\frac{2}{3}}
\end{pmatrix}} $$
conjugates $N_\alpha$ into $N_\beta$. Conversely, assume that $N_\alpha$ and $N_\beta$ are conjugate by some $T\in \SL(3,\QQ_p)$. Then $T$ also conjugates the Lie algebras $\mathfrak{n}_\alpha$ and $\mathfrak{n}_\beta$, where $\mathfrak{n}_\alpha=\{X_\alpha=\begin{pmatrix} 0 & x & y \\ 
0 & 0 & \alpha x\\
 0 & 0 & 0
\end{pmatrix}:x,y\in\QQ_p\}$. Let $\{e_1,e_2,e_3\}$ be the canonical basis of $\QQ_p^3$. Since $\langle e_1\rangle = \cap_{X_\alpha\in\mathfrak{n}_\alpha}\ker(X_\alpha)$, and $\mathfrak{n}_\alpha$ and $\mathfrak{n}_\beta$ are conjugate by $T$, we see that $\langle e_1\rangle$ is $T$-invariant. Similarly, since $\langle e_1,e_2\rangle = \cap_{X_\alpha\in\mathfrak{n}_\alpha}\ker(X_\alpha^2)$, we see that $\langle e_1,e_2\rangle$ is $T$-invariant. In other words, $T$ is upper triangular, say $T= \begin{pmatrix} \lambda & \gamma & \epsilon \\ 
0 & \mu & \delta\\
 0 & 0 & \frac{1}{\lambda\mu}
\end{pmatrix}$. Then by direct computation $TX_\alpha T^{-1}= \begin{pmatrix} 0 & \frac{\lambda x}{\mu} & \star \\ 
0 & 0 & \alpha \lambda\mu^2 x\\
 0 & 0 & 0
\end{pmatrix}$. For this to be in $\mathfrak{n}_\beta$, we need $\alpha\mu^2=\frac{\beta}{\mu}$, i.e., $\frac{\beta}{\alpha}$ is a cube in $\QQ_p^*$.
\end{proof}

\begin{proof}[Proof of \ref{sl3}]
Let $H$ be a limit of the diagonal Cartan. By Proposition~\ref{prop::limit_cartan_in_borel} it is enough to consider $H  \leq \mu_3\cdot B$, where $B$ is the minimal parabolic subgroup that equals the upper triangular matrices of $\SL (3,\QQ_p)$ (see Example~\ref{ex:levi_decom}).

By Section~\ref{subsec::hyper_Cartan}, if $H$ is a hyperbolic Cartan limit  $H$ either stabilizes a $2$-dimensional flat (that is an apartment) or a $1$-dimensional flat. The former case gives a diagonal Cartan subgroup. By Section~\ref{subsec::hyper_Cartan}  and Theorem~\ref{rem::hyp_Cartan}  in the latter case there are three possible subgroups 

$$\scriptsize{\begin{pmatrix} a & x &0\\
0 &a&0\\
0&0& a^{-2}
\end{pmatrix}
\qquad
\begin{pmatrix} a^{-2} & 0 &0\\
0 &a&x\\
0&0&a
\end{pmatrix}
\qquad
\begin{pmatrix} 
a & 0 & x\\
0 & a^{-2} & 0 \\
0 & 0 & a
\end{pmatrix} }
$$ where $a \in \QQ_p^*$ and $x \in \QQ_p$.  But these are all conjugate by permutation matrices.  To show that the first appears as a limit of the diagonal Cartan, we proceed as in Proposition \ref{sl2} and work at the level of $\overline{Cart(\mathfrak{sl}(3,\QQ_p))}^{Ch} $.

$$\scriptsize{ \begin{pmatrix} 1 & p^{-n } & 0\\
0 &1&0\\
0&0&1
\end{pmatrix} 
\begin{pmatrix} x_n & 0 & 0\\
0 &y_n &0\\
0&0&-(x_n+y_n)
\end{pmatrix}
\begin{pmatrix} 1 & p^{-n }&0\\
0 &1&0\\
0&0&1
\end{pmatrix} ^{-1} 
=
\begin{pmatrix} x_n & p^{-n}(y_n-x_n) &0\\
0 &y_n&0\\
0&0& -(x_n+y_n)
\end{pmatrix}.}
$$
For this to converge to 
$$\scriptsize{ \begin{pmatrix} a & x & 0\\
0 &a&0\\
0&0&-2a
\end{pmatrix} ,}$$
we may take $x_n=a$ and $y_n=a+p^nx$. This takes care of the unique (up to conjugacy) non-trivial hyperbolic Cartan limit.

By Theorem~\ref{ellip_unip}, if $H$ is an elliptic Cartan limit, then $H$ is contained in the product of $\mu_3$ with the unipotent radical of $B$. By Propositions~\ref{surj},~\ref{inj} and Corollary~\ref{cor::dim_same} the subalgebra $A_{H} \in \overline{Cart(\mathfrak{sl}(3,\QQ_p))}^{Ch} $ corresponding to $H$  is of dimension two and a subset of the upper triangular matrices in $\mathfrak{sl}(3,\QQ_p)$ having only zero on the diagonal. So $\langle A_H, \Id \rangle^ * = H$. It is then easy to see that the abelian subgroup $H$ is conjugate to the product of $\mu_3$ with one of the following three subgroups, with $x, y \in \QQ_p$,

$$\scriptsize N_{\alpha}= { \bpmat 1 & x & y \\
0 & 1 & \alpha x \\
0 & 0 & 1 \epmat
\qquad 
 \bpmat 1 & x & y \\
0 & 1 & 0 \\
0 & 0 & 1 \epmat
\qquad 
  \bpmat 1 & 0 & y \\
0 & 1 & x\\
0 & 0 & 1 \epmat}$$
and where $\alpha \in \QQ_p$ is fixed.  

 By lemma \ref{Nalpha} the number of conjugacy classes of $N_ \alpha$ is $| \QQ_p ^*/ {\QQ_p ^*}^3 | = \cube $. 


We claim that all the above three subgroups are limits of $C$. Indeed, consider the case of $N_\alpha$, and the following conjugating matrix:
$$ \scriptsize{\begin{pmatrix}  \alpha & p^{-n} & \frac{1}{2} p^{-2n}\\
0 &1& p^{-n}\\
0&0&  \frac{1}{\alpha}
\end{pmatrix} } $$
Again work at the level of $\overline{Cart(\mathfrak{sl}(3,\QQ_p))}^{Ch} $; then
$$\scriptsize{ \begin{pmatrix} \alpha & p^{-n } & \frac{p^{-2n}}{2}\\
0 &1&p^{-n}\\
0&0&\frac{1}{\alpha}
\end{pmatrix} 
\begin{pmatrix} x_n & 0 & 0\\
0 &y_n &0\\
0&0&-(x_n+y_n)
\end{pmatrix}
\begin{pmatrix} \alpha & p^{-n }&\frac{p^{-2n}}{2}\\
0 &1&p^{-n}\\
0&0&\frac{1}{\alpha}
\end{pmatrix} ^{-1} 
=
\begin{pmatrix} x_n & p^{-n}(y_n-x_n) & \frac{-3\alpha p^{-2n}y_n}{2}\\
0 &y_n&-\alpha p^{-n}(x_n+2y_n)\\
0&0& -(x_n+y_n)
\end{pmatrix}.}
$$
For this to converge to 
$$\scriptsize{ \begin{pmatrix} 0 & x & y\\
0 &0&\alpha x\\
0&0&0
\end{pmatrix} ,}$$
we may take $x_n=-p^n x$ and $y_n=\frac{-2p^{2n}y}{3\alpha}$. The reader can verify that the two remaining subgroups are obtained as limits of $C$ using the following conjugating sequences, respectively:

$$\scriptsize{\begin{pmatrix} 1 & p^{-n } & p^{-n}\\
0 &1&0\\
0&0&1
\end{pmatrix} ,
\begin{pmatrix} 1 & 0 & p^{-n}\\
0 &1&p^{-n}\\
0&0&1
\end{pmatrix}.} 
$$ 
\end{proof}

\begin{remark}  The sequences of conjugating matrices follow a geometric pattern.  The Cartan subgroup $C$ naturally preserves a projective triangle in $\mathbb{P} (\QQ_p ^3)$.  The limits of $C$ correspond to different ways of identifying points and lines in the triangle, given by projective transformations which are the same as the conjugating matrices.   For a full explanation, see \cite{Leitnersl3}. 
\end{remark}


\begin{example} 

 Recall $\SL(3,\mathbb{F}_2)$ is a finite simple Lie group, whose affine Weyl group is the dihedral group of order
$6$. Its associated Bruhat--Tits building is a finite spherical building that is represented by the Heawood graph. There are two kinds of vertices in the Heawood graph:  blue vertices corresponding to a point in $\PP( \mathbb{F}_2 ^3)$, and the red vertices corresponding  to a line in $\PP( \mathbb{F}_2 ^3)$. An apartment is a hexagon (i.e., $6$ connected vertices) in the Heawood graph and every half apartment  is represented as 4 connected vertices in the graph.

In the  Bruhat--Tits building of $\SL(3,\QQ_2)$ every vertex has a link (\cite[Def. A.19.]{AB}) which is represented by the Bruhat--Tits building for $\SL(3,\mathbb{F}_2)$ (\cite[Prop. A.20]{AB}), which is the Heawood graph. Moreover, a spherical apartment in the spherical building at infinity for $\SL(3,\QQ_2)$ is represented by an apartment in the Heawood graph. The red and blue types of vertices of the Heawood graph are stabilized by the two maximal parabolic subgroups of $\SL(3,\QQ_2)$. Intuitively, these red and blue vertices are  ``associated'' with the last two groups in the list of Proposition \ref{sl3}. The second group in the list of Proposition \ref{sl3} preserves the maximal flag $\langle e_1 \rangle \subset \langle e_1, e_2 \rangle \subset \langle e_1, e_2, e_3 \rangle $, which appears as the edge connecting a red and a blue vertex in the Heawood graph. Note, by Proposition~\ref{prop::limit_cartan_in_borel}  all these three groups are contained in the Borel subgroup, that stabilizes an edge=chamber in the spherical building at infinity for $\SL(3,\QQ_2)$. This ideal chamber corresponds to an edge in the Heawood graph. 

Finally, the first group preserves a wall in the Bruhat--Tits building of $\SL(3,\QQ_2)$ (that is a line). The two ideal opposite endpoints of this line are a red and a blue vertex; they can be visualized as two non-connected vertices in an apartment of the Heawood graph.

In the Bruhat--Tits building associated with $\SL(3,\QQ_p)$ where $p >2$ the links of vertices will be larger graphs. 
\end{example} 

Compare these results with similar results by \cite{Htt_2, Leitnersl3, ST} over $\RR$ and $\CC$.   The proofs written by \cite{Htt_2, ST} are not geometric, and rely heavily on computation using sequences.  The results here use the geometry of the affine and spherical buildings. 

\begin{figure}
\begin{tikzpicture}[style=thick, scale=0.83]
\draw (360/14:2cm) -- (2*360/14:2cm) -- (3*360/14:2cm) -- (4*360/14:2cm) -- (5*360/14:2cm) -- (6*360/14:2cm) -- (7*360/14:2cm) -- (8*360/14:2cm) -- (9*360/14:2cm) -- (10*360/14:2cm) -- (11*360/14:2cm) -- (12*360/14:2cm) -- (13*360/14:2cm) -- (14*360/14:2cm) -- cycle;
\draw (360/14:2cm) -- (6*360/14:2cm) ; 
\draw (2*360/14:2cm) -- (11*360/14:2cm) ; 
\draw (3*360/14:2cm) -- (8*360/14:2cm) ; 
\draw (4*360/14:2cm) -- (13*360/14:2cm) ; 
\draw (5*360/14:2cm) -- (10*360/14:2cm) ; 
\draw (9*360/14:2cm) -- (14*360/14:2cm) ; 
\draw (7*360/14:2cm) -- (12*360/14:2cm) ; 
\foreach \x in {1*360/14, 3*360/14, 5*360/14, 7*360/14, 9*360/14, 11*360/14, 13*360/14 }{\draw(\x: 2cm) circle(2pt) [red, fill=red] ; }
\foreach \x in {2*360/14, 4*360/14, 6*360/14, 8*360/14, 10*360/14, 12*360/14, 14*360/14 }{\draw(\x: 2cm) circle(2pt) [blue, fill=blue] ; }
\draw (360/14:2cm) node[right] {$\scriptsize{\langle e_1 \rangle, \bpmat 1 & s& t \\ 0 & 1 & 0 \\0 & 0 & 1 \epmat }$};
\draw (6*360/14:2cm) node[left] {$ \scriptsize{\bpmat 1 & 0& t \\ 0 & 1 & s \\0 & 0 & 1 \epmat , \langle e_1, e_2 \rangle }$};
\draw (8.5*360/14:3.5cm) node {$\scriptsize{\begin{array}{c} \bpmat 1 & s& t \\ 0 & 1 & s \\0 & 0 & 1 \epmat \\  \{ \langle e_1 \rangle , \langle e_1, e_2 \rangle \}  \end{array} }$};
\draw[->] (8.5*360/14:2.3cm) -- (8.5*360/14:2cm) ;
\end{tikzpicture}
\caption{Groups preserving faces in the Heawood graph}
\end{figure}
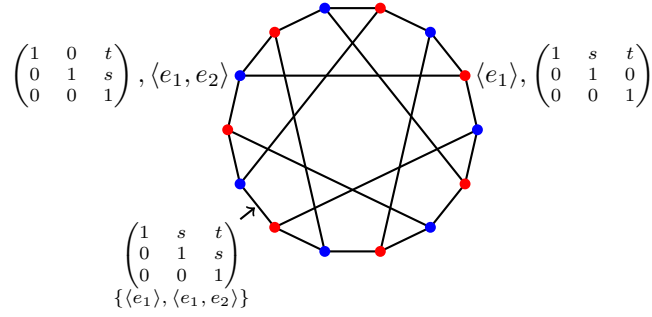 

\subsection{$\SL (4,\QQ_p)$}


For $k\geq 2$, set $\mathcal{Q}_k:= \vert \QQ_p ^* / {\QQ_p ^*}^k \vert$. Recall from \cite[section 3.3]{serre} that $\mathcal{Q}_2= 4$ for $p$ an odd prime, while $\mathcal{Q}_2=8$ for $p=2$.

 By Remark \ref{cubeclass} we see 
$$\mathcal{Q}_4= \left\{\begin{array}{ccc}8 & if & p\equiv 3 (\mod \;4) \\16 & if & p\equiv 1 (\mod \;4)  \\32 & if & p=2\end{array}\right.$$
$$\mathcal{Q}_8= \left\{\begin{array}{ccc}16 & if & p\equiv 3,7 (\mod \;8) \\32 & if & p\equiv 5 (\mod \;8) \\64 & if & p\equiv 1 (\mod \;8) \\128 & if & p=2\end{array}\right.$$

\begin{proposition} \label{sl4} 
The following families product with $\mu _4$ 
are the conjugacy classes of limits of the diagonal Cartan in $\SL(4,\QQ_p)$:
$$
\scriptsize{\begin{array}{cccc} 
C& E_1 & F_0 & F_{1}\\
 \left( \begin{array}{cccc}
a & 0 & 0&0 \\
0 & b & 0 &0  \\
0 & 0 & c &0\\
0&0&0& \frac{1}{abc}  \end{array} \right) &
 \left( \begin{array}{cccc}
a & 0 & 0&0 \\
0 & b & c &0  \\
0 & 0 & b &0\\
0&0&0& \frac{1}{ab^2}  \end{array} \right)  &
 \left( \begin{array}{cccc}
a & b & 0&0 \\
0 & a & 0 &0  \\
0 & 0 & \frac{1}{a} &c\\
0&0&0& \frac{1}{a}  \end{array} \right) &
\left( \begin{array}{cccc}
a & b & c&0 \\
0 & a & b &0  \\
0 & 0 & a &0\\
0&0&0& \frac{1}{a^3}  \end{array} \right)  
\end{array}}
$$

$$\scriptsize{ \begin{array}{cccc}
F_2 & F_3 & N_{1,  \beta}  &  N_{2} \\
\left( \begin{array}{cccc}
a & b & c&0 \\
0 & a & 0 &0  \\
0 & 0 & a &0\\
0&0&0& \frac{1}{a^3}  \end{array} \right) &
 \left( \begin{array}{cccc}
a & 0 & c&0 \\
0 & a & b &0  \\
0 & 0 & a &0\\
0&0&0& \frac{1}{a^3}  \end{array} \right) &
 \left( \begin{array}{cccc}
1 & a & b&c \\
0 & 1 &   a &  \beta b  \\
0 & 0 & 1 &  \beta a\\
0&0&0& 1  \end{array} \right) &
  \left( \begin{array}{cccc}
1 & a & b&c \\
0 & 1 & a &0  \\
0 & 0 & 1 &0\\
0&0&0& 1  \end{array} \right) 
\end{array}}$$
$$\scriptsize{\begin{array}{cccc}
 N_{3} &N_{4, \alpha}& N_5 & N_6 \\
 \left( \begin{array}{cccc}
1 & 0 & 0&c \\
0 & 1 & a &b  \\
0 & 0 & 1 &a\\
0&0&0& 1  \end{array} \right) &
  \left( \begin{array}{cccc}
1 & a & b&c \\
0 & 1 & 0 &\alpha a  \\
0 & 0 & 1 &b \\
0&0&0& 1  \end{array} \right) &
 \left( \begin{array}{cccc}
1 & 0 & b&c \\
0 & 1 & a &b  \\
0 & 0 & 1 &0\\
0&0&0& 1  \end{array} \right) &
\left( \begin{array}{cccc}
1 & a & 0&c \\
0 & 1 & 0 &0  \\
0 & 0 & 1 &b\\
0&0&0& 1  \end{array} \right) 
\end{array}}$$
$$\scriptsize{\begin{array} {cc} 
N_7 & N_8\\
 \left( \begin{array}{cccc}
1 & 0 & 0&c \\
0 & 1 & 0 &b  \\
0 & 0 & 1 &a\\
0&0&0& 1  \end{array} \right) &
 \left( \begin{array}{cccc}
1 & a & b&c \\
0 & 1 & 0 &0  \\
0 & 0 & 1 &0\\
0&0&0& 1  \end{array} \right) 
\end{array}} $$
where $a,b,c$ run over $\QQ_p$ and  $\alpha, \beta \in \QQ_p$ are fixed.  
 The number of conjugacy classes of limits of $C$ is bounded below by $ 12 +  \square +\mathcal{Q}_4 $  and bounded above by $12 + \square + \mathcal{Q}_8$. 
\end{proposition}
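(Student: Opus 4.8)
The plan is to follow exactly the template set by Propositions~\ref{sl2} and~\ref{sl3}, reducing everything to the level of Lie algebras and then splitting into the elliptic and hyperbolic cases. First, by Proposition~\ref{prop::limit_cartan_in_borel} together with Lemma~\ref{lem::unip_conj}, every limit $H$ is, up to conjugacy, a subgroup of $\mu_4 \cdot B$, and by Corollary~\ref{cor::homeo} it suffices to classify the corresponding subalgebras $A_H \in \overline{Cart(\mathfrak{g})}^{Ch}$; by Corollary~\ref{cor::dim_same} each such $A_H$ is a $3$-dimensional abelian subalgebra of the upper-triangular matrices in $\mathfrak{sl}(4, \QQ_p)$. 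Recovering $H = Gr(A_H)$ and multiplying by $\mu_4$ at the end then produces the families in the statement. The whole problem thus becomes: enumerate, up to conjugacy, the $3$-dimensional abelian subalgebras arising as Chabauty limits of $\mathfrak{c}$, and determine exactly when two of them are conjugate.

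For the hyperbolic case I would invoke Theorem~\ref{rem::hyp_Cartan} to put $H$ in block-diagonal form with $r$ indecomposable upper-triangular blocks of constant diagonal, where the stabilized flat $Conv_X(E^m)$ has dimension $r-1$. A genuine hyperbolic element forces $r \geq 2$, and for $\SL(4, \QQ_p)$ the admissible block partitions of $4$ are, up to permuting blocks: $1{+}1{+}1{+}1$ (giving $C$ itself, $r=4$), $2{+}1{+}1$ (giving $E_1$, $r=3$), $2{+}2$ (giving $F_0$, $r=2$), and $3{+}1$ ($r=2$). For the $3{+}1$ partition the indecomposable $3\times 3$ block is governed by the possible $2$-dimensional abelian subalgebras of strictly upper-triangular $3\times 3$ nilpotents, exactly as in Proposition~\ref{sl3}: the regular Jordan shape yields $F_1$, while the two rank-one shapes (nilpotent supported in the first row, resp.\ the last column) yield $F_2$ and $F_3$. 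I would then exhibit, for each of $E_1, F_0, F_1, F_2, F_3$, an explicit conjugating sequence in the unipotent radical realizing it as a limit (the same computation as in Propositions~\ref{sl2} and~\ref{sl3}), and check pairwise non-conjugacy using the kernels of the powers of the nilpotent part (as in Lemma~\ref{Nalpha}) to distinguish $F_1, F_2, F_3$.

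For the elliptic case Theorem~\ref{ellip_unip} tells us $A_H$ is a $3$-dimensional abelian subalgebra of the $6$-dimensional space $\mathfrak{n}$ of strictly upper-triangular matrices, and $H \leq U^{\emptyset}\cdot \mu_4$. The core task is the finite classification, up to conjugation, of such subalgebras occurring as limits; using the bracket relations among the root vectors $E_{ij}$ and normalizing by the Borel (and by suitable permutation matrices), I expect to obtain the six rigid families $N_2, N_3, N_5, N_6, N_7, N_8$ together with the two one-parameter families $N_{1,\beta}$ and $N_{4,\alpha}$, and to verify each is a limit by producing an explicit conjugating sequence as above. Counting the rigid hyperbolic families $C, E_1, F_0, F_1, F_2, F_3$ together with the rigid elliptic families then gives the $12$ appearing in the bound.

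The main obstacle is the conjugacy analysis of the one-parameter families $N_{1,\beta}$ and $N_{4,\alpha}$, which is precisely where the lower and upper bounds fail to meet. As in Lemma~\ref{Nalpha}, conjugating $N_{1,\beta}$ by an upper-triangular $T$ forces $T$ to respect the flag determined by the kernels of the powers of the nilpotent part, and reducing the resulting relation modulo $(\QQ_p^*)^k$ should show that $N_{1,\beta}$ contributes exactly $\square = \mathcal{Q}_2$ conjugacy classes. For $N_{4,\alpha}$ the same method produces a scaling relation whose relevant invariant I can only pin down between a fourth-power and an eighth-power class; this yields at least $\mathcal{Q}_4$ and at most $\mathcal{Q}_8$ conjugacy classes. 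Combining with the $12 + \square$ rigid classes gives the stated bounds $12 + \square + \mathcal{Q}_4 \le \# \le 12 + \square + \mathcal{Q}_8$, with the exact count of $N_{4,\alpha}$-classes left open.
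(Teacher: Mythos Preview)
Your proposal is correct and follows essentially the same approach as the paper: reduce to $\mu_4\cdot B$ via Proposition~\ref{prop::limit_cartan_in_borel}, split into hyperbolic and elliptic cases using Theorems~\ref{rem::hyp_Cartan} and~\ref{ellip_unip}, classify the resulting $3$-dimensional abelian subalgebras, exhibit explicit conjugating sequences, and handle the parametric families $N_{1,\beta}$ and $N_{4,\alpha}$ exactly as you describe (the paper packages this last step as Lemma~\ref{N4}). The only cosmetic difference is that the paper carries out the elliptic classification via an explicit case analysis on the six strictly-upper-triangular entries rather than in terms of root vectors, and it cites \cite{ST} for the classification over an algebraically closed field before descending to $\QQ_p$.
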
 

Iliev and Manivel prove there are 14 conjugacy classes of 3-dimensional abelian subalgebras in $\mathfrak{sl}(4,\CC)$, see \cite{IM}. This list is the same as ours, with only one representative for each family of conjugacy classes, 
 since $\CC$ is algebraically closed.

  


\begin{lemma}\label{N4}  
\begin{enumerate}
\item[a)] The groups $N_{1,\alpha}, N_{1,\beta}$ are conjugate in $\SL(4,\QQ_p)$ if and only if $\alpha,\beta$ are in the same square-class.
\item[b)] If  the groups $N_{4, \alpha}$ and $N_{4, \beta}$ are conjugate in $\SL(4,\QQ_p)$ then $\alpha$ and $\beta$ are in the same coset of $\QQ_p ^* / {\QQ_p ^*}^4$. 
 If $\alpha$ and $\beta$ are in the same coset of $\QQ_p ^* / {\QQ_p ^*}^8$ then the groups $N_{4, \alpha}$ and $N_{4, \beta}$ are conjugate in $\SL(4,\QQ_p)$.  
\end{enumerate}  
\end{lemma}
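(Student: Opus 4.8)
The plan is to argue exactly as in the proof of Lemma~\ref{Nalpha}: reduce to the level of Lie algebras and exploit canonical flags. Since $N_{1,\beta}=\exp(\mathfrak{n}_{1,\beta})$ and $N_{4,\alpha}=\exp(\mathfrak{n}_{4,\alpha})$, where
$$\mathfrak{n}_{1,\beta}=\left\{\begin{pmatrix} 0 & a & b & c \\ 0 & 0 & a & \beta b \\ 0 & 0 & 0 & \beta a \\ 0 & 0 & 0 & 0 \end{pmatrix}\right\},\qquad \mathfrak{n}_{4,\alpha}=\left\{\begin{pmatrix} 0 & a & b & c \\ 0 & 0 & 0 & \alpha a \\ 0 & 0 & 0 & b \\ 0 & 0 & 0 & 0 \end{pmatrix}\right\},$$
a matrix $T\in\SL(4,\QQ_p)$ conjugates one group onto the other if and only if it conjugates the corresponding Lie algebras. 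Throughout I assume $\alpha,\beta\in\QQ_p^*$ (otherwise the groups belong to a different family). The step common to both parts is to extract from each Lie algebra the flag of subspaces $\bigcap_{X}\ker X^{j}$; these are determined by the abstract algebra, hence are carried by $T$ to the corresponding subspaces of the target, which here are the same standard subspaces.

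For part (a), I would compute $\bigcap_X\ker X=\langle e_1\rangle$, $\bigcap_X\ker X^2=\langle e_1,e_2\rangle$ and $\bigcap_X\ker X^3=\langle e_1,e_2,e_3\rangle$ (the last uses $\beta\neq 0$, via the $(1,4)$ entry $\beta a^3$ of $X^3$). Thus the full standard flag is $T$-invariant and $T$ is upper triangular. For strictly upper triangular $X$ and upper triangular $T$ the superdiagonal entries transform by $(TXT^{-1})_{i,i+1}=\frac{t_{ii}}{t_{i+1,i+1}}X_{i,i+1}$, so matching the shape of $\mathfrak{n}_{1,\alpha}$ (superdiagonal $(a,a,\alpha a)$) against that of $\mathfrak{n}_{1,\beta}$ forces $t_{22}^2=t_{11}t_{33}$ and $\frac{t_{33}}{t_{44}}\beta=\alpha\frac{t_{11}}{t_{22}}$. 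Feeding these into $\det T=t_{11}t_{22}t_{33}t_{44}=1$ yields $\frac{\beta}{\alpha}=\bigl(\frac{t_{11}}{t_{22}^{3}}\bigr)^{2}$, a square; this is necessity. For sufficiency, when $\beta/\alpha=u^2$ I would exhibit $\mathrm{diag}(u,1,u^{-1},1)\in\SL(4,\QQ_p)$ and verify directly that it conjugates $N_{1,\beta}$ onto $N_{1,\alpha}$.

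For part (b) the analogous computation gives only the coarser invariant flag $\langle e_1\rangle\subset\langle e_1,e_2,e_3\rangle\subset\QQ_p^4$ (namely $\bigcap_X\ker X=\langle e_1\rangle$ and $\bigcap_X\ker X^2=\langle e_1,e_2,e_3\rangle$), so $T$ is only block upper triangular with a free invertible $2\times 2$ middle block $M$ on $\langle e_2,e_3\rangle$. The key device is the squaring map: one computes $X^2=Q_\alpha(a,b)\,E_{14}$ with $Q_\alpha(a,b)=\alpha a^2+b^2$ and $E_{14}$ the matrix unit. Since $e_1$ and $e_4^{*}$ are eigenvectors for $T$ and $T^{-1}$, we get $T E_{14}T^{-1}=\frac{t_{11}}{t_{44}}E_{14}$, while the coordinate $(a,b)$, read off as the $(1,2)$ and $(1,3)$ entries, undergoes a linear change $P=t_{11}M^{-1}$ with $\det P=t_{11}^{2}/\det M$. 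Comparing $(X')^2=TX^2T^{-1}$ for $X'\in\mathfrak{n}_{4,\beta}$ gives $P S_\beta P^{T}=\frac{t_{11}}{t_{44}}S_\alpha$ with $S_\gamma=\mathrm{diag}(\gamma,1)$; taking determinants gives $\frac{\beta}{\alpha}=\frac{(\det M)^2}{(t_{11}t_{44})^{2}}$, and substituting the constraint $\det T=t_{11}t_{44}\det M=1$ collapses this to $\frac{\beta}{\alpha}=(t_{11}t_{44})^{-4}\in(\QQ_p^*)^4$, which is necessity. For sufficiency, when $\beta/\alpha=s^8$ I would check that the diagonal element $\mathrm{diag}(s^2,s^{-3},s,1)\in\SL(4,\QQ_p)$ conjugates $N_{4,\beta}$ onto $N_{4,\alpha}$; the $\SL$ constraint on a diagonal conjugator forces the ratio to be an eighth power, which is precisely where this value arises.

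The main obstacle is closing the gap between the fourth-power necessity and the eighth-power sufficiency, and I do not expect to close it. The congruence $P S_\beta P^{T}=\frac{t_{11}}{t_{44}}S_\alpha$ together with $\det T=1$ is solvable over $\QQ_p$ precisely when, beyond $\beta/\alpha$ being a fourth power, the scaled binary forms $\langle\alpha,1\rangle$ and $\mu\langle\beta,1\rangle$ share the same Hasse invariant, and this secondary Hilbert-symbol condition need not follow from $\beta/\alpha$ being merely a fourth power. This is exactly why the statement records two bounds rather than an equality, consistent with the range from $12+\square+\mathcal{Q}_4$ to $12+\square+\mathcal{Q}_8$ in Proposition~\ref{sl4}.
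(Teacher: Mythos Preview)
Your proof is correct and follows essentially the same strategy as the paper: pass to the Lie algebras, extract the canonical flag from $\bigcap_X \ker X^j$ to constrain $T$, and then read off the relation between $\alpha$ and $\beta$ by comparing entries (part a) or by the determinant of the induced congruence of $2\times 2$ forms (part b). Your organization of (b) via the quadratic form $X\mapsto X^2=Q_\alpha(a,b)E_{14}$ is a clean repackaging of the paper's direct computation leading to $\det(A)\,A D_\alpha A^t=D_\beta$; up to the harmless transpose slip in your $P$, both routes yield $\beta/\alpha=(\det M)^4$, and your explicit diagonal conjugators for sufficiency agree (up to inversion/scalar) with those in the paper.
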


\begin{proof}   
 \begin{enumerate}
\item[a)] The proof of the necessary condition is very similar to the one of lemma \ref{Nalpha}, we just sketch the idea. Assume that $N_{1,\alpha}, N_{1,\beta}$ are conjugate by some $T\in \SL(4,\QQ_p)$; then the corresponding Lie algebras $\mathfrak{n}_{1,\alpha},\mathfrak{n}_{1,\beta}$ are conjugate by $T$. Writing $\langle e_1\rangle =\cap_{X\in\mathfrak{n}_{1,\alpha}}\ker(X), \langle e_1,e_2\rangle =\cap_{X\in\mathfrak{n}_{1,\alpha}}\ker(X^2), \langle e_1,e_2,e_3\rangle= \cap_{X\in\mathfrak{n}_{1,\alpha}}\ker(X^3)$, we see that the subspaces $\langle e_1\rangle, \langle e_1,e_2\rangle, \langle e_1,e_2,e_3\rangle$ are $T$-invariant, i.e., $T$ is upper triangular. A direct computation expressing that $T$ conjugates $\mathfrak{n}_{1,\alpha}$ into $\mathfrak{n}_{1,\beta}$, then gives $\alpha, \beta$ in the same square-class. Conversely, if $\alpha, \beta$ are in the same square class, $N_{1, \alpha }$  is conjugate to $N_{1, \beta}$   by the diagonal matrix $Diag ((\frac{\beta}{\alpha})^{-\frac{1}{2}}, 1 ,  (\frac{\beta}{\alpha})^{\frac{1}{2}}, 1)$.
\item[b)] If $N_{4,\alpha}$ and $N_{4,\beta}$ are conjugate by $T\in SL(4,\QQ_p)$, then so are the Lie algebras $\mathfrak{n}_{4,\alpha},\mathfrak{n}_{4,\beta}$, where 
$$\mathfrak{n}_{4,\alpha}=\{X_\alpha=\left( \begin{array}{cccc}
0 & a & b&c \\
0 & 0 & 0 &\alpha a  \\
0 & 0 & 0 &b \\
0&0&0& 0  \end{array} \right): a,b,c\in\QQ_p\}.$$
It will be convenient to write this in a more compact form. Set ${\bold a}=\left(\begin{array}{c}a \\b\end{array}\right)\in\QQ_P^2$, and consider the diagonal 2-by-2 matrix $D_\alpha=Diag(\alpha,1)$. Then:
$$\mathfrak{n}_{4,\alpha}=\{X_\alpha=\left( \begin{array}{ccc}
0 & {\bold a}^t &c \\
0 & {\bold 0} &D_\alpha {\bold a}  \\
0&0&0  \end{array} \right): {\bold a}\in\QQ_p^2,c\in\QQ_p\},$$
where ${\bold 0}$ denotes the 2-by-2 zero matrix. Since $\langle e_1\rangle = \cap_{X\in\mathfrak{n}_{4,\alpha}}\ker(X)$ and $\langle e_1,e_2,e_3\rangle = \cap_{X\in \mathfrak{n}_{4,\alpha}} \ker(X^2)$, we see that $\langle e_1\rangle$ and $\langle e_1,e_2,e_3\rangle$ are $T$-invariant subspaces, so that $T$ has the form
$$T=\left( \begin{array}{ccc}
\lambda & {\bold x}^t & \star \\
0 & A &{\bold y}  \\
0&0& \frac{1}{\lambda\det(A)}  \end{array} \right)$$
for some $\lambda\in\QQ_p^*,\bold{x,y}\in\QQ_p^2, A\in GL(2,\QQ_p)$. Then, by direct computation:
$$TX_\alpha T^{-1}=\left( \begin{array}{ccc}
0 & \lambda{\bold a}^t A^{-1} &\star \\
0 & {\bold 0} &\lambda\det(A)AD_\alpha {\bold a}  \\
0&0&0  \end{array} \right)$$
Setting $\bold{a'}:=\lambda(A^t)^{-1}a$, we get
$$TX_\alpha T^{-1}=\left( \begin{array}{ccc}
0 & \bold{a'} &\star \\
0 & {\bold 0} &\det(A)AD_\alpha A^t \bold{a'} \\
0&0&0  \end{array} \right)$$
Expressing that $TX_\alpha T^{-1}$ is in $\mathfrak{n}_{4,\beta}$, we get $\det(A)AD_\alpha A^t\bold{a'}=D_\beta\bold{a'}$ for every $\bold{a'}\in\QQ_p^2$, i.e.
$$\det(A)AD_\alpha A^t=D_\beta.$$
Taking determinants on both sides we get $\det(A)^4\alpha=\beta$, so that $\alpha$ and $\beta$ are in the same class modulo 4-th powers.

 Conversely, if $\alpha$ and $\beta$ are in the same coset of $\QQ_p ^* / {\QQ_p ^*}^8$ then $N_{4, \alpha}$ and $N_{4, \beta}$ are conjugate by the diagonal matrix $\textrm{diag} (1, (\frac{\beta}{\alpha})^{\frac{3}{8}}, (\frac{\beta}{\alpha})^{-\frac{1}{8}},(\frac{\beta}{\alpha})^{-\frac{1}{4}})$.
\end{enumerate}
 \end{proof}

 \begin{proof}[Proof of \ref{sl4}] 
 Just as in the proof of Proposition \ref{sl2} it is sufficient to consider only limits contained in the Borel subgroup product with $\mu_4$.  Using the same arguments as in Proposition \ref{sl3}, it is possible to produce a sequence of conjugating matrices to each of the limits of the Cartan as follows.  
 $$\scriptsize{\begin{array}{cccc} 
E_1 & F_0& F_1 & F_2 \\
\left( \begin{array} {cccc} 
1& 0 & 0& 0 \\
0 & 1 &p^ {-n} & 0 \\
0& 0& 1 &0\\
0 & 0& 0& 1
\end{array} \right) &
\left( \begin{array} {cccc} 
1& p^{-n} & 0& 0 \\
0 & 1 & 0 & 0 \\
0& 0& 1 &p^{-n}\\
0 & 0& 0& 1
\end{array} \right) & 
\left( \begin{array} {cccc} 
1 &p^{- n} & \frac{1}{2} p^{-2n} & 0 \\
0 & 1 & p^{-n} & 0 \\
0& 0&1 &0\\
0 & 0& 0& 1
\end{array} \right) &
\left( \begin{array} {cccc} 
1& p^{-n} & p^{-n}& 0 \\
0 & 1 & 0 & 0 \\
0& 0& 1 &0\\
0 & 0& 0& 1
\end{array} \right) 
  \end{array} }
  $$
    $$\scriptsize{\begin{array}{cccc} 
F_3 & N_{1, \beta} & N_2 & N_3 \\
\left( \begin{array} {cccc} 
1& 0 & p^{-n}& 0 \\
0 & 1 & p^{-n} & 0 \\
0& 0& 1 &0\\
0 & 0& 0& 1
\end{array} \right) &
\left( \begin{array} {cccc} 
\beta & p^{-n} &  \frac{1}{2} p^{-2n}  & \frac{1}{6} p^{-3n} \\
0 & 1 &   1 p^{- n} &  \frac{1}{2} p^{-2n}\\
0& 0& 1 &p^{-n}\\
0 & 0& 0&  \frac{1}{\beta}
\end{array} \right) & 
\left( \begin{array} {cccc} 
1 & p^{-n} & \frac{1}{2} p^{-2n} &p^{- n} \\
0 & 1 & p^{-n} & 0 \\
0& 0& 1 &0\\
0 & 0& 0& 1
\end{array} \right) &
\left( \begin{array} {cccc} 
1& 0 & 0&p^{- n} \\
0 & 1 &p^{ -n} & \frac{1}{2} p^{-2n} \\
0& 0& 1 &p^{-n}\\
0 & 0& 0& 1
\end{array} \right) 
  \end{array} }
  $$
      $$\scriptsize{\begin{array}{ccc} 
N_{4, \alpha} & N_5 & N_6 \\
\left( \begin{array} {cccc} 
1& p^{-n} & p^{-n}& p^{-2n} \\
0 & 1 & 0 &\alpha p^{- n} \\
0& 0& 1 &p^{-n}\\
0 & 0& 0& 1
\end{array} \right) &
\left( \begin{array} {cccc} 
1& 0 & p^{-n}& p^{-n} \\
0 & 1 & p^{-n} &p^{- n} \\
0& 0& 1 &0\\
0 & 0& 0& 1
\end{array} \right) &
\left( \begin{array} {cccc} 
1& p^{-n} & 0&p^{- n} \\
0 & 1 & 0 &0 \\
0& 0& 1 &p^{-n}\\
0 & 0& 0& 1
\end{array} \right) 
  \end{array} }
  $$
        $$\scriptsize{\begin{array}{cc} 
N_7 &N_8 \\
\left( \begin{array} {cccc} 
1& 0 & 0& p^{- n} \\
0 & 1 & 0 & p^{-n} \\
0& 0& 1 &p^{-n}\\
0 & 0& 0& 1
\end{array} \right) &
\left( \begin{array} {cccc} 
1& p^{-n} & p^{-n}& p^{-n} \\
0 & 1 & 0 &0 \\
0& 0& 1 &0\\
0 & 0& 0& 1
\end{array} \right). 
  \end{array}}
  $$
  
  Suprenko and Tyskevtich \cite{ST} classify maximal abelian subgroups over an algebraically closed field.  The work here is necessary to descend from the algebraic closure to $\mathbb{Q}_p$, so we need to understand whether or not it is possible to multiply an entry by a scalar in groups which have repeated entries.  Then we need to determine conjugacy classes of groups.  See also \cite{Htt, Leitnercusp} for classifications over $\RR$. 
 
 If $H$ is a hyperbolic Cartan limit, by Section \ref{subsec::hyper_Cartan} $H$ preserves a flat of dimension 1, 2, or 3.   This is the same codimension
as the largest block which has a common eigenvalue.  The proof of Proposition \ref{sl3} may be applied to classify the hyperbolic Cartan limits as the first 6 groups. 
 
If $H$ is an elliptic Cartan limit, by Theorem~\ref{ellip_unip}, $H$ is contained in the unipotent radical of $B {\green \cdot \mu_4}$.  

 One computes that in order for a group to be abelian, entries on diagonals must be zero or scalar multiples of one another.   Given a group of matrices of the form 
$$ \begin{pmatrix}  1& x & a & t\\
0 & 1 & y & b\\
0 & 0 & 1 & z\\
0 & 0 & 0 &1
\end{pmatrix} 
$$
as $x,y,z,a,b,t$ run over $\QQ_p$, one checks that one of the following conditions is necessary for the group
to be abelian.  
 First notice $x, y$ are either zero or scalar multiples of one another, and the same for $y,z$. 
 \begin{description} 
\item[If $y \neq 0$ and $xz \neq 0$] then from the previous relations we get $x,y,z$ are scalar multiples of one another , and either $a=0$, $b=0$ or $a,b$ are scalar multiples of one another.    If $a$ or $b=0$ then we do not get a group.  So we must have that $x,y,z$ are scalar multiples of each other and $a,b$ are scalar multiples of each other, so that after checking the group laws, we see it has the form: 
$$ \begin{pmatrix} 1 & x & a & t \\
0 & 1 & \alpha x & \beta a \\
0 & 0 & 1 & \beta x \\
 0 & 0 & 0 & 1
\end{pmatrix}
$$
where $\alpha, \beta \in \QQ_p$ are fixed scalars and $x,a,t$ run over $\QQ_p$. But this group is conjugate to a group where $\alpha=1$ by $diag( \alpha, 1,1,\frac{1}{\alpha})$.   So we are left with the group $N_{1,  \beta}$ which provides $\square$ conjugacy classes by lemma \ref{N4}.
\item [If $y=0$ and $xz \neq 0$ and $ab \neq 0$] then $x$ is a scalar multiple of $z \textrm { or } b$ and $z$ is a scalar multiple of  $x \textrm{ or } a$.    So we get the groups $N_{4, \alpha}$, whose conjugacy classes are studied in lemma \ref{N4}. 
\item [If $y=0$ and $xz \neq 0$ and $a, b=0$], we get the group $N_6$.
\item[If $y \neq 0$ and $x,z=0$] then $a,b,t$ may take any value.  This is a maximal abelian subgroup which is 4 dimensional, see \cite{ST}.  Since we are interested in 3 dimensional groups, we require some linear relation in $y,a,b,t$.  One checks that the groups is then either conjugate to $N_5$. 

Thus we are left with the cases where some of $x,z,a,b=0$.  
\item[If $x \textrm { or } z =0$] then $ab=0$ or $a$ is a scalar multiple of $b$.   In the case that $a,b$ are scalar multiples, we do not get a group. 
If one of $a,b=0$, then one checks that the group is conjugate to $N_2$ or $N_3$. 
\item[If $a \textrm { or } b=0$] then either $xz =0$ or $x$ is a scalar multiple of $z$.   If one of $a,b$ is zero, and one of $x,z$ is zero, then the group is conjugate to $N_7$ or $N_8$.  If one of $a,b=0$ and $x$ is a scalar multiple of $z$, then again we do not have a group. 
 \end{description} 
 Thus we have run over all possible cases for $x,y,z,a,b,t$, and we have obtained all of the groups in our list.
 \end{proof} 
 
 This also concludes the proof of Theorem \ref{lowdimthm}.

The  spherical building at infinity for $\SL(4,\QQ_p)$ has three kinds of vertices, whose stabilizers in $\SL(4, \QQ_p)$ are the maximal parabolic  subgroups and they correspond to fixing a point, line, or plane, in $\PP(\QQ_p ^3)$.  Intuitively, these correspond to $N_8, N_6,$ and $N_7$ respectively.  The group $N_5$ also fixes the vertex that corresponds to planes in $\PP(\QQ_p ^3)$. 
  
The group $N_3$ preserves the edge between the vertices fixing a plane and line in $\PP(\QQ_p ^3)$.  The group $N_2$ preserves the edge between vertices  fixing a vertex and line in $\PP(\QQ_p ^3)$.  The groups $N_{4, \alpha}$ preserve the edge  between vertices fixing a plane and a point, since $N_{4, \alpha}$ is contained in the unipotent radical of the parabolic which preserves this edge at infinity.  The groups $N_{1, \beta}$ correspond to preserving the triangle between vertices fixing a vertex, line, plane in $\PP(\QQ_p ^3)$.
  
The groups with eigenvalues preserve opposite faces in the spherical building at infinity for  $\SL(4,\QQ_p)$.  Notice they are made by gluing together unipotent conjugacy limits in lower dimensions. 
  
  
  
  \subsection{$\SL (5,\QQ_p)$ and higher}
  
Using the same sorts of arguments as the above, we conjecture that there are again finitely many limits of the Cartan in $\SL(5,\QQ_p)$. Hyperbolic Cartan limit groups are constructed by gluing together elliptic Cartan limit groups from lower dimensions as each block, with matching diagonal entries on each block, see Theorem \ref{rem::hyp_Cartan}. So there will again be finitely many hyperbolic Cartan limit groups.  Elliptic Cartan limit groups must be contained in the unipotent radical of a parabolic  product with $\mu_5$.  The largest such abelian subgroup of any unipotent radical is the image of an abelian representation from $\QQ_p ^6$.  So we conjecture that it is still only possible to fit finitely many conjugacy classes of limits of the Cartan inside this unipotent radical. 

\medskip  
In $\SL(6,\QQ_p)$ there is an abelian subgroup of a unipotent radical which is the image of a representation from $\QQ_p ^9$.  Computing conjugacy classes of limits of the Cartan in $\SL(6,\QQ_p)$  is open.   For $n \geq 7$, the next section shows there are infinitely many conjugacy limits of the Cartan up to conjugacy. 
  
\begin{remark}  
It is a natural question to ask which limits of the Cartan can limit to others.  The incidence geometry of the spherical building answers this question for us nicely. 

Consider first the case of a hyperbolic Cartan limit $H$. Then $H$ stabilizes a flat in the Bruhat--Tits building. By taking a limit of a sequence of conjugates of $H$ it is then clear that the dimension of the corresponding stabilized flat cannot increase.  So hyperbolic Cartan limits can limit to  either hyperbolic Cartan limits which stabilize lower dimensional flats, or to elliptic Cartan limits.

Recall the unipotent radical of a parabolic subgroups decreases as the dimension of the face in the spherical building at infinity decreases, see Example \ref{ex:levi_decom}.  (We mean there are less blocks in the unipotent radical, and the size of the blocks is smaller.) If $H$ is an elliptic Cartan limit  we expect $H$ to be contained in a unipotent radical $U^{I}$,  which is the unipotent radical stabilizing the face $\mathcal{F}^I$. Then a limit of a sequence of conjugates of $H$ must be contained in a unipotent radical whose corresponding face in the spherical building at infinity is contained in $\mathcal{F}^I$.

Notice that this corrects the digraph of limit groups in \cite{IM}. 
\end{remark}

  
  \section{An Infinite Family of NonConjugate Limits}\label{dim_cl}

In this section we adapt arguments due to Haettel/ Iliev-Manivel and Leitner.  Iliev and Manivel study $Red(n)$, the  Zariski closure of $Cart( \mathfrak{sl}(n,\RR))$ in the Grassmannians $Grass( n-1, \mathfrak{sl}(n,\RR))$ endowed with the Zariski topology.  

 Recall by Proposition \ref{chab_gras} that the Chabauty topology  on  $\mathcal{S}(\mathfrak{sl}(n,\QQ_p), n-1)$ is compatible with the topology of the Grassmannians $Grass( n-1, \mathfrak{sl}(n,\QQ_p))$. Define $Ab(n-1) \subset Grass( n-1, \mathfrak{sl}(n,\QQ_p))$ to be the set of abelian subalgebras of dimension $n-1$.

By results of \cite{Leitner}, we know that over $\RR$ we have $\overline{Cart ( \mathfrak{sl}(n,\RR))}^{Ch} = Ab(n-1)$ for $n \leq 4$ and $\overline{Cart ( \mathfrak{sl}(n,\RR))}^{Ch} \subsetneq Ab(n-1) $ for $n \geq 5$.   The same examples used in \cite{Leitner} to show $\overline{Cart ( \mathfrak{sl}(n,\RR))}^{Ch} \subsetneq Ab(n-1)$ for $n \geq 5$ may be adapted to $\QQ_p$ as the arguments use only linear algebra. 

 \begin{example}[Abelian subgroups which  are not limits of the Cartan]\label{abelnotlim}
Consider the image of the representation $\QQ_p ^4 \to \SL(5, \QQ_p)$ given by 
$$ (a,b,c,d) \mapsto 
\bpmat 1 & a & 0 & \frac{a^2}{2} & b\\
0 & 1 & 0 & a & 0 \\
0 & 0 & 1 & c & d \\
0 & 0 & 0 & 1 & 0 \\
0  & 0 & 0 & 0 & 1
\epmat.$$
This is an abelian subgroup of $\SL (5,\QQ_p)$.  

A \textbf{flat subgroup} 
 is a subgroup of $SL(n,\QQ_p)$ which is the intersection of a vector subspace of $\QQ_p^{n^2}$ with $\SL(n,\QQ_p) \subset \textrm{End}(\QQ_p ^n)$. The diagonal Cartan subgroup  $C$ is flat. 
Conjugacy is a linear map, so it preserves this property, and Chabauty limits of conjugates of $C$ are also flat, by Proposition \ref{chab_gras}.  The group in this example is not flat, and so - although 4-dimensional - it cannot be a limit of $C$. 
Similar examples as above can be constructed for $n \geq 6$. 
 \end{example}  

\begin{theorem}\label{ablim} For $n \geq 5$, we have $\overline{Cart( \mathfrak{sl}(n,\QQ_p)})^{Ch}$ is a proper subset of $Ab(n-1)$. 
\end{theorem}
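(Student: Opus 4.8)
The plan is to prove the two inclusions separately, the bulk of the work being strictness. The inclusion $\overline{Cart(\mathfrak{sl}(n,\QQ_p))}^{Ch} \subseteq Ab(n-1)$ is already available: every element of $\overline{Cart(\mathfrak{g})}^{Ch}$ is an abelian subalgebra by Lemma \ref{limits_abel}, and has dimension $n-1$ by Corollary \ref{cor::dim_same}, so it lies in $Ab(n-1)$. The entire content is therefore to exhibit, for each $n \geq 5$, an abelian subalgebra $\mathfrak{a} \subset \mathfrak{sl}(n,\QQ_p)$ of dimension $n-1$ that is \emph{not} a Chabauty limit of conjugates of $\mathfrak{c}$.

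First I would settle $n=5$ using the subalgebra underlying Example \ref{abelnotlim}. Let $\mathfrak{n} := \mathrm{span}_{\QQ_p}\{E_{12}+E_{24},\, E_{15},\, E_{34},\, E_{35}\} \subset \mathfrak{sl}(5,\QQ_p)$ be the Lie algebra of the group displayed there. A direct bracket computation on the four generators shows $\mathfrak{n}$ is abelian, and they are visibly linearly independent, so $\mathfrak{n}\in Ab(4)$. The decisive tool is the criterion packaged in Lemma \ref{alg_limits_abel}: if $\mathfrak{n}$ belonged to $\overline{Cart(\mathfrak{g})}^{Ch}$, then $\langle \mathfrak{n}, \Id\rangle$ would be closed under matrix multiplication. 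But $(E_{12}+E_{24})^2 = E_{14}$, and $E_{14} \notin \langle \mathfrak{n},\Id\rangle$ because the only off-diagonal positions occurring in $\langle\mathfrak{n},\Id\rangle$ are $(1,2),(2,4),(1,5),(3,4),(3,5)$. Hence $\langle\mathfrak{n},\Id\rangle$ is not a subalgebra of $\mathcal{M}(5,\QQ_p)$, so Lemma \ref{alg_limits_abel} forces $\mathfrak{n} \notin \overline{Cart(\mathfrak{g})}^{Ch}$, proving strictness for $n=5$.

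For general $n \geq 5$ I would extend $\mathfrak{n}$ by commuting diagonal directions. Placing $\mathfrak{n}$ in the top-left $5\times 5$ block, adjoin the space $\mathfrak{t}$ of diagonal matrices $\mathrm{diag}(s,s,s,s,s,t_6,\dots,t_n)$ subject to $5s + t_6 + \cdots + t_n = 0$; this $\mathfrak{t}$ has dimension $n-5$ and, being constant on coordinates $1$–$5$, commutes with every element of $\mathfrak{n}$. Then $\mathfrak{a} := \mathfrak{n} \oplus \mathfrak{t}$ is an abelian subalgebra of $\mathfrak{sl}(n,\QQ_p)$ of dimension $4 + (n-5) = n-1$, so $\mathfrak{a}\in Ab(n-1)$. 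The obstruction survives unchanged: $E_{14} = (E_{12}+E_{24})^2$ still fails to lie in $\langle\mathfrak{a},\Id\rangle$, since adjoining diagonal matrices and $\Id$ introduces no new off-diagonal position. Thus $\langle\mathfrak{a},\Id\rangle$ is again not multiplicatively closed and $\mathfrak{a}\notin\overline{Cart(\mathfrak{g})}^{Ch}$ by Lemma \ref{alg_limits_abel}.

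The step I expect to demand the most care is conceptual rather than computational: one must resist arguing strictness purely through the ``flatness'' of the group $N$ in Example \ref{abelnotlim}. Because the $p$-adic Lie functor is not injective, the subalgebra $\mathfrak{n}$ is the Lie algebra both of the non-flat group $N$ and of the flat group $Gr(\mathfrak{n}) = \langle\mathfrak{n},\Id\rangle\cap G$, and the latter is automatically flat; so the non-flatness of $N$ by itself does not exclude $\mathfrak{n}$ from $\overline{Cart(\mathfrak{g})}^{Ch}$. The genuinely decisive invariant is instead the associative closedness of $\langle\mathfrak{n},\Id\rangle$ guaranteed by Lemma \ref{alg_limits_abel}, and this is what the argument above exploits.
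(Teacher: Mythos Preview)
Your proof is correct, and in fact it sharpens the paper's argument. Both you and the paper use the same core example from Example~\ref{abelnotlim} and its higher-dimensional extensions, but the invariants invoked differ. The paper argues that the group $N$ of Example~\ref{abelnotlim} is not \emph{flat} (not the intersection of a linear subspace of $\mathcal{M}(n,\QQ_p)$ with $G$), and since Chabauty limits of conjugates of $C$ are flat, $N\notin\overline{Cart(G)}^{Ch}$. You instead work directly on the Lie-algebra side and observe that $(E_{12}+E_{24})^2=E_{14}\notin\langle\mathfrak{n},\Id\rangle$, so $\langle\mathfrak{n},\Id\rangle$ fails to be an associative subalgebra of $\mathcal{M}(n,\QQ_p)$; Lemma~\ref{alg_limits_abel} then excludes $\mathfrak{n}$ from $\overline{Cart(\mathfrak{g})}^{Ch}$.

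Your closing remark pinpoints a genuine subtlety: the paper's flatness argument, taken literally, only shows $N\notin\overline{Cart(G)}^{Ch}$, and this does not by itself yield $\mathfrak{n}\notin\overline{Cart(\mathfrak{g})}^{Ch}$, because the image of $\mathfrak{n}$ under $Gr$ (were $\mathfrak{n}$ in the domain) would be the automatically flat set $\langle\mathfrak{n},\Id\rangle\cap G$, not $N$. Your use of Lemma~\ref{alg_limits_abel} closes this gap cleanly. The two viewpoints are secretly the same phenomenon---$N$ is non-flat precisely because $E_{14}$ lies in the linear span of $N$ but not in $\langle\mathfrak{n},\Id\rangle$---but your formulation is the one that addresses the theorem as stated. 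The extension to $n\geq 6$ via $\mathfrak{a}=\mathfrak{n}\oplus\mathfrak{t}$ is also correct: the diagonal block $\mathfrak{t}$ is constant on the first five coordinates, so it commutes with $\mathfrak{n}$, contributes no off-diagonal entries, and hence $E_{14}$ remains outside $\langle\mathfrak{a},\Id\rangle$.
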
 

\begin{proof}  Extend example \ref{abelnotlim}. 
\end{proof} 

Iliev and Manivel and Haettel give a counting argument which proves Theorem \ref{ablim} for $n \geq 7$ over $\mathbb{C}$ and $\mathbb{R}$ respectively.  They count the dimension of the space of subalgebras isomorphic to $(\mathbb{R}^{n-1}, +)$ and show is  cubic in $n$, compared with the dimension of the space of limits of the Cartan subalgebra, which is quadratic in $n$.  Leitner's examples gave the first explicit examples of subalgebras isomorphic to $(\mathbb{R}^{n-1},+)$ which are not limits of the diagonal Cartan  subalgebra. 


Leitner \cite{Leitner} also gives a lower bound on the covering dimension of $\overline{Cart ( \SL(n, \RR)})^{Ch} /conjugacy$ over $\RR$. 
To extend this to $\mathbb{Q}_p$ a good notion of dimension is needed, and we intend to explore this in future work.

 Let $G$ be a group which acts on the projective space $\mathbb{P}(K^n)$ over a field $K$ and let $A,B \leq G$ be subgroups.  Every orbit of $A$ has a closure which spans a projective subspace.  The \textbf{dimension of an orbit closure} is the dimension of this projective subspace. The set of \textbf{orbit closures} of $A$ is the set of all closures of all orbits of $A$. 
 If $A$ and $B$ are conjugate, then the orbit closures of $A$ and $B$ are projectively equivalent, i.e., there is a projective transformation taking the orbit closures of $A \curvearrowright \mathbb{P}(K^n)$ to the orbit closures of $B$. 
  Leitner produces a continuum of groups with non-conjugate orbit closures.  We adapt the same argument from \cite{Leitner}. 

\begin{proposition}  For $n \geq 7$, there are infinitely many nonconjugate limits of $C$ in $\SL(n, \mathbb{Q}_p)$. 
\end{proposition}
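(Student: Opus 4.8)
The plan is to exhibit a family $\{H_t\}_{t \in \QQ_p}$ of Chabauty limits of $C$ in $\SL(n,\QQ_p)$, $n \geq 7$, depending continuously on a parameter, and to separate infinitely many of them up to conjugacy by means of a projective invariant. Since the relevant constructions of Leitner over $\RR$ in \cite{Leitner} are purely linear-algebraic, I expect them to transfer to $\QQ_p$ essentially verbatim; the field $\QQ_p$ enters only when counting how many values the invariant can take.

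\textbf{Construction of the family.} First I would write down, for $n = 7$, an explicit abelian subgroup $H_t$ of the unipotent radical $U^{\emptyset}$ whose matrix entries involve a free parameter $t \in \QQ_p$ sitting in a position that conjugation cannot normalise away (in contrast to $N_{1,\beta}$ and $N_{4,\alpha}$ of Proposition \ref{sl4}, where the parameter only survives modulo $k$-th powers). Each $H_t$ is verified to be an \emph{elliptic} limit of $C$ either by producing an explicit conjugating sequence $\{u_m\}_{m} \subset U$ as in Propositions \ref{sl2}, \ref{sl3} and \ref{sl4}, or, more cleanly, by writing down the corresponding $(n-1)$-dimensional abelian subalgebra $A_{H_t} \in \overline{Cart(\mathfrak{g})}^{Ch}$ and invoking Corollary \ref{cor::homeo} together with Lemma \ref{lem::unip_conj}. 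For $n > 7$ I would embed this $7 \times 7$ example as the top-left block and glue on the diagonal Cartan in the complementary coordinates; by the block description of limits in Theorem \ref{rem::hyp_Cartan} (and a product of conjugating sequences) the result is again a limit of $C$ in $\SL(n,\QQ_p)$.

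\textbf{Separating conjugacy classes.} The group $\SL(n,\QQ_p)$ acts on $\mathbb{P}(\QQ_p^n)$, and conjugation by $g$ carries the orbit closures of $H_s$ to the orbit closures of $g H_s g^{-1}$ through the projective transformation induced by $g$. Thus if $g H_s g^{-1} = H_t$ for some $g \in \SL(n,\QQ_p)$, the two orbit-closure configurations are projectively equivalent. I would arrange the family so that each $H_t$ canonically determines a quadruple of collinear points (or of concurrent hyperplanes) inside its orbit-closure configuration, whose cross-ratio is a fixed nonconstant function of $t$. A projective equivalence preserves cross-ratios, so $H_s$ conjugate to $H_t$ forces these cross-ratios to coincide; as $t$ ranges over $\QQ_p$ the cross-ratio attains infinitely many values, giving infinitely many conjugacy classes.

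\textbf{Main obstacle.} The decisive difficulty is twofold. First, one must guarantee that every $H_t$ is genuinely a limit of $C$ and not merely an abelian subalgebra of dimension $n-1$: Example \ref{abelnotlim} shows the two notions diverge for $n \geq 5$, the flatness criterion there being the relevant obstruction, so the explicit conjugating sequences (or their $\QQ_p$-analogues from \cite{Leitner}) must really be written down. Second, and more essentially, one must identify projective data attached to $H_t$ that is \emph{intrinsic}---canonically recoverable from the group and hence preserved by any conjugacy---and check that the resulting cross-ratio varies nontrivially with $t$ over $\QQ_p$ rather than being confined to a finite quotient such as $\QQ_p^{*}/(\QQ_p^{*})^{k}$. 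It is precisely this last requirement---having enough room in the unipotent radical for a genuine continuous modulus to appear---that forces the hypothesis $n \geq 7$, and verifying it is the heart of the argument.
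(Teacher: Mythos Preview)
Your proposal is correct and follows essentially the same approach as the paper: construct an explicit one-parameter family of unipotent limits (the paper writes down $L_\alpha$ for $n=7$ together with an explicit conjugating sequence $p_m$), then distinguish infinitely many of them via the cross-ratio of four canonical points on a projective line obtained from the orbit-closure configuration in $\mathbb{P}(\QQ_p^n)$. The paper likewise refers to \cite{Leitner} for the details when $n>7$, noting that nothing in that argument depends on the topology of $\RR$.
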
 
\begin{proof}  

We give a sketch of proof for $n=7$, which contains all the main ideas and significantly reduces notation.  The details for $n \geq 7$ may be read in \cite{Leitner}, and nothing about the topology of $\RR$ is used, so the same proof will work in the projective space over $\QQ_p$.

Let $\alpha \in \QQ_p -\{ 0,1,2 \}$ be fixed.  Consider the homomorphism $\rho_\alpha: \QQ_p ^6 \to  \SL(7, \mathbb{Q}_p)$ and the sequence $
\{p_m\}_{m \in \NN}$
$$\rho_\alpha (a,b,c,d,e,f):=  
\bpmat 1 & 0 & 0 & 0& 0&  a & 0\\
0 & 1 & 0 & 0 & 0 & b & b \\
0 & 0 & 1 & 0 & 0 & c & 2c \\
0 & 0 & 0 & 1 & 0 & d & \alpha d \\
 0 & 0 & 0 & 0 & 1 & e & f \\
 0 & 0 & 0 & 0 & 0 & 1 & 0 \\
 0 & 0 & 0 & 0 & 0 & 0 & 1\\
\epmat 
\qquad 
 p_m:= \bpmat 1& 0 & 0 & 0 &0 & m & 0\\
0 & 1 & 0 & 0 & 0 & m& m\\
0 & 0 & 1 & 0 & 0 & m & 2m \\
0 & 0 & 0 & 1 & 0 & m & \alpha m \\
0 & 0 & 0 & 0 & 1 & m ^2 & m^2 \\
0 & 0 & 0 & 0 & 0 & 1 & 0\\
0 & 0 & 0 & 0 &  0 & 0 & 1
\epmat. 
$$
Then the image of $\rho_\alpha$ is a group $L_\alpha$.  We see that $L_\alpha$ is a limit of the Cartan under conjugacy by conjugating $C$ by the sequence $\{p_m\}_{m \in \NN}$ of matrices and taking a limit as $m \to \infty$ in the Chabauty topology.  

We want to show $L_\alpha$ is not conjugate to $L_\beta$, for  
$$\beta \not \in \mathcal{UC} \{ 0,1,2, \alpha \} = \left\{\frac{2(\alpha-1)}{\alpha} , \frac{\alpha}{2(\alpha-1)}, \frac{\alpha}{2-\alpha} , \frac{2-\alpha}{\alpha}, \frac{2(\alpha-1)}{\alpha-2}, \frac{\alpha-2}{2(\alpha-1)} \right\}. $$ 
Here $\mathcal{UC} \{0,1,2, \alpha\}$ is the set of all possible cross ratios of the four points $\{[1:0], [1:1], [1:2], [1:\alpha] \}$ in any order.

We follow Leitner's argument and show the orbits of $L_\alpha$ acting on the projective space $\mathbb{P}(\QQ_p ^7)$ are not projectively equivalent to the orbit closures of $L_\beta$. The orbit closures of $L_\alpha$ are a fixed four dimensional space, surrounded by a sheaf of 5 dimensional spaces, some of which break down further.
We count the dimension 
of an orbit of a point $x \in \mathbb{P}(\QQ_p ^7)$ under $L_\alpha$.  There are three cases.    


Case 1:  $x \in \langle e_1, ..., e_5 \rangle $.  Then $L_\alpha$ acts as the identity, and the dimension of an orbit is 0. 

Case 2: $x \in \langle e_1, ..., e_5 , t e_6 + e_7  \rangle $, and $t \not \in \{ 0,1,2,\alpha\}$ then the orbit is 5 dimensional. 

Case 3: $x \in \langle e_1, ..., e_5 , t e_6 + e_7  \rangle $, and $t \in \{ 0,1,2,\alpha\}$, then the orbit is 4 dimensional. 


We can project out the space $\langle e_1, ..., e_5 \rangle $ and onto the projective line $\langle e_6, e_7 \rangle$.  This gives us four special points on the line, $\{[1:0], [1:1], [1:2], [1:\alpha] \}$.
The unordered cross ratio is a projective invariant of these four points.  
So the sets of points $\{[1:0], [1:1], [1:2], [1:\alpha] \}$ and $\{[1:0], [1:1], [1:2], [1:\beta] \}$ are projectively equivalent if and only if they have the same unordered cross ratio, and this proves the claim. 

Leitner generalizes this proof to groups with a matrix of coefficients in the upper right.   The dimension count comes from counting the degrees of freedom in the matrix after applying the unordered generalized cross ratio.  All of the details can be found in \cite{Leitner}. 
\end{proof}

\begin{bibdiv}
\begin{biblist}

\bib{AB}{book}{
   author={Abramenko, Peter},
   author={Brown, Kenneth S.},
   title={Buildings},
   series={Graduate Texts in Mathematics},
   volume={248},
   note={Theory and applications},
   publisher={Springer},
   place={New York},
   date={2008},
}

\bib{BW04}{article}{
author={Baumgartner, Udo },
author={Willis, George A.},
title={Contraction groups and scales of automorphisms of totally disconnected locally compact groups},
journal={Israel Journal of Mathematics},
year={2004},
issn={0021-2172},
volume={142},
number={1},
doi={10.1007/BF02771534},
url={http://dx.doi.org/10.1007/BF02771534},
publisher={Springer-Verlag},
pages={221-248},
language={English}
}

\bib{Borel}{book}{
   author={Borel, Armand},
   title={Linear algebraic groups},
   series={Graduate Texts in Mathematics},
   volume={126},
   edition={2},
   publisher={Springer-Verlag, New York},
   date={1991},
   pages={xii+288},
   isbn={0-387-97370-2},
   review={\MR{1102012}},
   doi={10.1007/978-1-4612-0941-6},
}

\bib{Bou75}{book}{
   author={Bourbaki, Nicolas},
   title={Lie groups and Lie algebras. Chapters 1--3},
   series={Elements of Mathematics (Berlin)},
   note={Translated from the French;
   Reprint of the 1975 edition},
   publisher={Springer-Verlag, Berlin},
   date={1989},
   pages={xviii+450},
   isbn={3-540-50218-1},
   review={\MR{979493}},
}

\bib{Bou81}{book}{
   author={Bourbaki, Nicolas},
   title={Espaces vectoriels topologiques. Chapitres 1 \`a 5},
   language={French},
   edition={New edition},
   note={\'El\'ements de math\'ematique. [Elements of mathematics]},
   publisher={Masson, Paris},
   date={1981},
   pages={vii+368},
   isbn={2-225-68410-3},
   review={\MR{633754}},
}

\bib{BH99}{book}{
  author={Bridson, M.},
  author={Haefliger, A.},
  title={Metric Spaces of Non-Positive Curvature},
    publisher={Springer-Verlag, Berlin},
   volume={319}
  date={1999},
}

 \bib{BHK}{article}{
   author={Bridson, Martin R.},
   author={de la Harpe, Pierre},
   author={Kleptsyn, Victor},
   title={The Chabauty space of closed subgroups of the three-dimensional
   Heisenberg group},
   journal={Pacific J. Math.},
   volume={240},
   date={2009},
   number={1},
   pages={1--48},
   issn={0030-8730},
   review={\MR{2485473}},
   doi={10.2140/pjm.2009.240.1},
}

\bib{CEM}{book}{
author={Canary, Richard}, 
author={Epstein, David},
author={Marden, Albert}, 
title={Fundamentals of Hyperbolic Manifolds: Selected Expositions}, 
series={London Math Society Lecture Notes Series}, 
volume={328}, 
publisher={Cambridge University Press},
date={2006}
}

\bib{CaCi}{article}{
  author={Caprace, P-E.},
   author={Ciobotaru, C.},
   title={Gelfand pairs and strong transitivity for Euclidean buildings},
    journal={Ergodic Theory and Dynamical Systems},
 volume={35},
 number={4},
 pages={1056-1078},
 date={2015},
   doi={},}
   
 \bib{CL}{article}{ 
 author={Caprace, Pierre Emmanuel}, 
 author={L\' ecureux, J.} , 
 title={Combinatorial and Group Theoretic Compactifications of Buildings}, 
 journal={Ann. Inst. Fourier}, 
 year={2011},
number={2}
pages={619-672}, 
 }

  \bib{Ch}{article}{
   author={Chabauty, Claude},
   title={Limite d'ensembles et g\'eom\'etrie des nombres},
   language={French},
   journal={Bull. Soc. Math. France},
   volume={78},
   date={1950},
   pages={143--151},
   issn={0037-9484},
   review={\MR{0038983}},
}

\bib{Cio}{article}{
   author={Ciobotaru, C.},
   title={A unified proof of the Howe--Moore property},
   journal={Journal of Lie Theory},
   volume={25},
   date={2015},
   pages={65--89},
   issn={1083-4362},
 note={arXiv:1403.0223},
   }
 
 \bib{CMRH}{unpublished}{
   author={Ciobotaru, C.},
   author={M\"{u}hlherr, B.} 
   author={Rousseau, G.} 
   title={The cone topology on masures},
   note={with an Appendix by Auguste H\'{e}bert, arXiv:1703.00318v2},
   }
  
   
   \bib{CoPau}{book}{
  author={Courtois, Gilles},
   author={Dal'Bo, Franï¿œoise},
   author={Paulin, Fr\'ed\'eric}
   title={Sur la dynamique des groupes de matrices et applications arithm\'etiques},
    note={Journ\'ees math\'ematiques X-UPS 2007, \\ \url{http://www.math.polytechnique.fr/xups/xups07-02.pdf}},
 pages={158},
 date={2007},
   doi={},}

 \bib{FL}{article}{
   author={Foertsch, Thomas},
   author={Lytchak, Alexander},
   title={The de Rham decomposition theorem for metric spaces},
   journal={Geom. Funct. Anal.},
   volume={18},
   date={2008},
   number={1},
   pages={120--143},
   issn={1016-443X},
   review={\MR{2399098}},
   doi={10.1007/s00039-008-0652-0},
}
   \bib{Gar97}{book}{
  author={Garrett, P.},
  title={Buildings and Classical Groups},
    publisher={Chapman and Hall},
  date={1997},
}
   
   \bib{GR}{article}{
   author={Guivarc'h, Yves},
   author={R\'emy, Bertrand},
   title={Group-theoretic compactification of Bruhat-Tits buildings},
   language={English, with English and French summaries},
   journal={Ann. Sci. \'Ecole Norm. Sup. (4)},
   volume={39},
   date={2006},
   number={6},
   pages={871--920},
   issn={0012-9593},
   review={\MR{2316977}},
   doi={10.1016/j.ansens.2006.10.002},
}

\bib{GJT}{book}{ 
 author={ Guivarc'h, Yves}, 
 author={Ji, Lizhen}, 
 author={Taylor, John},
 title={Compactifications of Symmetric Spaces},
 publisher={Birkhauser},
 series={Progress in Mathematics},
 volume={156},
 date={1998},
}
		
\bib{Htt}{article}{
   author={Haettel, Thomas},
   title={Compactification de Chabauty des espaces sym\'etriques de type non
   compact},
   language={French, with English summary},
   journal={J. Lie Theory},
   volume={20},
   date={2010},
   number={3},
   pages={437--468},
   issn={0949-5932},
   review={\MR{2743099}},
}

\bib{Htt_2}{article}{
   author={Haettel, Thomas},
   title={Compactification de Chabauty de l'espace des sous-groupes de
   Cartan de ${\rm \SL}_n(\Bbb{R})$},
   journal={Math. Z.},
   volume={274},
   date={2013},
   number={1-2},
   pages={573--601},
   issn={0025-5874},
   review={\MR{3054345}},
   doi={10.1007/s00209-012-1086-9},
}

\bib{Hoo42}{article}{
   author={Hooke, Robert},
   title={Linear $p$-adic groups and their Lie algebras},
   journal={Ann. of Math. (2)},
   volume={43},
   date={1942},
   pages={641--655},
   issn={0003-486X},
   review={\MR{0007420}},
   doi={10.2307/1968957},
}

\bib{IM}{proceedings}{
author={Iliev, Antas} 
author ={Manivel, Laurent} 
title={Varieties of reductions for $gl_n$} 
booktitle={Projective Varieties with Unexpected Properties} 
editor={Cilberto, C}
editor={Geramita, A.V.}
editor={ Harbourne, B.}
editor ={Miro-Roig, R.}
editor={Ranestad, K.}
publisher={Walter de Gruyter}
date={2005} 
pages={287--316}
}

\bib{KS}{article}{
author={Kramer, Linus},
   author={Schillewaert, Jeroen},
   title={Strongly transitive actions on Euclidean buildings},
    journal={Israel J. Math.},
   volume={219},
   date={2017},
   pages={163--170},
   doi={},
}

\bib{Leitnersl3}{article}{
author={Leitner, Arielle}
title={Limits Under Conjugacy of the Diagonal Subgroup in $\SL_3(\RR)$}
journal={Geom. Dedicata}
volume={160}
year={2016}
pages={135-149}
}

\bib{Leitner}{article}{
author={Leitner, Arielle} 
title={ Limits Under Conjugacy of the Diagonal Subgroup in $\SL_n(\RR)$} 
journal={Proceedings of the American Mathematical Society}
volume={144}
number={8}
pages= {3243-3254}
date={2016}
}

\bib{Leitnercusp}{article}{ 
author={Leitner, Arielle} 
title = { A Classification of subgroups of $SL(4,\RR)$ Isomorphic to $\RR^3$ and Generalized Cusps in Projective 3 Manifolds} 
journal={Topology and Applications}
volume={206}
date={2016}
pages={241-254}
}

 \bib{Lit}{unpublished}{
author={Littelmann, P.}
title={Schubert Varieties}
 note={Version May 21, 2012, \\ \url{http://www.mi.uni-koeln.de/~littelma/SMTkurz.pdf}}
}

 \bib{Mil}{unpublished}{
author={Milne, J.S.}
title={Lie Algebras, Algebraic Groups, and Lie Groups}
note={Version 2.00, May 5, 2013, \\ \url{http://www.jmilne.org/math/CourseNotes/LAG.pdf}}
}	

\bib{PR94}{book}{
   author={Platonov, Vladimir},
   author={Rapinchuk, Andrei},
   title={Algebraic groups and number theory},
   series={Pure and Applied Mathematics},
   volume={139},
   note={Translated from the 1991 Russian original by Rachel Rowen},
   publisher={Academic Press, Inc., Boston, MA},
   date={1994},
   pages={xii+614},
   isbn={0-12-558180-7},
   review={\MR{1278263 (95b:11039)}},
}

\bib{Rob00}{book}{
   author={Robert, Alain M.},
   title={A course in $p$-adic analysis},
   series={Graduate Texts in Mathematics},
   volume={198},
   publisher={Springer-Verlag, New York},
   date={2000},
   pages={xvi+437},
   isbn={0-387-98669-3},
   review={\MR{1760253}},
   doi={10.1007/978-1-4757-3254-2},
}

\bib{Rou11}{article}{
  author={Rousseau, Guy},
   title={Masures affines},
   journal={Pure Appl. Math. Quarterly (in honor of J. Tits)},
   volume={7},
   number={3},
   pages={859--921},
 date={2011},
   doi={},
}

\bib{Ron89}{book}{
  author={Ronan, M.},
  title={Lectures on Buildings},
    publisher={Academic Press, INC. Harcourt Brace Jovanovich, Publishers},
  date={1989 },
}


\bib{Sch11}{book}{
   author={Schneider, Peter},
   title={$p$-adic Lie groups},
   series={Grundlehren der Mathematischen Wissenschaften [Fundamental
   Principles of Mathematical Sciences]},
   volume={344},
   publisher={Springer, Heidelberg},
   date={2011},
   pages={xii+254},
   isbn={978-3-642-21146-1},
   review={\MR{2810332}},
   doi={10.1007/978-3-642-21147-8},
}

\bib{Ser92}{book}{
   author={Serre, Jean-Pierre},
   title={Lie algebras and Lie groups},
   series={Lecture Notes in Mathematics},
   volume={1500},
   note={1964 lectures given at Harvard University;
   Corrected fifth printing of the second (1992) edition},
   publisher={Springer-Verlag, Berlin},
   date={2006},
   pages={viii+168},
   isbn={978-3-540-55008-2},
   isbn={3-540-55008-9},
   review={\MR{2179691}},
}

\bib{serre}{book}{
author={Serre, Jean-Pierre}, 
title= {Cours d'Arithm\'{e}tique},
publisher={Presses Universitaires de France}, 
language={French},
date={1970},
edition={1st edition},
}

\bib{ST}{book}{
   author={Suprunenko, D. A.},
   author={Ty\v skevi\v c, R. I.},
   title={Commutative Matrices},
   language={English},
   publisher={Academic Press},
   date={1968},
   pages={158},
  }

\end{biblist}
\end{bibdiv}

\end{document}